\numberwithin{equation}{section}
\newtheorem{theorem}{Theorem}[section]
\newtheorem{lemma}[theorem]{Lemma}
\newtheorem{proposition}[theorem]{Proposition}
\newtheorem{corollary}[theorem]{Corollary}
\theoremstyle{definition}
\theoremstyle{remark}
\newtheorem{remark}[theorem]{Remark}
\newtheorem{rhp}{Riemann-Hilbert Problem}
\newcommand{\rhref}[1]{Riemann-Hilbert Problem~\ref{#1}}
\let\Re=\undefined\DeclareMathOperator{\Re}{Re}
\let\Im=\undefined\DeclareMathOperator{\Im}{Im}
\newcommand{\ii}{\ensuremath{\mathrm{i}}}
\newcommand{\ee}{\ensuremath{\mathrm{e}}}
\newcommand*\dd{\mathop{}\!\mathrm{d}}
\newcommand{\bg}{B}
\newcommand{\critpoly}{F}
\newcommand{\phase}{\vartheta}
\newcommand{\JacobiSigma}{\sigma}
\newcommand{\Jacobiu}{\mathfrak{u}}
\newcommand{\Jacobiv}{\mathfrak{v}}
\newcommand{\critpt}{\xi}
\newcommand{\solmat}{W}
\newcommand{\solitonphase}{\Omega}
\newcommand{\conformalprime}{\eta}
\newcommand{\compact}{\mathscr{K}}
\newcommand{\holomat}{Y}
\newcommand{\Qpoly}{Q}
\renewcommand*\env@matrix[1][\arraystretch]{%
  \edef\arraystretch{#1}%
  \hskip -\arraycolsep
  \let\@ifnextchar\new@ifnextchar
  \array{*\c@MaxMatrixCols c}}
\let\originalleft\left
\let\originalright\right
\renewcommand{\left}{\mathopen{}\mathclose\bgroup\originalleft}
\renewcommand{\right}{\aftergroup\egroup\originalright}
\title{Infinite-order rogue waves that are small (but not small in $L^2$)}
\author{Deniz Bilman}
\address{Deniz Bilman:  Department of Mathematical Sciences, University of Cincinnati, Cincinnati, OH, USA}
\email{bilman@uc.edu}
\author{Peter D.~Miller}
\address{Peter D. Miller:  Department of Mathematics, University of Michigan, Ann Arbor, MI, USA}
\email{millerpd@umich.edu}
\date{\today}
\begin{document}
\begin{abstract}
General rogue waves of infinite order constitute a family of solutions of the focusing nonlinear Schr\"odinger equation that have recently been identified in a variety of asymptotic limits such as high-order iteration of B\"acklund transformations and semiclassical focusing of pulses with specific amplitude profiles.  These solutions have compelling properties such as finite $L^2$-norm contrasted with anomalously slow temporal decay in the absence of coherent structures.  In this paper we investigate the asymptotic behavior of general rogue waves of infinite order in a parametric limit in which the solution becomes small uniformly on compact sets while the $L^2$-norm remains fixed.  We show that the solution is primarily concentrated on one side of a specific curve in logarithmically rescaled space-time coordinates, and we obtain the leading-order asymptotic behavior of the solution in this region in terms of elliptic functions as well as near the boundary curve in terms of modulated solitons.  
The asymptotic formula captures the fixed $L^2$-norm even as the solution becomes uniformly small.
\end{abstract}
\maketitle

\tableofcontents

\section{Introduction}
\label{s:introduction}
%\section{Introduction}
\subsection{General rogue waves of infinite order}
(General) rogue waves of infinite order $\Psi(X,T;\mathbf{G},B)$ are particular solutions of the focusing nonlinear Schr\"odinger equation
\begin{equation}
\ii Q_T + \frac{1}{2}Q_{XX} + |Q|^2Q = 0
\label{nls-general}
\end{equation}
that have recently appeared in numerous applications.  In \cite{Suleimanov17}, one of these solutions was conjectured to describe the weakly dispersive regularization of an infinite-amplitude collapse solution of the dispersionless focusing nonlinear Schr\"odinger equation, and a version of this result has recently been proved rigorously \cite{BuckinghamJM25}.  The same family of solutions arises in the fixed-dispersion setting when B\"acklund transformations are repeatedly iterated.  This was first observed in the case of high-order fundamental rogue-wave solutions \cite{BilmanLM2020} (the application for which the solutions are named) and also high-order soliton solutions \cite{BilmanB2019}.  Later it was shown that the same limiting objects describe a family of solutions continuously interpolating between high-order rogue waves and solitons \cite{BilmanM2021}, and in fact one can obtain the same limit via repeated B\"acklund iterations starting with an arbitrary seed solution \cite{BilmanM22}.  Evaluating at $T=0$ one obtains functions of $X$ that play a role in boundary-layer theory for the sharp-line Maxwell-Bloch system \cite{LiM24} and the large-degree asymptotics of rational solutions of the third Painlev\'e equation \cite{BarhoumiLMP24}.

The solutions $\Psi(X,T;\mathbf{G},\bg)$ are parametrized by a scalar $B>0$ and certain $2\times 2$ constant matrices $\mathbf{G}$.  They are defined in terms of the following Riemann-Hilbert problem.

\begin{rhp}
Let $(X,T)\in\mathbb{R}^2$ and $\bg>0$ be fixed  
and let $\mathbf{G}$ be a $2\times 2$ matrix satisfying $\det(\mathbf{G})=1$ and $\mathbf{G}^*=\sigma_2\mathbf{G}\sigma_2$.
 Find a $2\times 2$ matrix $\mathbf{P}(\Lambda;X,T,\mathbf{G},\bg)$ with the following properties:
\begin{itemize}
\item[]\textbf{Analyticity:}  $\mathbf{P}(\Lambda;X,T,\mathbf{G},\bg)$ is analytic in $\Lambda$ for $|\Lambda|\neq 1$, and it takes continuous boundary values on the clockwise-oriented unit circle from the interior and exterior.
\item[]\textbf{Jump condition:} The boundary values\footnote{We use the standard convention that a subscript $+$ (resp., $-$) denotes a boundary value taken on an oriented contour arc from the left (resp., right).} on the unit circle are related as follows:
\begin{equation}
\mathbf{P}_+(\Lambda;X,T,\mathbf{G},\bg)=\mathbf{P}_-(\Lambda;X,T,\mathbf{G},\bg)
\ee^{-\ii(\Lambda X+\Lambda^2T+2 \bg \Lambda^{-1})\sigma_3}\mathbf{G}
\ee^{\ii(\Lambda X+\Lambda^2T+2\bg \Lambda^{-1})\sigma_3},\quad |\Lambda|=1.
\label{P-jump}
\end{equation}
\item[]\textbf{Normalization:}  $\mathbf{P}(\Lambda;X,T,\mathbf{G},\bg)\to\mathbb{I}$ as $\Lambda\to\infty$.
\end{itemize}
\label{rhp:near-field}
\end{rhp}
Here $\sigma_2$ and $\sigma_3$ denote standard Pauli matrices
\begin{equation*}
\sigma_1 := \begin{bmatrix} 0 & 1 \\ 1 & 0\end{bmatrix},\quad \sigma_2 := \begin{bmatrix} 0 & -\ii \\ \ii & 0\end{bmatrix},\quad \sigma_3:=\begin{bmatrix} 1 & 0 \\ 0 & -1\end{bmatrix},
\end{equation*}
and $\mathbf{G}^*$ means the complex-conjugate matrix (no transpose).
In general any matrix $\mathbf{G}$ satisfying $\det(\mathbf{G})=1$ and $\mathbf{G}^*=\sigma_2 \mathbf{G} \sigma_2 $ as in \rhref{rhp:near-field} can be written as
\begin{equation}
\mathbf{G}=\mathbf{G}(a,b)=\frac{1}{\sqrt{|a|^{2}+|b|^{2}}}\begin{bmatrix}
a & b^{*} \\
-b & a^{*}
\end{bmatrix},\quad (a,b)\in\mathbb{C}^2\setminus\{(0,0)\}.
\label{G-form}
\end{equation}
\rhref{rhp:near-field} has a 
unique solution 
that
depends real-analytically on $(X,T)\in\mathbb{R}^2$ \cite[Proposition 1.1]{BilmanM2024}, and the special solution $\Psi(X,T;\mathbf{G},\bg)$ is defined in terms of $\mathbf{P}(\Lambda;X,T,\mathbf{G},\bg)$ via
\begin{equation}
\Psi(X,T;\mathbf{G},\bg) := 2\ii \lim_{\Lambda\to\infty} \Lambda P_{12}(X,T;\mathbf{G},\bg).
\label{}
\end{equation}
Without loss of generality we take $B=1$ thanks to the scaling symmetry \cite[Proposition 1.2]{BilmanM2024}
\begin{equation}
\Psi(X, T ; \mathbf{G}, \bg)=\bg \Psi(\bg X, \bg^2 T ; \mathbf{G}, 1),
\label{scaling-sym}
\end{equation}
so we will write $\Psi(X,T;\mathbf{G}):=\Psi(X,T;\mathbf{G},1)$ and omit $B=1$ from the argument list of functions related to $\mathbf{P}$.
On the other hand, the dependence on $\mathbf{G}=\mathbf{G}(a,b)$ is more consequential. 

\subsection{Dependence on parameters}
In recent work \cite{BilmanM2024}, numerous properties of the special solution $\Psi(X, T ; \mathbf{G})$ were established for fixed values of the parameters $(a,b)\in\mathbb{C}^2\setminus\{(0,0)\}$ including a Fredholm determinant representation and the asymptotic behavior in spacetime as $(X,T)\to\infty$ in the entire plane.  
As part of the same work, a software package 
\cite{RogueWaveInfiniteNLS} was developed to compute $\Psi(X, T ; \mathbf{G})$ at virtually any point in the $(X,T)$-plane based on numerical solution of suitable Riemann-Hilbert problem representations (modifications of \rhref{rhp:near-field}). 
One key result is:
\begin{theorem}[\protect{\cite[Theorem 1.9]{BilmanM2024}}]
Let $\mathbf{G}=\mathbf{G}(a,b)$ be as in \eqref{G-form}.  If also $ab \neq 0$, then $\Psi(\diamond, T; \mathbf{G})\in L^2(\mathbb{R})$ for all $T\in\mathbb{R}$ with $\| \Psi(\diamond, T;\mathbf{G} )\|_{L^2(\mathbb{R})}=\sqrt{8}$.
\label{t:L2-norm}
\end{theorem}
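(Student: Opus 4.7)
I would derive the $L^2$-norm from a trace identity extracted from \rhref{rhp:near-field}, establish and evaluate the boundary limits via nonlinear steepest descent, and then extend to all $T$ by $L^2$-conservation of \eqref{nls-general}.

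Expand $\mathbf{P}(\Lambda;X,T,\mathbf G)=\mathbb I+\Lambda^{-1}\mathbf P_1(X,T)+\Lambda^{-2}\mathbf P_2(X,T)+O(\Lambda^{-3})$ as $\Lambda\to\infty$, so that $\Psi(X,T;\mathbf G)=2\ii(\mathbf P_1)_{12}$. The jump matrix $V(\Lambda)=\ee^{-\ii\phi(\Lambda)\sigma_3}\mathbf G\ee^{\ii\phi(\Lambda)\sigma_3}$ with $\phi=\Lambda X+\Lambda^2T+2\bg\Lambda^{-1}$ obeys $V_X=-\ii\Lambda[\sigma_3,V]$, so the product $\mathbf P_X\mathbf P^{-1}$ is jump-free across $|\Lambda|=1$ and hence entire; matching against the large-$\Lambda$ expansion forces it to equal the degree-one polynomial $-\ii\Lambda\sigma_3+\ii[\sigma_3,\mathbf P_1]$. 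Reading off the $\Lambda^{-1}$ coefficient of $\mathbf{P}_X = (-\ii\Lambda\sigma_3 + \ii[\sigma_3,\mathbf{P}_1])\mathbf{P} + \ii\Lambda\mathbf{P}\sigma_3$ and invoking the Schwarz-symmetric relation $(\mathbf P_1)_{21}=-(\mathbf P_1)_{12}^*$---which follows from the symmetry $\mathbf G^*=\sigma_2\mathbf G\sigma_2$ and uniqueness in \rhref{rhp:near-field}, and which also forces $(\mathbf P_1)_{11}\in \ii\mathbb{R}$---one obtains
\begin{equation*}
\frac{\partial}{\partial X}(\mathbf P_1)_{11}(X,T) = -\frac{\ii}{2}|\Psi(X,T;\mathbf G)|^2.
\end{equation*}
Provided the boundary limits exist and $|\Psi|^2$ is locally integrable, integration yields the trace identity
\begin{equation*}
\|\Psi(\diamond,T;\mathbf G)\|_{L^2(\mathbb R)}^2 = 2\ii\bigl[(\mathbf P_1)_{11}(+\infty,T)-(\mathbf P_1)_{11}(-\infty,T)\bigr],
\end{equation*}
a real number since both limits are purely imaginary.

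To justify existence of the limits, prove $L^2$-finiteness, and evaluate the right side, I would apply the Deift--Zhou nonlinear steepest-descent method to \rhref{rhp:near-field} as $X\to\pm\infty$ at fixed $T$. At $T=0$ the controlling phase is $2\phi=2\Lambda X+4\bg\Lambda^{-1}$, whose imaginary part on $\Lambda=\ee^{\ii\theta}$ is $2(X-2\bg)\sin\theta$, with saddles at $\Lambda=\pm\sqrt{2\bg/X}$ (for $X>2\bg$) or $\Lambda=\pm\ii\sqrt{2\bg/|X|}$ (for $X<0$), all strictly inside the disk. Using the LDU-type factorization of $V$ built from the form \eqref{G-form} of $\mathbf G$, one opens lenses into regions where the exponentials decay. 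The hypothesis $ab\neq 0$ enters here: both triangular factors of $\mathbf{G}$ are nondegenerate, so the lens opening is uniform and no spurious solitonic residues are generated. The resulting small-norm problem, together with saddle-point parametrices and a local model near $\Lambda=0$ absorbing the essential singularity $\ee^{\pm 2\ii\bg/\Lambda}$, delivers decay estimates on $\Psi$ sufficient for $L^2$-integrability and provides explicit expressions for $(\mathbf P_1)_{11}(\pm\infty,T)$.

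The decisive structural point is universality: the coupling of $\mathbf G$ into $(\mathbf P_1)_{11}(\pm\infty,T)$ is routed only through the exponentially suppressed lens jumps, while the surviving $O(1)$ contribution is dictated solely by the $2\bg\Lambda^{-1}$ piece of $\phi$ and the normalization at $\Lambda=\infty$---neither of which sees $(a,b)$. Matching the $\Lambda=0$ parametrix against any convenient representative of $\mathbf G$ then produces the universal value $(\mathbf P_1)_{11}(+\infty,T)-(\mathbf P_1)_{11}(-\infty,T)=-4\ii$, i.e.\ $\|\Psi(\diamond,T;\mathbf G)\|_{L^2(\mathbb R)}^2=8$, at $T=0$. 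Finally, since $\Psi$ is a smooth solution of \eqref{nls-general} (Zakharov--Shabat dressing from \rhref{rhp:near-field}) and the focusing NLS conserves $L^2$-mass on smooth $L^2$-solutions, the identity $\|\Psi(\diamond,T;\mathbf G)\|_{L^2(\mathbb R)}=\sqrt 8$ propagates to all $T\in\mathbb R$.

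\textbf{Main obstacle.} The hard work is the steepest-descent analysis. The two saddles collide at $\Lambda=0$ as $X\to 0$, so the far-field parametrices must be matched with a uniform treatment that extends down to moderate $|X|$; and the local parametrix at $\Lambda=0$---where the essential singularity of $\phi$ sits---must be solved explicitly enough to extract the $\Lambda^{-1}$ coefficient of its expansion. Isolating the universal constant $-4\ii$ rather than an $(a,b)$-dependent quantity amounts to showing that the subleading $(a,b)$-dependent pieces cancel between $(\mathbf P_1)_{11}(+\infty,T)$ and $(\mathbf P_1)_{11}(-\infty,T)$, which is the true algebraic content of the theorem and explains why the hypothesis $ab\neq 0$ is both necessary and sufficient.
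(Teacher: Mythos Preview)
This theorem is not proved in the present paper; it is quoted verbatim from \cite[Theorem~1.9]{BilmanM2024} and used as an input. There is therefore no proof here to compare your proposal against.

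That said, your outline is structurally sound and is in fact the strategy one expects the cited reference to follow. The trace identity $\partial_X(\mathbf P_1)_{11}=-\tfrac{\ii}{2}|\Psi|^2$ is correct and standard, and the idea of computing the boundary values of $(\mathbf P_1)_{11}$ by steepest descent in $X$ is the right one. An essentially identical mechanism is used in Section~\ref{sec:L2-norm} of the present paper, where the authors compute the $L^2$-norm of the \emph{approximation} $\breve\Psi$ by the same trace-identity device (equation~\eqref{eq:DS-mod-psibreve-squared-Lax}), integrate in $\chi$, and read off the universal constant $8$ from the boundary behavior of a $g$-function coefficient $g_1$. Your proposed argument is the direct analogue for $\Psi$ itself at fixed $T$ with $X\to\pm\infty$.

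One correction of language: there is no essential singularity of $\mathbf P$ at $\Lambda=0$ to build a parametrix for---$\mathbf P$ is analytic inside the unit disk. The pole in the phase $\phi$ at $\Lambda=0$ is a feature of the jump matrix on $|\Lambda|=1$, and the large-$X$ analysis handles it by rescaling the spectral variable so that the saddle structure stabilizes (cf.\ the substitution $\Lambda=M^{-1}z$ in Section~\ref{s:escape} here, or the analogous large-$X$ rescalings in \cite{BilmanM2024}). Your description of ``a local model near $\Lambda=0$ absorbing the essential singularity'' mislocates where the work happens, though the intended content---that the $2\Lambda^{-1}$ term in $\phi$ is what produces the universal constant independent of $(a,b)$---is correct.
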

$L^2(\mathbb{R})$ is a natural space on which the Cauchy problem for \eqref{nls-general} is globally well posed \cite{Tsutsumi87}.  Most solutions in this space obey soliton resolution, in that either they undergo modified scattering with dispersive decay in $L^\infty(\mathbb{R})$ proportional to $|T|^{-\frac{1}{2}}$ or they have a soliton component that does not decay at all in $L^\infty(\mathbb{R})$ 
\cite{BorgheseJM2018}.
However, the general rogue waves of infinite order with $ab\neq 0$ are exceptional solutions that exhibit anomalously slow temporal decay \cite[Theorem 1.22]{BilmanM2024}: $\Psi(X,T;\mathbf{G})\sim |T|^{-\frac{1}{3}}$ as $|T|\to+\infty$ (upper and lower bounds).  This property makes these solutions very interesting from the point of view of mathematical analysis as well as physical applications.

The condition $ab\neq 0$ is essential for Theorem~\ref{t:L2-norm} because otherwise the solution is trivial:
\begin{proposition}[\protect{\cite[Proposition 1.8]{BilmanM2024}}] If $ab=0$, then $\Psi(X,T;\mathbf{G})=0$ for all $(X,T)\in\mathbb{R}^2$.
\label{prop:degeneration}
\end{proposition}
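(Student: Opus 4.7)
The strategy is to exhibit, in each of the two degenerate sub-cases $b=0$ and $a=0$, an explicit closed-form solution of \rhref{rhp:near-field} that is manifestly diagonal on the exterior $|\Lambda|>1$; the formula $\Psi(X,T;\mathbf{G}) = 2\ii \lim_{\Lambda\to\infty}\Lambda P_{12}(\Lambda;X,T,\mathbf{G})$ together with the uniqueness statement in \cite[Proposition 1.1]{BilmanM2024} then forces $\Psi\equiv 0$.

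The case $b=0$ is essentially trivial. The matrix $\mathbf{G}(a,0) = \ee^{\ii\arg(a)\sigma_3}$ is diagonal, so it commutes with the factors $\ee^{\pm\ii(\Lambda X + \Lambda^2 T + 2\Lambda^{-1})\sigma_3}$ in \eqref{P-jump}, and the jump collapses to the \emph{constant} diagonal matrix $J\equiv \mathbf{G}(a,0)$. The piecewise-constant diagonal function $\mathbf{P}(\Lambda) = \mathbb{I}$ for $|\Lambda|>1$ and $\mathbf{P}(\Lambda) = \mathbf{G}(a,0)^{-1}$ for $|\Lambda|<1$ satisfies all three conditions of \rhref{rhp:near-field}, and $P_{12}\equiv 0$ is obvious.

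The case $a=0$ is the substantive one. Setting $\phi := \arg(b)$ and $\vartheta(\Lambda) := \Lambda X + \Lambda^2 T + 2\Lambda^{-1}$, the jump matrix is the off-diagonal
\begin{equation*}
J = \begin{bmatrix} 0 & \ee^{-\ii(\phi+2\vartheta)} \\ -\ee^{\ii(\phi+2\vartheta)} & 0 \end{bmatrix}.
\end{equation*}
I would introduce the dressed matrix
\begin{equation*}
\tilde{\mathbf{P}}(\Lambda) := \begin{cases} \mathbf{P}(\Lambda), & |\Lambda|>1, \\ \mathbf{P}(\Lambda)\cdot\ii\sigma_2, & |\Lambda|<1, \end{cases}
\end{equation*}
which preserves both $\tilde{\mathbf{P}}(\infty)=\mathbb{I}$ and analyticity off the unit circle. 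A short direct calculation yields the new jump $\tilde J = (\ii\sigma_2)^{-1}J = \ee^{\ii(\phi+2\vartheta)\sigma_3}$, which is \emph{diagonal}. This decouples the RHP for $\tilde{\mathbf{P}}$ into four scalar RHPs with multiplicative jumps $\ee^{\pm\ii(\phi+2\vartheta)}$. Because $\vartheta$ is single-valued on the unit circle, these jump factors have winding number zero, and the natural split $\vartheta = (\Lambda X + \Lambda^2 T) + 2\Lambda^{-1}$ into parts analytic inside and outside the disk delivers the nowhere-vanishing diagonal entries $\tilde{\mathbf{P}}_{11}(\Lambda) = \ee^{4\ii/\Lambda}$ and $\tilde{\mathbf{P}}_{22}(\Lambda) = \ee^{-4\ii/\Lambda}$ on the exterior, together with the corresponding polynomial-exponential expressions on the interior.

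For each off-diagonal entry of $\tilde{\mathbf{P}}$, the ratio against the corresponding nowhere-zero diagonal entry has no jump across $|\Lambda|=1$, extends to an entire function, and vanishes at $\Lambda=\infty$; Liouville's theorem thus makes it identically zero. Consequently $\tilde{\mathbf{P}}$ and hence $\mathbf{P} = \ee^{(4\ii/\Lambda)\sigma_3}$ are strictly diagonal on $|\Lambda|>1$, so $P_{12}\equiv 0$ there and $\Psi\equiv 0$. The principal difficulty that the plan must overcome is the essential singularity $\ee^{\pm 4\ii/\Lambda}$ of the jump matrix $J$ at $\Lambda = 0$: the naive piecewise-constant ansatz that works in the $b=0$ case fails precisely because of it. The key insight is that right-multiplication by the constant matrix $\ii\sigma_2$ on the interior diagonalizes the jump \emph{without} introducing any new singularities, thereby reducing the $2\times 2$ problem to zero-index scalar RHPs that are explicitly solvable.
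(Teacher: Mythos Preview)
Your proposal is correct and follows essentially the same approach as the paper: in each degenerate case one exhibits the explicit solution of \rhref{rhp:near-field}, observes that it is diagonal on the exterior $|\Lambda|>1$, and invokes uniqueness to conclude $P_{12}\equiv 0$ there and hence $\Psi\equiv 0$. The paper simply writes down the closed-form solution \eqref{eq:Pfor(a,b)=(0,1)} (citing \cite[Appendix A]{BilmanM2024}), whereas you supply a constructive derivation via the $\ii\sigma_2$-dressing that reduces the off-diagonal jump to a diagonal one; this is a pleasant extra but not a different strategy, and your Liouville argument for the vanishing of the off-diagonal entries of $\tilde{\mathbf{P}}$ is in fact just the standard uniqueness proof, so it is redundant with (though consistent with) your appeal to \cite[Proposition~1.1]{BilmanM2024}.
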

This result relies on the fact that \rhref{rhp:near-field} can be solved explicitly if either $a=0$ or $b=0$ (see, for instance \eqref{eq:Pfor(a,b)=(0,1)} below). 
The solution $\Psi(X,T;\mathbf{G}(a,b))$ depends continuously on $(a,b)$ for fixed $(X,T)$, so Theorem~\ref{t:L2-norm} and Proposition~\ref{prop:degeneration} together imply that
\begin{equation}
\lim_{\substack{ab \to 0 \\ ab \neq 0}}\|\Psi(\diamond,T;\mathbf{G}(a,b)\|_{L^2(\mathbb{R})}=\sqrt{8}\quad\text{but}\quad\forall (X,T)\in\mathbb{R}^2:  \lim_{\substack{ab \to 0 \\ ab \neq 0}}\Psi(X,T;\mathbf{G}(a,b))=0.
\label{eq:conundrum}
\end{equation}
While this is not a contradiction, it deserves a proper explanation in terms of the asymptotic behavior of the solution as $ab\to 0$.  Does the solution $\Psi(\diamond,T;\mathbf{G}(a,b))$ decay in $L^\infty(\mathbb{R})$ as $ab\to 0$ for fixed $T$ similar to dispersive decay conserving the $L^2$-norm?  Or perhaps the solution undergoes translation to $X=\infty$ as $ab\to 0$ for fixed $T$ like a traveling pulse?  

Numerical computations done with the software package \cite{RogueWaveInfiniteNLS} described in the paper \cite{BilmanM2024} suggest that it is a combination of these two scenarios that characterizes the limit $ab\to 0$.
Figure~\ref{f:surface-plots-decreasing-a} shows the behavior of $\Psi(X,T;\mathbf{G}(a,b))$ as $a$ decreases from $a=1$ to $a=\frac{1}{4}$ for fixed $b=1$.
\begin{figure}[h]
\includegraphics[height=1.85in]{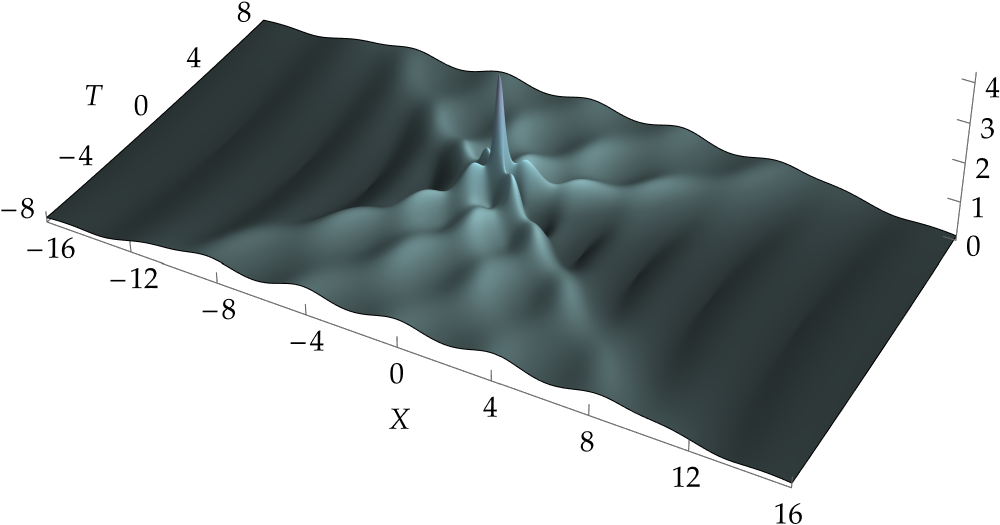}\includegraphics[height=1.85in]{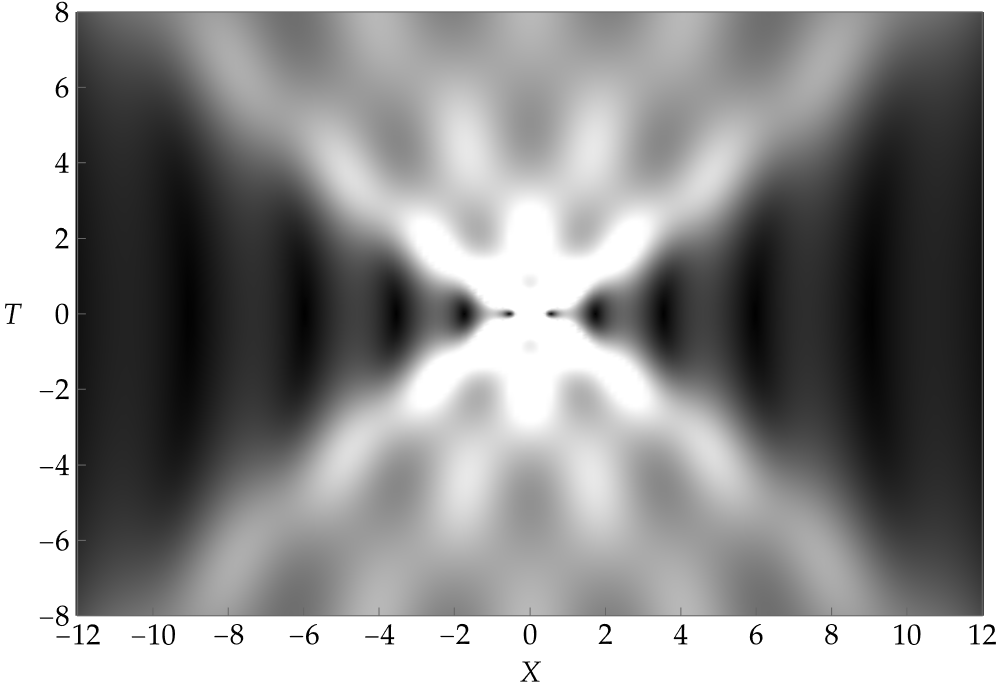}\\
\includegraphics[height=1.85in]{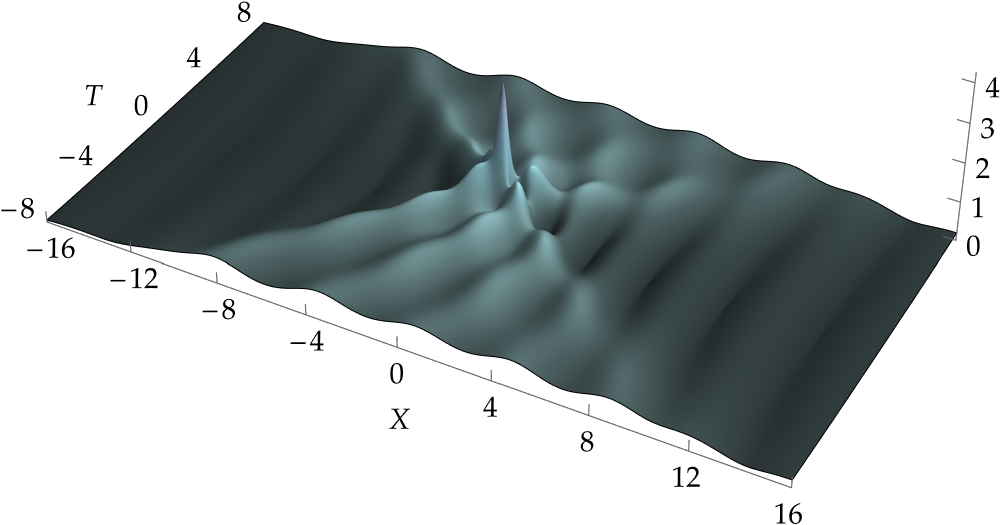}\includegraphics[height=1.85in]{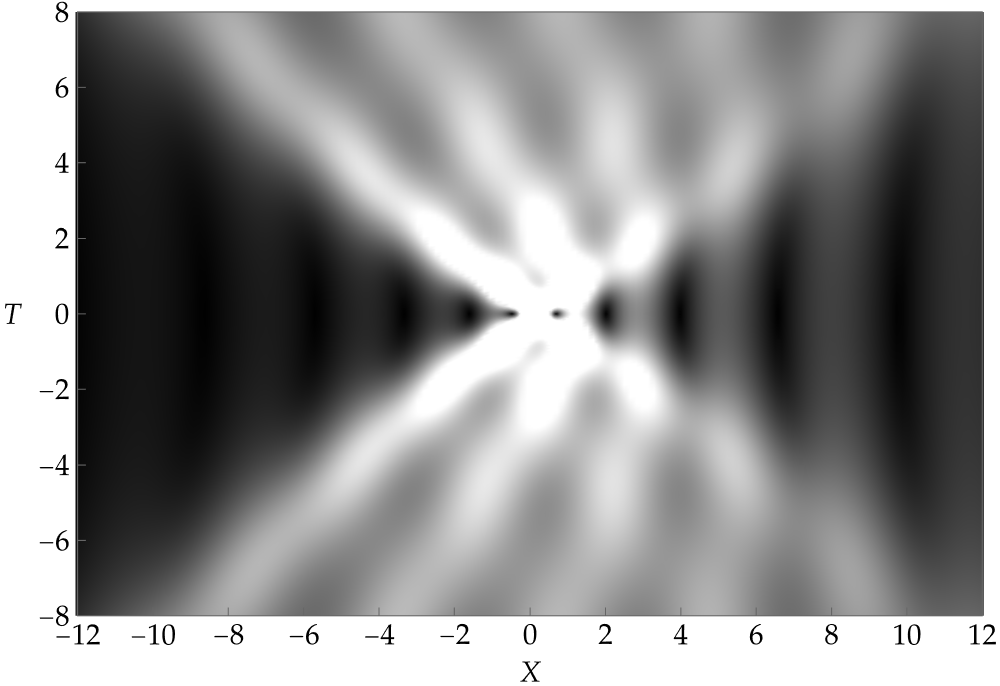}\\
\includegraphics[height=1.85in]{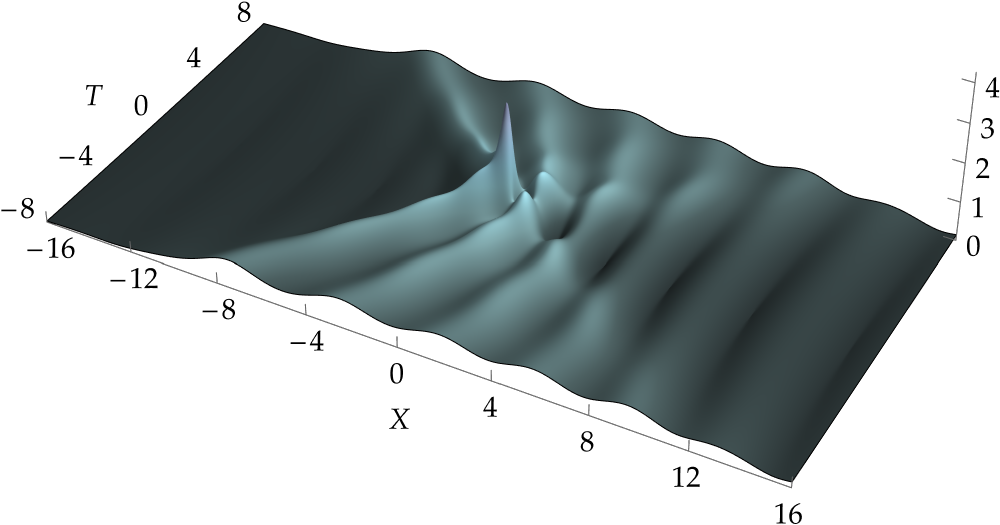}\includegraphics[height=1.85in]{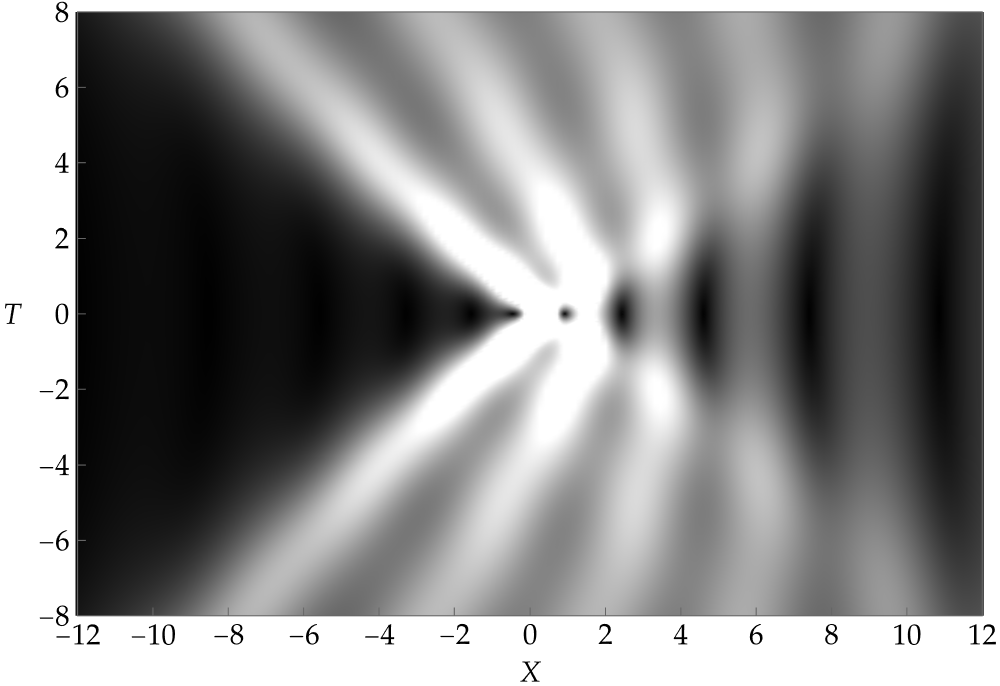}\\
\caption{Surface plots (left) and density plots (right) of $|\Psi(X,T;\mathbf{G}(a,b))|$ computed with the software package \texttt{RogueWaveInfinite.jl}, for $b=1$. From top to bottom: $a=1$, $a=\frac{1}{2}$, and $a=\frac{1}{4}$.}
\label{f:surface-plots-decreasing-a-v2}
\end{figure}
There evidently emerges a quiescent unbounded region of the $(X,T)$-plane extending to $X=-\infty$ in which the solution becomes small as $a\to 0$.  Indeed, the small-amplitude oscillations visible in the wedge-shaped region containing the the negative $X$-axis when $a=b=1$ appear to decay as $a\to 0$, while those on the opposite side become amplified as $a\to 0$. In fact, these amplified oscillations seem to merge with the larger-amplitude waves present for $a=b=1$ in wedge-shaped regions surrounding the positive and negative $T$-axes.
Figure~\ref{f:surface-plots-decreasing-a} shows the behavior of the solution close to the origin $(X,T)=(0,0)$ (the peak location of the solution if $a=b=1$) as $a$ decreases from $a=1$ to $a=\frac{1}{256}$. 
\begin{figure}[h]
\includegraphics[width=0.24\textwidth]{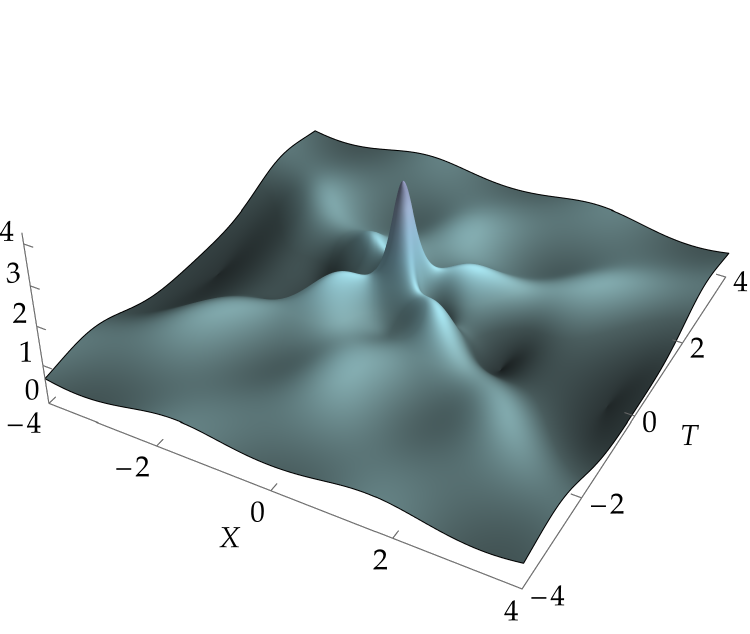}
\includegraphics[width=0.24\textwidth]{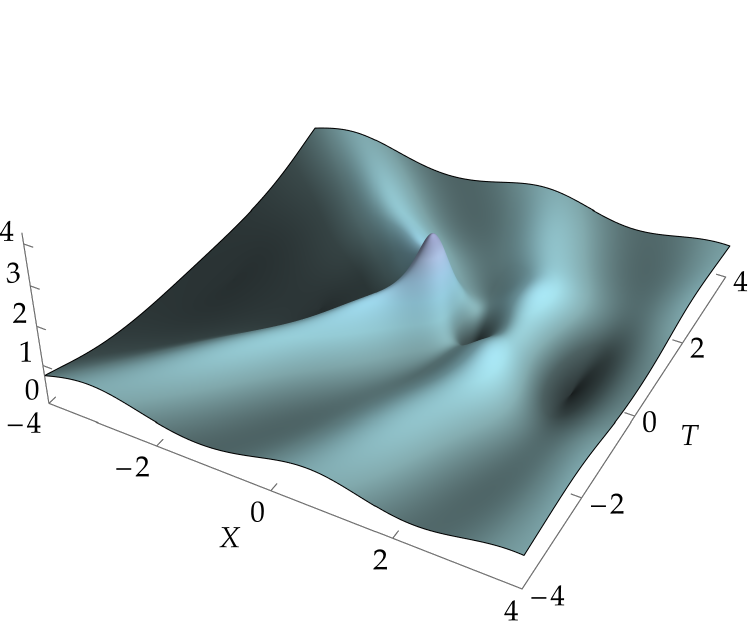}
\includegraphics[width=0.24\textwidth]{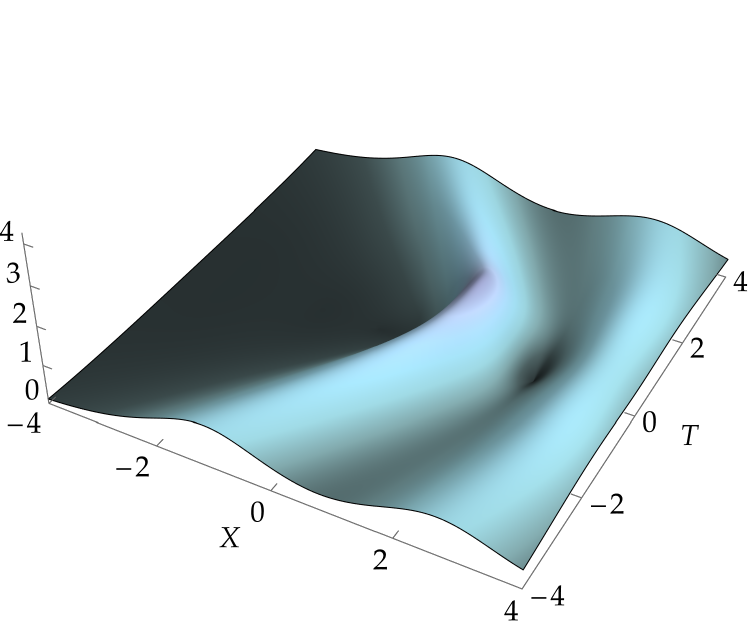}
\includegraphics[width=0.24\textwidth]{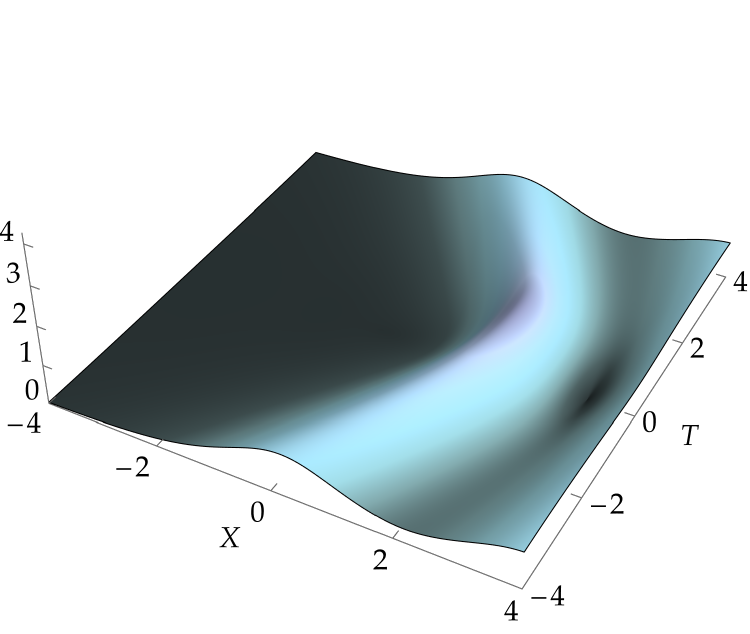}
\caption{Surface plots of $|\Psi(X,T;\mathbf{G}(a,b))|$ computed with the software package \texttt{RogueWaveInfinite.jl}, for $b=1$. From left to right: $a=1$, $a=\frac{1}{8}$, $a=\frac{1}{64}$, and $a=\frac{1}{256}$.}
\label{f:surface-plots-decreasing-a}
\end{figure}
Evidently, the peak of the solution both becomes smaller and shifts somewhat toward the positive $X$ direction as $a\to 0$. Plots of the time-slices $\Psi(X,T=0;\mathbf{G}(a,b))$ as $a$ becomes small are shown in Figure~\ref{f:Psi-on-a-b}.
\begin{figure}[h]
\includegraphics[width=0.24\linewidth]{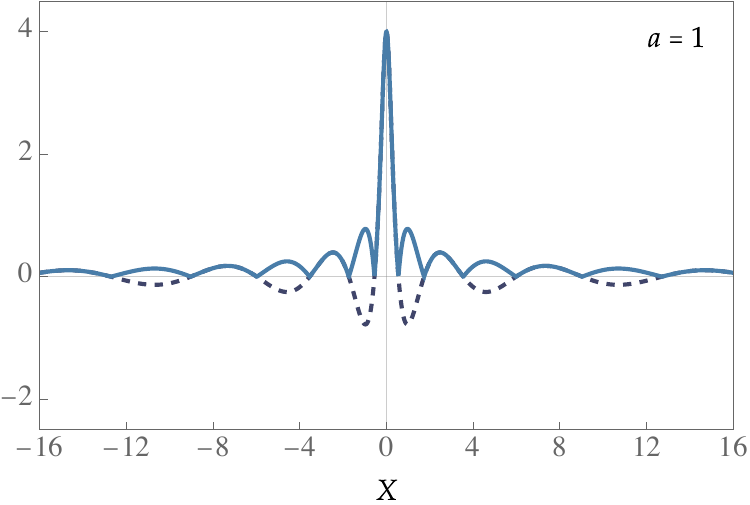}
\includegraphics[width=0.24\linewidth]{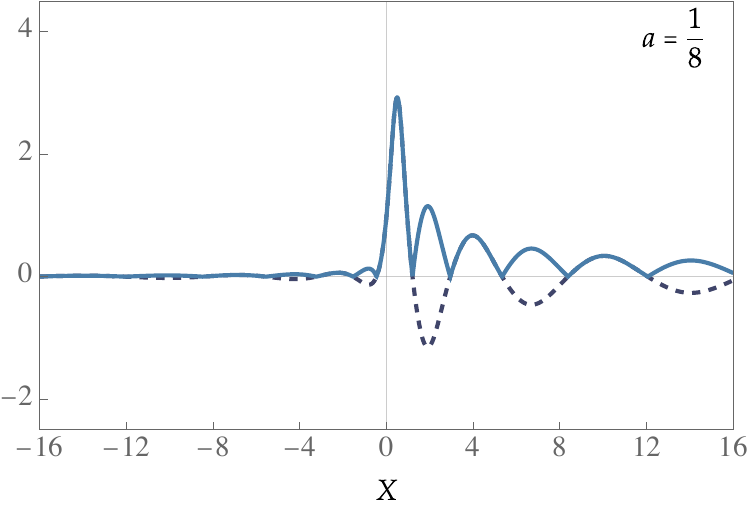}
\includegraphics[width=0.24\linewidth]{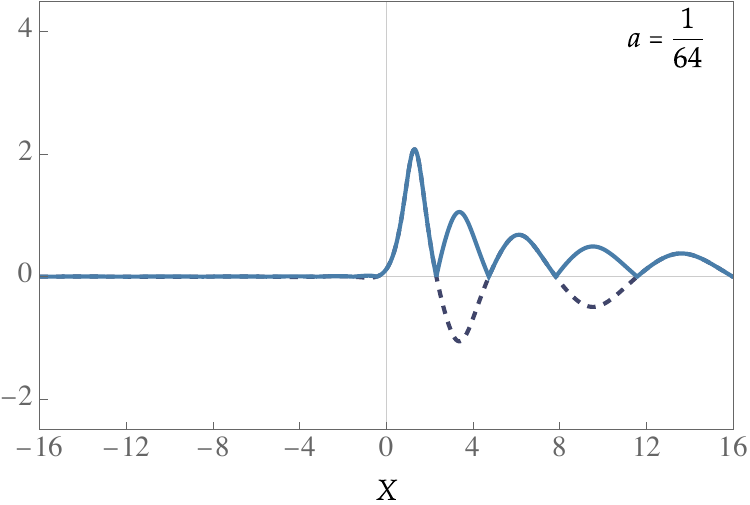}
\includegraphics[width=0.24\linewidth]{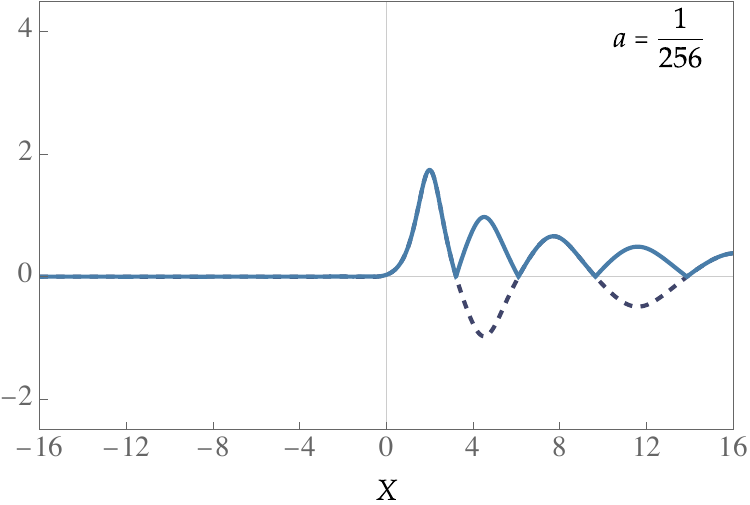}
\caption{Plots of $\Psi(X,0;\mathbf{G}(a,b))$ computed with the software package \texttt{RogueWaveInfinite.jl}, for $b=1$. From left to right: $a=1$, $a=\frac{1}{8}$, $a=\frac{1}{64}$, and $a=\frac{1}{256}$. Solid lines:  $|\Psi(X,0;\mathbf{G}(a,b))|$; dashed lines: $\Re(\Psi(X,0;\mathbf{G}(a,b)))$. The solution is real valued for $T=0$ since $ab\in\mathbb{R}$.} 
\label{f:Psi-on-a-b}
\end{figure}

Similar behavior of the solution but with $X$ replaced by $-X$ occurs if instead $b\to 0$ with $a$ fixed, due to the following symmetry:
\begin{proposition}[\protect{\cite[Proposition 1.3]{BilmanM2024}}]
$\Psi(X,T;\mathbf{G}(a,b)) = \Psi(-X,T;\mathbf{G}(b,a))$.
\label{prop:X-sym}
\end{proposition}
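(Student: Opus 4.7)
The proof proceeds by uniqueness of the solution of \rhref{rhp:near-field}. The plan is to construct a transformation of $\mathbf{P}(\Lambda;X,T,\mathbf{G}(a,b))$ that also solves \rhref{rhp:near-field} but at the parameters $(-X,T,\mathbf{G}(b,a))$; uniqueness will then force the two matrices to coincide and the identity for $\Psi$ will follow by matching the $\Lambda^{-1}$ coefficients of the $(1,2)$ entry at infinity.

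The algebraic starting point will be the identity
\[
\mathbf{G}(b,a)=\sigma_1\,\mathbf{G}(a,b)\,(-\sigma_3),
\]
which one checks by direct multiplication from \eqref{G-form}. This identity expresses the target matrix through left multiplication by $\sigma_1$ and right multiplication by $-\sigma_3$. In the jump, $\sigma_1$ anticommutes with $\sigma_3$ (so that $\sigma_1 e^{\pm i\phi\sigma_3}=e^{\mp i\phi\sigma_3}\sigma_1$), effectively flipping the sign of the phase, while $-\sigma_3$ commutes with $e^{\pm i\phi\sigma_3}$ and acts on $\mathbf{G}(a,b)$ internally. Combined with the substitution $\Lambda\mapsto -\Lambda$---which reverses the signs of the $\Lambda X$ and $2\bg/\Lambda$ terms in $\phi(\Lambda;X,T)=\Lambda X+\Lambda^2 T+2\bg/\Lambda$ while preserving $\Lambda^2 T$---these operations provide all the flexibility required to realize the target phase $\phi(\Lambda;-X,T)$.

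With the candidate transformation in hand, the remainder is routine bookkeeping. Analyticity for $|\Lambda|\neq 1$ and continuity of boundary values are inherited from $\mathbf{P}$; the normalization $\hat{\mathbf{P}}(\Lambda)\to\mathbb{I}$ at infinity is enforced by arranging the constant factors appropriately; and direct computation using the commutation relations above together with the key algebraic identity shows that the transformed jump coincides with $V(\Lambda;-X,T,\mathbf{G}(b,a))$. Uniqueness \cite[Proposition 1.1]{BilmanM2024} will then equate the transformed matrix with $\mathbf{P}(\Lambda;-X,T,\mathbf{G}(b,a))$, and comparing the $\Lambda^{-1}$ coefficient of the $(1,2)$ entry at infinity---converted if necessary to the $(2,1)$ coefficient of $\mathbf{P}(\Lambda;X,T,\mathbf{G}(a,b))$ via the Schwarz symmetry $\mathbf{P}(\Lambda)^{*}=\sigma_2\mathbf{P}(\Lambda^{*})\sigma_2$ implied by $\mathbf{G}^{*}=\sigma_2\mathbf{G}\sigma_2$---yields the stated identity $\Psi(X,T;\mathbf{G}(a,b))=\Psi(-X,T;\mathbf{G}(b,a))$.

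The hardest step will be making the identity $\mathbf{G}(b,a)=\sigma_1\mathbf{G}(a,b)(-\sigma_3)$ fit into a transformation that simultaneously preserves the jump structure and the normalization at $\Lambda=\infty$. Because this identity multiplies $\mathbf{G}(a,b)$ by \emph{different} Pauli matrices on the two sides, a pure constant-matrix conjugation is insufficient---indeed, $\mathbf{G}(a,b)$ and $\mathbf{G}(b,a)$ generally have distinct traces and so are not conjugate in $SU(2)$. Reconciling this asymmetry while matching the sign of the $2\bg/\Lambda$ term after $\Lambda\mapsto -\Lambda$ is where the technical work lies, and will likely require combining the Pauli multiplications with the $\Lambda\mapsto -\Lambda$ substitution and a gauge transformation designed to absorb any residual $\Lambda$-dependent phase into the fundamental matrix.
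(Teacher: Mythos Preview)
The paper does not supply its own proof of this proposition; it merely cites \cite[Proposition~1.3]{BilmanM2024}.  So there is no argument here to compare against, and your proposal must be judged on its own.

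Your strategy---build a transformation of $\mathbf{P}(\Lambda;X,T,\mathbf{G}(a,b))$ that solves \rhref{rhp:near-field} at $(-X,T,\mathbf{G}(b,a))$ and then invoke uniqueness---is the natural one, and the algebraic identity $\mathbf{G}(b,a)=\sigma_1\mathbf{G}(a,b)(-\sigma_3)$ is correct.  But the proposal is a plan, not a proof: you explicitly flag the central construction as ``the hardest step'' and leave it undone, offering only that it ``will likely require \ldots\ a gauge transformation.''  Two concrete obstacles stand in the way.  First, the substitution $\Lambda\mapsto-\Lambda$ sends $\Lambda X+\Lambda^2T+2\bg\Lambda^{-1}$ to $-\Lambda X+\Lambda^2T-2\bg\Lambda^{-1}$, which differs from $\phi(\Lambda;-X,T)$ by $-4\bg\Lambda^{-1}$.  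Any attempt to absorb $\ee^{\pm 4\ii\bg\Lambda^{-1}\sigma_3}$ by right-multiplication inside the unit disk introduces an essential singularity at $\Lambda=0$ and destroys analyticity there; absorbing it outside the disk is compatible with analyticity but must still be reconciled with the asymmetric $\sigma_1,\ -\sigma_3$ factors, and you have not shown how.  Second, because $\mathbf{G}(a,b)$ and $\mathbf{G}(b,a)$ generically have different traces, no constant conjugation relates them; your $\sigma_1/(-\sigma_3)$ identity confirms this, but it also means the transformation you seek cannot be a pure conjugation of $\mathbf{P}$, and the piecewise (inside/outside) modification required is precisely what you have not written down.  A quick sanity check with the explicit solution \eqref{eq:Pfor(a,b)=(0,1)} for $(a,b)=(0,1)$ versus $\mathbf{P}\equiv\mathbb{I}$ for $(a,b)=(1,0)$ already shows that the $\mathbf{P}$-matrices themselves are not related by anything as simple as $\sigma_1\mathbf{P}(-\Lambda)\sigma_1$.

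In short: right idea, right ingredients, but the key step is missing and the stated obstacles are real.  To turn this into a proof you must exhibit the transformation explicitly and verify analyticity in both components of $|\Lambda|\neq 1$, the jump condition, and the normalization; the final coefficient-matching is routine once that is done.
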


\subsection{Main results}
The purpose of this paper is to provide a rigorous explanation for the statement \eqref{eq:conundrum}  by describing the wave profile in the asymptotic limit $ab\to 0$. We find that in this limit, $\Psi(X,T;\mathbf{G}(a,b))$ is modeled by a gradually dispersing elliptic wavetrain with slowly decaying amplitude.

Using Proposition~\ref{prop:X-sym}, we focus on the case that $a\to 0$ for fixed $b\neq 0$. 
The smallness of $a$ is measured by the quantity
\begin{equation}
M:=-\frac{1}{2}\ln\left(\frac{|a|}{\sqrt{|a|^2+|b|^2}}\right)>0
\label{M-def}
\end{equation}
and we introduce logarithmically rescaled coordinates $(\chi,\tau)$ defined by
\begin{equation}
X = M^2 \chi\qquad\text{and} \quad T = M^3\tau.
\label{chi-tau-def}
\end{equation}
Thus the limit $a\to 0$ corresponds to $M\to+\infty$.
Also introducing the rescaled complex variable $z$ by setting
\begin{equation}
\Lambda = M^{-1} z,
\label{eq:z-Lambda}
\end{equation}
we see that with $\bg=1$ the exponent in \rhref{rhp:near-field} is expressed in the rescaled variables as:
\begin{equation}
\left.\vphantom{\int} (\Lambda X + \Lambda^2 T - 2 \Lambda^{-1})\right\vert_{X=M^2 \chi, T= M^3\tau, \Lambda=M^{-1}z} = M\phase(z;\chi,\tau),
\end{equation}
where
\begin{equation}
\phase(z)=\phase(z;\chi,\tau):= \chi z+ \tau z^2 - 2z^{-1}.
\label{eq:DS-tildevartheta}
\end{equation}
When $\chi>-(54\tau^2)^{\frac{1}{3}}$, the function $z\mapsto\phase(z;\chi,\tau)$ has a complex-conjugate pair of critical points $z=\critpt,\critpt^*$ depending real-analytically on $(\chi,\tau)$ with $\Im(\critpt)>0$, and in Section~\ref{s:boundary-curve} we show that the condition $\Re(\ii\phase(\critpt;\chi,\tau))=-1$ determines $\chi$ as an even real-analytic function of $\tau$ denoted $\chi_\mathrm{c}(\tau)$.  
This function satisfies $\chi_\mathrm{c}(\tau)>-(54\tau^2)^\frac{1}{3}$ and $\chi_\mathrm{c}(0)=\frac{1}{8}$, and $\chi=\chi_\mathrm{c}(\tau)$ satisfies the polynomial equation $\critpoly(\chi,\tau)=0$, where
\begin{multline}
\critpoly(\chi,\tau):=-4096\tau^4 + 1259712\tau^6+55296\tau^4\chi-186624\tau^4\chi^2\\
{}+69984\tau^4\chi^3-128\tau^2\chi^4+864\tau^2\chi^5+1296\tau^2\chi^6 -\chi^8+8\chi^9.
\label{eq:boundary-curve-exact}
\end{multline}
In the right-hand pane of Figure~\ref{f:Psi-elliptic}, the curve $(\chi,\tau)=(\chi_{\mathrm{c}}(\tau),\tau)$ is superimposed on the density plot of the solution $|\Psi(X,T;\mathbf{G}(a,b))|$ with $a=10^{-3}$ and $b=\sqrt{1-a^2}$, corresponding to $M\approx 3.45$.
\begin{figure}[h]
\includegraphics[width=0.48\linewidth]{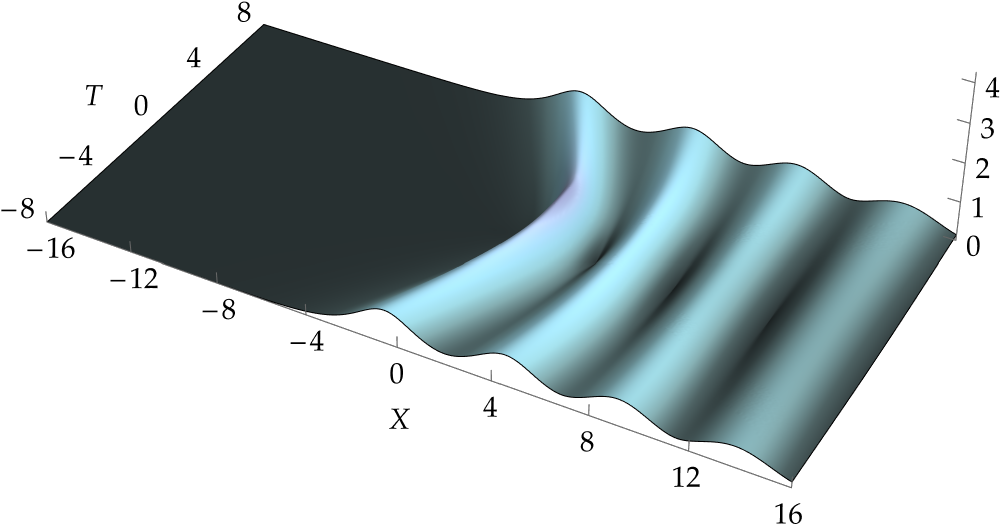}
\includegraphics[width=0.48\linewidth]{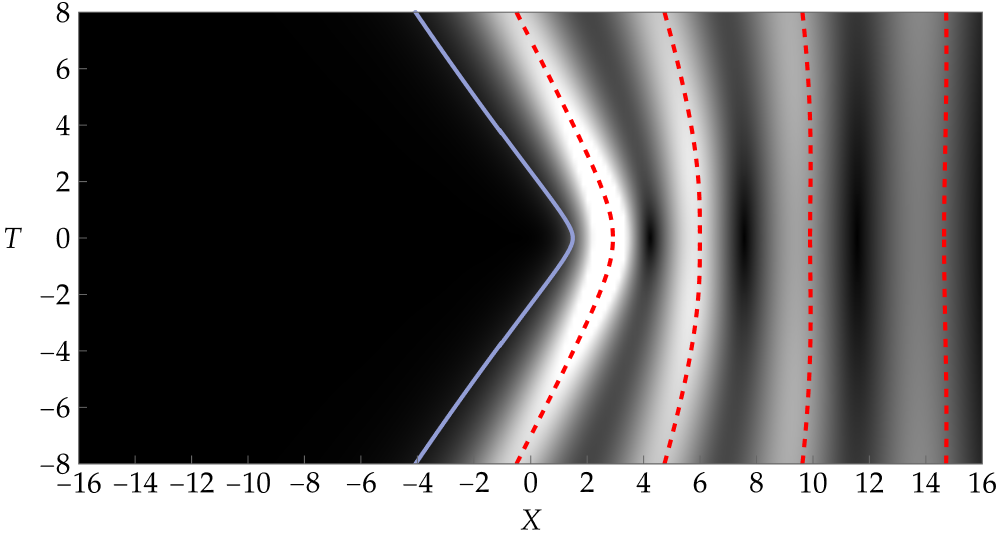}
\caption{Plots of $|\Psi(X,T;\mathbf{G}(a,b))|$ computed with the software package \texttt{RogueWaveInfinite.jl}, for $a=10^{-3}$ and $b=\sqrt{1-a^2}$ so $M=\frac{3}{2}\ln(10)\approx 3.45$. Left-pane: surface plot, right-pane: density plot with the critical curve $(\chi,\tau)=(\chi_{\mathrm{c}}(\tau),\tau)$ plotted in light blue in the $(X,T)$ plane. Also shown with red dashed lines are the curves $\Phi_n(\chi,\tau;M)=0$ for $n=0,1,2,3$ (see Theorem~\ref{t:edge}).}
\label{f:Psi-elliptic}
\end{figure}

In Section~\ref{s:DS-Integral-Condition} below we prove that there is a well-defined continuous and real-valued function $\lambda=\lambda(\chi,\tau)$ defined for $\chi>\chi_\mathrm{c}(\tau)$ with the following properties.  Firstly, when $\chi=\chi_\mathrm{c}(\tau)$, $-\chi/(2\tau)+\tau\lambda(\chi,\tau)/2$ is the unique real critical point of $z\mapsto\phase(z;\chi,\tau)$.  Secondly, for $\chi>\chi_\mathrm{c}(\tau)$, $\lambda$ is a smooth function of $(\chi,\tau)$ for which the quartic polynomial
\begin{equation}
R(z)^2 := z^4 + \tau\lambda z^3 + \left[\frac{3}{4}\tau^2\lambda^2-\frac{1}{2}\chi\lambda\right]z^2 +\frac{16\tau}{(\tau^2\lambda-\chi)^3}z + \frac{4}{(\tau^2\lambda-\chi)^2}
\label{eq:Intro-Rsquared}
\end{equation}
has a Schwarz-symmetric quartet of distinct complex roots $z=\alpha,\beta,\alpha^*,\beta^*$ labeled so that $\mathrm{Im}(\alpha)>0$, $\mathrm{Im}(\beta)>0$,  $\mathrm{Re}(\alpha)\le\mathrm{Re}(\beta)$ and if $\mathrm{Re}(\alpha)=\mathrm{Re}(\beta)$, then $\mathrm{Im}(\alpha)>\mathrm{Im}(\beta)$.  Thirdly, taking the branch cuts of $R(z)$ to be a Schwarz-symmetric pair of arcs, one of which joins $\alpha$ to $\beta$ in the upper half-plane, with the branch chosen so that $R(z)=z^2+O(z)$ as $z\to\infty$, the condition 
\begin{equation}
\int_{\alpha^*}^\alpha\frac{2\tau z +\chi-\tau^2\lambda}{z^2}R(z)\,\dd z = 2\ii
\label{eq:Int-int-hprime}
\end{equation}
holds (the integrand has zero residue at $z=0$ so the integral is independent of any Schwarz-symmetric path avoiding the origin).  Then, since the roots of $R(z)^2$ are distinct for $\chi>\chi_\mathrm{c}(\tau)$, $\alpha$ and $\beta$ are also smooth functions of $(\chi,\tau)$ on this region, and there exists a well-defined antiderivative $h(z)=h(z;\chi,\tau)$ of the integrand in \eqref{eq:Int-int-hprime} in the neighborhood of $z=\infty$ with the integration constant determined so that $h(z)=\phase(z;\chi,\tau)+O(z^{-1})$ as $z\to\infty$.  Although $z\mapsto h'(z)$ is meromorphic on $\mathbb{R}$ for $\chi>\chi_\mathrm{c}(\tau)$ with zero residue at the pole $z=0$, the continuation of $h(z)$ from a neighborhood of $z=\infty$ along $\mathbb{R}$ from $z>0$ and $z<0$ avoiding the pole results respectively in two different values $h_+(z)$ and $h_-(z)$ differing by a constant that we denote by $\Delta(\chi,\tau)$:
\begin{equation}
\Delta(\chi,\tau):=h_+(z;\chi,\tau)-h_-(z;\chi,\tau),\quad\chi>\chi_\mathrm{c}(\tau).
\label{eq:Int-Delta-def}
\end{equation}
This is a real-valued smooth function of $(\chi,\tau)$ on the indicated domain, and an explicit formula for it is given in \eqref{eq:DS-Delta-equation} below.  Its partial derivatives are explicitly given by (see \eqref{eq:DS-r3-r4} and \eqref{eq:DS-Ip}--\eqref{eq:DS-tau-derivs} below)
\begin{equation}
\frac{\partial\Delta}{\partial\chi}=-\left[\frac{1}{2\pi\ii}\int_{\beta}^{\beta^*}\frac{\dd z}{R(z)}\right]^{-1},\quad \frac{\partial\Delta}{\partial\tau}=\frac{1}{2}(\alpha+\beta+\alpha^*+\beta^*)\frac{\partial\Delta}{\partial\chi}.
\end{equation}
Assuming that $\mathrm{Re}(\alpha)<\mathrm{Re}(\beta)$, all four roots lie on a single circle with center $x\in\mathbb{R}$, and we may therefore define angles 
$\theta_\alpha:=\arg(\alpha(\chi,\tau)-x)$ and $\theta_\beta:=\arg(\beta(\chi,\tau)-x)$ with  $0<\theta_\beta<\theta_\alpha<\pi$.  From these, we define
\begin{equation}
m_1(\chi,\tau):=\frac{\sin(\theta_\alpha)\sin(\theta_\beta)}{\sin^2(\tfrac{1}{2}(\theta_\alpha+\theta_\beta))},
\label{eq:Intro-m1}
\end{equation}
and note that the definition extends by continuity to the case that $\mathrm{Re}(\alpha)=\mathrm{Re}(\beta)$ with the limiting value being given by $m_1=4\mathrm{Im}(\alpha)\mathrm{Im}(\beta)/(\mathrm{Im}(\alpha)+\mathrm{Im}(\beta))^2$.

With these ingredients, our main result describing $\Psi(X,T;\mathbf{G})$ for large $M$ is then the following.

\begin{theorem}[Asymptotic behavior of $\Psi(X,T;\mathbf{G}(a,b))$ for small $a$ / large $M$]
Assume that $a$ and $M>0$ are related by \eqref{M-def}.  Let $\compact$ be a compact subset of $\mathbb{R}^2$ with coordinates $(\chi,\tau)$ satisfying $\chi<\chi_\mathrm{c}(\tau)$.  Then in the limit $M\to+\infty$, 
%\begin{equation}
%M:=-\frac{1}{2}\ln\left(\frac{|a|}{\sqrt{|a|^2+|b|^2}}\right)>0.
%\end{equation}
$(\chi,\tau)\mapsto M\Psi(M^2\chi,M^3\tau;\mathbf{G}(a,b))$ is 
uniformly exponentially small on $\compact$. 
On the other hand, if $\compact$ is a compact subset of $(\chi,\tau)\in\mathbb{R}^2$ for which $\chi>\chi_\mathrm{c}(\tau)$, then in the same limit, 
\begin{equation}
M\Psi(M^2\chi,M^3\tau;\mathbf{G}(a,b))=\ee^{-\ii\arg(ab)}\breve{\Psi}(\chi,\tau;M) + O(M^{-1})
\label{eq:DS-locally-uniform}
\end{equation}
%holds uniformly for $(\chi,\tau)\in K$.  
holds uniformly for $(\chi,\tau)\in \compact$.
Here, $\breve{\Psi}(\chi,\tau;M)$ is given explicitly by \eqref{eq:DS-Psi-breve-define}--\eqref{eq:DS-q-ratio} below, 
with modulus satisfying 
\begin{multline}
|\breve{\Psi}(\chi,\tau;M)|^2 = (\mathrm{Im}(\alpha(\chi,\tau))+\mathrm{Im}(\beta(\chi,\tau)))^2\\
{}-4\mathrm{Im}(\alpha(\chi,\tau))\mathrm{Im}(\beta(\chi,\tau))\mathrm{sn}^2\left(\frac{\mathbb{K}(m_1(\chi,\tau))}{\pi}(M\Delta(\chi,\tau)+\pi);m_1(\chi,\tau)\right),
\label{eq:Intro-square-modulus}
\end{multline}
where the elliptic parameter $m_1(\chi,\tau)\in (0,1)$ 
%defined above
ranges from $m_1(\chi_\mathrm{c}(\tau),\tau)=1$ to $m_1(+\infty,\tau)=0$, 
$\mathbb{K}=\mathbb{K}(m)$ denotes the complete elliptic integral of the first kind, and $\alpha(\chi,\tau)$, $\beta(\chi,\tau)$ along with their complex conjugates constitute an exact solution of the elliptic quasilinear system \eqref{eq:DS-Whitham} of Whitham modulation equations in Riemann-invariant form  obtained by Forest and Lee \cite{ForestL86} for the rescaled focusing nonlinear Schr\"odinger equation in the form
\begin{equation}
\ii M^{-1}\frac{\partial q}{\partial \tau} +\frac{1}{2}M^{-2}\frac{\partial^2 q}{\partial\chi^2} +|q|^2q=0.
\label{eq:semiclassicalNLS}
\end{equation}
 Also, $\breve{\Psi}(\diamond,\tau;M)\in L^2((\chi_\mathrm{c}(\tau),+\infty))$ and 
\begin{equation}
\int_{\chi_\mathrm{c}(\tau)}^{+\infty}|\breve{\Psi}(\chi,\tau;M)|^2\,\dd\chi  = 8 + O(M^{-1}),\quad M\to+\infty.
\label{eq:Intro-approx-L2}
\end{equation}
\label{t:DS}
\end{theorem}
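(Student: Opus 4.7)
The proof will be a Deift--Zhou nonlinear steepest descent analysis of \rhref{rhp:near-field} after the rescaling $\Lambda = M^{-1}z$, which moves the jump contour to the circle $|z|=M$ and turns the oscillatory exponent into $M\phase(z;\chi,\tau)$. My first move is a $g$-function transformation $\mathbf{O}(z) \defeq \mathbf{P}(M^{-1}z)\ee^{\ii M(h(z)-\phase(z))\sigma_3}$, where $h$ is the antiderivative built in the introduction. The quartic structure \eqref{eq:Intro-Rsquared}, the moment condition \eqref{eq:Int-int-hprime}, and the normalization $h(z) = \phase(z) + O(z^{-1})$ at infinity are precisely the conditions needed to turn the transformed jump into three kinds of pieces: (i) constant unimodular jumps on a Schwarz-symmetric pair of arcs $\Gamma, \Gamma^*$ connecting $\alpha \leftrightarrow \beta$ and their conjugates; (ii) a constant jump $\ee^{-\ii M \Delta(\chi,\tau)\sigma_3}$ on the real-axis segment between the two arcs, reflecting the multivaluedness of $h$ produced by the Schwarz-symmetric branch cuts; and (iii) a jump on the old circle $|z|=M$ admitting a triangular factorization of $\mathbf{G}$ whose off-diagonal entries, after lens deformation, decay exponentially wherever $\Re(\ii h - \ii \phase)$ has the correct sign.

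A sign-chart analysis is then decisive. For $\chi > \chi_\mathrm{c}(\tau)$, I verify that $\Re(\ii h - \ii \phase)$ has the correct sign distribution off the branch cuts to permit lens opening; this is where the strict inequality $\Re(\ii \phase(\critpt;\chi,\tau)) < -1$ is used. For $\chi < \chi_\mathrm{c}(\tau)$ no branch cut is required: one can choose $h$ with the algebraic form of \eqref{eq:Intro-Rsquared} degenerated so that the entire contour contributes an exponentially-decaying jump, whence $\mathbf{O}(z) = \mathbb{I} + O(\ee^{-cM})$ uniformly on $\compact$ and $M\Psi(M^2\chi, M^3\tau; \mathbf{G}(a,b)) = O(\ee^{-cM})$, proving the first assertion.

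In the oscillatory region, the leading-order model RHP after lens deformation has piecewise-constant jumps on $\Gamma \cup \Gamma^* \cup (\text{real segment})$, with parameters $M\Delta$ and the $B$-period integral $\int_\beta^{\beta^*} R^{-1} \dd z$. This is a classical genus-$1$ outer parametrix, solvable explicitly via Jacobi theta functions on the elliptic Riemann surface $y^2 = R(z)^2$; the reconstruction formula then yields the theta-ratio expression \eqref{eq:DS-q-ratio}, whose squared modulus simplifies to the $\mathrm{sn}^2$ form of \eqref{eq:Intro-square-modulus} via standard Its-type trace identities together with the geometric observation that $\alpha, \beta, \alpha^*, \beta^*$ lie on a common circle (which identifies the elliptic parameter $m_1$). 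Local Airy parametrices at the four branch points, followed by a small-norm argument for the residual error RHP, upgrade this to the $O(M^{-1})$ correction in \eqref{eq:DS-locally-uniform}. That $(\alpha, \beta, \alpha^*, \beta^*)$ solves the Forest--Lee Whitham system is obtained by differentiating \eqref{eq:Intro-Rsquared}--\eqref{eq:Int-int-hprime} in $(\chi,\tau)$ and recognizing the resulting hyperbolic Riemann-invariant form, a standard consequence of the $g$-function construction.

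For the $L^2$-norm identity \eqref{eq:Intro-approx-L2}, the rapid $\chi$-oscillation of $M\Delta$ allows me to replace $\mathrm{sn}^2$ by its period-average $1 - \mathbb{E}(m_1)/\mathbb{K}(m_1)$ up to $O(M^{-1})$, reducing the integral to an explicit one-dimensional integral in the modulation variables; alternatively, combining Theorem~\ref{t:L2-norm} with exponential smallness for $\chi < \chi_\mathrm{c}(\tau)$ and the uniformity of \eqref{eq:DS-locally-uniform} on compacts pins the value to $8 + O(M^{-1})$, and matching the two arguments provides a built-in consistency check. The main obstacle I anticipate is the $g$-function construction itself: because $\phase$ carries a simple pole at $z=0$, the antiderivative $h$ is not analytic across the real axis but jumps by $\Delta$, so the standard genus-$1$ steepest-descent templates must be adapted to track branches of $R$ and signs of $\Re(\ii h - \ii \phase)$ consistently on every component of $\mathbb{C} \setminus (\Gamma \cup \Gamma^* \cup \mathbb{R})$, and in parallel one must prove unique solvability of the implicit system defining $\lambda(\chi,\tau), \alpha(\chi,\tau), \beta(\chi,\tau)$ throughout the indicated domain as well as the limiting behavior $m_1 \to 1$ near $\chi = \chi_\mathrm{c}(\tau)$ and $m_1 \to 0$ as $\chi \to +\infty$.
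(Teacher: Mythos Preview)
Your overall architecture matches the paper's: rescale, introduce $h=\phase+g$, build a genus-one outer parametrix in theta functions, install Airy parametrices at the four branch points, and close with a small-norm argument for the $O(M^{-1})$ error; the Whitham system then follows by differentiating the moment conditions. One step you do not mention is the paper's preliminary move of right-multiplying by the explicit solution $\mathbf{P}(\Lambda;X,T,\mathbf{G}(0,1))^{-1}$ before anything else. This recasts the jump matrix so that its diagonal is $\sqrt{1-\ee^{-4M}}$ and its off-diagonal entries carry $\ee^{\pm 2\ii M(\phase(z)\mp\ii)}$, which makes the role of the inequalities $-1<\Re(\ii\phase)<1$ transparent and gives the exponential smallness for $\chi<\chi_\mathrm{c}(\tau)$ immediately, with no $g$-function whatsoever. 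Your description is compatible with this but leaves the mechanism implicit.

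The genuine gap is in your treatment of \eqref{eq:Intro-approx-L2}. Neither of your two routes works as stated. The averaging route (replacing $\mathrm{sn}^2$ by its period mean) reduces the claim to an explicit integral identity in the modulation variables $\alpha,\beta,m_1$ over $(\chi_\mathrm{c}(\tau),+\infty)$ that is far from obvious; the paper remarks that its own argument constitutes an \emph{indirect} proof of precisely this identity. Your alternative, invoking Theorem~\ref{t:L2-norm} together with the locally uniform estimate \eqref{eq:DS-locally-uniform}, is circular: uniform convergence on compacts gives no control over the tail $\chi\to+\infty$, so one cannot pass to the $L^2$ norm this way (indeed the paper deduces the $L^2$-convergence Corollary~\ref{cor:L2-convergence} \emph{from} \eqref{eq:Intro-approx-L2}, not the reverse). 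The paper's actual method is different and more delicate. It derives a Lax pair for $\mathbf{L}(z):=\breve{\mathbf{O}}^{\mathrm{out}}(z)\ee^{-\ii M(h(z)+2z^{-1})\sigma_3}$, whose coefficient matrices acquire simple poles at $\alpha,\beta,\alpha^*,\beta^*$ because these branch points move with $(\chi,\tau)$. The resulting trace identity expresses $|\breve{\Psi}|^2$ as a total $\chi$-derivative plus correction terms proportional to $\alpha_\chi$ and $\beta_\chi$. Integrating over $(\chi_\mathrm{c}(\tau),+\infty)$ yields $8$ from explicit boundary evaluations of a coefficient of $g$, while the correction terms are shown to be $O(M^{-1})$ via two ingredients: uniform bounds on the pole residues (theta-function estimates in both degenerations $H\uparrow 0$ and $H\downarrow -\infty$) and a separate lemma establishing $\alpha_\chi,\beta_\chi\in L^1((\chi_\mathrm{c}(\tau),+\infty))$. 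This integrability lemma requires careful asymptotics of $\lambda(\chi,\tau)$ at both endpoints and is the real substance behind \eqref{eq:Intro-approx-L2}.
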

Note that according to \eqref{eq:Intro-square-modulus}, the amplitude $|\breve{\Psi}(\chi,\tau;M)|^2$ of the approximation valid for $\chi>\chi_\mathrm{c}(\tau)$ is oscillatory (wavelength proportional to $M^{-1}$ as a function of $\chi$, or to $M$ as a function of $X$) and varies between the lower bound of $|\mathrm{Im}(\alpha(\chi,\tau)-\beta(\chi,\tau))|$ and the upper bound of $\mathrm{Im}(\alpha(\chi,\tau)+\beta(\chi,\tau))$, both of which are $M$-independent 
functions of $(\chi,\tau)$
and hence have characteristic length scales of $M^2$ as functions of $X$.  It turns out that when $\tau=0$, one has $\beta(\chi,\tau)=-\alpha(\chi,\tau)^*$, so the lower bound vanishes.  The upper bound expressed in terms of $X=M^2\chi$  for $\chi>\chi_\mathrm{c}(0)=\frac{1}{8}$ is compared with plots of $\Psi(X,0;\mathbf{G})$ for two different small values of $a$ in Figure~\ref{f:Psi-a-small}.
\begin{figure}[h]
\includegraphics[width=\linewidth]{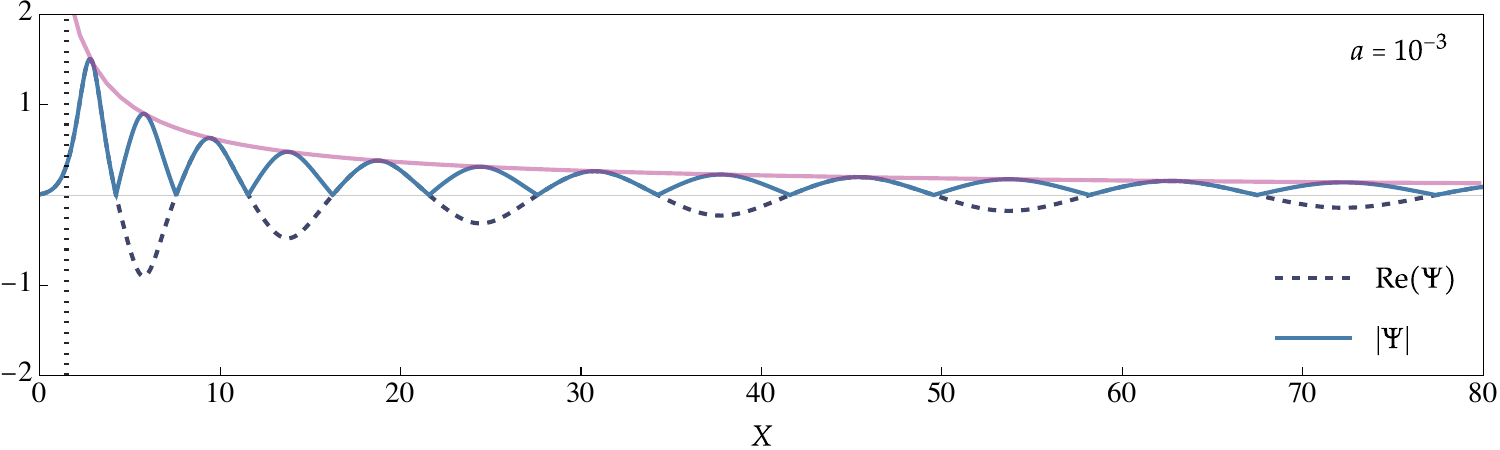}\\
\includegraphics[width=\linewidth]{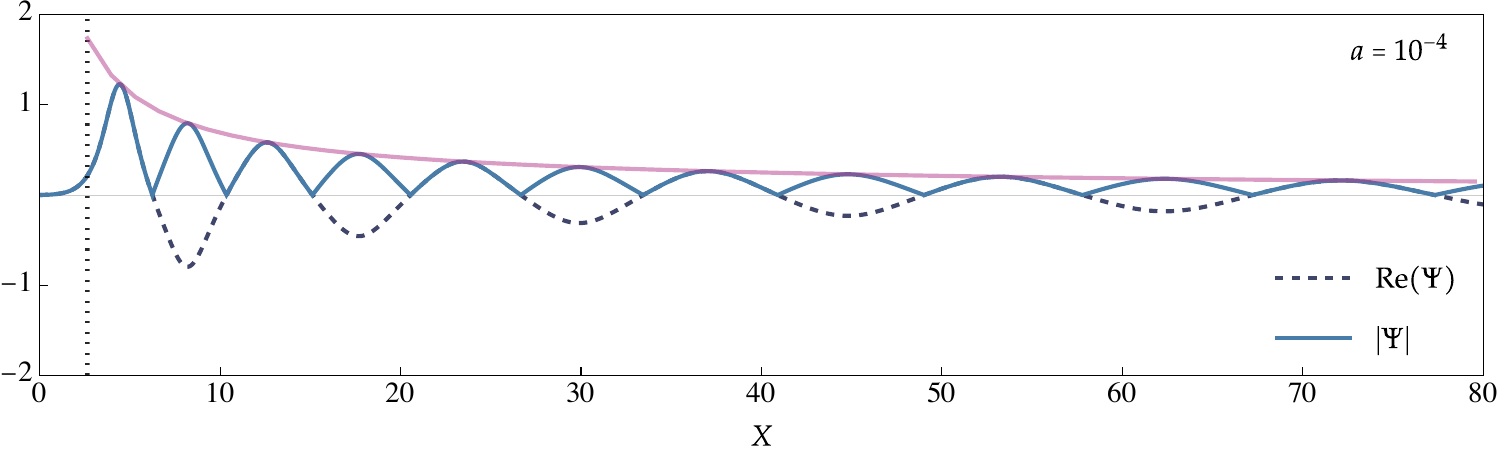}
\caption{Plots of $|\Psi(X,0;\mathbf{G})|$ (solid) and $\Re(\Psi(X,0;\mathbf{G}))$ (dashed) computed with \texttt{RogueWaveInfinite.jl} with $\mathbf{G}=\mathbf{G}(a,b)$, $b=\sqrt{1-a^2}$. Top: $a=10^{-3}$, bottom $a=10^{-4}$. Because $ab>0$, $\Psi(X,0;\mathbf{G})$ is real valued.  In both plots, the amplitude upper bound of $\mathrm{Im}(\alpha(M^{-2}X,0)+\beta(M^{-2}X,0))$ is plotted for $X>\frac{1}{8}M^2$ in pink.}
\label{f:Psi-a-small}
\end{figure}

One implication of this theorem is that, since each compact subset $\compact'$ of the $(X,T)$-plane is eventually contained as $M\to+\infty$ within a fixed closed disk $\compact$ centered at $(\chi,\tau)=(0,0)$ of sufficiently small radius in the $(\chi,\tau)$-plane that $\chi<\chi_\mathrm{c}(\tau)$ holds on $\compact$, we see that $(X,T)\mapsto\Psi(X,T;\mathbf{G}(a,b))$ tends uniformly to zero on compact sets $\compact'$ as $M\to+\infty$.  In this sense, general rogue waves of infinite order are small locally uniformly when $ab$ is small.  However, 
in light of Theorem~\ref{t:L2-norm} they cannot be small in $L^2(\mathbb{R})$, and \eqref{eq:Intro-approx-L2} shows that the leading approximation of $\Psi(X,T;\mathbf{G}(a,b))$ asserted in Theorem~\ref{t:DS} correctly captures the $L^2$-norm as the solution spreads and decays in the limit $a\to 0$.  Indeed, combining
$|\Psi(M^2\chi,M^3\tau;\mathbf{G}(a,b))|^2 = M^{-2}|\breve{\Psi}(\chi,\tau;M)|^2 + O(M^{-3})$  with $X=M^2\chi$ and hence $\dd X=M^2\dd\chi$, and neglecting contributions for $\chi<\chi_\mathrm{c}(\tau)$, we can approximate
\begin{equation}
\int_\mathbb{R}|\Psi(X,M^3\tau;\mathbf{G}(a,b))|^2\,\dd X \approx \int_{\chi_\mathrm{c}(\tau)}^{+\infty} |\breve{\Psi}(\chi,\tau;M)|^2\,\dd\chi=8,
\end{equation}
so the $L^2$-norm is not actually lost in the limit that $a\to 0$.  A more precise statement is:
\begin{corollary}[Convergence in $L^2(\mathbb{R})$]
We have
\begin{equation}
\lim_{M\to+\infty}\|M\Psi(M^2\diamond,M^3\tau;\mathbf{G}(a,b))-\ee^{-\ii\arg(ab)}\breve{\Psi}(\diamond,\tau;M)\|_{L^2(\mathbb{R})}=0,
\label{eq:L2-convergence}
\end{equation}
where $\breve{\Psi}(\chi,\tau;M)$ is extended by zero to $\chi<\chi_\mathrm{c}(\tau)$.
\label{cor:L2-convergence}
\end{corollary}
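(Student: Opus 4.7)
The plan is to leverage the two $L^2$-norm identities already in hand. Set $f_M(\chi)\defeq M\Psi(M^2\chi,M^3\tau;\mathbf{G}(a,b))$ and $g_M(\chi)\defeq\ee^{-\ii\arg(ab)}\breve{\Psi}(\chi,\tau;M)$, with $g_M$ extended by zero to $\chi<\chi_\mathrm{c}(\tau)$. The change of variables $X=M^2\chi$ in Theorem~\ref{t:L2-norm} yields $\|f_M\|_{L^2(\mathbb{R})}^2=8$ for every $M$, while \eqref{eq:Intro-approx-L2} gives $\|g_M\|_{L^2(\mathbb{R})}^2=8+O(M^{-1})$. From the polarization identity
\begin{equation*}
\|f_M-g_M\|_{L^2(\mathbb{R})}^2=\|f_M\|_{L^2}^2+\|g_M\|_{L^2}^2-2\Re\langle f_M,g_M\rangle_{L^2(\mathbb{R})},
\end{equation*}
the corollary reduces to proving $\Re\langle f_M,g_M\rangle_{L^2(\mathbb{R})}\to 8$ as $M\to+\infty$.

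Since $g_M$ vanishes on $\{\chi<\chi_\mathrm{c}(\tau)\}$, the inner product equals $\int_{\chi_\mathrm{c}(\tau)}^{+\infty}f_M\overline{g_M}\,\dd\chi$. The key step is to produce an $M$-independent $L^1$-majorant for $|g_M|^2$ at infinity. From \eqref{eq:Intro-square-modulus} together with $\mathrm{sn}^2\ge 0$ one obtains the pointwise bound $|g_M(\chi)|^2\le(\Im\alpha(\chi,\tau)+\Im\beta(\chi,\tau))^2$, which is independent of $M$. I would analyze the quartic \eqref{eq:Intro-Rsquared} as $\chi\to+\infty$ (the regime in which $m_1\to 0$) to verify that $\Im\alpha$ and $\Im\beta$ decay fast enough for this majorant to be integrable at $+\infty$, yielding uniform tail control: for every $\epsilon>0$ there exists $R$ with $\int_R^{+\infty}|g_M|^2\,\dd\chi<\epsilon$ for all sufficiently large $M$. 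Near the inner boundary, I would use the modulated-soliton description indicated by Theorem~\ref{t:edge} to choose $\delta>0$ with $\int_{\chi_\mathrm{c}(\tau)}^{\chi_\mathrm{c}(\tau)+\delta}|g_M|^2\,\dd\chi<\epsilon$ uniformly in $M$.

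With $R$ and $\delta$ fixed, the uniform approximation \eqref{eq:DS-locally-uniform} on the compact interval $K\defeq[\chi_\mathrm{c}(\tau)+\delta,R]$ gives $|f_M-g_M|=O(M^{-1})$ and $|g_M|=O(1)$ on $K$, hence $\int_K f_M\overline{g_M}\,\dd\chi=\int_K|g_M|^2\,\dd\chi+O(M^{-1})$. On the complementary set $(\chi_\mathrm{c}(\tau),\chi_\mathrm{c}(\tau)+\delta)\cup(R,+\infty)$, Cauchy-Schwarz combined with $\|f_M\|_{L^2(\mathbb{R})}\le\sqrt{8}$ and the boundary-layer/tail bounds delivers a contribution of size $O(\sqrt{\epsilon})$. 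Together with $\int_K|g_M|^2\,\dd\chi\ge\|g_M\|_{L^2}^2-2\epsilon=8-2\epsilon+O(M^{-1})$, these estimates yield $\Re\langle f_M,g_M\rangle\ge 8-O(\sqrt{\epsilon})+o(1)$; the matching upper bound $\Re\langle f_M,g_M\rangle\le\|f_M\|\|g_M\|=8+O(M^{-1})$ is free from Cauchy-Schwarz. Since $\epsilon>0$ is arbitrary, $\Re\langle f_M,g_M\rangle\to 8$, completing the argument.

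The main obstacle is the tightness step: showing that $(\Im\alpha+\Im\beta)^2$ is integrable at $+\infty$. This demands quantitative asymptotics of the modulation variables as $\chi\to+\infty$, extracted by perturbing the defining relations \eqref{eq:Intro-Rsquared}--\eqref{eq:Int-int-hprime} around the degenerate regime $m_1=0$. A secondary technical point, the uniform control of the solitonic boundary layer at $\chi=\chi_\mathrm{c}(\tau)^+$, should be routine given Theorem~\ref{t:edge}. Once these two ingredients are in place, the rest of the proof is bookkeeping built on the norm identity.
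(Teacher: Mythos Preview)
Your proposal is essentially the paper's own argument: polarization identity, the two norm identities $\|f_M\|^2=8$ and $\|g_M\|^2=8+O(M^{-1})$, the $M$-independent pointwise majorant $|g_M|\le\Im\alpha+\Im\beta$, and locally uniform convergence on compact subintervals of $(\chi_\mathrm{c}(\tau),+\infty)$ from Theorem~\ref{t:DS}. The paper packages the tail/boundary control via density of $C_0^\infty$ test functions rather than your explicit $\delta,R$ truncation, but this is cosmetic.

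Two remarks. First, your appeal to Theorem~\ref{t:edge} for the inner boundary layer is unnecessary: the majorant $(\Im\alpha+\Im\beta)^2$ is $M$-independent and continuous down to $\chi=\chi_\mathrm{c}(\tau)$ (where $\alpha=\beta=\critpt$), so $\int_{\chi_\mathrm{c}(\tau)}^{\chi_\mathrm{c}(\tau)+\delta}|g_M|^2\,\dd\chi\le\int_{\chi_\mathrm{c}(\tau)}^{\chi_\mathrm{c}(\tau)+\delta}(\Im\alpha+\Im\beta)^2\,\dd\chi<\epsilon$ for small $\delta$ without any edge analysis. Second, the tail integrability you flag as the main obstacle is handled in the paper by the asymptotics $\Im\alpha,\Im\beta=O(\chi^{-3/4})$ as $\chi\to+\infty$, extracted in the proof of Lemma~\ref{lem:absolute-integrability} from the large-$\chi$ analysis of $R(z)^2$ via the scaling $z=\chi^{-1/2}w$; this gives $(\Im\alpha+\Im\beta)^2=O(\chi^{-3/2})\in L^1$, exactly as you anticipated.
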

The proof will given in Section~\ref{sec:L2-convergence} below.
The bulk of Section~\ref{s:escape} is occupied with the proof of Theorem~\ref{t:DS}.

The small-$a$ asymptotic behavior described in Theorem~\ref{t:DS} is not uniform for $(\chi,\tau)$ on any neighborhood of a point $(\chi_\mathrm{c}(\tau),\tau)$ on the critical curve.
Our next result concerns the transitional regime where $(\chi,\tau)$ is allowed to penetrate into the region $\chi>\chi_{\mathrm{c}}(\tau)$ from the region $\chi<\chi_{\mathrm{c}}(\tau)$. To quantify the extent of this transition region in the $(\chi,\tau)$-plane, we recall the complex critical point $z=\critpt(\chi,\tau)$ of the phase function $z\mapsto \phase(z;\chi,\tau)=\chi z+\tau z^2-2 z^{-1}$ for $\chi>-\left(54 \tau^2\right)^{\frac{1}{3}}$ with $\Im(\critpt)>0$ and introduce the quantity
\begin{equation}
d(\chi,\tau):=-\ii(\phase(\critpt(\chi,\tau) ; \chi, \tau)-\ii).
\label{d-def-intro}
\end{equation}
The curve $(\chi,\tau)=(\chi_{\mathrm{c}}(\tau),\tau)$ coincides with the zero level $\Re(d(\chi,\tau))=0$ and the domain $\chi>\chi_{\mathrm{c}}(\tau)$ corresponds to the region $\Re(d(\chi,\tau))>0$.  Because $\critpt(\chi,\tau)$ is a simple critical point of $z\mapsto\phase(z;\chi,\tau)$, $\phase''(\critpt(\chi,\tau);\chi,\tau)\neq 0$, and in Section~\ref{s:local-param} below a certain complex-valued continuous function $\conformalprime(\chi,\tau)$ is defined with the property that $\conformalprime(\chi,\tau)^2=\ii\phase''(\critpt(\chi,\tau);\chi,\tau)$.  
Our theorem is then the following.

\begin{theorem}[Behavior of $\Psi(X,T;\mathbf{G}(a,b))$ at the edge for small $a$]
Assume that $a$ and $M>0$ are related by \eqref{M-def}.
Fix constants $K_->0$ sufficiently small and  $K_+>0$ arbitrary, and consider the region $\mathcal{S}$
of the $(\chi,\tau)$-plane defined by the inequalities
\begin{equation}
\mathcal{S}:=\left\{(\chi,\tau)\in\mathbb{R}^2: -K_-\le \Re(2 d (\chi,\tau))\le K_+\frac{\ln(M)}{M}\right\}.
\label{d-size}
\end{equation}
Also fix an arbitrary compact subset $\compact$ of the domain $\chi<\chi_\mathrm{c}(\tau)$.  For $n\in\mathbb{Z}_{\ge 0}$, set
\begin{equation}
D_n(\chi,\tau;M):=(-1)^n\frac{\ee^{2Md(\chi,\tau)}M^{-(n+\frac{1}{2})}}{2\pi\gamma_n^2(2\Im(\critpt(\chi,\tau))\conformalprime(\chi,\tau))^{2n+1}},
\label{eq:Dn-def}
\end{equation}
where
\begin{equation}
\gamma_n=\sqrt{\frac{2^n}{\sqrt{\pi}n!}},
%\change{\gamma_n:=\frac{2^{\frac{n}{2}}}{\pi^{\frac{1}{4}} \sqrt{n !}}},
%c_n(\chi,\tau):=  \ln (2 \pi)+2 \ln \left(\gamma_{n-1}\right) + \ln(| y_0^{-}(\chi, \tau, n)|) +\ln(| p_{-1}(\chi, \tau)|) - \ln(2\Im(\sigma(\chi,\tau))).
\label{gamma-n-intro}
\end{equation}
and set 
\begin{equation}
\Phi_n(\chi,\tau;M):=\ln(|D_n(\chi,\tau;M)|),\quad \solitonphase_n(\chi,\tau;M):=\arg(D_n(\chi,\tau;M)).
\label{eq:intro-Phi-n-theta-n}
\end{equation}
Then defining
\begin{equation}
\psi_n(\chi,\tau;M):=2\mathrm{Im}(\critpt(\chi,\tau))\mathrm{sech}(\Phi_n(\chi,\tau;M))\ee^{\ii\solitonphase_n(\chi,\tau;M)},
\label{eq:modulated-soliton}
\end{equation}
the asymptotic formula
\begin{equation}
M\Psi(M^2\chi,M^3\tau;\mathbf{G}(a,b))=\ee^{-\ii\arg(ab)}\sum_{n=0}^{\lfloor K_+\rceil}\psi_n(\chi,\tau;M) + O(M^{-\frac{1}{2}}),\quad M\to+\infty
\end{equation}
holds uniformly for $(\chi,\tau)\in \mathcal{S}\cup\compact$.
Here, $\lfloor\diamond\rceil$ denotes the nearest integer function that rounds up at the half-integers.
\label{t:edge}
\end{theorem}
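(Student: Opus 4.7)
The approach is to extend the Deift--Zhou steepest-descent analysis of \rhref{rhp:near-field} that yields Theorem~\ref{t:DS} into the transition region $\mathcal{S}$, where subdominant exponential contributions near the complex critical point $z=\critpt(\chi,\tau)$ of $\phase$ must be resolved to $\lfloor K_+\rceil+1$ successive orders in order to produce the soliton chain. As a preliminary reduction, conjugating \rhref{rhp:near-field} by $\ee^{\ii\arg(ab)\sigma_3/2}$ absorbs the prefactor $\ee^{-\ii\arg(ab)}$ in the statement, reducing to the case $ab>0$ where $\mathbf{G}$ has diagonal entries $\pm\ee^{-2M}$ and real off-diagonals $\pm\sqrt{1-\ee^{-4M}}$. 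Passing to the rescaled variable $z=M\Lambda$ places the jump on $|z|=M$ with exponent $M\phase(z;\chi,\tau)$.

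The first main step is to apply a $g$-function/lens-opening construction analogous to that used for Theorem~\ref{t:DS}, deforming the jump off $|z|=M$ onto Schwarz-symmetric steepest-descent contours through $\critpt$ and $\critpt^*$. For $(\chi,\tau)\in\compact$ the resulting off-diagonal weights decay at rate $\ee^{-cM}$ with $c>0$ uniform on $\compact$, yielding the exponential smallness of $\Psi$ on $\compact$ through standard small-norm RHP theory and simultaneously accounting for the summed contribution of all the $\psi_n$ (each of which is exponentially small there because $\Phi_n\to-\infty$). The essential new feature concerns the edge region: on the deformed contour the off-diagonal weight near $\critpt$ carries a factor $|a|\ee^{2M\Im(\phase(\critpt))}=\ee^{2M\Re(d(\chi,\tau))}$, which can be as large as $M^{K_+}$ on $\mathcal{S}$ and therefore precludes a direct small-norm bound.

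To accommodate this growth I would build a local parametrix on a shrinking disk about $\critpt$ using the conformal coordinate $\zeta=\conformalprime(\chi,\tau)(z-\critpt)$ chosen so that $\ii M(\phase(z)-\phase(\critpt))=\zeta^2+O(M^{-1/2})$ after rescaling by $\sqrt{M}$. The resulting local model RHP is of parabolic-cylinder (Weber) type with effective parameter $\ee^{2Md}M^{-1/2}/(2\pi\gamma_0^2(2\Im(\critpt)\conformalprime))=D_0(\chi,\tau;M)$, and its solution admits a hierarchical expansion whose $n$-th term arises from an iterated Schlesinger transformation at spectral point $z=\critpt$ with norming constant $D_n$; the factors $\gamma_n^2=2^n/(\sqrt{\pi}n!)$ and signs $(-1)^n$ emerge naturally from the Hermite-basis expansion of the Weber solution. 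Truncating the expansion at $n=\lfloor K_+\rceil$ captures every term whose contribution fails to be uniformly $O(M^{-1/2})$ on $\mathcal{S}$. Combining this local parametrix with its Schwarz-conjugate partner at $\critpt^*$ and extracting $\Psi=2\ii\lim_{\Lambda\to\infty}\Lambda P_{12}$ via the standard soliton-dressing identity produces the sum $\sum_{n=0}^{\lfloor K_+\rceil}\psi_n$, while the residual RHP (identity jumps away from shrinking disks about $\critpt,\critpt^*$; jumps of size $O(M^{-1/2})$ on the disk boundaries) yields the claimed uniform error via small-norm theory.

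The main obstacle is the identification underlying this local expansion: establishing that the $n$-th Schlesinger iterate at $z=\critpt$ produces exactly the modulated sech-soliton \eqref{eq:modulated-soliton}, with amplitude $2\Im(\critpt)\mathrm{sech}(\Phi_n)$, phase $\solitonphase_n$, and norming constant matching \eqref{eq:Dn-def} down to the factors $(-1)^n$ and $\gamma_n^2$. This requires a precise dictionary between iterated Schlesinger transformations at a single complex spectral point and the Hermite-basis expansion of the Weber parametrix, together with a cumulative error argument showing that the $\lfloor K_+\rceil+1$ successive transformations remain uniformly $O(M^{-1/2})$ throughout $\mathcal{S}$ and across the critical curve $\Re(d)=0$, where the individual summands are no longer separately small and must be collectively controlled to match the exponentially-decaying asymptotics inherited from $\compact$.
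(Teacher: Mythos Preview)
Your outline has the right instinct (local analysis near $\critpt,\critpt^*$ where the subdominant exponential $\ee^{2M\Re(d)}$ ceases to be negligible), but the architecture differs from the paper's in ways that matter, and the step you flag as the ``main obstacle'' is precisely the step the paper's construction avoids.

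First, the paper uses \emph{no $g$-function at all} in the edge regime. Starting from the reduced problem \rhref{rhp:S}, one simply takes $\Gamma$ through $\critpt,\critpt^*$; outside small disks $D_\critpt,D_\critpt^*$ the jump is already $\mathbb{I}+O(\ee^{-cM})$ uniformly on $\mathcal{S}$, so the whole lens-opening apparatus from the $\chi>\chi_{\mathrm{c}}(\tau)$ analysis is unnecessary here.

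Second, the local model inside $D_\critpt$ is not a Weber/parabolic-cylinder problem with a hierarchical Schlesinger expansion, but the \emph{Hermite-polynomial} Riemann--Hilbert problem of Fokas--Its--Kitaev at a \emph{single} degree $n$. The degree is chosen adaptively as $n=N(\chi,\tau;M):=\lfloor \Re(2d)/(M^{-1}\ln M)\rceil\in\{0,1,\dots,\lfloor K_+\rceil\}$, and the outer parametrix is the rational matrix $((z-\critpt)/(z-\critpt^*))^{n\sigma_3}$. With this choice the mismatch on $\partial D_\critpt$ is $O(1)$ rather than $O(M^{-1/2})$, so one more correction is needed: an explicit \emph{soliton parametrix} (a rational $2\times2$ matrix with simple poles at $\critpt,\critpt^*$) is laid over the Hermite/outer construction, and only after this second layer does the residual problem become genuinely small-norm $O(M^{-1/2})$. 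Extracting the $12$-entry of this explicit rational parametrix yields a single modulated soliton $\psi_{n}$ (or $\psi_{n-1}$, depending on which side of the half-integer one is on; the identity $C_{n-1}^+C_n^-=-4\Im(\critpt)^2$ reconciles the two cases).

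Third, the sum $\sum_{m=0}^{\lfloor K_+\rceil}\psi_m$ does \emph{not} arise from assembling $\lfloor K_+\rceil+1$ contributions simultaneously. Rather, for each $(\chi,\tau)\in\mathcal{S}$ the analysis produces exactly one term $\psi_{N(\chi,\tau;M)}$ (or its neighbor), and then a separate, elementary estimate shows that every other $\psi_m$ with $m\neq N$ satisfies $|\Phi_m|\gtrsim\tfrac12\ln M$ and hence $|\psi_m|=O(M^{-1/2})$. So the single term equals the full sum modulo $O(M^{-1/2})$, and the sum formulation is merely a device to give one expression valid across all sub-strips of $\mathcal{S}$.

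The practical consequence is that the obstacle you identify---controlling $\lfloor K_+\rceil+1$ iterated transformations uniformly and matching each to a sech profile---never arises: there is one Hermite degree, one explicit rank-one soliton dressing, and one small-norm estimate. Your route may be workable in principle, but as written it leaves exactly the hard cumulative-error step unresolved, whereas the paper's adaptive-degree-plus-soliton-correction scheme sidesteps it entirely.
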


The proof is given in Section~\ref{s:edge}. We also have the following corollary to Theorem~\ref{t:edge} providing more information about the function $\psi_n(\chi,\tau;M)$ given by \eqref{eq:modulated-soliton}.
\begin{corollary}[Soliton approximation]
Let $(\chi_0,\tau_0)$ be a point maximizing $|\psi_n(\chi,\tau;M)|$, i.e., satisfying $\Phi_n(\chi_0,\tau_0;M)=0$.  Set $\critpt_0:=\critpt(\chi_0,\tau_0)$ and $\solitonphase_n^0:=\solitonphase_n(\chi_0,\tau_0;M)$.  Then
\begin{equation}
\psi_n(\chi_0+\chi,\tau_0+\tau;M)=q_n(\chi,\tau;M) + O(M^{-1}),\quad M\to+\infty
\end{equation}
holds uniformly for $\chi=O(M^{-1})$ and $\tau=O(M^{-1})$, where
\begin{equation}
q_n(\chi,\tau;M):=2\Im(\critpt_0)\mathrm{sech}(2M\Im(\critpt_0)[\chi +2\Re(\critpt_0)\tau])\ee^{\ii\solitonphase^0_n}\ee^{-2\ii M(\Re(\critpt_0)\chi+[\Re(\critpt_0)^2-\Im(\critpt_0)^2]\tau)}
\label{eq:q-define}
\end{equation}
is an exact soliton solution of the focusing nonlinear Schr\"odinger equation in the form \eqref{eq:semiclassicalNLS} corresponding to a simple eigenvalue $\critpt_0\in\mathbb{C}_+$ of the Zakharov-Shabat eigenvalue problem.
\label{cor:soliton}
\end{corollary}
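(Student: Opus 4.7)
The proof is essentially a first-order Taylor expansion in $(\chi,\tau)=O(M^{-1})$ of each factor in $\psi_n(\chi_0+\chi,\tau_0+\tau;M)$ about the base point $(\chi_0,\tau_0)$, followed by a direct substitution check that the resulting closed-form expression is an exact soliton of \eqref{eq:semiclassicalNLS}.  Since $\critpt(\chi,\tau)$ is smooth near $(\chi_0,\tau_0)$, the amplitude prefactor expands as $\Im(\critpt(\chi_0+\chi,\tau_0+\tau))=\Im(\critpt_0)+O(M^{-1})$.  The key observation is that $\critpt(\chi,\tau)$ is by construction a critical point of $z\mapsto\phase(z;\chi,\tau)=\chi z+\tau z^2-2z^{-1}$, so applying the chain rule to $d(\chi,\tau)=-\ii(\phase(\critpt(\chi,\tau);\chi,\tau)-\ii)$ annihilates all derivatives of $\critpt$ and produces the clean identities $\partial_\chi d=-\ii\critpt$ and $\partial_\tau d=-\ii\critpt^2$.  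Evaluating at $(\chi_0,\tau_0)$ and separating real and imaginary parts gives
\begin{equation*}
\Re(\partial_\chi d|_0)=\Im(\critpt_0),\quad \Im(\partial_\chi d|_0)=-\Re(\critpt_0),\quad \Re(\partial_\tau d|_0)=2\Re(\critpt_0)\Im(\critpt_0),\quad \Im(\partial_\tau d|_0)=\Im(\critpt_0)^2-\Re(\critpt_0)^2.
\end{equation*}

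Next, decompose $\ln D_n(\chi,\tau;M)=2Md(\chi,\tau)+G_n(\chi,\tau)+c_n(M)$, where $G_n(\chi,\tau)=-(2n+1)\ln(2\Im(\critpt)\conformalprime)$ is smooth and $M$-independent, and $c_n(M)$ collects $-(n+\tfrac{1}{2})\ln M$ together with the remaining $(\chi,\tau)$-independent constants.  Taking real and imaginary parts recovers $\Phi_n$ and (a branch of) $\solitonphase_n$.  At $(\chi_0,\tau_0)$ the $2Md$ piece has $O(M)$ partial derivatives while those of $G_n$ are $O(1)$, and its quadratic Taylor remainder contributes only $M\cdot O(\chi^2+\tau^2)=O(M^{-1})$ in our range.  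Combining these observations with the hypothesis $\Phi_n(\chi_0,\tau_0;M)=0$ produces, uniformly for $(\chi,\tau)=(O(M^{-1}),O(M^{-1}))$,
\begin{equation*}
\Phi_n(\chi_0+\chi,\tau_0+\tau;M) = 2M\Im(\critpt_0)\bigl(\chi+2\Re(\critpt_0)\tau\bigr)+O(M^{-1}),
\end{equation*}
\begin{equation*}
\solitonphase_n(\chi_0+\chi,\tau_0+\tau;M) = \solitonphase_n^0 - 2M\bigl(\Re(\critpt_0)\chi+(\Re(\critpt_0)^2-\Im(\critpt_0)^2)\tau\bigr)+O(M^{-1}),
\end{equation*}
the branch ambiguity in $\solitonphase_n$ having been absorbed into $\solitonphase_n^0$.

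Substituting these expansions into \eqref{eq:modulated-soliton} and using the boundedness and Lipschitz continuity of $\sech$ on $\mathbb{R}$ and of $w\mapsto\ee^{\ii w}$ on bounded sets in $\mathbb{C}$ (noting that the leading part of $\Phi_n$ is $O(1)$, not large, in our range), the $O(M^{-1})$ corrections in the arguments propagate to an $O(M^{-1})$ correction for $\psi_n$ itself, producing the formula \eqref{eq:q-define} for $q_n$.  That $q_n$ is an exact solution of \eqref{eq:semiclassicalNLS} follows by direct substitution: the $\sech$-envelope moves with group velocity $-2\Re(\critpt_0)$ in $\chi$ versus $\tau$, the carrier has wavenumber $-2M\Re(\critpt_0)$ and frequency $-2M(\Re(\critpt_0)^2-\Im(\critpt_0)^2)$, and the nonlinear term is balanced against the dispersion via $\sech^2+\tanh^2=1$, identifying $q_n$ as the bright 1-soliton of \eqref{eq:semiclassicalNLS} associated with the simple Zakharov--Shabat eigenvalue $\critpt_0\in\mathbb{C}_+$.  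The only real bookkeeping obstacle is verifying that the $O(1)$ derivatives of $G_n$ and the smooth variation of $\Im(\critpt)$ genuinely enter at order $O(M^{-1})$ once multiplied by $(\chi,\tau)=O(M^{-1})$; this is handled uniformly by standard $C^2$ estimates on a fixed neighborhood of $(\chi_0,\tau_0)$.
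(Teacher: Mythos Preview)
Your proof is correct and follows essentially the same approach as the paper: both rely on the key identities $\partial_\chi d=-\ii\critpt$ and $\partial_\tau d=-\ii\critpt^2$ coming from the critical-point condition, then Taylor-expand $\Phi_n$ and $\solitonphase_n$ about $(\chi_0,\tau_0)$ to first order in $(\chi,\tau)=O(M^{-1})$ while noting that the $M$-independent terms contribute only $O(1)$ to the derivatives. Your version is slightly more explicit about propagating the $O(M^{-1})$ errors through $\sech$ and $\ee^{\ii w}$ and about verifying the soliton property by direct substitution, but these are routine details the paper simply omits.
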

The proof is given in Section~\ref{sec:local-behavior}.  These results show that the elliptic wave that occupies the domain $\chi>\chi_\mathrm{c}(\tau)$ according to Theorem~\ref{t:DS} degenerates at the boundary to a train of sech-shaped pulses each of width proportional to $M^{-1}$ and separated from its neighbors by a distance proportional to $M^{-1}\ln(M)$.  The pulses are modulated solitons confined to curvilinear trajectories $\Phi_n(\chi,\tau;M)=0$ (plotted in Figure~\ref{f:Psi-elliptic}) roughly parallel to the boundary curve $\chi=\chi_\mathrm{c}(\tau)$, and upon zooming in on a point on one of the trajectories at a scale proportional to the width of the corresponding pulse one sees an exact soliton solution of \eqref{eq:semiclassicalNLS} with the correct relationships between amplitude, wavelength, velocity, wavenumber, and frequency.
Note that the apparent drift of the curves $\Phi_n(\chi,\tau;M)=0$ for larger $n$ from the amplitude peaks in Figure~\ref{f:Psi-elliptic} is a finite-$M$ effect.

The remainder of the article is devoted to the proofs of these results, making use of the following proposition to work with renormalized versions of the parameters $(a,b)$.

\begin{proposition}[\protect{\cite[Proposition 1.5]{BilmanM2024}}]
For all $(X, T) \in \mathbb{R}^2$ and $(a,b) \in \mathbb{C}^2$ with $a b \neq 0$,
\begin{equation}
\Psi(X, T ; \mathbf{G}(a, b))=\ee^{-\ii \arg (a b)} \Psi(X, T ; \mathbf{G}(\mathfrak{a}, \mathfrak{b})),
\end{equation}
where normalized parameters defined by
\begin{equation}
\mathfrak{a}:=\frac{|a|}{\sqrt{|a|^2+|b|^2}} \quad \text { and } \quad \mathfrak{b}:=\frac{|b|}{\sqrt{|a|^2+|b|^2}}
\label{eq:frakafrakb}
\end{equation}
satisfy $\mathfrak{a}, \mathfrak{b}>0$ with $\mathfrak{a}^2+\mathfrak{b}^2=1$.
\label{p:a-b-scaling}
\end{proposition}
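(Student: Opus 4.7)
The plan is to reduce the general case to the phase-normalized case by realizing two elementary gauge-like transformations of \rhref{rhp:near-field} that act independently on the phases of $a$ and $b$, each extracting a scalar phase factor from $\Psi$.  Both transformations are organized around the two algebraic identities
\begin{equation}
\ee^{\ii\mu\sigma_3}\mathbf{G}(a,b)\ee^{-\ii\mu\sigma_3}=\mathbf{G}(a,b\ee^{-2\ii\mu}),\qquad \ee^{\ii\mu\sigma_3}\mathbf{G}(a,b)\ee^{\ii\mu\sigma_3}=\mathbf{G}(a\ee^{2\ii\mu},b),
\end{equation}
both of which are verified by a direct comparison of entries using \eqref{G-form}.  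The first (a conjugation) changes only the phase of $b$; the second (a symmetric two-sided multiplication) changes only the phase of $a$.  Together they generate every separate change of $\arg(a)$ and $\arg(b)$ and therefore suffice to transport the general $\mathbf{G}(a,b)$ with $ab\ne 0$ to the phase-normalized $\mathbf{G}(|a|,|b|)$, which equals $\mathbf{G}(\mathfrak{a},\mathfrak{b})$ because the scaling prefactor $1/\sqrt{\mathfrak{a}^2+\mathfrak{b}^2}$ in \eqref{G-form} is unity when $\mathfrak{a}^2+\mathfrak{b}^2=1$.

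To realize the first identity at the level of \rhref{rhp:near-field} I will set $\mathbf{Q}(\Lambda):=\ee^{-\ii\theta\sigma_3/2}\mathbf{P}(\Lambda;X,T,\mathbf{G}(a,b))\ee^{\ii\theta\sigma_3/2}$ and observe that $\mathbf{Q}$ is analytic off $|\Lambda|=1$, satisfies $\mathbf{Q}(\infty)=\mathbb{I}$, and, because $\ee^{\pm\ii\phi\sigma_3}$ commutes with any diagonal $\sigma_3$-exponential, has jump on $|\Lambda|=1$ equal to $\ee^{-\ii\phi\sigma_3}\mathbf{G}(a,b\ee^{\ii\theta})\ee^{\ii\phi\sigma_3}$.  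Uniqueness of the solution to \rhref{rhp:near-field} then forces $\mathbf{Q}(\Lambda)=\mathbf{P}(\Lambda;X,T,\mathbf{G}(a,b\ee^{\ii\theta}))$, and reading the $(1,2)$-entry of the conjugation (which contributes an overall factor $\ee^{-\ii\theta}$) yields $\Psi(X,T;\mathbf{G}(a,b\ee^{\ii\theta}))=\ee^{-\ii\theta}\Psi(X,T;\mathbf{G}(a,b))$.

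The second identity is the delicate one, because shifting the phase of $a$ alone alters the diagonal entries of $\mathbf{G}(a,b)$, and those are invariant under any globally-defined diagonal conjugation of the jump.  To circumvent this obstruction I will instead use the piecewise device
\begin{equation}
\tilde{\mathbf{P}}(\Lambda):=\mathbf{P}(\Lambda;X,T,\mathbf{G}(a,b))\cdot\begin{cases}\ee^{\ii\theta\sigma_3/2}, & |\Lambda|>1,\\ \ee^{-\ii\theta\sigma_3/2}, & |\Lambda|<1,\end{cases}
\end{equation}
which remains analytic off $|\Lambda|=1$ since the multiplier is constant on each open region.  The piecewise multiplier introduces an additional constant factor into the jump which, combined with the original jump and simplified via the second identity above, collapses to exactly $\ee^{-\ii\phi\sigma_3}\mathbf{G}(a\ee^{\ii\theta},b)\ee^{\ii\phi\sigma_3}$.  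Since $\tilde{\mathbf{P}}(\infty)=\ee^{\ii\theta\sigma_3/2}$, uniqueness of \rhref{rhp:near-field} forces $\tilde{\mathbf{P}}(\Lambda)=\ee^{\ii\theta\sigma_3/2}\mathbf{P}(\Lambda;X,T,\mathbf{G}(a\ee^{\ii\theta},b))$, and restricting to $|\Lambda|>1$ and extracting the $(1,2)$-entry at infinity gives $\Psi(X,T;\mathbf{G}(a\ee^{\ii\theta},b))=\ee^{-\ii\theta}\Psi(X,T;\mathbf{G}(a,b))$.

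Composing these two results with $\theta=-\arg(b)$ in the first and $\theta=-\arg(a)$ in the second carries $(a,b)\mapsto(|a|,|b|)$ and multiplies $\Psi$ by $\ee^{\ii\arg(b)}\ee^{\ii\arg(a)}=\ee^{\ii\arg(ab)}$, which rearranges to the claimed identity $\Psi(X,T;\mathbf{G}(a,b))=\ee^{-\ii\arg(ab)}\Psi(X,T;\mathbf{G}(\mathfrak{a},\mathfrak{b}))$.  The main obstacle in the argument is the realization of the diagonal phase change of $a$: no globally-defined diagonal gauge can effect it, and the piecewise construction above is what makes the step go through; everything else is routine manipulation with commuting matrix exponentials and the uniqueness statement for \rhref{rhp:near-field}.
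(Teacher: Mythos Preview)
Your proof is correct. Both algebraic identities for $\mathbf{G}(a,b)$ check out by direct computation, the conjugation argument for the phase of $b$ is routine, and the piecewise multiplier construction for the phase of $a$ correctly handles the clockwise orientation of the jump contour (so that $+$ is the exterior boundary value) and produces the right jump and normalization. The final composition yields the stated identity, and the observation that $\mathbf{G}(|a|,|b|)=\mathbf{G}(\mathfrak{a},\mathfrak{b})$ because of the built-in normalization in \eqref{G-form} is also correct.

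Note, however, that this paper does not give its own proof of the proposition: it is quoted verbatim as \cite[Proposition~1.5]{BilmanM2024} and used as a black box. So there is nothing in the present paper to compare your argument against. Your argument is a natural and complete one; the two gauge moves you isolate (diagonal conjugation for $\arg(b)$, piecewise diagonal right-multiplication for $\arg(a)$) are exactly the kind of manipulations one expects for symmetries of a Riemann--Hilbert problem of this type, and the use of uniqueness to identify the transformed matrix is standard.
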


\subsection{Acknowledgements} D. Bilman was supported by the National Science Foundation on grant number DMS-2108029; P. D. Miller was supported by the National Science Foundation on grant numbers DMS-1812625, DMS-2204896, and DMS-2508694.
The authors would like to thank the Isaac Newton Institute for Mathematical Sciences for support and hospitality during the programme Dispersive Hydrodynamics (EPSRC Grant Number EP/R014604/1), as well as the University of Bristol where part of the work on this paper was undertaken.
The computations in this work were facilitated through the use of the advanced computational, storage, and networking infrastructure provided by the Ohio Supercomputer Center (48-core Pitzer nodes) \cite{OhioSupercomputerCenter1987}.

\section{Asymptotic behavior of $\Psi(X,T;\mathbf{G}(a,b))$ in the limit $a\to 0$:  the case $\chi\neq\chi_\mathrm{c}(\tau)$}
\label{s:escape}
%\subsection{Double-scaling asymptotics}

To study the limit $a\to 0$, we assume without loss of generality (see Proposition~\ref{p:a-b-scaling}) that the parameters $(a,b)$ are replaced by normalized parameters $(\mathfrak{a},\mathfrak{b})$ with $\mathfrak{a}>0$ and $\mathfrak{b}>0$ and $\mathfrak{a}^2+\mathfrak{b}^2=1$, provided we restore a phase factor of $\ee^{-\ii\arg(ab)}$ at the end.
Since $\mathfrak{a}$ is small, it is useful to compare the solution $\mathbf{P}(\Lambda;X,T,\mathbf{G}(\mathfrak{a},\sqrt{1-\mathfrak{a}^2}))$ of Riemann-Hilbert Problem~\ref{rhp:near-field} with the limiting solution $\mathbf{P}(\Lambda;X,T,\mathbf{G}(0,1))$ which is given explicitly by (see \cite[Appendix A]{BilmanM2024}):
\begin{equation}
\mathbf{P}(\Lambda;X,T,\mathbf{G}(0,1))=\begin{cases}\displaystyle\begin{bmatrix}0 & -\ee^{-2\ii(\Lambda X+\Lambda^2 T)}\\\ee^{2\ii (\Lambda X+\Lambda^2T)} & 0\end{bmatrix}, &|\Lambda|<1,\\
\ee^{4\ii\Lambda^{-1}\sigma_3},&|\Lambda|>1.
\end{cases}
\label{eq:Pfor(a,b)=(0,1)}
\end{equation}
We therefore set
\begin{equation}
\mathbf{N}(\Lambda;X,T,\mathfrak{a}):=\mathbf{P}(\Lambda;X,T,\mathbf{G}(\mathfrak{a},\sqrt{1-\mathfrak{a}^2}))\mathbf{P}(\Lambda;X,T,\mathbf{G}(0,1))^{-1},\quad |\Lambda|\neq 1,\quad \mathfrak{a}\in [0,1].
\end{equation}
Then, $\Psi(X,T;\mathbf{G}(\mathfrak{a},\sqrt{1-\mathfrak{a}^2}))=2\ii\lim_{\Lambda\to\infty}\Lambda N_{12}(\Lambda;X,T,\mathfrak{a})$, and $\mathbf{N}(\Lambda;X,T,\mathfrak{a})$ satisfies the conditions of a related Riemann-Hilbert Problem:
\begin{rhp}
Let $(X,T)\in\mathbb{R}^2$, and let $\mathfrak{a}\in[0,1]$.  Find a $2\times 2$ matrix $\mathbf{N}(\Lambda)=\mathbf{N}(\Lambda;X,T,\mathfrak{a})$ with the following properties:
\begin{itemize}
\item[]\textbf{Analyticity:}  $\mathbf{N}(\Lambda)$ is analytic in $\Lambda$ for $|\Lambda|\neq 1$, and it takes continuous boundary values on the clockwise-oriented unit circle from the interior and exterior.
\item[]\textbf{Jump condition:}  The boundary values on the unit circle are related as follows:
\begin{equation}
\mathbf{N}_+(\Lambda)=\mathbf{N}_-(\Lambda)\ee^{-\ii(\Lambda X+\Lambda^2 T -2\Lambda^{-1})\sigma_3}\begin{bmatrix} \sqrt{1-\mathfrak{a}^2} & -\mathfrak{a}\\\mathfrak{a} & \sqrt{1-\mathfrak{a}^2}\end{bmatrix}\ee^{\ii(\Lambda X+\Lambda^2 T-2\Lambda^{-1})\sigma_3},\quad|\Lambda|=1.
\end{equation}
\item[]\textbf{Normalization:}  $\mathbf{N}(\Lambda)\to\mathbb{I}$ as $\Lambda\to\infty$.
\end{itemize}
\label{rhp:N}
\end{rhp}

In terms of $M>0$ defined by \eqref{M-def}, according to \eqref{eq:frakafrakb} we have
$\mathfrak{a}=\ee^{-2M}$.   Recall the scalings $(X,T,\Lambda)\mapsto (\chi,\tau,z)$ given by \eqref{chi-tau-def}--\eqref{eq:z-Lambda}.   Without changing the problem in any essential way, we may assume that the jump contour is any simple closed curve $\Gamma$ in the $z$-plane that surrounds the origin in the clockwise sense.  The equivalent Riemann-Hilbert problem for $\mathbf{S}(z;\chi,\tau,M):=\mathbf{N}(\Lambda,X,T,\mathfrak{a})=\mathbf{N}(M^{-1}z;M^2\chi,M^3\tau,\ee^{-2M})$ is then the following.  
\begin{rhp}
Let $(\chi,\tau)\in\mathbb{R}^2$ and let $M\in\mathbb{R}$.  Find a $2\times 2$ matrix $\mathbf{S}(z)=\mathbf{S}(z;\chi,\tau,M)$ with the following properties:
\begin{itemize}
\item[]\textbf{Analyticity:}  $\mathbf{S}(z)$ is analytic in $z$ for $z\in\mathbb{C}\setminus\Gamma$, and it takes continuous boundary values on $\Gamma$ from the interior and exterior.
\item[]\textbf{Jump condition:}  The boundary values are related by $\mathbf{S}_+(z)=\mathbf{S}_-(z)\mathbf{V}^\mathbf{S}(z)$ for $z\in\Gamma$, where
\begin{equation}
\mathbf{V}^\mathbf{S}(z)=\mathbf{V}^\mathbf{S}(z;\chi,\tau,M):=\begin{bmatrix}\sqrt{1-\ee^{-4M}} & -\ee^{-2\ii M(\phase(z;\chi,\tau)-\ii)}\\
\ee^{2\ii M(\phase(z;\chi,\tau)+\ii)} & \sqrt{1-\ee^{-4M}}\end{bmatrix},
\label{eq:S-jump}
\end{equation}
in which $\phase(z;\chi,\tau)$ is defined by \eqref{eq:DS-tildevartheta}.
\item[]\textbf{Normalization:}  $\mathbf{S}(z)\to\mathbb{I}$ as $z\to\infty$.
\end{itemize}
\label{rhp:S}
\end{rhp}
We assume that $\Gamma$ is Schwarz-symmetric (with reflection reversing the orientation) and independent of $M$.  Then, it is easy to see that whenever $\mathbf{S}(z;\chi,\tau,M)$ is a solution of \rhref{rhp:S}, so is $\sigma_2\mathbf{S}(z^*;\chi,\tau,M)^*\sigma_2$.  Since $\mathbf{P}(\Lambda;X,T,\mathbf{G})$ is uniquely determined from the conditions of \rhref{rhp:near-field}, it follows that also $\mathbf{S}(z;\chi,\tau,M)$ is uniquely determined from the conditions of \rhref{rhp:S}, and so the solution is necessarily Schwarz symmetric in the sense that
\begin{equation}
\mathbf{S}(z;\chi,\tau,M)=\sigma_2\mathbf{S}(z^*;\chi,\tau,M)^*\sigma_2
\label{eq:S-Schwarz}
\end{equation}
Since $\Psi(X,T;\mathbf{G}(\mathfrak{a},\sqrt{1-\mathfrak{a}^2}))=2\ii\lim_{\Lambda\to\infty}\Lambda N_{12}(\Lambda;X,T,\mathfrak{a})$, we have
\begin{equation}
M\Psi(M^2\chi,M^3\tau;\mathbf{G}(\ee^{-2M},\sqrt{1-\ee^{-4M}}))=2\ii\lim_{z\to\infty}zS_{12}(z;\chi,\tau,M).
\label{eq:DS-Psi-from-S}
\end{equation}

\subsection{Definition of $\chi_\mathrm{c}(\tau)$ and proof of Theorem~\ref{t:DS} for $\chi<\chi_\mathrm{c}(\tau)$}
\label{s:boundary-curve}
Noting the rapid convergence to $1$ of the diagonal elements of the jump matrix in \eqref{eq:S-jump}, we will have a small-norm problem if the contour $\Gamma$ can be placed such that  the rescaled exponents $\pm \ii (\phase(z;\chi,\tau)\pm\ii)$ of both off-diagonal entries have strictly negative real parts uniformly on $\Gamma$.  This will be the case if the region of the complex $z$-plane defined by the inequalities $-1<\mathrm{Re}(\ii\phase(z;\chi,\tau))<1$  contains a Jordan curve $\Gamma$ surrounding the origin, a condition that depends on the coordinates $(\chi,\tau)$.

Firstly, suppose that $\chi^3<-54\tau^2$.  Then a discriminant calculation shows that $\phase(z;\chi,\tau)$ has distinct real critical points (two if $\tau=0$ and three more generally).  One can show that in this situation, (i) there are two critical points $z=z_\pm$ having opposite signs:  $z_-<0<z_+$ such that the third real critical point lies outside of the interval $[z_-,z_+]$ and (ii) there is a zero level curve $\mathrm{Re}(\ii\phase(z;\chi,\tau))=0$ in the upper half-plane connecting the critical points $z_-,z_+$.   Taking $\Gamma$ to be the union of the latter curve with its Schwarz reflection we therefore have the desired strict inequality $-1<\mathrm{Re}(\ii\phase(z;\chi,\tau))<1$ holding uniformly for $z\in\Gamma$. See the first pane of Figure~\ref{f:vartheta-composite}.

When $\chi$ increases through the value $-(54\tau^2)^{\frac{1}{3}}$ and $\tau\neq 0$, the third real critical point coalesces with either $z_-$ or $z_+$ (if $\tau=0$ instead $z_-$ and $z_+$ coalesce at $z=0$) and the double real critical point bifurcates into a complex-conjugate pair denoted $\critpt=\critpt(\chi,\tau),\critpt^*$ with $\mathrm{Im}(\critpt)>0$.  Because the double real critical point for $\chi=-(54\tau^2)^\frac{1}{3}$ lies on the set $\Re(\ii\phase(z;\chi,\tau))=0\in (-1,1)$, there is a neighborhood of $(-(54\tau^2)^\frac{1}{3},\tau)$ in $\mathbb{R}^2$ on which it is still possible to place $\Gamma$ surrounding the origin within the region on which $-1<\mathrm{Re}(\ii\phase(z;\chi,\tau))<1$ holds, although it is generally no longer possible to choose $\Gamma$ to be a zero level curve of $\mathrm{Re}(\ii\phase(z;\chi,\tau))$.   Indeed, if $\chi>-(54\tau^2)^\frac{1}{3}$, the branch of the latter curve emanating into the upper half-plane from the remaining real root no longer returns to the real line. See the second pane of Figure~\ref{f:vartheta-composite}.
\begin{figure}[h]
\includegraphics[width=0.24\textwidth]{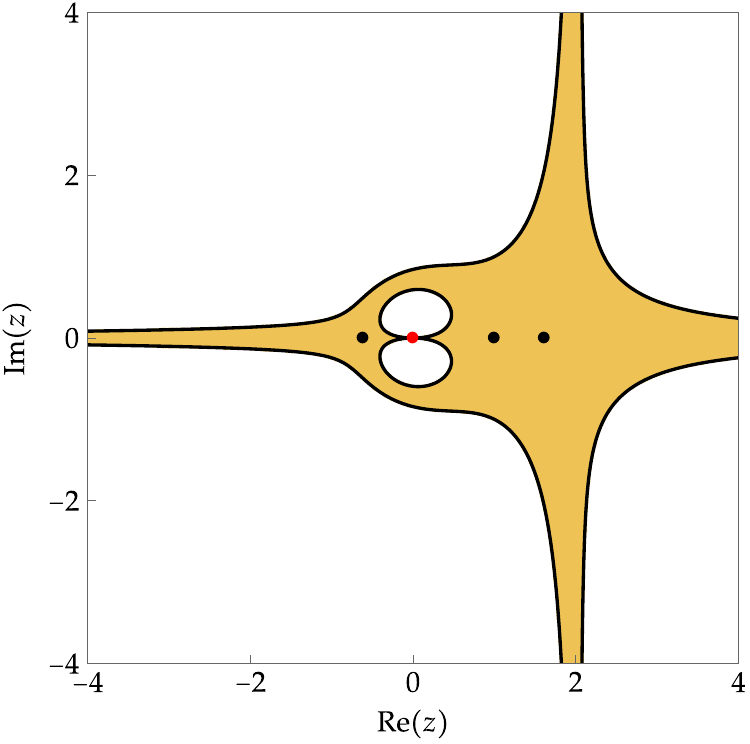}
\includegraphics[width=0.24\textwidth]{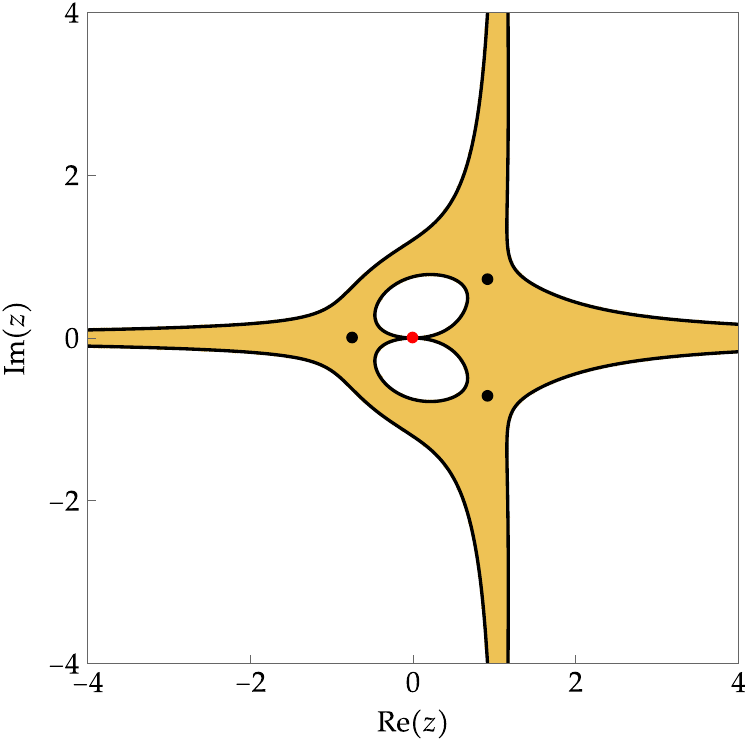}
\includegraphics[width=0.24\textwidth]{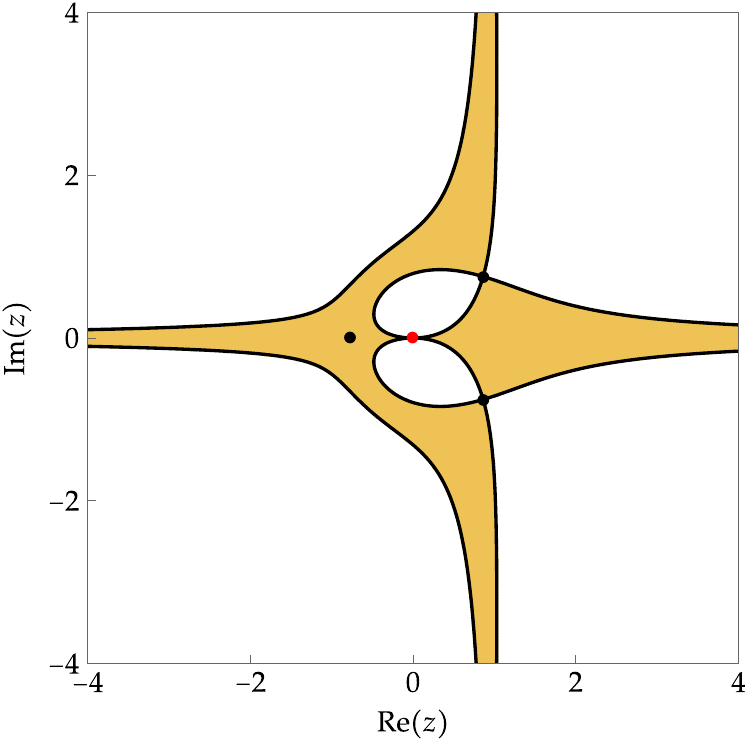}
\includegraphics[width=0.24\textwidth]{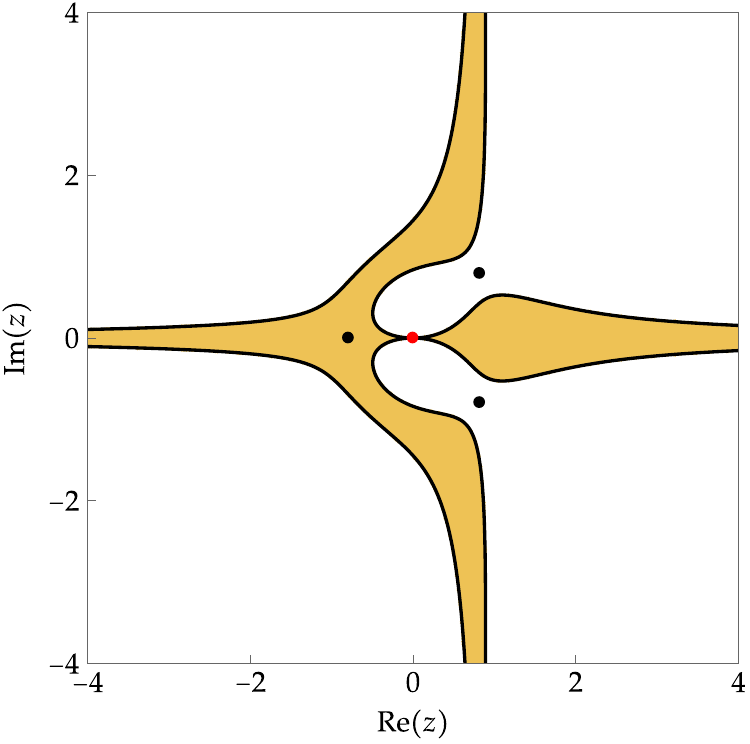}
\caption{The region $-1<\mathrm{Re}(\ii\phase(z;\chi,\tau))<1$ (shaded) for $\tau=1$. The critical points of $\phase(z;\chi,\tau)$ are marked with black dots. The red dot marks the singularity of $\phase(z;\chi,\tau)$ at $z=0$. From left to right, first pane: $\chi<-(54\tau^2)^\frac{1}{3}$, second pane $-(54\tau^2)^\frac{1}{3}<\chi<\chi_{\mathrm{c}}(\tau)$, third pane: $\chi=\chi_{\mathrm{c}}(\tau)$, and fourth pane: $\chi>\chi_{\mathrm{c}}(\tau)$.}
\label{f:vartheta-composite}
\end{figure}

However, given $\tau\in\mathbb{R}$, there is a maximum value of $\chi$ denoted $\chi_\mathrm{c}(\tau)$ with $\chi_\mathrm{c}(\tau)>-(54\tau^2)^{\frac{1}{3}}$ such that if $\chi>\chi_\mathrm{c}(\tau)$, the region $-1<\mathrm{Re}(\ii\phase(z;\chi,\tau))<1$ no longer contains a Jordan curve $\Gamma$ surrounding the origin.  
See the third and fourth panes of Figure~\ref{f:vartheta-composite}.
The transition occurs because a critical point of $\phase(z;\chi,\tau)$ moves onto one of the two level curves $\mathrm{Re}(\ii\phase(z;\chi,\tau))=\pm 1$, changing the topology of the indicated region in the $z$-plane so that it no longer contains a suitable contour $\Gamma$.  In fact, since $\phase(z;\chi,\tau)^*=\phase(z^*;\chi,\tau)$, the two conditions $\mathrm{Re}(\ii\phase(\critpt;\chi,\tau))=-1$ and $\mathrm{Re}(\ii\phase(\critpt^*;\chi,\tau))=1$ occur simultaneously for $\chi=\chi_\mathrm{c}(\tau)$.  These two equivalent conditions can be written in integral form as
\begin{equation}
\mathrm{Im}\left(\int_{\critpt^*}^\critpt\phase'(z;\chi,\tau)\,\dd z\right) = 2.
\label{eq:BreakingCurve}
\end{equation}
Using $\phase'(\critpt;\chi,\tau)=0$, it is easy to see that for each fixed $\tau\in\mathbb{R}$,
\begin{equation}
\frac{\partial}{\partial \chi}\mathrm{Im}\left(\int_{\critpt^*}^\critpt\phase'(z;\chi,\tau)\,\dd z\right)=\mathrm{Im}\left(\int_{\critpt^*}^\critpt\frac{\partial\phase'}{\partial \chi}(z;\chi,\tau)\,\dd z\right) = \mathrm{Im}\left(\int_{\critpt^*}^\critpt\,\dd z\right)=2\mathrm{Im}(\critpt)
>0.
\label{eq:k-chi-positive}
\end{equation}
The left-hand side of \eqref{eq:BreakingCurve} can be written as $k(\chi,\tau):=\Im(\phase(\critpt;\chi,\tau)-\phase(\critpt^*;\chi,\tau))=-\ii(\phase(\critpt;\chi,\tau)-\phase(\critpt^*;\chi,\tau))$.  Of course, when $\chi=-(54\tau^2)^{1/3}$, we have $k(\chi,\tau)=0<2$ since $\critpt=\critpt^*$.  For each fixed $\tau\in\mathbb{R}$, it is easy to calculate the asymptotic behavior of the complex critical point $\critpt$ as $\chi\to+\infty$; namely $\critpt=\ii\sqrt{2\chi^{-1}}+O(\chi^{-3/2})$.    Therefore $k(\chi,\tau)=4\sqrt{2\chi} + O(\chi^{-1/2})>2$ as $\chi\to+\infty$.  Since $\chi\mapsto k(\chi,\tau)$ is strictly increasing, it follows that there is a unique value $\chi=\chi_\mathrm{c}(\tau)>-(54\tau^2)^{1/3}$ consistent with \eqref{eq:BreakingCurve}.  

The function $\tau\mapsto\chi_\mathrm{c}(\tau)$ is differentiable.  Indeed, 
implicit differentiation of \eqref{eq:BreakingCurve} 
for $\chi=\chi_\mathrm{c}(\tau)$
with respect to $\tau$ gives
\begin{equation}
\chi_\mathrm{c}'(\tau)=-\dfrac{\displaystyle\int_{\critpt^*}^\critpt\phase_\tau'(z;\chi,\tau)\,\dd z}{\displaystyle\int_{\critpt^*}^\critpt\phase_\chi'(z;\chi,\tau)\,\dd z}=
-\dfrac{\displaystyle\int_{\critpt^*}^\critpt 2z\,\dd z}{\displaystyle\int_{\critpt^*}^\critpt\,\dd z}=-\frac{\critpt^2-\critpt^{*2}}{\critpt-\critpt^*} = -(\critpt+\critpt^*)=-2\mathrm{Re}(\critpt).
\label{chi-crit-prime}
\end{equation}

When $\tau=0$ and $\chi>0$, the complex critical point is explicit:  $\critpt(\chi,0)=\ii\sqrt{2\chi^{-1}}$.  Using $\phase(z;\chi,0)=\chi z -2z^{-1}$ shows that for $\chi>0$, $\phase(\critpt(\chi,0);\chi,0)=\ii\sqrt{8\chi}$.  Therefore, putting $\tau=0$ in \eqref{eq:BreakingCurve} gives $\sqrt{8\chi}=1$, that is, $\chi_\mathrm{c}(0)=\frac{1}{8}$.
It is easy to show using $\phase(z;\chi,-\tau)=-\phase(-z;\chi,\tau)$ that 
$\chi_\mathrm{c}(\diamond)$ is an even function and therefore also $\chi_\mathrm{c}'(0)=0$.
On the other hand,
the condition $\phase'(\critpt;\chi,\tau)=0$ is equivalent to the cubic equation $2\tau \critpt^3 + \chi \critpt^2 + 2=0$.  Taking the imaginary part shows that $\chi_\mathrm{c}'(\tau)=-2\Re(\critpt)$ cannot vanish if $\tau\neq 0$ and $\Im(\critpt)>0$.  
For $\tau\neq 0$
we can determine the well-defined sign of the continuous function $\tau\mapsto\chi_\mathrm{c}'(\tau)$ by considering the limits $\tau\to\pm\infty$.  The relevant dominant balance in the critical point equation leads to $2\tau\critpt^3+2\approx 0$ which has two complex roots satisfying $\mathrm{sgn}(\mathrm{Re}(\critpt))=\mathrm{sgn}(\tau)$.  Therefore also $\mathrm{sgn}(\chi_\mathrm{c}'(\tau))=-\mathrm{sgn}(\tau)$ for all $\tau\neq 0$.  
It follows that 
for all $\tau\neq 0$ we have the strict inequality $-(54\tau^2)^{1/3}<\chi_\mathrm{c}(\tau)<\frac{1}{8}$. 

In fact, it is possible to express $\chi_\mathrm{c}(\tau)$ 
in algebraic form as follows.  The condition $\phase(\critpt;\chi,\tau)-\phase(\critpt^*;\chi,\tau)=2\ii$ can be written explicitly in terms of $\critpt=u+\ii v$ as
\begin{equation}
v\cdot\left(\chi+2\tau u +\frac{2}{u^2+v^2}\right)=1.
\label{eq:integralcondition-uv}
\end{equation}
Likewise the statement that $\phase'(u+\ii v;\chi,\tau)=0$ can be split into its real part
\begin{equation}
\chi(u^2-v^2)+2\tau(u^3-3uv^2)+2=0
\label{eq:criticalpoint-real}
\end{equation}
and its imaginary part (cancelling $v\neq 0$):
\begin{equation}
2\chi u + 6\tau u^2 -2\tau v^2=0.
\label{eq:criticalpoint-imaginary}
\end{equation}
From \eqref{eq:criticalpoint-imaginary} we may explicitly eliminate $u^2$ in favor of $u$:
\begin{equation}
u^2=\frac{1}{3\tau}(\tau v^2-\chi u).
\end{equation}
This formula allows one to systematically reduce higher powers of $u$ to linear terms.  Indeed, substituting two consecutive times into \eqref{eq:criticalpoint-real} results in a linear equation for $u$ that is solved by
\begin{equation}
u=\frac{18\tau-8\chi\tau v^2}{\chi^2+48\tau^2v^2}.
\end{equation}
Substituting this expression for $u$ into \eqref{eq:integralcondition-uv} and \eqref{eq:criticalpoint-imaginary} and finding  common denominators results in
a $9^\mathrm{th}$ degree and $6^\mathrm{th}$ degree polynomial equation in $v$ respectively.  The resultant between these two polynomials is the condition on $(\chi,\tau)$ that they are simultaneously solvable; this resultant has factors $\tau^{37}\neq 0$, $(108\tau^2+\chi^3)^{12}$ which vanishes for $\chi=-(108\tau^2)^{\frac{1}{3}}\le -(54\tau^2)^{\frac{1}{3}}$, and finally $\critpoly(\chi,\tau)$ given by \eqref{eq:boundary-curve-exact} in Section~\ref{s:introduction}. 
It is straightforward to check by explicit calculation that $\critpoly(\frac{1}{8},0)=0$, while
$\critpoly_\chi(\frac{1}{8},0)\neq 0$, so locally the implicit function theorem applies to yield $\chi=\chi_\mathrm{c}(\tau)$. 
From this representation it is easy to see that for large $\tau$, %$\chi<0$ are 
$\chi_\mathrm{c}(\tau)\sim -(54\tau^2)^\frac{1}{3}$, 
even though the strict inequality $\chi_\mathrm{c}(\tau)>-(54\tau^2)^\frac{1}{3}$ holds for all $\tau\in\mathbb{R}$.
The curves $\chi=\chi_\mathrm{c}(\tau)$ and $\chi=-(54\tau^2)^\frac{1}{3}$ 
are plotted in Figure~\ref{f:critical-curves}.
\begin{figure}[h]
\includegraphics[width=0.3\textwidth]{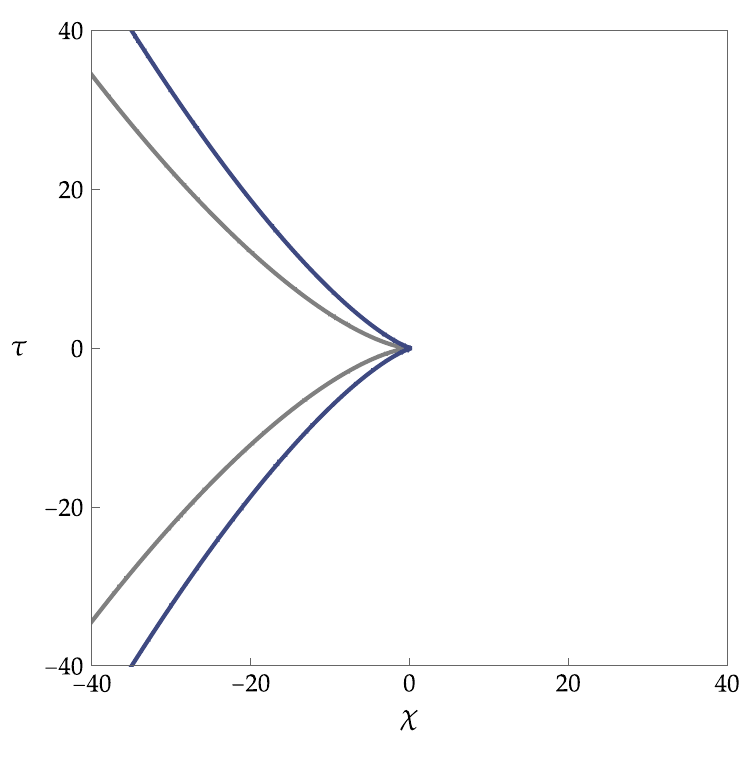}\quad\includegraphics[width=0.3\textwidth]{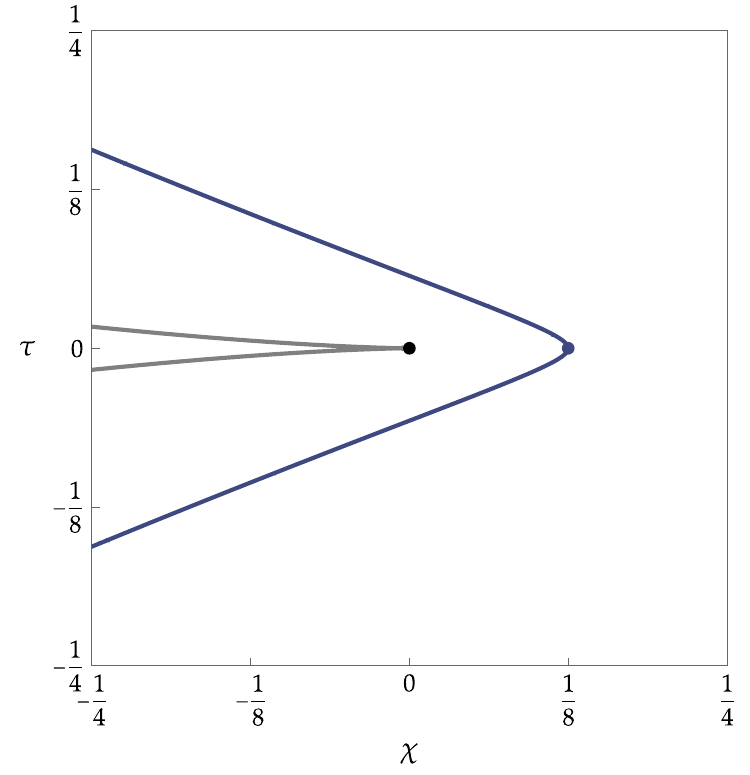}\quad\includegraphics[width=0.3\textwidth]{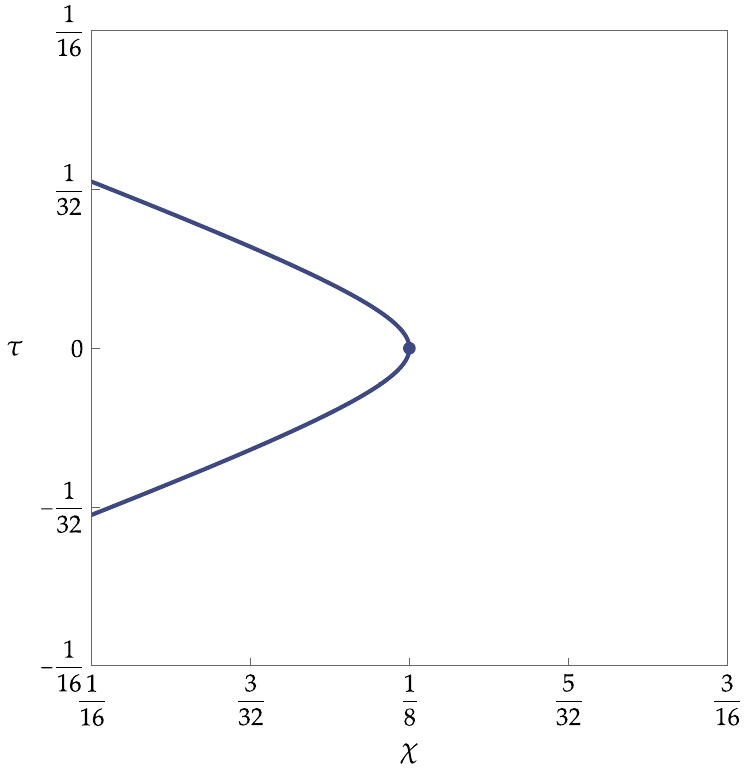}
\caption{Plots showing the curve %$\ell_{\mathrm{c}}$  
$\chi=\chi_\mathrm{c}(\tau)$
(dark blue) and the curve $\chi=-(54\tau^2)^\frac{1}{3}$ (gray), to the left of which $z\mapsto\phase(z;\chi,\tau)$ has real critical points, at different scales. 
}
\label{f:critical-curves}
\end{figure}

The coordinates $(\chi,\tau)$ of the point on the graph $\chi=\chi_\mathrm{c}(\tau)$ 
%$\ell_\mathrm{c}$ 
can be expressed in terms of the corresponding critical point $\critpt=u+\ii v$ with the use of \eqref{eq:criticalpoint-real}--\eqref{eq:criticalpoint-imaginary}:
\begin{equation}
\chi = \frac{2v^2-6u^2}{(u^2+v^2)^2}\quad\text{and}\quad\tau=\frac{2u}{(u^2+v^2)^2}.
\label{eq:chi-tau-sigma}
\end{equation}
Finally, when $\chi=\chi_\mathrm{c}(\tau)$ and $\tau\neq 0$, there is a third real critical point of $z\mapsto\phase(z;\chi,\tau)$ that can be written in the form $z=-\chi/(2\tau)+\tau\lambda/2$, so since the product of the critical points is $-\tau^{-1}$ we obtain a parametrization of $\lambda$ in the form
\begin{equation}
\lambda=\frac{1}{\tau^2}\left(\chi-\frac{2}{|\critpt|^2}\right) = -2(u^2+v^2)^2
\label{eq:gamma-negative}
\end{equation}
where we have used \eqref{eq:chi-tau-sigma}.

Now we give the proof of Theorem~\ref{t:DS} in the case $\chi<\chi_\mathrm{c}(\tau)$.  Under this condition,
a Jordan curve $\Gamma$ exists enclosing the origin on which $-1<\mathrm{Re}(\ii\phase(z;\chi,\tau))<1$ holds and hence Riemann-Hilbert Problem~\ref{rhp:S} is of exponentially small-norm type in the limit $M\to+\infty$, locally uniformly with respect to $(\chi,\tau)$.  The statement in Theorem~\ref{t:DS} that $M\Psi(M^2\chi,M^3\tau;\mathbf{G}(a,b))$ is uniformly exponentially small as $M\to+\infty$ on compact subsets of $\chi<\chi_\mathrm{c}(\tau)$ then follows from \eqref{eq:DS-Psi-from-S}.

\subsection{The spectral curve}
\label{s:DS-spectral-curve}
When $\chi>\chi_\mathrm{c}(\tau)$, to control the exponential factors on the off-diagonal in the jump matrix in \eqref{eq:S-jump} we use the technique of multiplying $\mathbf{S}(z;\chi,\tau,M)$ on the right by a diagonal matrix factor $\ee^{\ii Mg(z)\sigma_3}$ where $g(z)=g(z;\chi,\tau)=g(z^*;\chi,\tau)^*$ is to be analytic except for certain arcs of $\Gamma$ along which either $g_+(z)-g_-(z)$ or $g_+(z)+g_-(z)+2\phase(z)$ is constant, and we require $g(z)\to 0$ as $z\to\infty$.  For the method to be effective, $g(z)$ cannot vanish identically.  Setting 
\begin{equation}
h(z)=h(z;\chi,\tau):=g(z;\chi,\tau)+\phase(z;\chi,\tau), 
\label{eq:DS-h-g-tildevartheta}
\end{equation}
we then see that $h'(z)^2$ is analytic except at $z=0$, and it has the asymptotic behavior
\begin{equation}
h'(z)^2 = 4z^{-4}+O(z^{-2}),\quad z\to 0,\quad\text{and}
\end{equation}
\begin{equation}
h'(z)^2 = 4\tau^2z^2+4\tau\chi z + \chi^2 + O(\tau z^{-1}) + O(\chi z^{-2}) + O(z^{-4}),\quad z\to\infty,
\end{equation}
in which the error terms cannot be made more precise without further knowledge of $g'(z)$.
Therefore, by Liouville's theorem
\begin{equation}
h'(z)^2 = z^{-4}P(z),\quad P(z):=4\tau^2z^6 + 4\tau\chi z^5 + \chi^2 z^4 + C_3z^3 + C_2z^2 + 4
\label{eq:P-sextic}
\end{equation}
where $C_2$ and $C_3$ are real coefficients independent of $z$, and where $C_3=0$ when $\tau=0$.  The relation $y^2=z^{-4}P(z)$ is said to define the \emph{spectral curve} for the problem.

Note that $g(z)$ vanishes identically if and only if $P(z)=(2\tau z^3+\chi z^2+2)^2$ is a perfect square.  Therefore we are only interested in the case that some of the roots of $P(z)$ are simple.  Also in the special case that $\tau=0$, $P$ is quartic instead of sextic:  $P(z)=\chi^2z^4+C_2z^2+4$.

\subsection{Integral condition}
\label{s:DS-Integral-Condition}
In fact, for $\chi>\chi_\mathrm{c}(\tau)$, we will 
assume that $P(z)$ has four simple roots forming a complex quartet and denoted $z=\alpha,\beta,\alpha^*,\beta^*$ with $\alpha\neq \beta$ and $\mathrm{Im}(\alpha)>0$ and $\mathrm{Im}(\beta)>0$, and that if $\tau\neq 0$ the remaining two roots are repeated and real, say $z=\gamma/(2\tau)$.  In the latter case we also require that $C_3$ and $C_2$ are related so that the discriminant of $P(z)$ vanishes.  So, whether or not $\tau=0$ it remains to determine $C_2\in\mathbb{R}$ as a function of $(\chi,\tau)$.  To this end, we impose the integral condition
\begin{equation}
\mathrm{Im}\left(\int_{\alpha^*}^\alpha h'(z)\,\dd z\right)=2,
\label{eq:integralcondition}
\end{equation}
which is the correct analogue in this setting of \eqref{eq:BreakingCurve}.
We note that since $h(z;\chi,\tau)=g(z;\chi,\tau)+\phase(z;\chi,\tau)$ and $g$ is analytic at $z=0,\infty$ with $g(z)\to 0$ as $z\to\infty$ it follows immediately that 
\begin{equation}
\mathop{\mathrm{Res}}_{z=0}h'(z;\chi,\tau) =\mathop{\mathrm{Res}}_{z=\infty}h'(z;\chi,\tau)=0.
\end{equation}
This along with the Schwarz symmetry $h'(z^*;\chi,\tau)=h'(z;\chi,\tau)^*$ implies that if we take a contour of integration from $z=\alpha$ to $z=\beta$ that avoids the branch cuts of $h'(z;\chi,\tau)$ except at the endpoints, then without further conditions,
\begin{equation}
\mathrm{Im}\left(\int_\alpha^\beta h'(z)\,\dd z\right)=0.
\end{equation}
Consequently, it makes no difference if $(\alpha,\alpha^*)$ are replaced with $(\beta,\beta^*)$ in \eqref{eq:integralcondition}.

If $\tau\neq 0$, we assume that $P(z)$ as given by \eqref{eq:P-sextic} has a double real root that we write in the form $z=\gamma/(2\tau)$ for some $\gamma\in\mathbb{R}$ to be determined, and no further real roots.  
We then match $P(z)=(2\tau z-\gamma)^2(z^4-S_1z^3+S_2z^2-S_3z+S_4)$ with the form given in \eqref{eq:P-sextic} to equate the symmetric homogeneous polynomials $S_p$ of degree $p$ in the four generically simple complex roots $z=\alpha,\beta,\alpha^*,\beta^*$ to explicit expressions in $(\chi,\tau,\gamma)$:
\begin{equation}
\begin{split}
S_1:=\alpha+\alpha^*+\beta+\beta^* &= -\frac{\chi+\gamma}{\tau}
\\
S_2:=\alpha\alpha^* + \alpha\beta + \alpha\beta^* + \alpha^*\beta+\alpha^*\beta^*+\beta\beta^*&=\frac{(\chi+\gamma)(\chi+3\gamma)}{4\tau^2}
\\
S_3:=\alpha\alpha^*\beta+\alpha\alpha^*\beta^*+\alpha\beta\beta^*+\alpha^*\beta\beta^*&=-\frac{16\tau}{\gamma^3}
\\
S_4:=\alpha\alpha^*\beta\beta^*&=\frac{4}{\gamma^2}
.
\end{split}
\label{eq:Ss-gamma-chi-tau}
\end{equation}
To match the asymptotic that $h'(z)=2\tau z + O(1)$ as $z\to\infty$ we write 
\begin{equation}
h'(z)=\frac{2\tau z-\gamma}{z^2}R(z),\quad R(z)^2 = z^4-S_1z^3+S_2z^2-S_3z+S_4,\quad S_p=S_p(\gamma,\chi,\tau),
\label{eq:hprime-Rsquared}
\end{equation}
(cf., \eqref{eq:Intro-Rsquared}) where $R(z)$ is analytic except for a branch cut connecting $z=\alpha$ and $z=\beta$ in the upper half-plane and its reflection in the real axis, and where $R(z)=z^2+O(z)$ as $z\to\infty$.  

At this point, to be able to include the $\tau=0$ case in the same framework it is convenient to parametrize $\gamma$ by a new parameter $\lambda$ so that
$\gamma=-\chi+\tau^2\lambda$.  Then the coefficients of $R(z)^2$ become
\begin{equation}
S_1=-\tau\lambda,\quad S_2=\frac{3}{4}\tau^2\lambda^2-\frac{1}{2}\chi\lambda ,\quad S_3=-\frac{16\tau}{(\tau^2\lambda-\chi)^3},\quad S_4=\frac{4}{(\tau^2\lambda-\chi)^2}.
\label{eq:SymmetricPolynomials-lambda}
\end{equation}
Indeed, taking the limit $\tau\to 0$ for fixed $\chi$, $\lambda$, and $z\neq 0$, $R(z)^2\to z^4+\chi^{-2}C_2z^2 + 4\chi^{-2}$ and $h'(z)^2\to \chi^2(z^4+\chi^{-2}C_2z^2 + 4\chi^{-2})z^{-4}$ where $C_2=-\frac{1}{2}\chi^3\lambda$.

\begin{lemma}
Let $\tau\in\mathbb{R}$ and suppose that $\chi\ge\chi_\mathrm{c}(\tau)$.  Then $\gamma=-\chi+\tau^2\lambda\neq 0$.
\label{lem:gamma-nonzero}
\end{lemma}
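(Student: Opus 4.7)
The plan is to dispatch this as a short case analysis on whether $\tau=0$ or $\tau\neq 0$, exploiting the fact that the real quantity $\gamma/(2\tau)$ has already been built to be the double real root of the sextic $P(z)$ in \eqref{eq:P-sextic}, and that $P(0)=4\neq 0$ by construction.

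First I would handle $\tau=0$. In this case the definition $\gamma=-\chi+\tau^2\lambda$ simplifies to $\gamma=-\chi$. Since the excerpt has established $\chi_\mathrm{c}(0)=\tfrac{1}{8}$, the hypothesis $\chi\ge \chi_\mathrm{c}(0)$ gives $\chi\ge\tfrac{1}{8}>0$, hence $\gamma\neq 0$. This case is purely a bookkeeping observation.

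Next I would treat $\tau\neq 0$. For $\chi>\chi_\mathrm{c}(\tau)$ the parametrization was chosen so that $\gamma/(2\tau)$ is the location of the double real root of the sextic $P(z)=4\tau^2z^6+4\tau\chi z^5+\chi^2z^4+C_3z^3+C_2z^2+4$. Evaluating at $z=0$ gives $P(0)=4\neq 0$, so $z=0$ is not among the roots of $P$; consequently $\gamma/(2\tau)\neq 0$, and therefore $\gamma\neq 0$. For the boundary case $\chi=\chi_\mathrm{c}(\tau)$ with $\tau\neq 0$, the excerpt characterizes $-\chi/(2\tau)+\tau\lambda/2=\gamma/(2\tau)$ as the unique real critical point of $z\mapsto\phase(z;\chi,\tau)$; since $\phase$ has a pole at $z=0$, the point $z=0$ cannot be a critical point, so once again $\gamma/(2\tau)\neq 0$, hence $\gamma\neq 0$. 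Alternatively, one can substitute the explicit parametrization \eqref{eq:chi-tau-sigma} together with $\lambda=-2(u^2+v^2)^2$ from \eqref{eq:gamma-negative} and compute directly that $\gamma=-2/|\critpt|^2$ at the boundary, which makes the sign and nonvanishing transparent.

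There is no real obstacle here; the only mild subtlety is making sure the boundary case $\chi=\chi_\mathrm{c}(\tau)$ is handled by identifying $\gamma/(2\tau)$ with the real critical point of $\phase$ rather than with a root of the full sextic (whose quartet portion is degenerate there), but the argument that $z=0$ is excluded applies uniformly in both cases because $\phase$ has a pole and $P(0)=4$.
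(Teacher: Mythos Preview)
Your proposal is correct and takes essentially the same approach as the paper: both split on $\tau=0$ versus $\tau\neq 0$, use $\gamma=-\chi\le -\tfrac18<0$ in the former case, and in the latter observe that $P(0)=4\neq 0$ so the double real root $\gamma/(2\tau)$ of $P$ cannot sit at the origin. Your extra care in separately treating the boundary $\chi=\chi_\mathrm{c}(\tau)$ (via the real critical point of $\phase$, or equivalently via the explicit formula $\gamma=-2/|\critpt|^2$ from \eqref{eq:gamma-negative}) is fine but not strictly needed, since the paper's observation that the constant term of $P$ equals $4$ holds for all finite $C_2,C_3$ and hence covers the boundary as well.
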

\begin{proof}
If $\tau\neq 0$, expanding out $(2\tau z-\gamma)^2R(z)^2$ and comparing with $P(z)$ in the form \eqref{eq:P-sextic} shows that for all finite values of $C_2,C_3$ one has $\gamma\neq 0$.  If $\tau=0$, then $\gamma=-\chi<-\chi_\mathrm{c}(0)=-\frac{1}{8}<0$.
\end{proof}

Using $h'(z^*)=h'(z)^*$, the integral condition \eqref{eq:integralcondition} then can be written as $f(\lambda;\chi,\tau)=0$, where
\begin{equation}
f(\lambda;\chi,\tau):=-\ii\left(\int_{\alpha^*}^\alpha h'(z)\,\dd z-2\ii\right)=-\ii\left(\int_{\alpha^*}^\alpha\frac{2\tau z+\chi-\tau^2\lambda}{z^2}R(z)\,\dd z-2\ii\right).
\label{eq:integralcondition-rewrite}
\end{equation}
We think of this as an equation to be solved for $\lambda$ given $(\chi,\tau)\in\mathbb{R}^2$ with $\tau\neq 0$ and $\chi>\chi_\mathrm{c}(\tau)$. 
A computation using \eqref{eq:Ss-gamma-chi-tau}--\eqref{eq:hprime-Rsquared} shows that 
\begin{equation}
\frac{\partial R}{\partial\lambda}(z)=\frac{2\tau(\tau^2\lambda-\chi)^4 z^3+(\tau^2\lambda-\chi)^4(3\tau^2\lambda-\chi)z^2-96\tau^3z-16\tau^2(\tau^2\lambda-\chi)}{4(\tau^2\lambda-\chi)^4R(z)}.
\label{eq:gamma-deriv-1}
\end{equation}
From this it follows that
\begin{equation}
\frac{\partial h'}{\partial\lambda}(z)=-\frac{\Qpoly(\lambda;\chi,\tau)}{4(\tau^2\lambda-\chi)^4R(z)},\quad
\Qpoly(\lambda;\chi,\tau):=6(\tau^2\lambda-\chi)^6+6\chi(\tau^2\lambda-\chi)^5+\chi^2(\tau^2\lambda-\chi)^4+192\tau^4.
\label{eq:gamma-deriv-2}
\end{equation}
Therefore, using the fact that $h'(\alpha)=h'(\alpha^*)=0$,
\begin{equation}
\frac{\partial f}{\partial\lambda}(\lambda;\chi,\tau)=\ii\frac{\Qpoly(\lambda;\chi,\tau)}{4(\tau^2\lambda-\chi)^4}\int_{\alpha^*}^\alpha\frac{\dd z}{R(z)}.
\label{eq:gamma-deriv-3}
\end{equation}
The integral factor is a complete elliptic integral of the first kind.  It cannot vanish, and by Schwarz symmetry of $R(z)$, it is purely imaginary.  By artificially deforming the branch points $\alpha,\alpha^*,\beta,\beta^*$ to a configuration symmetric about the imaginary axis (so $\beta=-\alpha^*$), one can show that the integral is purely positive imaginary.  Consequently, $f_\lambda(\lambda;\chi,\tau)=0$ if and only if $\Qpoly(\lambda;\chi,\tau)=0$, and $\mathrm{sgn}(f_\lambda(\lambda;\chi,\tau))=-\mathrm{sgn}(\Qpoly(\lambda;\chi,\tau))$.

To determine how the configuration of roots of $R(z)^2$ depends on $\lambda$ and $(\chi,\tau)$ with $\chi>\chi_\mathrm{c}(\tau)$, we now analyze the
\emph{discriminant locus}, i.e., the points in the $(\chi,\lambda)$-plane for fixed $\tau$ at which the discriminant $\mathscr{D}(\chi,\tau,\lambda)$ of $R(z)^2$  
vanishes.  
The discriminant is given explicitly by
\begin{equation}
\mathscr{D}(\chi,\tau,\lambda)=\frac{\mathscr{D}_2^+(\chi,\tau,\lambda)^2\mathscr{D}_2^-(\chi,\tau,\lambda)^2\mathscr{D}_1(\chi,\tau,\lambda)}{2(\chi-\tau^2\lambda)^{12}}
\label{eq:DS-discriminant-factors}
\end{equation}
in which
\begin{equation}
\begin{split}
\mathscr{D}_2^\pm(\chi,\tau,\lambda)&:=(\chi-\tau^2\lambda)^2\lambda\pm 8\\
\mathscr{D}_1(\chi,\tau,\lambda)&:=27(\tau^2\lambda-\chi)^5\tau^2\lambda+9\chi^2(\tau^2\lambda-\chi)^4+\chi^3(\tau^2\lambda-\chi)^3-864\tau^4.
\end{split}
\label{eq:DS-discriminant-factors-def}
\end{equation}
Neither of the double factors $\mathscr{D}_2^\pm(\chi,\tau,\lambda)$ nor the simple factor $\mathscr{D}_1(\chi,\tau,\lambda)$ can vanish for $\lambda=\tau^{-2}\chi$.  (Neither of the double factors can vanish for $\lambda=0$ either.)  Hence given $\tau\neq 0$, equating the three factors to zero gives a system of curves in the $(\chi,\lambda)$-plane, none of which can intersect the line $\lambda=\tau^{-2}\chi$.  In the limit $\chi\to+\infty$, it is straightforward to determine all of the real solutions $\lambda=\lambda_k(\chi,\tau)$ of $\mathscr{D}=0$ at fixed $\tau\neq 0$; in order of increasing $\lambda$ (i.e., $\lambda_j(\chi,\tau)<\lambda_k(\chi,\tau)$) these are:
\begin{itemize}
\item
$\lambda_1=-8\chi^{-2}+128\tau^2\chi^{-5}+O(\chi^{-8})$, a root of $\mathscr{D}_2^+(\chi,\tau,\lambda)$;
\item
$\lambda_2=8\chi^{-2}+128\tau^2\chi^{-5}+O(\chi^{-8})$, a root of $\mathscr{D}_2^-(\chi,\tau,\lambda)$;
\item 
$\lambda_3=\frac{2}{3}\chi\tau^{-2}-(864\tau^{-2})^\frac{1}{3}\chi^{-1}+O(\chi^{-3})$, a root of $\mathscr{D}_1(\chi,\tau,\lambda)$;
\item
$\lambda_4=\chi\tau^{-2}-\sqrt{8}|\tau|^{-1}\chi^{-\frac{1}{2}}+O(\chi^{-2})$, a root of $\mathscr{D}_2^-(\chi,\tau,\lambda)$;
\item
$\lambda_5=\chi\tau^{-2}+(864\tau^{-2})^\frac{1}{3}\chi^{-1}+O(\chi^{-3})$, a root of $\mathscr{D}_1(\chi,\tau,\lambda)$;
\item
$\lambda_6=\chi\tau^{-2}+\sqrt{8}|\tau|^{-1}\chi^{-\frac{1}{2}}+O(\chi^{-2})$, a root of $\mathscr{D}_2^-(\chi,\tau,\lambda)$.
\end{itemize}
The other two roots of the double factor $\mathscr{D}_2^+(\chi,\tau,\lambda)$ and the remaining four roots of the simple factor $\mathscr{D}_1(\chi,\tau,\lambda)$ are non-real for $\chi>0$ sufficiently large.   Now considering finite $\chi$, we note:
\begin{itemize}
\item
The discriminant of $\mathscr{D}_2^+(\chi,\tau,\lambda)$ with respect to $\lambda$ is $-32\tau^6(54\tau^2+\chi^3)$.  This is strictly negative for $\tau\neq 0$ and $\chi>\chi_\mathrm{c}(\tau)$, and hence $\lambda_1(\chi,\tau)$ remains the only real root of this factor on the whole interval $\chi>\chi_\mathrm{c}(\tau)$.  
\item
The discriminant of $\mathscr{D}_2^-(\chi,\tau,\lambda)$ with respect to $\lambda$ is $-32\tau^6(54\tau^2-\chi^3)$.  This will change sign from positive to negative as $\chi$ decreases through the value 
\begin{equation}
\chi=\chi_0(\tau):=(54\tau^2)^{\frac{1}{3}}>0,
\end{equation}
which lies in the interval $\chi>\chi_\mathrm{c}(\tau)$ provided $|\tau|$ is sufficiently large\footnote{Computing the resultant of $\critpoly(\chi,\tau)$ given by \eqref{eq:boundary-curve-exact} (whose zero locus contains the curve $\chi=\chi_\mathrm{c}(\tau)$) and $\chi^3-54\tau^2$ with respect to $\chi$ gives a cubic polynomial in $\tau^2$ with a unique positive root that determines the threshold value of $|\tau|$ beyond which $(54\tau^2)^{\frac{1}{3}}>\chi_\mathrm{c}(\tau)$, namely $|\tau|=|\widehat{\tau}|\approx 0.00573703$.}.  In this case, as $\chi$ decreases from $+\infty$ through the value $\chi_0(\tau)>\chi_\mathrm{c}(\tau)$, two of the three real roots $\lambda_2(\chi,\tau)$, $\lambda_4(\chi,\tau)$, and $\lambda_6(\chi,\tau)$ will coalesce and disappear.  Since the graphs of $\lambda=\lambda_4(\chi,\tau)$ and $\lambda=\lambda_6(\chi,\tau)$ lie on opposite sides of the line $\lambda=\tau^{-2}\chi$ for large $\chi>0$ and neither can cross this line, the two roots $\lambda_2(\chi,\tau)$ and $\lambda_4(\chi,\tau)$ lying below this line are the ones that coalesce and disappear with the common value of $\lambda=18\chi_0(\tau)^{-2}$.  
\item
The discriminant of $\mathscr{D}_1(\chi,\tau,\lambda)$ with respect to $\lambda$ is proportional by a large positive integer to $\tau^{76}(1492992\tau^4+\chi^6)$.  This is strictly positive for $\tau\neq 0$ and $\chi>\chi_\mathrm{c}(\tau)$, and hence $\lambda_3(\chi,\tau)<\lambda_5(\chi,\tau)$ remain the only real roots of this factor for all $\chi>\chi_\mathrm{c}(\tau)$.  
\end{itemize}
Next we discuss the possibility of intersections of branches $\lambda=\lambda_k(\chi,\tau)$ belonging to different factors in the discriminant:
\begin{itemize}
\item
Obviously, the double factors $\mathscr{D}_2^\pm(\chi,\tau,\lambda)$ can have no roots $\lambda$ in common.
\item
The resultant of the double factor $\mathscr{D}_2^+(\chi,\tau,\lambda)$ and the simple factor $\mathscr{D}_1(\chi,\tau,\lambda)$ with respect to $\lambda$ is $4096\tau^{30}(54\tau^2+\chi^3)^3$. Since this is nonzero for $\tau\neq 0$ and $\chi\ge\chi_\mathrm{c}(\tau)$, $\lambda_1(\chi,\tau)<\lambda_3(\chi,\tau)$ holds on the whole interval $\chi\ge\chi_\mathrm{c}(\tau)$.
\item
The resultant of the double factor $\mathscr{D}_2^-(\chi,\tau,\lambda)$ and the simple factor $\mathscr{D}_1(\chi,\tau,\lambda)$ with respect to $\lambda$ is $4096\tau^{30}(54\tau^2-\chi^3)^3$. If $\tau^2>\widehat{\tau}^2$, as $\chi$ decreases from $+\infty$, this will vanish at exactly the same value $\chi=\chi_0(\tau)>\chi_\mathrm{c}(\tau)$ at which point the real roots $\lambda_2(\chi,\tau)$ and $\lambda_4(\chi,\tau)$ of $\mathscr{D}_2^-(\chi,\tau,\lambda)$ coalesce and disappear.  This is consistent with the fact that between each pair of consecutive real roots of $\mathscr{D}_2^-(\chi,\tau,\lambda)$ for $\chi>0$ sufficiently large there lies a real root of $\mathscr{D}_1(\chi,\tau,\lambda)$. Moreover, when $\chi=\chi_0(\tau)$, it is easy to check that $\lambda_2(\chi,\tau)=\lambda_3(\chi,\tau)=\lambda_4(\chi,\tau)=18\chi^{-2}$ and $\lambda_5(\chi,\tau)=\lambda_6(\chi,\tau)=72\chi^{-2}$. Expanding 
$\mathscr{D}_2^-(\chi,\tau,\lambda)$ and $\mathscr{D}_1(\chi,\tau,\lambda)$ about $\lambda=72\chi_0(\tau)^{-2}$ and $\chi=\chi_0(\tau)$ shows that 
$\lambda_5(\chi,\tau)<\lambda_6(\chi,\tau)$ for $\chi\neq \chi_0(\tau)$.
\end{itemize}
Next, we determine the components of the complement of the discriminant locus for which $R(z)^2$ has two distinct complex-conjugate pairs of non-real roots.  Based on the above analysis, the complement of the discriminant locus and the singular line $\lambda=\tau^{-2}\chi$ consists of eight pairwise disjoint components if $\tau^2\le\widehat{\tau}^2$, and a ninth component (denoted $R_{56}^<$ below) appears for $\tau^2>\widehat{\tau}^2$:  
\begin{itemize}
\item
The region $R_-$ is defined by the inequality $\lambda<\lambda_1(\chi,\tau)$ for $\chi>\chi_\mathrm{c}(\tau)$.
\item
The region $R_{12}$ is defined by the inequalities $\lambda_1(\chi,\tau)<\lambda<\lambda_2(\chi,\tau)$ for $\chi>\chi_\mathrm{c}(\tau)$ if $\tau^2\le\widehat{\tau}^2$. If $\tau^2>\widehat{\tau}^2$, then $R_{12}$ is defined by $\lambda_1(\chi,\tau)<\lambda<\lambda_2(\chi,\tau)$ for $\chi\ge \chi_0(\tau)$ and by $\lambda_1(\chi,\tau)<\lambda<\lambda_3(\chi,\tau)$ for $\chi_\mathrm{c}(\tau)<\chi<\chi_0(\tau)$.
\item 
The region $R_{23}$ is defined by the inequalities $\lambda_2(\chi,\tau)<\lambda<\lambda_3(\chi,\tau)$ for $\chi>\chi_\mathrm{c}(\tau)$ if $\tau^2\le\widehat{\tau}^2$.  If $\tau^2>\widehat{\tau}^2$, then $R_{23}$ is defined by $\lambda_2(\chi,\tau)<\lambda<\lambda_3(\chi,\tau)$ for $\chi>\chi_0(\tau)$.
\item 
The region $R_{34}$ is defined by the inequalities $\lambda_3(\chi,\tau)<\lambda<\lambda_4(\chi,\tau)$ for $\chi>\chi_\mathrm{c}(\tau)$ if $\tau^2\le\widehat{\tau}^2$.  If $\tau^2>\widehat{\tau}^2$, then $R_{34}$ is defined by $\lambda_3(\chi,\tau)<\lambda<\lambda_4(\chi,\tau)$ for $\chi>\chi_0(\tau)$.
\item 
The region $R_{40}$ is defined by the inequalities $\lambda_4(\chi,\tau)<\lambda<\tau^{-2}\chi$ for $\chi>\chi_\mathrm{c}(\tau)$ if $\tau^2\le\widehat{\tau}^2$.  If $\tau^2>\widehat{\tau}^2$, then $R_{40}$ is defined by $\lambda_4(\chi,\tau)<\lambda<\tau^{-2}\chi$ for $\chi\ge \chi_0(\tau)$ and by $\lambda_3(\chi,\tau)<\lambda<\tau^{-2}\chi$ for $\chi_\mathrm{c}(\tau)<\chi<\chi_0(\tau)$.
\item
The region $R_{05}$ is defined by the inequalities $\tau^{-2}\chi<\lambda<\lambda_5(\chi,\tau)$ for $\chi>\chi_\mathrm{c}(\tau)$ (regardless of the sign of $\tau^2-\widehat{\tau}^2$).
\item
The region $R_{56}^>$ is defined by the inequalities $\lambda_{5}(\chi,\tau)<\lambda<\lambda_6(\chi,\tau)$ for $\chi>\chi_\mathrm{c}(\tau)$ if $\tau^2\le\widehat{\tau}^2$.  If $\tau^2>\widehat{\tau}^2$, then $R_{56}^+$ is defined by $\lambda_5(\chi,\tau)<\lambda<\lambda_6(\chi,\tau)$ for $\chi>\chi_0(\tau)$.
\item
The region $R_{56}^<$ is only defined for $\tau^2>\widehat{\tau}^2$, and it is given by the inequalities $\lambda_5(\chi,\tau)<\lambda<\lambda_6(\chi,\tau)$ for $\chi_\mathrm{c}(\tau)<\chi<\chi_0(\tau)$.
\item
The region $R_+$ is defined by the inequality $\lambda>\lambda_6(\chi,\tau)$ for $\chi>\chi_\mathrm{c}(\tau)$. 
\end{itemize}
See Figure~\ref{fig:DiscriminantLocus}.  
\begin{figure}[h]
\begin{center}
\includegraphics[height=0.22\textheight]{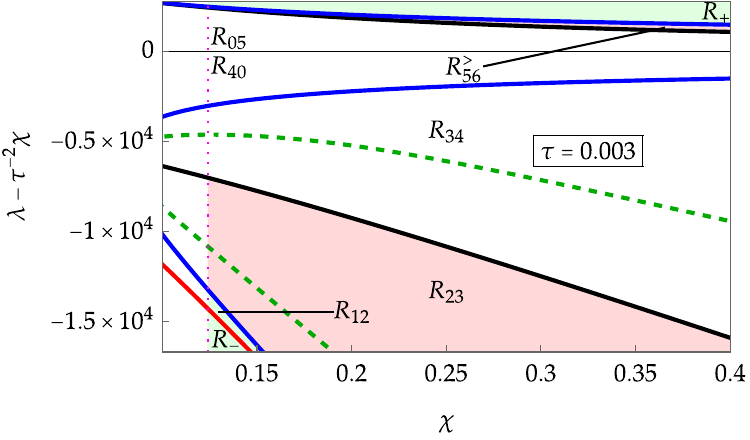}\hfill\includegraphics[height=0.22\textheight]{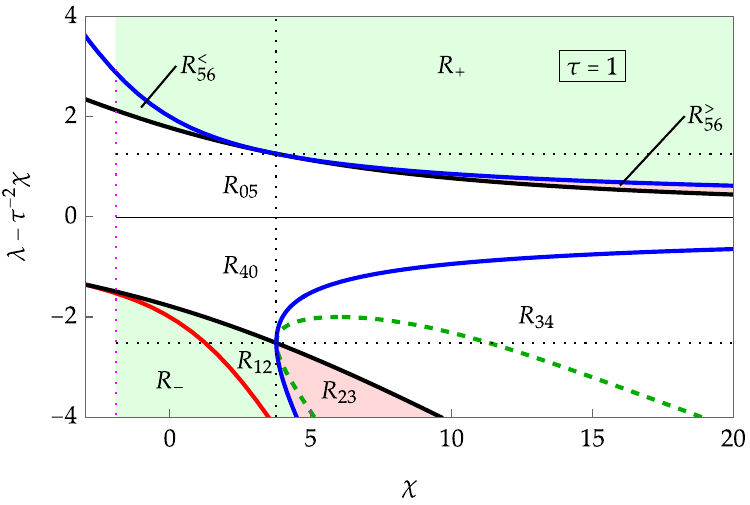}
\end{center}
\caption{The locus $\Qpoly(\lambda;\chi,\tau)=0$ (dashed green curve) and the discriminant locus in the $(\chi,\lambda-\tau^{-2}\chi)$-plane for $\tau=0.003$ (left) and $\tau=1$ (right).  Red curve:  the real root $\lambda=\lambda_1(\chi,\tau)$ of the double factor $\mathscr{D}_2^+(\chi,\tau,\lambda)$.  Blue curves:  roots $\lambda=\lambda_2(\chi,\tau)$, $\lambda=\lambda_4(\chi,\tau)$, and $\lambda=\lambda_6(\chi,\tau)$ of the double factor $\mathscr{D}_2^-(\chi,\tau,\lambda)$.  Black curves:  the real roots $\lambda=\lambda_3(\chi,\tau)$ and $\lambda=\lambda_5(\chi,\tau)$ of the simple factor $\mathscr{D}_1(\chi,\tau,\lambda)$.  The vertical dotted magenta line is $\chi=\chi_\mathrm{c}(\tau)$, the vertical dotted black line is $\chi=\chi_0(\tau)$, and the horizontal dotted lines are $\lambda=18\chi_0(\tau)^{-2}$ and $\lambda=72\chi_0(\tau)^{-2}$.  For $\chi>\chi_\mathrm{c}(\tau)$, green shading indicates the regions (iii) where $R(z)^2$ has distinct complex roots, pink shading indicates the regions (i) where $R(z)^2$ has all real simple roots, and no shading indicates regions (ii) where $R(z)^2$ has two real roots and a complex-conjugate pair of roots.}
\label{fig:DiscriminantLocus}
\end{figure}
For all $(\chi,\tau,\lambda)$ in any given region, $R(z)^2$ is in exactly one of three distinct cases:  (i) four simple real roots, (ii) two simple real roots and two simple non-real roots, or (iii) four simple non-real roots.  To determine the case for a given region it therefore suffices to do so for any convenient point $(\chi,\tau,\lambda)$ contained therein.  Since all of the regions except for $R_{56}^<$ contain arbitrarily large $\chi>0$, for these regions it suffices to determine the root configuration for any chosen $\lambda$ asymptotically confined as $\chi\to+\infty$ in the eight disjoint intervals $\lambda<\lambda_1(\chi,\tau)$, $\lambda_1(\chi,\tau)<\lambda<\lambda_2(\chi,\tau)$, $\lambda_2(\chi,\tau)<\lambda<\lambda_3(\chi,\tau)$, $\lambda_3(\chi,\tau)<\lambda<\lambda_4(\chi,\tau)$, $\lambda_4(\chi,\tau)<\lambda<\tau^{-2}\chi$, $\tau^{-2}\chi<\lambda<\lambda_5(\chi,\tau)$, $\lambda_5(\chi,\tau)<\lambda<\lambda_6(\chi,\tau)$, and $\lambda>\lambda_6(\chi,\tau)$.  To determine the root configuration on $R_{56}^<$ it suffices to consider $\chi=0$ which yields $\lambda_5(0,\tau)=32^\frac{1}{6}|\tau|^{-\frac{4}{3}}$ and $\lambda_6(0,\tau)=64^\frac{1}{6}|\tau|^{-\frac{4}{3}}$; then we may set $\lambda=48^\frac{1}{6}|\tau|^{-\frac{4}{3}}$ and verify that $\tau$ scales out of the root-finding problem for $R(z)^2$.  For each region the root configuration  is thusly determined by explicit calculations.  The results are:
\begin{itemize}
\item $R(z)^2$ has four simple real roots if $(\chi,\tau,\lambda)\in R_{23}\cup R_{56}^>$.
\item $R(z)^2$ has two simple real roots and two simple non-real roots if $(\chi,\tau,\lambda)\in R_{34}\cup R_{40}\cup R_{05}$.
\item $R(z)^2$ has four simple non-real roots if $(\chi,\tau,\lambda)\in R_-\cup R_{12}\cup R_{56}^<\cup R_+$.
\end{itemize}
Moreover, it can be shown that among all of the arcs making up the discriminant locus, those for which there are no real roots of $R(z)^2$ (i.e., a purely complex double root configuration) are precisely $\lambda=\lambda_1(\chi,\tau)$ for $\chi>-\chi_0(\tau)$ (a condition implied by $\chi>\chi_\mathrm{c}(\tau)$) and $\lambda=\lambda_6(\chi,\tau)$ for $\chi<\chi_0(\tau)$.  In other words, these are the arcs of the locus that separate two regions on each of which $R(z)^2$ has four simple non-real roots.  On all other arcs of the discriminant locus, $R(z)^2$ has at least one double real root.

In the case that $\tau=0$ and $\chi>\chi_\mathrm{c}(0)=\frac{1}{8}$, it is clear that $\mathscr{D}_1(\chi,\tau,\lambda)$ has no roots $\lambda$ whatsoever, while $\mathscr{D}_2^+(\chi,\tau,\lambda)=0$ has only the root $\lambda_1(\chi,\tau)=-8\chi^{-2}$ and $\mathscr{D}_2^-(\chi,\tau,\lambda)=0$ has only the root $\lambda_2(\chi,\tau)=8\chi^{-2}$.  Therefore, only three regions survive:  $R_-$, $R_{12}$, and $R_{23}$, with the upper bound on $\lambda$ for $R_{23}$ replaced by $+\infty$.  

\begin{lemma}
On the region 
$R_{12}$
in the $(\chi,\lambda)$-plane, the inequality $\Qpoly(\lambda;\chi,\tau)>0$ holds.
\label{lem:Qpos}
\end{lemma}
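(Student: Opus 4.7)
The plan is to combine three ingredients: a sign evaluation of $\Qpoly$ restricted to each boundary arc of $R_{12}$, a positivity check at an interior test point, and an absence-of-critical-points argument that upgrades boundary positivity to the entire open region via a standard extremum principle on compact sub-domains. First introduce $u:=\tau^2\lambda-\chi$, so that $\Qpoly(\lambda;\chi,\tau)=6u^6+6\chi u^5+\chi^2 u^4+192\tau^4$. The three boundary arcs $\lambda=\lambda_k$ of $R_{12}$ are then cut out by the compact algebraic relations $u^3+\chi u^2+8\tau^2=0$ (for $\lambda=\lambda_1$, from $\mathscr{D}_2^+=0$), $u^3+\chi u^2-8\tau^2=0$ (for $\lambda=\lambda_2$, from $\mathscr{D}_2^-=0$), and $[u(3u+\chi)]^3=864\tau^4$ (for $\lambda=\lambda_3$, from $\mathscr{D}_1=0$, relevant in the piece $\chi_\mathrm{c}(\tau)<\chi<\chi_0(\tau)$ when $\tau^2>\widehat{\tau}^2$).

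Using each defining cubic to eliminate the $u^6$ and $u^5$ terms in $\Qpoly$, I would derive the clean identities
\begin{equation*}
\Qpoly|_{\lambda=\lambda_1}=(24\tau^2+\chi u^2)^2,\quad \Qpoly|_{\lambda=\lambda_2}=(24\tau^2-\chi u^2)^2,\quad \Qpoly|_{\lambda=\lambda_3}=\frac{1}{9}u^3(3u+2\chi)(6u+\chi)^2,
\end{equation*}
the last resting on the factorization $108u^3+108\chi u^2+27\chi^2 u+2\chi^3=(3u+2\chi)(6u+\chi)^2$. A short elimination combining each identity with its defining cubic shows that the only real vanishing loci of $\Qpoly$ on these arcs correspond to $\chi=\pm(54\tau^2)^{1/3}$: the negative root lies strictly below $\chi_\mathrm{c}(\tau)$ and is excluded, while $\chi_0(\tau)=(54\tau^2)^{1/3}$ appears only at the corner where multiple boundary arcs of $R_{12}$ meet; the alternative factor $6u+\chi=0$ on the $\lambda_3$ arc forces $\chi^6<0$ and is impossible. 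Hence $\Qpoly>0$ strictly on all open portions of $\partial R_{12}$.

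For the test point I would take $(\chi_\star,0)$ with $\chi_\star>\chi_0(\tau)$; then $\lambda_1(\chi_\star,\tau)<0<\lambda_2(\chi_\star,\tau)$ places the point in $R_{12}$, and $u=-\chi_\star$ gives $\Qpoly(0;\chi_\star,\tau)=\chi_\star^6+192\tau^4>0$. A gradient computation yields
\begin{equation*}
\nabla_{(\chi,\lambda)}\Qpoly=\bigl(-2u^3(15u^2+14\chi u+2\chi^2),\,2\tau^2 u^3(18u^2+15\chi u+2\chi^2)\bigr),
\end{equation*}
and eliminating the two quadratic factors via their difference $u(3u+\chi)$ forces either $u=0$ (the excluded singular line $\lambda=\tau^{-2}\chi$ separating $R_{40}$ from $R_{05}$) or the degenerate case $\chi=0$ at $u=-\chi/3$. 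Since $R_{12}\subset\{u<0\}$, $\Qpoly$ has no critical points in the interior of $R_{12}$. Combined with connectedness of $R_{12}$ (it is bounded by continuous graphs of the $\lambda_k(\chi,\tau)$, with the two sub-pieces in the $\tau^2>\widehat{\tau}^2$ case glued along $\chi=\chi_0(\tau)$ where $\lambda_2=\lambda_3$) and the growth $\Qpoly\to+\infty$ as $\chi\to+\infty$, any hypothetical interior zero of $\Qpoly$ would constitute an interior minimum on a suitable compact sub-domain and hence a critical point, contradicting the gradient analysis. The positive test value then promotes this to $\Qpoly>0$ throughout $R_{12}$.

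The chief obstacle is the algebraic derivation of the three boundary identities; each is a straightforward elimination using the associated defining cubic, but the cubic factorization on the $\lambda_3$ arc requires the most care. All remaining steps—the test point, the gradient computation, and the extremum argument—are routine once these identities are in hand.
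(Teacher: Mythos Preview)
Your boundary identities are correct and rather elegant: with $u=\tau^2\lambda-\chi$ one indeed finds $\Qpoly|_{\lambda=\lambda_1}=(\chi u^2+24\tau^2)^2$, $\Qpoly|_{\lambda=\lambda_2}=(\chi u^2-24\tau^2)^2$, and $\Qpoly|_{\lambda=\lambda_3}=\tfrac{1}{9}u^3(3u+2\chi)(6u+\chi)^2$, and your gradient computation correctly shows that the only critical points of $\Qpoly$ in the $(\chi,\lambda)$-plane lie on the excluded line $u=0$.  This is a genuinely different route from the paper's, which instead tracks the two real roots $\lambda_\Qpoly^\pm(\chi,\tau)$ of $\Qpoly$ via discriminant and resultant computations and confines them to $R_{23}\cup R_{34}$ for all $\chi>\chi_0(\tau)$ (they disappear for $\chi<\chi_0(\tau)$).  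The paper's method is purely one-dimensional in spirit (following roots as $\chi$ varies), while yours is two-dimensional.

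However, your extremum argument has a real gap.  The region $R_{12}$ is defined only for $\chi>\chi_\mathrm{c}(\tau)$, so its boundary in the $(\chi,\lambda)$-plane includes not only the $\lambda_k$-arcs and the large-$\chi$ end but also the vertical segment $\{\chi=\chi_\mathrm{c}(\tau)\}$, on which you have said nothing about $\Qpoly$.  Consequently there is no ``suitable compact sub-domain'' with $\Qpoly\ge 0$ on its full boundary, and the minimum could in principle be attained on the left edge rather than at an interior critical point.  A clean fix that preserves your ingredients is to run the argument one $\chi$-slice at a time: for fixed $\chi>\chi_\mathrm{c}(\tau)$, the interior zeros of $\partial_\lambda\Qpoly=2\tau^2u^3(18u^2+15\chi u+2\chi^2)$ in $\{u<0\}$ occur only at $u=-\chi/6$ (where $\Qpoly=\chi^6/6^5+192\tau^4>0$) and at $u=-2\chi/3$ (where $\Qpoly=-48\chi^6/729+192\tau^4$, which is nonpositive only for $\chi\ge\chi_0(\tau)$, and one checks that this point then lies on or above $\lambda_2$ and hence outside $R_{12}$).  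Combined with your endpoint identities this gives $\Qpoly>0$ on each vertical fiber.  A smaller issue: on the $\lambda_3$-arc your expression $\tfrac{1}{9}u^3(3u+2\chi)(6u+\chi)^2$ is not manifestly nonnegative, so showing it is nonvanishing on the open arc does not by itself give strict positivity---you need to verify the sign at one point (e.g.\ by perturbing from the corner $\chi=\chi_0(\tau)$, where $u=-2\chi_0/3$ and $3u+2\chi$ crosses zero with the correct sign).
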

\begin{proof}
Since $\Qpoly(\lambda;\chi,0)=\chi^6$ and for $\tau=0$, $\chi>\chi_\mathrm{c}(0)=\frac{1}{8}>0$ on $R_{12}$, it suffices to fix $\tau\neq 0$ for the rest of the proof.
By an asymptotic analysis of $\lambda\mapsto\Qpoly(\lambda;\chi,\tau)$ as $\chi\to+\infty$ analogous to that already conducted above for $\lambda\mapsto \mathscr{D}_2^\pm(\chi,\tau,\lambda)$ and $\lambda\mapsto\mathscr{D}_1(\chi,\tau,\lambda)$, one can check that $\Qpoly(\lambda;\chi,\tau)=0$ has two real solutions $\lambda=\lambda_\Qpoly^\pm(\chi,\tau)$ with asymptotic behavior $\lambda_\Qpoly^\pm(\chi,\tau)=(\frac{1}{2}\pm\frac{1}{6}\sqrt{3})\tau^{-2}\chi + O(\chi^{-5})$ as $\chi\to+\infty$ while the other four roots are non-real for $\chi>0$ sufficiently large.  Hence $\lambda_1(\chi,\tau)<\lambda_2(\chi,\tau)<\lambda_\Qpoly^-(\chi,\tau)<\lambda_3(\chi,\tau)<\lambda_\Qpoly^+(\chi,\tau)<\lambda_4(\chi,\tau)<\lambda_5(\chi,\tau)<\lambda_6(\chi,\tau)$ holds for sufficiently large $\chi>0$.

For $\tau\neq 0$, the discriminant of $\Qpoly(\lambda;\chi,\tau)$ with respect to $\lambda$ is proportional by a positive factor to the product $-(54\tau^2-\chi^3)(54\tau^2+\chi^3)(1492992\tau^4+\chi^6)$, which for $\chi>\chi_\mathrm{c}(\tau)$ vanishes if and only if $\chi=(54\tau^2)^\frac{1}{3}$.  This occurs for no $\chi>\chi_\mathrm{c}(\tau)$ if $|\tau|<|\hat{\tau}|$ and at exactly the value $\chi=\chi_0(\tau)\ge\chi_\mathrm{c}(\tau)$ otherwise (the dotted black vertical line in the right-hand pane of Figure~\ref{fig:DiscriminantLocus}).  However, $\Qpoly(\lambda;\chi_0(\tau),\tau)=(\chi_0(\tau)^2\lambda-18)^2p(\lambda;\tau)$ where $\lambda\mapsto p(\lambda;\tau)$ is a quartic polynomial with discriminant proportional via a positive factor to $\chi_\mathrm{0}(\tau)^{60}>0$.  Hence there is one real double root $\lambda=\lambda_0(\tau):=18\chi_0(\tau)^{-2}$ and four simple roots (of $p$). These four roots are also non-real because there are only two real roots for large $\chi>0$ and the discriminant of $\Qpoly$ is nonzero for $\chi>\chi_0(\tau)$.  Expanding $\Qpoly(\lambda;\chi,\tau)=0$ about $\chi=\chi_0(\tau)$ and $\lambda=\lambda_0(\tau)$ shows that $\chi-\chi_0(\tau)\approx\frac{9}{4}\tau^4(2\tau^2)^{-\frac{1}{3}}(\lambda-\lambda_0(\tau))^2$, so that $\lambda\mapsto\Qpoly(\lambda;\chi,\tau)$ has exactly two real roots $\lambda=\lambda_\Qpoly^\pm(\chi,\tau)$ for $\chi>\chi_0(\tau)$, which coalesce and disappear as $\chi$ decreases through $\chi_0(\tau)$.  Since the discriminant is nonzero for $\chi_\mathrm{c}(\tau)<\chi<\chi_0(\tau)$, there are no real roots at all in this interval.

Next we show that the two roots $\lambda_\Qpoly^-(\chi,\tau)$ and $\lambda_\Qpoly^+(\chi,\tau)$ are confined to the regions $R_{23}$ and $R_{34}$ respectively.  This holds for sufficiently large $\chi>0$ due to the inequalities $\lambda_2(\chi,\tau)<\lambda_\Qpoly^-(\chi,\tau)<\lambda_3(\chi,\tau)<\lambda_\Qpoly^+(\chi,\tau)<\lambda_4(\chi,\tau)$.  It suffices therefore to show that these inequalities persist as $\chi$ decreases to $\chi_0(\tau)$ (if $|\tau|>|\hat{\tau}|$) or to $\chi_\mathrm{c}(\tau)>\chi_0(\tau)$ (if $|\tau|<|\hat{\tau}|)$.
But $\lambda_2(\chi,\tau)$ and $\lambda_4(\chi,\tau)$ are roots of $\lambda\mapsto \mathscr{D}_2^-(\chi,\tau,\lambda)$ defined for $\chi>\chi_0(\tau)$, and 
the resultant of $\Qpoly(\lambda;\chi,\tau)$ with $\mathscr{D}_2^-(\chi,\tau,\lambda)$ as polynomials in $\lambda$ is $65536\tau^{32}(54\tau^2-\chi^3)^2$ which is nonzero for $\chi>\chi_0(\tau)$.  Hence the inequalities $\lambda_2(\chi,\tau)<\lambda_\Qpoly^-(\chi,\tau)$ and $\lambda_\Qpoly^+(\chi,\tau)<\lambda_4(\chi,\tau)$ both persist for $\chi>\chi_0(\tau)$.  Similarly, $\lambda_3(\chi,\tau)$ is a root of $\lambda\mapsto \mathscr{D}_1(\chi,\tau,\lambda)$, and 
the resultant of $\Qpoly(\lambda;\chi,\tau)$ with $\mathscr{D}_1(\chi,\tau,\lambda)$ as polynomials in $\lambda$ is $191102976\tau^{84}(54\tau^2-\chi^3)(54\tau^2+\chi^3)(1492992\tau^4+\chi^6)^2$ which again is nonzero for $\chi>\chi_0(\tau)$ and hence $\lambda_\Qpoly^-(\chi,\tau)<\lambda_3(\chi,\tau)<\lambda_\Qpoly^+(\chi,\tau)$ holds in the same interval.  As the region $R_{23}$ is defined by $\lambda_2(\chi,\tau)<\lambda<\lambda_3(\chi,\tau)$ and $R_{34}$ is defined by $\lambda_3(\chi,\tau)<\lambda<\lambda_4(\chi,\tau)$, the claim is proved.  The roots $\lambda_\Qpoly^\pm(\chi,\tau)$ are illustrated in the plots in Figure~\ref{fig:DiscriminantLocus} with dashed green curves.

Since 
$R_-\cup R_{12}\cup\{\lambda=\lambda_1(\chi,\tau)\}$
is disjoint from $R_{23}$ and from $R_{34}$, $\Qpoly(\lambda;\chi,\tau)$ is of one sign on the whole region of interest.  To determine its sign, it suffices to let $\lambda\to-\infty$ for fixed $(\chi,\tau)$ yielding that $\Qpoly(\lambda;\chi,\tau)>0$ holds on $R_{12}$ in particular.
\end{proof}

\begin{lemma}
If $(\chi,\tau,\lambda)$ are such that $R(z)^2$ has any real repeated roots, then $f(\lambda;\chi,\tau)=-2\neq 0$.
\label{l:DS-no-real-double-roots-under-continuation}
\end{lemma}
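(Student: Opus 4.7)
The plan is to directly show that under the hypothesis, $\int_{\alpha^*}^\alpha h'(z)\,\dd z = 0$, from which $f(\lambda;\chi,\tau) = -\ii(0-2\ii) = -2$ follows immediately from the definition \eqref{eq:integralcondition-rewrite}.

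First I would observe that since $R(z)^2$ has real coefficients in $z$ (by \eqref{eq:SymmetricPolynomials-lambda} and the hypothesis $(\chi,\tau,\lambda)\in\mathbb{R}^3$), its four roots form a Schwarz-symmetric multiset. Any real repeated root $r\in\mathbb{R}$ is self-conjugate and hence must pair with itself in this multiset, so at least one of the two Schwarz pairs constituting $\{\alpha,\alpha^*,\beta,\beta^*\}$ coincides with $\{r,r\}$. Relabeling if necessary (the $\alpha\leftrightarrow\beta$ symmetry combined with the identity $\int_{\alpha^*}^\alpha h'(z)\,\dd z = \int_{\beta^*}^\beta h'(z)\,\dd z$ established in the paragraph preceding \eqref{eq:integralcondition-rewrite} allows this without loss of generality), I take $\beta=\beta^*=r$.

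Next I would evaluate $\int_{\beta^*}^\beta h'(z)\,\dd z$ directly. Factoring $R(z)^2 = (z-r)^2 Q(z)$ where $Q(z)$ is the quadratic whose roots are $\alpha,\alpha^*$, one has $R(z) = (z-r)\sqrt{Q(z)}$ for a branch of $\sqrt{Q}$ analytic in a neighborhood of $r$ provided $\alpha\neq r$; the exceptional cases where $\alpha=r$ (triple or quadruple root) are handled analogously. Thus $R$ has a removable simple zero at $z=r$ rather than a branch point, and $h'(z)=(2\tau z+\chi-\tau^2\lambda)R(z)/z^2$ is analytic at $z=r$. The contour from $\beta^*=r$ to $\beta=r$ can be chosen as a small loop around $r$ enclosing nothing else: the pole of $h'$ at $z=0$ has zero residue, and the branch cut joining $\alpha$ to $\alpha^*$ can be moved clear of $r$. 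Hence $\int_{\beta^*}^\beta h'(z)\,\dd z = 0$, and the invoked identity then gives $\int_{\alpha^*}^\alpha h'(z)\,\dd z = 0$ as desired.

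I expect the main obstacle to be making the identity $\int_{\alpha^*}^\alpha h'(z)\,\dd z = \int_{\beta^*}^\beta h'(z)\,\dd z$ rigorous at a degenerate configuration, since the Schwarz-symmetric labeling $(\alpha,\beta)$ is not uniquely determined when branch points collide. The cleanest resolution is a continuity argument: the identity was established using a Schwarz-symmetry calculation valid on any open neighborhood of a complex-quartet configuration, and both integrals depend continuously on the parameters (interpreting the degenerating integral via a shrinking contour around the coalescing pair), so the identity extends continuously across the discriminant arcs in question. The residual cases --- four real roots with two coinciding, or both Schwarz pairs simultaneously degenerating at distinct real points --- all reduce to the same argument because at least one of $\int_{\alpha^*}^\alpha h'(z)\,\dd z$ or $\int_{\beta^*}^\beta h'(z)\,\dd z$ is from a point to itself along a contour shrinkable to that point, with no enclosed pole or period contribution surviving.
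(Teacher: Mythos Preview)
Your proposal is correct and follows essentially the same idea as the paper, though with considerably more scaffolding than the paper uses. The paper's proof is a single sentence: it simply says that if $R(z)^2$ has a repeated real root, say $\alpha=\alpha^*$, then the integral $\int_{\alpha^*}^\alpha h'(z)\,\dd z$ in \eqref{eq:integralcondition-rewrite} vanishes (an integral from a point to itself along a null-homotopic path), so $f=-2$. Your detour through $\beta=\beta^*$ followed by the swap identity, together with the continuity discussion across degenerate configurations, is unnecessary once you observe that the labeling of which Schwarz pair is called $(\alpha,\alpha^*)$ is free and the definition of $f$ only involves $\int_{\alpha^*}^\alpha$.
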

\begin{proof}
If $R(z)^2$ has a repeated real root, say $\alpha=\alpha^*$, the integral in \eqref{eq:integralcondition-rewrite} vanishes.
\end{proof}

\begin{proposition}
For each $\chi>\chi_\mathrm{c}(\tau)$, there exists a unique solution $\lambda=\lambda(\chi,\tau)$ of $f(\lambda;\chi,\tau)=0$ that is real analytic as a function of $(\chi,\tau)$ and such that the graph of $\chi\mapsto\lambda(\chi,\tau)$ lies in the region $R_{12}$.
\label{prop:gamma-xt}
\end{proposition}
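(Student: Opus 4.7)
The plan is to fix $\tau \in \mathbb{R}$, treat $\chi > \chi_{\mathrm{c}}(\tau)$ as a parameter, and apply the intermediate value theorem to $\lambda \mapsto f(\lambda; \chi, \tau)$ on the $\lambda$-interval carved out by the region $R_{12}$, then upgrade to joint real analyticity in $(\chi,\tau)$ via the implicit function theorem. Since $R(z)^2$ has four simple non-real roots everywhere on $R_{12}$, the roots $\alpha, \beta, \alpha^*, \beta^*$ and hence the integrand of $f$ depend real-analytically on $(\chi,\tau,\lambda)$ throughout that region. Combining \eqref{eq:gamma-deriv-3} with Lemma~\ref{lem:Qpos} and the fact that $\int_{\alpha^*}^\alpha \dd z/R(z)$ is a strictly positive imaginary period shows $\partial f/\partial\lambda < 0$ throughout $R_{12}$, which will give uniqueness as soon as existence is established.

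For existence, I would compute the limiting values of $f$ on the two boundary branches of $R_{12}$. The upper boundary is $\lambda = \lambda_2(\chi,\tau)$ (or $\lambda_3(\chi,\tau)$ on the sub-interval $\chi_{\mathrm{c}}(\tau) < \chi < \chi_0(\tau)$ when $|\tau| > |\widehat{\tau}|$), at which two roots of $R(z)^2$ coalesce on the real axis to create a real double root. Lemma~\ref{l:DS-no-real-double-roots-under-continuation} therefore gives $f = -2 < 0$ there. The delicate step is to evaluate $f$ on the lower boundary $\lambda = \lambda_1(\chi,\tau)$, where $\mathscr{D}_2^+ = 0$ and $R(z)^2 = [(z-\alpha)(z-\alpha^*)]^2$ degenerates to a complex-conjugate pair of double roots. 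I would prove that $g \equiv 0$ on this branch, so $h = \phase$ and $\alpha = \critpt(\chi,\tau)$. This is a routine but essential coefficient comparison: expanding $(2\tau z - \gamma)(z-\alpha)(z-\alpha^*)$ and matching against $2\tau z^3 + \chi z^2 + 2$, the $z^2$ coefficient forces $\mathrm{Re}(\alpha) = -\tau\lambda/4$, the $z^0$ coefficient forces $\gamma|\alpha|^2 = -2$ (consistent with $\gamma = \tau^2\lambda - \chi < 0$ on $R_{12}$ by Lemma~\ref{lem:gamma-nonzero} and $|\alpha|^4 = S_4$), and the resulting $z^1$ identity reduces to $\lambda(\chi-\tau^2\lambda)^2 + 8 = 0$, which is exactly $\mathscr{D}_2^+(\chi,\tau,\lambda) = 0$.

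Granted this identification, $\{\alpha,\alpha^*\}$ together with $\gamma/(2\tau)$ are the three roots of $2\tau z^3 + \chi z^2 + 2 = 0$, so $\alpha = \critpt$ by the convention $\Im(\alpha) > 0$, and
\begin{equation*}
f(\lambda_1(\chi,\tau); \chi, \tau) = -\ii\left(\int_{\critpt^*}^{\critpt} \phase'(z;\chi,\tau)\, \dd z - 2\ii\right) = k(\chi,\tau) - 2,
\end{equation*}
where $k(\chi,\tau)$ is the function from Section~\ref{s:boundary-curve}. Since $k(\chi_{\mathrm{c}}(\tau),\tau) = 2$ by definition of $\chi_{\mathrm{c}}$ and $\partial_\chi k = 2\Im(\critpt) > 0$ by \eqref{eq:k-chi-positive}, one has $f(\lambda_1;\chi,\tau) = k(\chi,\tau) - 2 > 0$ for all $\chi > \chi_{\mathrm{c}}(\tau)$. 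The intermediate value theorem then produces a unique zero $\lambda(\chi,\tau) \in R_{12}$ of $f$, and joint real analyticity in $(\chi,\tau)$ follows from the implicit function theorem applied with $\partial f/\partial\lambda \neq 0$. The principal obstacle is the algebraic identification $g \equiv 0$ on the $\lambda_1$-branch; without it the value of $f$ at the lower endpoint would not obviously connect to the very threshold $k = 2$ used to define $\chi_{\mathrm{c}}(\tau)$, and the sign-change argument driving the intermediate value theorem would be unavailable.
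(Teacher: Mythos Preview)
Your proposal is correct and follows essentially the same route as the paper: intermediate value theorem on the $\lambda$-interval $(\lambda_1,\lambda_{\max})$ using Lemma~\ref{l:DS-no-real-double-roots-under-continuation} at the upper endpoint and the identification $h\equiv\phase$ (hence $f=k-2>0$) at the lower endpoint, with strict monotonicity from \eqref{eq:gamma-deriv-3} and Lemma~\ref{lem:Qpos}, then the implicit function theorem for analyticity. The only difference is that the paper observes directly that a complex-conjugate double root of $R(z)^2$ forces $P(z)$ in \eqref{eq:P-sextic} to be a perfect square and hence $g\equiv 0$, whereas you carry out the explicit coefficient comparison verifying this---a useful check, but not a different idea.
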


\begin{proof}
Given $\chi>\chi_\mathrm{c}(\tau)$, the intersection of $R_{12}$ with the fixed-$\chi$ vertical line in the plots shown in Figure~\ref{fig:DiscriminantLocus} is a $\lambda$-interval $\lambda_1(\chi,\tau)<\lambda<\lambda_\mathrm{max}(\chi,\tau)$, where $\lambda_\mathrm{max}(\chi,\tau)=\lambda_2(\chi,\tau)$ or (if $|\tau|>|\hat{\tau}|$ and $\chi_\mathrm{c}(\tau)<\chi<\chi_0(\tau)$) $\lambda_\mathrm{max}(\chi,\tau)=\lambda_3(\chi,\tau)$.  When $\lambda=\lambda_{\mathrm{max}}(\chi,\tau)$ there is a repeated real root of $R(z)^2$ implying via Lemma~\ref{l:DS-no-real-double-roots-under-continuation} that $f(\lambda;\chi,\tau)=-2$.  On the other hand, when $\lambda=\lambda_1(\chi,\tau)$, $R(z)^2=(z-\critpt)^2(z-\critpt^*)^2$ for some $\sigma\in\mathbb{C}_+$, implying that the polynomial $P(z)$ in \eqref{eq:P-sextic} is a perfect square; therefore $g(z)$ vanishes identically and hence $h(z;\chi,\tau)=\phase(z;\chi,\tau)$.
This means that $f(\lambda_1(\chi,\tau);\chi,\tau)=k(\chi,\tau)-2$, where $k(\chi,\tau)$ is the left-hand side of \eqref{eq:BreakingCurve}.  Since by definition of $\chi_\mathrm{c}(\tau)$ we have $k(\chi,\tau)-2=0$ for $\chi=\chi_\mathrm{c}(\tau)$, according to \eqref{eq:k-chi-positive} $f(\lambda_1(\chi,\tau);\chi,\tau)=k(\chi,\tau)-2>0$ holds because $\chi>\chi_\mathrm{c}(\tau)$.  Existence of a unique solution  $\lambda=\lambda(\chi,\tau)\in (\lambda_1(\chi,\tau),\lambda_\mathrm{max}(\chi,\tau))$ of $f(\lambda;\chi,\tau)=0$ then follows from the intermediate value theorem, \eqref{eq:gamma-deriv-3}, and Lemma~\ref{lem:Qpos}.  Real analyticity of $\chi\mapsto\lambda(\chi,\tau)$ follows from the implicit function theorem and analyticity of $f$ with respect to $\chi>\chi_\mathrm{c}(\tau)$ and $\tau$.
\end{proof}

\subsection{Construction of $g(z)$}
\label{s:DS-Constructing-g}
We now show how, given $\chi>\chi_\mathrm{c}(\tau)$, branch cuts for $h(z)$ can be chosen so that $\mathrm{Im}(h(z)\pm\ii)$ has a sign chart that is suitable for subsequent steepest-descent analysis.
Level curves of $\mathrm{Im}(h(z))$ are arcs of horizontal trajectories of the quadratic differential $h'(z)^2\,\dd z^2$, i.e., curves with tangent $\dd z$ satisfying at every point $z$ the condition $h'(z)^2\,\dd z^2>0$.  Since $h'(z)$ is real-valued for $z\in\mathbb{R}$, the real line (omitting the pole $z=0$ and, if $\tau\neq 0$, the double zero at $z=\gamma/(2\tau)=-\chi/(2\tau) +\tau\lambda/2$) is one such trajectory.  As trajectories cannot intersect, all remaining trajectories are therefore confined to the open half-planes $\mathbb{C}_\pm$; by Schwarz symmetry of $h'(z)$ those in $\mathbb{C}_-$ are obtained from those in $\mathbb{C}_+$ by reflection through the real line.  On the Riemann sphere, the quadratic differential $h'(z)^2\,\dd z^2$ has just two poles ($z=0,\infty$), so applying Jenkins' three-pole theorem \cite[Theorem 3.6]{Jenkins58} shows that there can be no recurrent trajectories and therefore also no divergent trajectories (see \cite[\S 10.2 and 11.1]{Strebel84}).  It follows that each of the three trajectories emanating from any simple zero of $h'(z)^2$ tends in the other direction toward a pole or zero of $h'(z)^2$.  For all $\chi>\chi_\mathrm{c}(\tau)$ we have exactly two simple zeros $z=\alpha,\beta$ in the open upper half-plane $\mathbb{C}_+$, and if $\tau\neq 0$ no trajectory from either of them can terminate in the other direction at the real double zero $z=\gamma/(2\tau)$ because this is forbidden by the integral condition \eqref{eq:integralcondition}.  Teichm\"uller's Lemma \cite[Theorem 14.1]{Strebel84} can then be used to show that exactly one of the trajectories from $z=\alpha$ (resp.,  $z=\beta$) tends to $z=0$, exactly one other tends to $z=\infty$, and the third tends to the other zero in $\mathbb{C}_+$, $z=\beta$ (resp., $z=\alpha$).

Given this trajectory structure, we now construct the function $g(z)$ so that $\mathrm{Im}(h(z)\pm\ii)$ has the sign chart necessary to admit steepest-descent analysis.  Indeed, let $\Sigma_g$ denote the Schwarz-symmetric contour consisting of the trajectory $h'(z)^2\,\dd z^2>0$ joining the complex roots $z=\alpha,\beta$ in the upper half-plane, its Schwarz reflection in the real line, and a Schwarz-symmetric arc connecting $z=\beta,\beta^*$ that lies in the sector at $z=\beta$ bounded by the trajectories emanating from this point and tending to $z=0,\infty$ (we assume that $\mathrm{Re}(\alpha)\le\mathrm{Re}(\beta)$, and if the real parts are equal, $\mathrm{Im}(\alpha)>\mathrm{Im}(\beta)$).  In particular, the latter arc of $\Sigma_g$ crosses the real line at a positive value.  Taking the branch cuts of $h'(z)$ to be the arcs of $\Sigma_g$ joining $\alpha,\beta$ and joining $\alpha^*,\beta^*$ and accounting for the fact that $g(\infty)=0$,  $g(z)$ is defined by integration of $g'(z)=h'(z)-\phase'(z)$:
\begin{equation}
g(z)=\int_\infty^z\left[h'(Z)-\phase'(Z)\right]\,\dd Z,\quad z\in\mathbb{C}\setminus\Sigma_g,
\label{eq:DS-g-formula}
\end{equation}
where the path of integration is arbitrary in $\mathbb{C}\setminus\Sigma_g$ because all singularities are removable.  It follows that $g(z)$ and $h(z)=\phase(z)+g(z)$ are both real-valued for $z\in\mathbb{R}$ and that $g(z^*)=g(z)^*$ and $h(z^*)=h(z)^*$.  On the arc of $\Sigma_g$ connecting $z=\alpha,\beta$, it follows from \eqref{eq:integralcondition} and $h'_+(z)+h'_-(z)=0$ that $h_+(z)+h_-(z)$ is independent of $z$ and that $\mathrm{Im}(h_+(z)+h_-(z))=2$.  We may write $h_+(z)+h_-(z)=2\ii + \phi$, where $\phi\in\mathbb{R}$ is independent of $z$.  Likewise, since $h'(z)$ is analytic on the arc of $\Sigma_g$ joining $\beta,\beta^*$, on this arc (taken with downwards orientation) the difference of boundary values is independent of $z$ and real:  $h_+(z)-h_-(z)=\Delta\in\mathbb{R}$ (as in \eqref{eq:Int-Delta-def});  meanwhile given the placement of this arc relative to the critical trajectories, Schwarz symmetry of $h$ and \eqref{eq:integralcondition} imply that $\mathrm{Im}(h_+(z)+h_-(z))\in (-2,2)$.  This in turn implies that $\mathrm{Im}(h(z))>1$ holds on either side of the arc of $\Sigma_g$ joining $z=\alpha,\beta$, and that $\mathrm{Im}(h(z))<1$ holds in the sector at $z=\alpha$ opposite this arc.
ee Figure~\ref{f:arcs-h} for these arcs and the regions where $\mathrm{Im}(h(z))<1$ and $\mathrm{Im}(h(z))> -1$.
\begin{figure}[h]
\includegraphics[width=0.32\textwidth]{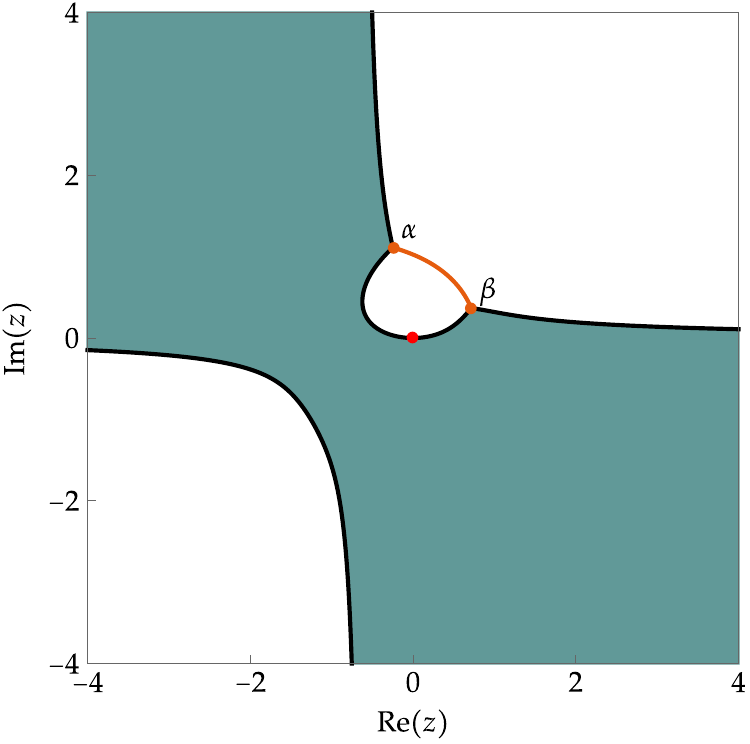}
\includegraphics[width=0.32\textwidth]{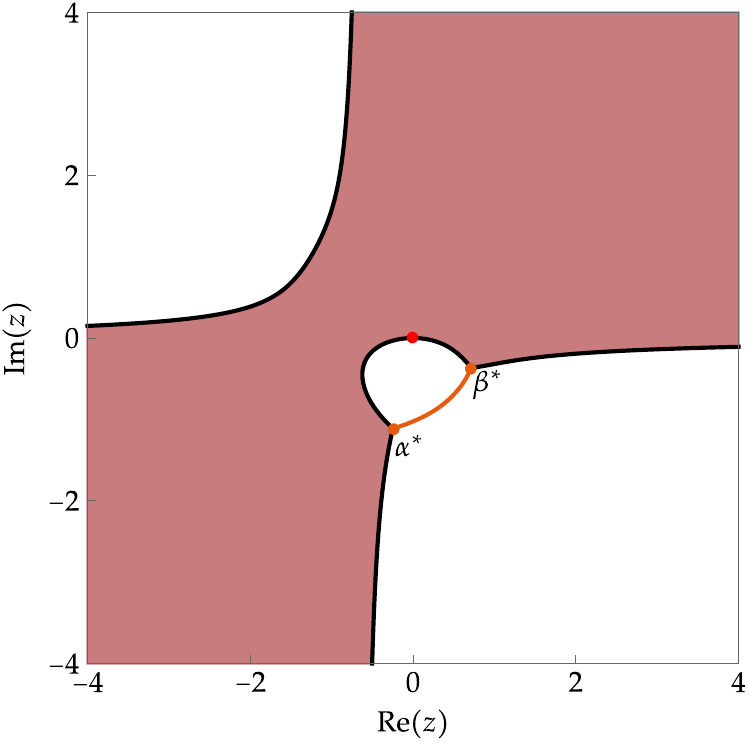}
\includegraphics[width=0.32\textwidth]{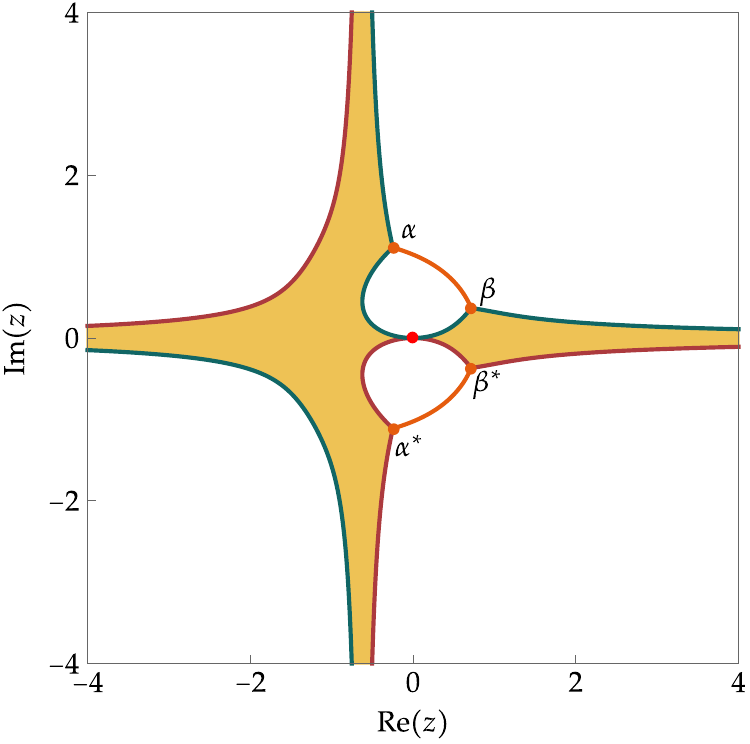}
\caption{Left-pane: the region where $\Im(h(z))<1$ and the arc of $\Sigma_g$ connecting the points $z=\alpha,\beta$. The factor $\ee^{-2\ii M(h(z)-\ii)}$ decays exponentially as $M\to+\infty$ for $z$ in the shaded region.
Center-pane: the region where $\Im(h(z))> -1$ and the arc of $\Sigma_g$ connecting the points $z=\beta^*,\alpha^*$. The factor $\ee^{2\ii M(h(z)+\ii)}$ decays exponentially as $M\to+\infty$ for $z$ in the shaded region.
Right-pane: the region where the composite inequality $-1<\Im(h(z)) < 1$ holds, and the arcs of $\Sigma_g$ connecting $z=\alpha,\beta$ and $z=\beta^*,\alpha^*$. Both of the factors $\ee^{-2\ii M(h(z)-\ii)}$ and $\ee^{2\ii M(h(z)+\ii)}$ decay exponentially as $M\to+\infty$ for $z$ in the shaded region.}
\label{f:arcs-h}
\end{figure}

We denote the three arcs of $\Sigma_g$ as $\Gamma_{\alpha\to\beta}$, $\Gamma_{\beta\to\beta^*}$, and $\Gamma_{\beta^*\to\alpha^*}$ with the subscript indicating the endpoints and orientation.  Including an additional oriented Schwarz-symmetric arc $\Gamma_{\alpha^*\to\alpha}$ lying in the sector bounded by the trajectories emanating from $\alpha$ tending to $z=0,\infty$ (crossing the real line at a negative value) we obtain a closed contour $\Gamma:=\Gamma_{\alpha\to\beta}\cup\Gamma_{\beta\to\beta^*}\cup\Gamma_{\beta^*\to\alpha^*}\cup\Gamma_{\alpha^*\to\alpha}$ with the origin in its interior and clockwise orientation.  Note that $\mathrm{Im}(h(z))\in (-1,1)$ holds for $z\in\Gamma_{\alpha^*\to\alpha}$.

\subsection{Additional properties of the spectral curve for $\chi>\chi_\mathrm{c}(\tau)$}
\subsubsection{Modulation equations}
The four roots $z=\alpha,\beta,\alpha^*,\beta^*$ of $R(z)^2$ are functions of $(\chi,\tau)$ on the domain $\chi>\chi_\mathrm{c}(\tau)$.  Here we derive a quasilinear system of Whitham modulation equations for which these functions are Riemann invariants.  The starting point is the trivial (Clairaut) identity
\begin{equation}
\frac{\partial}{\partial\tau}\frac{\partial h}{\partial\chi}(z;\chi,\tau) +\frac{\partial}{\partial\chi}\left(-\frac{\partial h}{\partial\tau}(z;\chi,\tau)\right)=0,\quad z\in\mathbb{C}\setminus\Sigma_g.
\label{eq:DS-Clairaut}
\end{equation}
The left-hand side of this equation is an analogue in this setting of the canonical differential $\Omega$ in the modulation theory of the Korteweg-de Vries equation as explained by Flaschka, Forest, and McLaughlin \cite{FlaschkaFM80}.  We first express the ``inner'' partial derivatives of $h$ explicitly in terms of $\alpha,\beta,\alpha^*,\beta^*$, and $z$ by differentiating their Riemann-Hilbert jump conditions and solving the differentiated problems.  Using \eqref{eq:DS-tildevartheta} and \eqref{eq:DS-h-g-tildevartheta} shows that the partial derivatives $h_\chi(z;\chi,\tau)$ and $h_\tau(z;\chi,\tau)$ are both analytic functions for $z\in\mathbb{C}\setminus\Sigma_g$, and taking into account further that $g(\infty)=0$ holds for all $(\chi,\tau)$ in question shows that $h_\chi(z;\chi,\tau)=z+O(z^{-1})$ while $h_\tau(z;\chi,\tau)=z^2+O(z^{-1})$ as $z\to\infty$.  Differentiation of the jump conditions for $h(z;\chi,\tau)$ shows that $h_{\chi+}(z;\chi,\tau)+h_{\chi-}(z;\chi,\tau)=\phi_\chi(\chi,\tau)$ and $h_{\tau+}(z;\chi,\tau)+h_{\tau-}(z;\chi,\tau)=\phi_\tau(\chi,\tau)$ both hold for $z\in\Gamma_{\alpha\to\beta}\cup\Gamma_{\beta^*\to\alpha^*}$.  Likewise, $h_{\chi+}(z;\chi,\tau)-h_{\chi-}(z;\chi,\tau)=\Delta_\chi(\chi,\tau)$ and $h_{\tau+}(z;\chi,\tau)-h_{\tau-}(z;\chi,\tau)=\Delta_\tau(\chi,\tau)$ both hold for $z\in\Gamma_{\beta\to\beta^*}$.  It follows from these conditions that $h_\chi(z;\chi,\tau)$ and $h_\tau(z;\chi,\tau)$ necessarily have the form
\begin{equation}
\frac{\partial h}{\partial\chi}(z;\chi,\tau)=R(z)\left[\frac{\phi_\chi(\chi,\tau)}{2\pi\ii}\int_{\Gamma_{\alpha\to\beta}\cup\Gamma_{\beta^*\to\alpha^*}}\frac{\dd w}{R_+(w)(w-z)} + \frac{\Delta_\chi(\chi,\tau)}{2\pi\ii}\int_{\Gamma_{\beta\to\beta^*}}\frac{\dd w}{R(w)(w-z)}\right]
\label{eq:DS-h-chi}
\end{equation}
and
\begin{equation}
\frac{\partial h}{\partial\tau}(z;\chi,\tau)=R(z)\left[1+\frac{\phi_\tau(\chi,\tau)}{2\pi\ii}\int_{\Gamma_{\alpha\to\beta}\cup\Gamma_{\beta^*\to\alpha^*}}\frac{\dd w}{R_+(w)(w-z)} + \frac{\Delta_\tau(\chi,\tau)}{2\pi\ii}\int_{\Gamma_{\beta\to\beta^*}}\frac{\dd w}{R(w)(w-z)}\right].
\label{eq:DS-h-tau}
\end{equation}
But since $R(z)^{-1}=z^{-2}+r_3z^{-3}+r_4z^{-4}+O(z^{-5})$ as $z\to\infty$, where
\begin{equation}
\begin{split}
r_3&:=\frac{1}{2}(\alpha+\beta+\alpha^*+\beta^*),\\ 
r_4&:=\frac{1}{8}\left(3(\alpha+\beta+\alpha^*+\beta^*)^2-4\left(\alpha\alpha^*+\alpha\beta+\alpha^*\beta+\alpha\beta^*+\alpha^*\beta^*+\beta\beta^*\right)\right),
\end{split}
\label{eq:DS-r3-r4}
\end{equation}
upon dividing \eqref{eq:DS-h-chi} by $R(z)$ and using  $h_\chi(z;\chi,\tau)=z+O(z^{-1})$ as $z\to\infty$ we see that
\begin{equation}
\begin{split}
\frac{\phi_\chi(\chi,\tau)}{2\pi\ii}\int_{\Gamma_{\alpha\to\beta}\cup\Gamma_{\beta^*\to\alpha^*}}\frac{\dd w}{R_+(w)} +\frac{\Delta_\chi(\chi,\tau)}{2\pi\ii}\int_{\Gamma_{\beta\to\beta^*}}\frac{\dd w}{R(w)} &= -1\\
\frac{\phi_\chi(\chi,\tau)}{2\pi\ii}\int_{\Gamma_{\alpha\to\beta}\cup\Gamma_{\beta^*\to\alpha^*}}\frac{w\,\dd w}{R_+(w)} +\frac{\Delta_\chi(\chi,\tau)}{2\pi\ii}\int_{\Gamma_{\beta\to\beta^*}}\frac{w\,\dd w}{R(w)} &= -r_3,
\end{split}
\end{equation}
and similarly using $h_\tau(z;\chi,\tau)=z^2+O(z^{-1})$ as $z\to\infty$ in \eqref{eq:DS-h-tau} gives
\begin{equation}
\begin{split}
\frac{\phi_\tau(\chi,\tau)}{2\pi\ii}\int_{\Gamma_{\alpha\to\beta}\cup\Gamma_{\beta^*\to\alpha^*}}\frac{\dd w}{R_+(w)} +\frac{\Delta_\tau(\chi,\tau)}{2\pi\ii}\int_{\Gamma_{\beta\to\beta^*}}\frac{\dd w}{R(w)} &= -r_3\\
\frac{\phi_\tau(\chi,\tau)}{2\pi\ii}\int_{\Gamma_{\alpha\to\beta}\cup\Gamma_{\beta^*\to\alpha^*}}\frac{w\,\dd w}{R_+(w)} +\frac{\Delta_\tau(\chi,\tau)}{2\pi\ii}\int_{\Gamma_{\beta\to\beta^*}}\frac{w\,\dd w}{R(w)} &= -r_4.
\end{split}
\end{equation}
Now a contour deformation shows that 
\begin{equation}
\int_{\Gamma_{\alpha\to\beta}\cup\Gamma_{\beta^*\to\alpha^*}}\frac{\dd w}{R_+(w)} = 0\quad\text{and}\quad
\int_{\Gamma_{\alpha\to\beta}\cup\Gamma_{\beta^*\to\alpha^*}}\frac{w\,\dd w}{R_+(w)} = -\ii\pi.
\end{equation}
Therefore, defining
\begin{equation}
I_p:=\frac{1}{2\pi\ii}\int_{\Gamma_{\beta\to\beta^*}}\frac{w^p\,\dd w}{R(w)},\quad p\in\mathbb{Z}_{\ge 0},
\label{eq:DS-Ip}
\end{equation}
and noting that $I_0\neq 0$ as a complete elliptic integral of the first kind, we may solve explicitly for $\phi_\chi(\chi,\tau)$, $\Delta_\chi(\chi,\tau)$, $\phi_\tau(\chi,\tau)$, and $\Delta_\tau(\chi,\tau)$:
\begin{equation}
\Delta_\chi(\chi,\tau)=-\frac{1}{I_0}\quad\text{and}\quad
\phi_\chi(\chi,\tau)=2r_3-2\frac{I_1}{I_0}
\label{eq:DS-chi-derivs}
\end{equation}
and
\begin{equation}
\Delta_\tau(\chi,\tau)=-\frac{r_3}{I_0}\quad\text{and}\quad \phi_\tau(\chi,\tau)=2r_4-2\frac{r_3I_1}{I_0}.
\label{eq:DS-tau-derivs}
\end{equation}
Using these in \eqref{eq:DS-h-chi}--\eqref{eq:DS-h-tau} explicitly presents the ``inner'' partial derivatives of $h$ as functions of $z$ depending parametrically on $\chi,\tau$ \emph{only via the four points $z=\alpha,\beta,\alpha^*,\beta^*$}.  

The expressions \eqref{eq:DS-h-chi}--\eqref{eq:DS-h-tau} can be further simplified by invoking another contour integration argument to show that
\begin{equation}
\int_{\Gamma_{\alpha\to\beta}\cup\Gamma_{\beta^*\to\alpha^*}}\frac{\dd w}{R_+(w)(w-z)} = \frac{\ii\pi}{R(z)},\quad z\in\mathbb{C}\setminus\Sigma_g.
\end{equation}
We thus
define another branch of the square root by setting
\begin{equation}
\widetilde{R}(z):=\begin{cases}-R(z),&z\in \mathrm{int}(\Gamma),\\R(z),& %z\in\mathbb{C}\setminus\overline{B},
z\in\mathrm{ext}(\Gamma),
\end{cases}
\label{eq:widetilde-R}
\end{equation}
and we see that if $C$ is a clockwise-oriented loop surrounding the branch cut $\Gamma_{\beta\to\beta^*}$ of $\widetilde{R}(z)$ but not enclosing its other branch cut $\Gamma_{\alpha^*\to\alpha}$,
\begin{equation}
\int_{\Gamma_{\beta\to\beta^*}}\frac{\dd w}{R(w)(w-z)} = \begin{cases}
\displaystyle \frac{1}{2}\oint_C\frac{\dd w}{\widetilde{R}(w)(w-z)},&z\in\mathrm{ext}(C)\\
\displaystyle \frac{\ii\pi}{\widetilde{R}(z)} + \frac{1}{2}\oint_C\frac{\dd w}{\widetilde{R}(w)(w-z)},&z\in\mathrm{int}(C)\setminus\Gamma_{\beta\to\beta^*}.\end{cases}
\end{equation} 
Therefore, \eqref{eq:DS-h-chi}--\eqref{eq:DS-h-tau} become
\begin{equation}
\frac{\partial h}{\partial\chi}(z;\chi,\tau)=\begin{cases}
\displaystyle \frac{1}{2}\phi_\chi(\chi,\tau) + \frac{\Delta_\chi(\chi,\tau)}{4\pi\ii}R(z)\oint_C\frac{\dd w}{\widetilde{R}(w)(w-z)},&\text{for $z$ near $\alpha$ or $\alpha^*$}\\
\displaystyle \frac{1}{2}(\phi_\chi(\chi,\tau)\pm\Delta_\chi(\chi,\tau)) + \frac{\Delta_\chi(\chi,\tau)}{4\pi\ii}R(z)\oint_C\frac{\dd w}{\widetilde{R}(w)(w-z)},&\text{for $z$ near $\beta$ or $\beta^*$,}
\end{cases}
\end{equation}
\begin{multline}
\frac{\partial h}{\partial\tau}(z;\chi,\tau)\\=\begin{cases}
\displaystyle R(z)+\frac{1}{2}\phi_\tau(\chi,\tau) + \frac{\Delta_\tau(\chi,\tau)}{4\pi\ii}R(z)\oint_C\frac{\dd w}{\widetilde{R}(w)(w-z)},&\text{for $z$ near $\alpha$ or $\alpha^*$}\\
\displaystyle R(z)+\frac{1}{2}(\phi_\tau(\chi,\tau)\pm\Delta_\tau(\chi,\tau)) + \frac{\Delta_\tau(\chi,\tau)}{4\pi\ii}R(z)\oint_C\frac{\dd w}{\widetilde{R}(w)(w-z)},&\text{for $z$ near $\beta$ or $\beta^*$,}
\end{cases}
\end{multline}
where the $\pm$ sign stands for the fraction $R(z)/\widetilde{R}(z)$.
 Also, $I_p$ defined by \eqref{eq:DS-Ip} can be rewritten as
\begin{equation}
I_p=\frac{1}{4\pi\ii}\oint_C\frac{w^p\,\dd w}{\widetilde{R}(w)},\quad p\in\mathbb{Z}_{\ge 0}.
\label{eq:DS-Ip-Rtilde}
\end{equation}

Next, we differentiate $h_\chi$ and $h_\tau$ with respect to $\tau$ and $\chi$ respectively, thinking of $z$ near $z_1:=\alpha$, $z_2:=\beta$, $z_3:=\alpha^*$, or $z_4:=\beta^*$.    The most singular terms in any of these limits arise from differentiation of $R(z)$ via its branch points, since
\begin{equation}
\frac{\partial R}{\partial\chi,\tau}(z)=-\frac{1}{2}R(z)\sum_{i=1}^4\frac{1}{z-z_i}\frac{\partial z_i}{\partial\chi,\tau}.  
\end{equation}
Hence 
\begin{equation}
\frac{\partial}{\partial\tau}\left(\frac{\partial h}{\partial\chi}(z;\chi,\tau)\right)=-\frac{\Delta_\chi(\chi,\tau)}{8\pi\ii}R(z)\left[\sum_{i=1}^4\frac{1}{z-z_i}\frac{\partial z_i}{\partial \tau}\right]\oint_C\frac{\dd w}{\widetilde{R}(w)(w-z_\ell)} + O(1),\quad z\to z_\ell
\end{equation}
and
\begin{equation}
\frac{\partial}{\partial\chi}\left(-\frac{\partial h}{\partial\tau}(z;\chi,\tau)\right)=R(z)\left[\sum_{i=1}^4\frac{1}{z-z_i}\frac{\partial z_i}{\partial\chi}\right]\left(\frac{1}{2}+\frac{\Delta_\tau(\chi,\tau)}{8\pi\ii}\oint_C\frac{\dd w}{\widetilde{R}(w)(w-z_\ell)}\right)+O(1),\quad z\to z_\ell.
\end{equation}
Therefore, keeping only the coefficients of the most singular terms corresponding to $i=\ell$, the identity \eqref{eq:DS-Clairaut} implies that
\begin{equation}
-\frac{\Delta_\chi(\chi,\tau)}{8\pi\ii}\oint_C\frac{\dd w}{\widetilde{R}(w)(w-z_\ell)}\cdot\frac{\partial z_\ell}{\partial\tau} +\left(\frac{1}{2}+\frac{\Delta_\tau(\chi,\tau)}{8\pi\ii}\oint_C\frac{\dd w}{\widetilde{R}(w)(w-z_\ell)}\right)\frac{\partial z_\ell}{\partial \chi}=0,\quad \ell=1,2,3,4.
\end{equation}
Here, the partial derivatives $\Delta_\chi(\chi,\tau)$ and $\Delta_\tau(\chi,\tau)$ are explicitly expressed in terms of $z_1,\dots,z_4$ using \eqref{eq:DS-chi-derivs}--\eqref{eq:DS-tau-derivs}.

Therefore, the quantities $z_\ell=z_\ell(\chi,\tau)$, $\ell=1,\dots,4$ satisfy a quasilinear first-order system of partial differential equations in diagonal (Riemann-invariant) form:
\begin{equation}
\frac{\partial z_\ell}{\partial\tau} + s_\ell\frac{\partial z_\ell}{\partial \chi}=0,\quad \ell=1,\dots,4,\quad s_\ell:=-\frac{\Delta_\tau}{\Delta_\chi}-\left[\frac{\Delta_\chi}{4\pi\ii}\oint_C\frac{\dd w}{\widetilde{R}(w)(w-z_\ell)}\right]^{-1}.
\label{eq:DS-Whitham}
\end{equation}
Using \eqref{eq:DS-r3-r4} and  \eqref{eq:DS-chi-derivs}--\eqref{eq:DS-tau-derivs}, the characteristic velocities $s_\ell$ can be written in the form
\begin{equation}
s_\ell=-\frac{1}{2}\Sigma_1
+ 4\pi\ii I_0\left[\oint_C\frac{\dd w}{\widetilde{R}(w)(w-z_\ell)}\right]^{-1},\quad \ell=1,\dots,4,
\label{eq:DS-sk-BM}
\end{equation}
in which we use the notation
\begin{equation}
\Sigma_1:=\sum_{i=1}^4 z_i,\quad \Sigma_2:=\mathop{\sum_{i,j=1}^4}_{i\neq j}z_i z_j,\quad \Sigma_3:=\mathop{\sum_{i,j,k=1}^4}_{i\neq j\neq k}z_i z_j z_k.
\label{eq:DS-sums-notation}
\end{equation}
The quantities $s_\ell$ defined by \eqref{eq:DS-sk-BM} agree precisely with the characteristic velocities $S^{(\ell)}$ defined in the well-known paper of Forest and Lee \cite{ForestL86} on the Whitham modulation theory of the focusing nonlinear Schr\"odinger equation after accounting for some typos\footnote{Some typos in \cite{ForestL86} include:  in (II.2b), the first and third equations should read $f_t=(\ii r_x-2Er)g+(\ii q_x+2Eq)h$ and $h_t=2(\ii r_x-2Er)f-2\ii (qr-2E^2)h$ respectively; in the displayed formula for $D_j^{(1)}$ for $0\le j\le N-2$ on page 53, an overall minus sign is missing (at least in the genus-one $N=2$ case); in (III.6) a minus sign is missing on the derivative with respect to $X$ for consistency with (III.2); in (III.10) the characteristic speeds $S^{(k)}$ are off by a sign and the summation in the numerator should begin with the index $j=0$.} and rescaling the time by a factor of $2$.  To see this, we integrate the identity 
\begin{equation}
\frac{\dd}{\dd w}\frac{\widetilde{R}(w)}{w-z_\ell}=\frac{w^2-\frac{1}{2}\Sigma_1w+z_\ell(\frac{1}{2}\Sigma_1-z_\ell)}{\widetilde{R}(w)}
-\frac{1}{2}\mathop{\prod_{i=1}^4}_{i\neq \ell}(z_\ell-z_i)\cdot\frac{1}{\widetilde{R}(w)(w-z_\ell)}
\end{equation}
around the cycle $C$ to obtain
\begin{equation}
\begin{split}
\frac{1}{2}\mathop{\prod_{i=1}^4}_{i\neq \ell}(z_\ell-z_i)\oint_C\frac{\dd w}{\widetilde{R}(w)(w-z_\ell)}&=\oint_C\frac{w^2-\frac{1}{2}\Sigma_1w+z_\ell(\frac{1}{2}\Sigma_1-z_\ell)}{\widetilde{R}(w)}\dd w\\
&=4\pi\ii I_2-2\pi\ii\Sigma_1 I_1+2\pi\ii z_\ell(\Sigma_1-2z_\ell)I_0.
\end{split}
\end{equation}
It follows that $s_\ell$ given by \eqref{eq:DS-sk-BM} can be written as
\begin{equation}
s_\ell=\frac{\displaystyle -\tfrac{1}{2}\Sigma_1I_2 +\tfrac{1}{4}\Sigma_1^{2}I_1 +\left[\tfrac{1}{2}\mathop{\prod_{i=1}^4}_{i\neq \ell}(z_\ell-z_i)-\tfrac{1}{2}\Sigma_1z_\ell(\tfrac{1}{2}\Sigma_1-z_\ell)\right]I_0}{\displaystyle I_2 - \tfrac{1}{2}\Sigma_1I_1 +z_\ell(\tfrac{1}{2}\Sigma_1-z_\ell)I_0}.
\end{equation}
Next, recalling \eqref{eq:DS-sums-notation}, we have 
\begin{equation}
\mathop{\prod_{i=1}^4}_{i\neq \ell}(z_\ell-z_i)=4z_\ell^3-3\Sigma_1z_\ell^2+2\Sigma_2z_\ell -\Sigma_3,
\end{equation}
so $s_\ell$ becomes a ratio of cubic and quadratic polynomials in $z_\ell$ with coefficients that are symmetric functions of all four points $z_i$, $i=1,\dots,4$:
\begin{equation}
s_\ell=\frac{2I_0z_\ell^3-\Sigma_1I_0z_\ell^2 + (\Sigma_2-\tfrac{1}{4}\Sigma_1^2)I_0z_\ell -\tfrac{1}{2}\Sigma_1 I_2 +\tfrac{1}{4}\Sigma_1^2I_1-\tfrac{1}{2}\Sigma_3I_0}{-I_0z_\ell^2+\tfrac{1}{2}\Sigma_1I_0z_\ell + I_2-\tfrac{1}{2}\Sigma_1I_1}.
\label{eq:DS-s-ell-close}
\end{equation}
Finally, since 
\begin{equation}
\frac{\dd}{\dd w}\widetilde{R}(w) = \frac{ 4w^3-3\Sigma_1w^2+2\Sigma_2w-\Sigma_3}{2\widetilde{R}(w)},
\end{equation}
by integration about the loop $C$ we obtain the identity $2I_3-\frac{3}{2}\Sigma_1I_2+\Sigma_2I_1-\frac{1}{2}\Sigma_3I_0=0$.  Using this we eliminate $I_0$ from the constant term in the numerator on the right-hand side of \eqref{eq:DS-s-ell-close} and obtain
\begin{equation}
s_\ell=\frac{2I_0z_\ell^3-\Sigma_1I_0z_\ell^2 + (\Sigma_2-\tfrac{1}{4}\Sigma_1^2)I_0z_\ell -2I_3+\Sigma_1I_2+(\tfrac{1}{4}\Sigma_1^2-\Sigma_2)I_1}{-I_0z_\ell^2+\tfrac{1}{2}\Sigma_1I_0z_\ell + I_2-\tfrac{1}{2}\Sigma_1I_1}.
\end{equation}
Upon dividing numerator and denominator through by $I_0\neq 0$, the resulting ratio of polynomials $D_3^{(2)}z_\ell^3+D_2^{(2)}z_\ell^2+D_1^{(2)}z_\ell + D_0^{(2)}$ and $D_2^{(1)}z_\ell^2 + D_1^{(1)}z_\ell + D_0^{(1)}$ matches those defined in \cite{ForestL86} modulo the aforementioned time rescaling and typos.

\subsubsection{Absolute integrability of derivatives}

We now prove the following.
\begin{lemma}
The partial derivatives $\alpha_\chi(\chi,\tau)$ and $\beta_\chi(\chi,\tau)$ are absolutely integrable on 
$(\chi_\mathrm{c}(\tau),+\infty)$.
\label{lem:absolute-integrability}
\end{lemma}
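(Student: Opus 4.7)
The plan is to derive an explicit formula for $\alpha_\chi$ by implicit differentiation and then control its behavior at the two endpoints of the interval $(\chi_\mathrm{c}(\tau),+\infty)$. By Proposition~\ref{prop:gamma-xt} the function $\lambda(\chi,\tau)$ is real-analytic on this open interval and the four roots $\alpha,\beta,\alpha^*,\beta^*$ of $R(z)^2$ are distinct there, so $\alpha$ is real-analytic in $(\chi,\tau)$. Consequently $\alpha_\chi$ is continuous on the open interval, and the only obstructions to $L^1$-integrability can arise as $\chi\to\chi_\mathrm{c}(\tau)^+$ (where $\alpha,\beta\to\xi$) or as $\chi\to+\infty$. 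Differentiating $R(\alpha)^2=0$ using both the factored form and the expanded form with coefficients from \eqref{eq:SymmetricPolynomials-lambda} yields
\begin{equation*}
\alpha_\chi \;=\; -\frac{(S_4)_\chi-(S_3)_\chi\alpha+(S_2)_\chi\alpha^2-(S_1)_\chi\alpha^3}{(\alpha-\beta)(\alpha-\alpha^*)(\alpha-\beta^*)}\,,
\end{equation*}
where $(S_p)_\chi=(\partial_\lambda S_p)\lambda_\chi+\partial_\chi S_p$ and $\lambda_\chi$ is obtained by implicit differentiation of $f(\lambda;\chi,\tau)=0$. The argument for $\beta_\chi$ is identical after exchanging $\alpha\leftrightarrow\beta$.

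For the edge behavior, let $\mu(\chi,\tau):=\lambda(\chi,\tau)-\lambda_1(\chi,\tau)>0$ and let $\zeta_1(\chi,\tau)$ denote the double root of $R(z)^2$ on the curve $\mathscr{D}_2^+=0$ (which is real-analytic in $\chi$ and satisfies $\zeta_1(\chi_\mathrm{c}(\tau),\tau)=\xi$). The standard unfolding of a double root of a polynomial under smooth perturbation of its coefficients gives $\alpha-\beta=c(\chi)\sqrt{\mu}+O(\mu)$ with $c(\chi)\neq0$, while $\alpha-\alpha^*$ and $\alpha-\beta^*$ remain bounded away from $0$. To relate $\mu$ to $\epsilon:=\chi-\chi_\mathrm{c}(\tau)$, I would explicitly analyze the divergence of the elliptic integral $\int_{\alpha^*}^\alpha dz/R(z)$ in the pinching geometry: a direct computation using the local behavior $R(z)\sim c_1\sqrt{z-\alpha}$ with $|c_1|\sim|\alpha-\beta|^{1/2}$ near the coalescing branch points shows that this integral grows like $|\alpha-\beta|^{-1/2}\sim\mu^{-1/4}$. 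Via \eqref{eq:gamma-deriv-3} this gives $f_\lambda\sim -C\mu^{-1/4}$; integrating in $\lambda$ and matching with $f(\lambda_1;\chi,\tau)=k(\chi,\tau)-2\sim k_\chi\epsilon$ (from the proof of Proposition~\ref{prop:gamma-xt} and \eqref{eq:k-chi-positive}) yields the power-law scaling $\mu\sim\epsilon^{4/3}$. Since the numerator in the formula for $\alpha_\chi$ is bounded and the denominator is $O(\sqrt{\mu})=O(\epsilon^{2/3})$, we obtain $\alpha_\chi=O(\epsilon^{-2/3})$, which is integrable on $(\chi_\mathrm{c}(\tau),\chi_\mathrm{c}(\tau)+\delta)$ because $-2/3>-1$.

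For the behavior at infinity, the inclusion $\lambda\in R_{12}$ together with $\lambda_1\sim-8\chi^{-2}$ and $\lambda_2\sim 8\chi^{-2}$ forces $\lambda=O(\chi^{-2})$, so from \eqref{eq:SymmetricPolynomials-lambda} one has $S_1=O(\chi^{-2})$, $S_2=O(\chi^{-1})$, $S_3=O(\chi^{-3})$, $S_4=O(\chi^{-2})$. The rescaling $z=\chi^{-1/2}w$ converts $R(z)^2$ into $\chi^{-2}(w^4+\tilde a w^2+4+O(\chi^{-1/2}))$ with $|\tilde a|<8$, showing $|\alpha|,|\beta|=O(\chi^{-1/2})$ with all pairwise differences of rescaled roots bounded and bounded away from $0$ (as follows from $\lambda$ being confined to the interior of $R_{12}$). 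Direct tracking of the scalings in the explicit formula then gives $\alpha_\chi=O(\chi^{-3/2})$, integrable on $(1,+\infty)$. Combining the endpoint estimates with continuity of $\alpha_\chi$ on compact subintervals yields $\int_{\chi_\mathrm{c}(\tau)}^{+\infty}|\alpha_\chi|\,d\chi<\infty$. The principal technical obstacle is the edge analysis: justifying the rate $\int_{\alpha^*}^\alpha dz/R(z)\sim\mu^{-1/4}$ as the branch cut $[\alpha,\beta]$ pinches, and hence the scaling $\mu\sim\epsilon^{4/3}$, requires careful local analysis of the nearly-degenerate algebraic curve, whereas the infinity behavior is a straightforward rescaling argument.
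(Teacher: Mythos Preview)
Your overall strategy---implicit differentiation of $R(\alpha)^2=0$ and endpoint analysis---matches the paper's, but there are two concrete errors in the rates you claim.

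\textbf{Edge $\chi\downarrow\chi_\mathrm{c}(\tau)$.} The integral $\int_{\alpha^*}^\alpha\dd z/R(z)$ does \emph{not} diverge like $|\alpha-\beta|^{-1/2}$. Your local expansion $R(z)\sim c_1\sqrt{z-\alpha}$ with $|c_1|\sim|\alpha-\beta|^{1/2}$ is valid only for $|z-\alpha|\lesssim|\alpha-\beta|$, and integrating over that region gives an $O(1)$ contribution. The divergence comes from the annulus $|\alpha-\beta|\ll|z-\critpt|\ll 1$, where $R(z)\approx(z-\critpt)(z-\critpt^*)$ and the integrand behaves like $1/(z-\critpt)$; this yields a \emph{logarithmic} divergence $\sim\ln(1/|\alpha-\beta|)$ (equivalently, this is the standard $\mathbb{K}(m)\sim-\tfrac12\ln(1-m)$ as $m\to 1$). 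With the correct rate one gets $f_\lambda\sim-C\ln(1/\mu)$, hence $\mu\ln(1/\mu)\sim\epsilon$ rather than $\mu\sim\epsilon^{4/3}$. Fortunately this still gives $|\alpha_\chi|=O(\mu^{-1/2})=O(\epsilon^{-1/2}(\ln(1/\epsilon))^{1/2})$, which is integrable---so the gap is fixable, but your stated exponent $-2/3$ is wrong. The paper does this more carefully by expanding $f(\lambda;\chi,\tau)$ directly (rather than integrating $f_\lambda$) to obtain $\Delta\lambda\ln(1/|\Delta\lambda|)\sim C\Delta\chi$, and it also observes that the numerator in your formula for $\alpha_\chi$ actually tends to zero at the edge, yielding the sharper $|\alpha_\chi|=o(\Delta\chi^{-1/2-\epsilon})$.

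\textbf{Large $\chi$.} Your claim that ``all pairwise differences of rescaled roots are bounded away from $0$'' is false: being in the interior of $R_{12}$ for each fixed $\chi$ does not prevent $\lambda(\chi,\tau)$ from approaching $\partial R_{12}$ as $\chi\to\infty$, and in fact the integral condition forces $\chi^2\lambda\to 8$ so that the rescaled roots tend to the double points $w=\pm\sqrt{2}$. Concretely, $|\alpha-\alpha^*|\sim\chi^{-3/4}$, not $\chi^{-1/2}$. The paper establishes $\delta^2\sim C\chi^{-1/2}$ from the integral condition and then exploits a cancellation in the numerator ($\alpha^2b_2+b_4=O(\chi^{-13/4})$ rather than $O(\chi^{-3})$) to recover $|\alpha_\chi|\lesssim\chi^{-3/2}$. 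Without that cancellation, your cruder bound on the numerator combined with the correct denominator $\sim\chi^{-7/4}$ gives only $|\alpha_\chi|=O(\chi^{-5/4})$---still integrable, but your route to the claimed $O(\chi^{-3/2})$ is not valid as written.
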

\begin{proof}
Let $\tau$ be fixed.  By symmetry it suffices to consider prove the integrability of $\alpha_\chi$, and since $\chi\mapsto \alpha(\chi,\tau)$ is continuously differentiable with respect to $\chi$ on the open interval $(\chi_\mathrm{c}(\tau),+\infty)$ it is sufficient to examine $\alpha_\chi$ in the limits $\chi\uparrow+\infty$ and $\chi\downarrow \chi_\mathrm{c}(\tau)$.  

\subsubsection*{Absolute integrability of $\alpha_\chi$ at $\chi=+\infty$}
We begin by analyzing $\lambda(\chi,\tau)$ as $\chi\to+\infty$.  Since $\lambda_1(\chi,\tau)=-8\chi^{-2}+O(\chi^{-5})<\lambda(\chi,\tau)<\lambda_2(\chi,\tau)=8\chi^{-2}+O(\chi^{-5})$ for large positive $\chi>0$, we may write $\lambda(\chi,\tau)=2(4-\delta^2)\chi^{-2}$ for $0<\delta^2<8$.  A calculation then shows that, after rescaling by $z=\chi^{-\frac{1}{2}}w$,
\begin{equation}
\chi^2R(\chi^{-\frac{1}{2}}w)^2 = (w^2-2)^2 + \delta^2w^2 + O(\chi^{-\frac{3}{2}})
\label{eq:Rsquared-large-chi}
\end{equation}
in the limit $\chi\to+\infty$, uniformly for bounded $w$.   Next, we rewrite the integral condition \eqref{eq:integralcondition} with the help of the alternate branch $\widetilde{R}(z)$ of the square root of $R(z)^2$ defined in \eqref{eq:widetilde-R}.  $\widetilde{R}(z)$ is analytic except on $\Gamma_{\beta\to\beta^*}\cup\Gamma_{\alpha^*\to\alpha}$ and satisfies $\widetilde{R}(z)=z^2+O(z)$ as $z\to\infty$.  Using $\widetilde{R}(z)$ in place of $R(z)$ gives a corresponding modification $\widetilde{h}'(z)$ of $h'(z)$.  Then the condition \eqref{eq:integralcondition} can be written as
\begin{equation}
\frac{1}{2\ii}\oint_{L_0} \widetilde{h}'(z)\,\dd z=2
\label{eq:integralcondition-loop}
\end{equation}
where $L_0$ is a clockwise-oriented loop surrounding $\Gamma_{\alpha^*\to\alpha}$.  Then, in the scaling $z=\chi^{-\frac{1}{2}}w$,  $\widetilde{h}'(z)=(2\tau z+\chi-\tau^2\lambda)\widetilde{R}(z)z^{-2}$ has the expansion
\begin{equation}
\begin{split}
\widetilde{h}'(\chi^{-\frac{1}{2}}w)&=\chi^2\widetilde{R}(\chi^{-\frac{1}{2}}w)w^{-2}(1+O(\chi^{-\frac{3}{2}}))\\ &=\chi\left[\frac{w^2-2}{w^2}+\frac{\delta^2}{2(w^2-2)} + O\left(\frac{\delta^4}{(w^2-2)^3}\right)+O(\chi^{-\frac{3}{2}})\right].
\end{split}
\end{equation}
Here the error terms are uniform for $w$ bounded and bounded away from $w=\pm\sqrt{2}$.  Because $\dd z = \chi^{-\frac{1}{2}}\,\dd w$, for \eqref{eq:integralcondition-loop} to be satisfied it is necessary that the rescaled branch cuts are very near to $w=\pm\sqrt{2}$, so we can fix a rescaled contour $L:=\chi^\frac{1}{2}L_0$ enclosing the point $w=-\sqrt{2}$ but excluding $w=0,\sqrt{2}$ and then expand \eqref{eq:integralcondition-loop} as
\begin{equation}
\begin{split}
2&=\frac{\chi^{\frac{1}{2}}}{2\ii}\oint_L\left[\frac{w^2-2}{w^2}+\frac{\delta^2}{2(w^2-2)} + O\left(\frac{\delta^4}{(w^2-2)^3}\right)+O(\chi^{-\frac{3}{2}})\right]\,\dd w \\ &= \chi^\frac{1}{2}\left(\frac{\pi\delta^2}{\sqrt{32}}+O(\delta^4)+O(\chi^{-\frac{3}{2}})\right).
\end{split}
\end{equation}
This shows that $\delta^2$ is necessarily large compared with $\chi^{-\frac{3}{2}}$ as $\chi\to+\infty$, and in fact
\begin{equation}
\delta^2 =\frac{\sqrt{128}}{\pi}\chi^{-\frac{1}{2}} + O(\delta^4)+O(\chi^{-\frac{3}{2}}) = \frac{\sqrt{128}}{\pi}\chi^{-\frac{1}{2}} +O(\chi^{-1}).
\label{eq:delta-squared-expansion}
\end{equation}

With $\lambda=\lambda(\chi,\tau)$, combining \eqref{eq:gamma-deriv-2} (replacing $h',R$ with $\widetilde{h}',\widetilde{R}$) with 
\begin{equation}
\frac{\partial\widetilde{h}'}{\partial\chi}(z)=\left(z^2+\frac{1}{2}\tau\lambda z + \tau^2\lambda^2-\frac{3}{4}\chi\lambda+48(\tau^2\lambda-\chi)^{-4}\tau^2\right)\frac{1}{\widetilde{R}(z)},
\end{equation}
implicit differentiation of \eqref{eq:integralcondition-loop} yields
\begin{equation}
\frac{\partial\lambda}{\partial\chi}=\frac{4(\tau^2\lambda-\chi)^4}{\Qpoly(\lambda;\chi,\tau)}\frac{\displaystyle\oint_{L_0}\left(z^2+\frac{1}{2}\tau\lambda z + \tau^2\lambda^2-\frac{3}{4}\chi\lambda+48(\tau^2\lambda-\chi)^{-4}\tau^2\right)\frac{\dd z}{\widetilde{R}(z)}}{\displaystyle\oint_{L_0}\frac{\dd z}{\widetilde{R}(z)}}.
\label{eq:DS-gamma-chi-loop}
\end{equation}
Using $\lambda=2(4-\delta^2)\chi^{-2}$ with $\delta^2=O(\chi^{-\frac{1}{2}})$, it is straightforward to obtain
\begin{equation}
\frac{4(\tau^2\lambda-\chi)^4}{\Qpoly(\lambda;\chi,\tau)}=4\chi^{-2}+192\tau^2\chi^{-5} +O(\chi^{-\frac{11}{2}}),\quad\chi\to+\infty.
\label{eq:Q-prefactor}
\end{equation}
For the ratio of integrals in \eqref{eq:DS-gamma-chi-loop}, the constant terms in the integrand of the numerator have the expansion
\begin{equation}
\tau^2\lambda^2-\frac{3}{4}\chi\lambda+48(\tau^2\lambda-\chi)^{-4}\tau^2 = -6\chi^{-1}+O(\chi^{-\frac{3}{2}}),\quad\chi\to+\infty,
\end{equation}
and for the remaining terms we use the substitution $z=\chi^{-\frac{1}{2}}w$ and the expansion \eqref{eq:Rsquared-large-chi} to obtain
\begin{equation}
\frac{\displaystyle\oint_{L_0}\left(z^2+\frac{1}{2}\tau\lambda z\right)\frac{\dd z}{\widetilde{R}(z)}}{\displaystyle\oint_{L_0}\frac{\dd z}{\widetilde{R}(z)}}=\frac{\displaystyle\oint_L\left(\chi^{-1}w^2+(4-\delta^2)\tau\chi^{-\frac{5}{2}}w\right)\frac{\dd w}{\chi \widetilde{R}(\chi^{-\frac{1}{2}}w)}}{\displaystyle\oint_L\frac{\dd w}{\chi\widetilde{R}(\chi^{-\frac{1}{2}}w)}} = 2\chi^{-1}+O(\chi^{-\frac{3}{2}})
\end{equation}
as $\chi\to+\infty$, where we used the fact that $L$ encloses but is bounded away from the pole at $w=-\sqrt{2}$.  The ratio of integrals is therefore $-4\chi^{-1}+O(\chi^{-\frac{3}{2}})$, so combining with \eqref{eq:Q-prefactor} gives 
\begin{equation}
\frac{\partial\lambda}{\partial \chi} = -16\chi^{-3}+O(\chi^{-\frac{7}{2}}),\quad\chi\to+\infty.
\label{eq:DS-gamma-prime-estimate}
\end{equation}

Now differentiation with respect to $\chi$ for $\alpha=\alpha(\chi,\tau)$ of the monic quartic equation $R(\alpha)^2=\alpha^4-S_1\alpha^3+S_2\alpha^2-S_3\alpha+S_4=0$ yields
\begin{equation}
\frac{\partial\alpha}{\partial\chi}=\frac{\alpha^3b_1-\alpha^2b_2+\alpha b_3-b_4}{(\alpha-\alpha^*)(\alpha-\beta)(\alpha-\beta^*)},
\label{eq:DS-alpha-chi-1-A}
\end{equation}
in which $b_j=S_{j,\chi}$ for $j=1,\dots,4$ are given by
\begin{equation}
\begin{gathered}
b_1=-\tau\frac{\partial\lambda}{\partial\chi},\quad b_2=-\frac{1}{2}\lambda +\frac{1}{2}(3\tau^2\lambda-\chi)\frac{\partial\lambda}{\partial\chi},\\
b_3=-\frac{48\tau}{(\tau^2\lambda-\chi)^4}\left(1-\tau^2\frac{\partial\lambda}{\partial\chi}\right),\quad
b_4=\frac{8}{(\tau^2\lambda-\chi)^3}\left(1-\tau^2\frac{\partial\lambda}{\partial\chi}\right).
\end{gathered}
\label{eq:DS-b-vec-A}
\end{equation}
Since the roots of $R(\chi^{-\frac{1}{2}}w)^2$ converge to $w=\pm\sqrt{2}$ as $\chi\to+\infty$, it is immediate that $|\alpha-\beta|\gtrsim\chi^{-\frac{1}{2}}$ and $|\alpha-\beta^*|\gtrsim\chi^{-\frac{1}{2}}$.  Expanding  \eqref{eq:Rsquared-large-chi} shows that the roots near $w=-\sqrt{2}$ satisfy $w=-\sqrt{2}\pm\frac{1}{2}\ii\delta +O(\delta^2)$ which implies that $|\alpha-\alpha^*|\gtrsim \chi^{-\frac{1}{2}}\delta\gtrsim\chi^{-\frac{3}{4}}$.  For the numerator in \eqref{eq:DS-alpha-chi-1-A}, from $|\alpha|\lesssim\chi^{-\frac{1}{2}}$ we then obtain $\alpha^3b_1=O(\chi^{-\frac{3}{2}}\chi^{-3})=O(\chi^{-\frac{9}{2}})$ and  $\alpha b_3 = O(\chi^{-\frac{1}{2}}\chi^{-4})=O(\chi^{-\frac{9}{2}})$.  Some useful cancellation then occurs in the combination $\alpha^2b_2+b_4$ via $\alpha^2=2\chi^{-1}+O(\chi^{-\frac{5}{4}})$, $b_2=4\chi^{-2}+O(\chi^{-\frac{5}{2}})$, and $b_4=-8\chi^{-3}+O(\chi^{-6})$, hence $\alpha^2b_2+b_4=O(\chi^{-\frac{13}{4}})$.  Using these estimates in \eqref{eq:DS-alpha-chi-1-A} yields
$|\alpha_\chi|\lesssim\chi^{-\frac{3}{2}}$ as $\chi\to+\infty$, which is integrable.

\subsubsection*{Absolute integrability of $\alpha_\chi$ at $\chi=\chi_\mathrm{c}(\tau)$.}
Fix $\tau\in\mathbb{R}$.  For the other limit $\chi\downarrow\chi_\mathrm{c}:=\chi_\mathrm{c}(\tau)$, the roots $z=\alpha,\beta$ of the quartic $R(z)^2$ coalesce at $z=\critpt$, $\mathrm{Im}(\critpt)>0$, which leads to divergence of the contour integrals in \eqref{eq:DS-gamma-chi-loop} and logarithmic corrections in the sub-leading order terms in the expansion of $f(\lambda;\chi,\tau)$ defined in \eqref{eq:integralcondition-rewrite}.  To study this behavior, we first recall that since $h(z)$ and $\phase(z;\chi,\tau)$ agree when $\chi=\chi_\mathrm{c}$, using \eqref{eq:chi-tau-sigma}--\eqref{eq:gamma-negative} we may parametrize the limiting values $(\chi_\mathrm{c},\tau,\lambda_\mathrm{c})$ of $(\chi,\tau,\lambda)$ by the common limiting value $\critpt=u+\ii v$ (with $v>0$) of $\alpha,\beta$. With this notation, from \eqref{eq:DS-gamma-chi-loop} one can observe that the polynomial in the integrand of the numerator has the same limiting value at the two points $z=\critpt,\critpt^*$ at which the denominator becomes small:
\begin{equation}
\left.\left(z^2+\frac{1}{2}\tau\lambda_\mathrm{c} z\right)\right|_{z=u\pm\ii v}=-(u^2+v^2).
\end{equation}
Therefore, the most singular contribution of the numerator integral in the limit $\chi\downarrow\chi_\mathrm{c}$ is explicitly proportional to that of the denominator integral, so the ratio of these integrals tends to the limit $18u^2+2v^2$.  Evaluating the remaining factors in \eqref{eq:DS-gamma-chi-loop} in the limit, we therefore deduce that the partial derivative $\lambda_\chi$ has a well-defined finite limiting value as $\chi\downarrow\chi_\mathrm{c}$ that we denote by $\lambda_{\chi,\mathrm{c}}$:
\begin{equation}
\lambda_{\chi,\mathrm{c}}:=\lim_{\chi\downarrow\chi_\mathrm{c}}\frac{\partial\lambda}{\partial\chi}(\chi,\tau)=\frac{2(u^2+v^2)^4}{9u^2+v^2}.
\end{equation}
Since its derivative is continuous for $\chi\ge \chi_\mathrm{c}$, it follows that we may represent $\lambda(\chi,\tau)$ in the form
\begin{equation}
\lambda(\chi,\tau)=\lambda_\mathrm{c}+\lambda_{\chi,\mathrm{c}}\Delta\chi + \Delta\lambda,\quad\Delta\chi:=\chi-\chi_\mathrm{c},
\end{equation}
where $\Delta\lambda=o(\Delta\chi)$ as $\Delta\chi\downarrow 0$.

We now determine the leading term of $\Delta\lambda$.  Starting from \eqref{eq:hprime-Rsquared}, with $\gamma=-\chi+\tau^2\lambda$, we write $h'(z)$ in the form 
\begin{equation}
h'(z)=\frac{1}{z^2}L(z)r(z)r(z^*)^*,
\end{equation}
where $L(z)$ is the linear function
\begin{equation}
L(z):=2\tau z+\chi_\mathrm{c}-\tau^2\lambda_\mathrm{c}+(1-\tau^2\lambda_{\chi,c})\Delta\chi - \tau^2\Delta\lambda,
\end{equation}
and where $r(z)^2=(z-\alpha)(z-\beta)$ and $r(z)=z+O(1)$ as $z\to\infty$ with $r$ having a straight-line branch cut joining $\alpha,\beta$.  Let $\widehat{\critpt}=\frac{1}{2}(\alpha+\beta)$ denote the midpoint of the branch cut.  Then since $r(z)$ has opposite signs on opposite sides of its branch cut and elsewhere is analytic, in the definition \eqref{eq:integralcondition-rewrite} we may replace the integral over a path from $\alpha^*$ to $\alpha$ by the average of two integrals over Schwarz-symmetric paths denoted $\widehat{A}_\pm$, each of which goes from $\widehat{\critpt}_\pm^*$ to $\widehat{\critpt}_\pm$ in the domain of analyticity of $h'(z)$, where $\widehat{\critpt}_\pm$ are points at $z=\widehat{\critpt}$ on opposite sides of the branch cut for $r$.  It is not important where $\widehat{A}_\pm$ cross the real line other than the pole $z=0$ of $h'(z)$, because the residue of $h'(z)$ at $z=0$ vanishes.  Concretely, we assume that $\widehat{A}_\pm$ are as follows.  Let $C$ be the circle centered at $\widehat{\critpt}$ with radius $\frac{1}{2}\mathrm{Im}(\widehat{\critpt})$; let $J$ denote one of the two points on $C$ with $\mathrm{Im}(J)=\mathrm{Im}(\widehat{\critpt})$ for which $|\Re(J)|$ is largest.   Thus $\mathrm{Re}(J)\neq 0$.  Finally, let $A_\pm^\mathrm{in}$ denote the two segments from antipodal points $J_\pm$ of $C$ to $\widehat{\critpt}$ that are perpendicular to the branch cut between $\alpha$ and $\beta$.  Then $\widehat{A}_\pm\cap\mathbb{C}$ consists of the vertical line from $z=\mathrm{Re}(J)\neq 0$ to $J$ followed by a shortest arc of $C$ terminating at $J_\pm$ followed by $\widehat{A}_\pm^\mathrm{in}$.  Thus we arrive at the rewritten integral condition
\begin{equation}
\begin{split}
0=f(\lambda;\chi,\tau):=&\frac{1}{2\ii}\int_{\widehat{A}_+\cup \widehat{A}_-} L(z)r(z)r(z^*)^*\frac{\dd z}{z^2} -2
\\=&\mathrm{Im}\left(\int_{(\widehat{A}_+\cup \widehat{A}_-)\cap\mathbb{C}_+} L(z)r(z)r(z^*)^*\frac{\dd z}{z^2}\right)-2,
\end{split}
\end{equation}
where the second formula follows by Schwarz symmetry.
Now let $\rho:=|\alpha-\widehat{\critpt}|=|\beta-\widehat{\critpt}|=\frac{1}{2}|\beta-\alpha|$.  We use the following representation of $r(z^*)^*$:
\begin{equation}
r(z^*)^*=(z-\widehat{\critpt}^*)\left(1-\frac{(\beta^*-\alpha^*)^2}{4(z-\widehat{\critpt}^*)^2}\right)^\frac{1}{2}=z-\widehat{\critpt}^* + m(z)
\end{equation}
where by Taylor expansion it follows that $m(z)=O(\rho^2)$ holds uniformly for $z$ in bounded subsets of $\mathbb{C}_+$ since $\widehat{\critpt}^*\in\mathbb{C}_-$.  We also use
a corrected representation of $r(z)$:
\begin{equation}
r(z)=z-\widehat{\critpt} -\frac{1}{8}(\beta-\alpha)^2\left(\frac{1}{z-\widehat{\critpt}}\right)_\rho + e(z),
\end{equation}
where $(1/\zeta)_\rho:=\chi_{|\zeta|>\rho}(\zeta)/\zeta$ is a cutoff version of $1/\zeta$.  By Taylor expansion in $\frac{1}{4}(\beta-\alpha)/(z-\widehat{\critpt})$, it is easy to see that $e(z)=O(\rho^3/(z-\widehat{\critpt})^2)$ for $|z-\widehat{\critpt}|>\rho$;  also since $e(z)=r(z)-(z-\widehat{\critpt})$ for $|z-\widehat{\critpt}|\le \rho$, using $|r(z)|\le (|z-\widehat{\critpt}|+\rho)$ it follows that $e(z)=O(\rho)$ for $|z-\widehat{\critpt}|\le\rho$.  These estimates imply that $e\in L^1((\widehat{A}_+\cup\widehat{A}_-)\cap\mathbb{C}_+)$ with norm
\begin{equation}
\|e\|_{L^1((\widehat{A}_+\cup\widehat{A}_-)\cap\mathbb{C}_+)}\lesssim
\mathop{\int_{(\widehat{A}_+\cup \widehat{A}_-)\cap\mathbb{C}_+}}_{|z-\widehat{\critpt}|>\rho} \frac{\rho^3\,|\dd z|}{|z-\widehat{\critpt}|^2} +\mathop{\int_{(\widehat{A}_+\cup \widehat{A}_-)\cap\mathbb{C}_+}}_{|z-\widehat{\critpt}|\le\rho} \rho\,|\dd z|\lesssim \rho^2.
\end{equation}
Since $L(z)/z^2$ is uniformly bounded on the union of contours $(\widehat{A}_+\cup\widehat{A}_-)\cap\mathbb{C}_+$ as $\widehat{\critpt}\to\critpt$,  it then follows that 
\begin{equation}
f(\lambda;\chi,\tau)=\mathrm{Im}\left(\int_{(\widehat{A}_+\cup \widehat{A}_-)\cap\mathbb{C}_+}L(z)\left[z-\widehat{\critpt}-\frac{1}{8}(\beta-\alpha)^2\left(\frac{1}{z-\widehat{\critpt}}\right)_\rho\right](z-\widehat{\critpt}^*)\,\frac{\dd z}{z^2}\right)-2 + O(\rho^2).
\label{eq:f-expand-crit-1}
\end{equation}
We explicitly evaluate the terms not involving the cutoff function defined as
\begin{equation}
f_0(\lambda;\chi,\tau):=\mathrm{Im}\left(\int_{(\widehat{A}_+\cup \widehat{A}_-)\cap\mathbb{C}_+}L(z)(z-\widehat{\critpt})(z-\widehat{\critpt}^*)\frac{\dd z}{z^2}\right)-2
\label{eq:f0-definition}
\end{equation}
by first noticing that since the integrand vanishes to first order at the common endpoint $\widehat{\critpt}$ of $\widehat{A}_\pm$, and since $\widehat{\critpt}-\critpt=O(\rho)$, we may write
\begin{equation}
f_0(\lambda;\chi,\tau)=\mathrm{Im}\left(\int_{(A_+\cup A_-)\cap\mathbb{C}_+}L(z)(z-\widehat{\critpt})(z-\widehat{\critpt}^*)\frac{\dd z}{z^2}\right)-2+O(\rho^2),
\end{equation}
where $A_\pm$ now are contours terminating at $\critpt$ instead of $\widehat{\critpt}$.  
We next observe that, because the integral condition is definitely satisfied when $\Delta\chi=0$ and $\Delta\lambda=0$ and $\alpha=\beta=\critpt$ (so that also $r(z)r(z^*)^*=(z-\critpt)(z-\critpt^*)$), we have the identity
\begin{equation}
\mathrm{Im}\left(\int_{(A_+\cup A_-)\cap\mathbb{C}_+}L_0(z)(z-\critpt)(z-\critpt^*)\frac{\dd z}{z^2}\right)=2,\quad L_0(z):=2\tau z+\chi_\mathrm{c}-\tau^2\lambda_\mathrm{c}.
\end{equation}
Therefore, since $L(z)-L_0(z)=(1-\tau^2\lambda_{\chi,\mathrm{c}})\Delta\chi -\tau^2\Delta\lambda=(1-\tau^2\lambda_{\chi,\mathrm{c}})\Delta\chi + o(\Delta\chi)$,  and
\begin{equation}
\begin{split}
(z-\widehat{\critpt})(z-\widehat{\critpt}^*)-(z-\critpt)(z-\critpt^*)&=-2\mathrm{Re}(\widehat{\critpt}-\critpt)z+|\widehat{\critpt}|^2-|\critpt|^2\\
&=-2\mathrm{Re}(\widehat{\critpt}-\critpt)z +2\mathrm{Re}((\widehat{\critpt}-\critpt)\critpt^*) + O(\rho^2),
\end{split}
\end{equation}
which in particular is $o(1)$ as $\Delta\chi\to 0$, we get
\begin{multline}
f_0(\lambda;\chi,\tau)=(1-\tau^2\lambda_{\chi,\mathrm{c}})\Delta\chi\mathrm{Im}\left(\int_{(A_+\cup A_-)\cap\mathbb{C}_+}(z-\critpt)(z-\critpt^*)\frac{\dd z}{z^2}\right) \\+
\mathrm{Im}\left(\int_{(A_+\cup A_-)\cap\mathbb{C}_+}L_0(z)\left[-2\mathrm{Re}(\widehat{\critpt}-\critpt)z+2\mathrm{Re}((\widehat{\critpt}-\critpt)\critpt^*)\right]\frac{\dd z}{z^2}\right) + o(\Delta\chi)+O(\rho^2).
\label{eq:f0-almost-there}
\end{multline}
The last step in evaluating $f_0(\lambda;\chi,\tau)$ is to determine the leading asymptotic behavior of $\widehat{\critpt}-\critpt$, which turns out to be proportional to $\Delta\chi$.

For this purpose, we now study the asymptotic behavior of $z=\alpha,\beta$ by seeking small roots $w$ of the quartic $R(\critpt+w)^2$.  To simplify the use of the expansions of $\chi$ and $\lambda$, we first multiply through by $(\tau^2\lambda-\chi)^3\neq 0$ (see Lemma~\ref{lem:gamma-nonzero}).  Then $(\tau^2\lambda-\chi)^3R(\critpt+w)^2$ has coefficients that are all polynomials in $\Delta\chi$ and $\Delta\lambda$ and that are rational in $(u,v)$ with common denominator $2(u^2+v^2)^{16}(9u^2+v^2)^5$.  Therefore clearing that denominator:
\begin{equation}
(\tau^2\lambda-\chi)^3R(\critpt+w)^2=\frac{p(w)}{2(u^2+v^2)^{16}(9u^2+v^2)^5},\quad p(w):=\sum_{j=0}^4p_jw^j,
\end{equation}
where the coefficients $p_j$ are polynomials in $u,v,\Delta\chi,\Delta\lambda$ with expansions with respect to the small parameters $\Delta\chi$ and $\Delta\lambda$ of the form:
\begin{equation}
\begin{gathered}
p_4=O(1),
\quad p_3=O(1),\quad p_2=64v^2(u^2+v^2)^{13}(9u^2+v^2)^5 + O(\Delta\chi),\\
p_1=-32\ii v(u+\ii v)(3u+\ii v)(u^2+v^2)^{15}(9u^2+v^2)^4\Delta\chi + O(\Delta\lambda),\\
p_0=16(u+\ii v)(3u+\ii v)(u^2+v^2)^{11}(9u^2+v^2)^6\Delta\lambda + O(\Delta\chi^2).
\end{gathered}
\end{equation}
Here we used the relations $\Delta\lambda=o(\Delta\chi)$ and $\Delta\chi=o(1)$ only to express the error terms.  Setting $p(w)=0$ and seeking dominant balances for small $w$, since $p_2$ has a positive limit as $\Delta\chi\to 0$, the terms $p_4w^4+p_3w^3$ are definitely negligible compared to $p_2w^2$.  From the remaining quadratic, linear, and constant terms, we can determine the leading-order asymptotic behavior of the sum and product of the roots:
\begin{equation}
w_1+w_2=\alpha+\beta-2\critpt=-\frac{p_1}{p_2}(1+o(1))=\frac{\ii (u+\ii v)(3u+\ii v)(u^2+v^2)^2}{2v(9u^2+v^2)}\Delta\chi(1+o(1)),
\label{eq:sum-of-roots}
\end{equation}
\begin{equation}
w_1w_2=(\alpha-\critpt)(\beta-\critpt)=\frac{p_0}{p_2}(1+o(1))=\frac{(u+\ii v)(3u+\ii v)(9u^2+v^2)}{4v^2(u^2+v^2)^2}\Delta\lambda(1+o(1)).
\label{eq:product-of-roots}
\end{equation}
In particular since $\alpha+\beta=2\widehat{\critpt}$, we can combine \eqref{eq:sum-of-roots} with \eqref{eq:f0-almost-there} and use $v=\mathrm{Im}(\critpt)$ to give a remarkably simple formula:
\begin{equation}
f_0(\lambda;\chi,\tau)=\mathrm{Im}\left(\int_{(A_+\cup A_-)\cap\mathbb{C}_+}\,\dd z\right)\Delta\chi + o(\Delta\chi)+O(\rho^2) =2v\Delta\chi + o(\Delta\chi)+O(\rho^2).
\end{equation}

Therefore, going back to $f(\lambda;\chi,\tau)$ using \eqref{eq:f-expand-crit-1}--\eqref{eq:f0-definition} we have
\begin{multline}
f(\lambda;\chi,\tau)=2v\Delta\chi -\frac{1}{8} \mathrm{Im}\left((\beta-\alpha)^2\int_{(\widehat{A}_+\cup\widehat{A}_-)\cap\mathbb{C}_+}L(z)\left(\frac{1}{z-\widehat{\critpt}}\right)_\rho (z-\widehat{\critpt}^*)\frac{\dd z}{z^2}\right)\\+o(\Delta\chi)+O(\rho^2).
\end{multline}
Since $(\beta-\alpha)^2=O(\rho^2)$, all bounded contributions from the integral can be absorbed into the error terms.  Due to the cutoff $\chi_{|z-\widehat{\critpt}|>\rho}(z)$ implicit in the integrand, we are essentially integrating in $z$ up to a pair of antipodal points a distance $\rho$ from $\widehat{\critpt}$, and the only contribution to the integral that grows as $\rho\to 0$ comes from a logarithmic singularity at $z=\widehat{\critpt}$.  Thus,
\begin{equation}
\int_{(\widehat{A}_+\cup\widehat{A}_-)\cap\mathbb{C}_+}L(z)\left(\frac{1}{z-\widehat{\critpt}}\right)_\rho (z-\widehat{\critpt}^*)\frac{\dd z}{z^2}=-2\frac{L(\widehat{\critpt})(\widehat{\critpt}-\widehat{\critpt}^*)}{\widehat{\critpt}^2}\ln\left(\frac{1}{\rho}\right) + O(1),\quad\rho\to 0.
\end{equation}
Since $\widehat{\critpt}=\critpt+O(\rho)$ and $\rho\ln(1/\rho)$ is bounded as $\rho\to 0$, we can replace $\widehat{\critpt}$ with the limiting value $\critpt$ and absorb the difference into the error terms.  Therefore,
\begin{equation}
\begin{split}
f(\lambda;\chi,\tau)&=2v\Delta\chi +\frac{1}{4}\mathrm{Im}\left(\frac{L(\critpt)(\critpt-\critpt^*)}{\critpt^2}(\beta-\alpha)^2\right)\ln\left(\frac{1}{\rho}\right) + o(\Delta\chi)+O(\rho^2)\\
&=2v\Delta\chi +\frac{1}{4}\mathrm{Im}\left(\frac{L_0(\critpt)(\critpt-\critpt^*)}{\critpt^2}(\beta-\alpha)^2\right)\ln\left(\frac{1}{\rho}\right) + o(\Delta\chi)+O(\rho^2),
\end{split}
\label{eq:f-lambda-final}
\end{equation}
where on the second line we used the fact that $\rho^2\ln(1/\rho)=o(1)$ as $\Delta\chi\to 0$ and hence also $\rho\to 0$ to replace $L(\critpt)$ with $L_0(\critpt)$.  Note that in terms of $\critpt=u+\ii v$, we have
\begin{equation}
\frac{L_0(\critpt)(\critpt-\critpt^*)}{\critpt^2}=\frac{4\ii v(3u-\ii v)(u-\ii v)}{(u^2+v^2)^3}.
\end{equation}

Now, $(\beta-\alpha)^2=((\beta-\critpt)-(\alpha-\critpt))^2 = (\alpha+\beta-2\critpt))^2-4(\alpha-\critpt)(\beta-\critpt)=(w_1+w_2)^2-4w_1w_2$, so this quantity can be approximated using \eqref{eq:sum-of-roots} and \eqref{eq:product-of-roots}, and it is a sum of terms proportional to $\Delta\chi^2$ and $\Delta\lambda$.  A priori, it is not clear which of these terms is dominant, although it is known that $\Delta\lambda=o(\Delta\chi)$.  To answer this question, suppose that $\Delta\lambda=O(\Delta\chi^2)$.  Then also $(\beta-\alpha)^2=O(\Delta\chi^2)$ and hence $\rho=O(\Delta\chi)$, so using \eqref{eq:f-lambda-final} the integral condition on $\lambda$ becomes
\begin{equation}
0=f(\lambda;\chi,\tau)=2v\Delta\chi +o(\Delta\chi)\quad\text{if $\Delta\lambda=O(\Delta\chi^2)$,}
\end{equation}
because $\rho\mapsto\rho^2\ln(1/\rho)$ is monotone increasing for small $\rho>0$.
Since $v=\mathrm{Im}(\critpt)>0$, this is clearly a contradiction because $\Delta\chi>0$ holds for $\chi>\chi_\mathrm{c}$.  Therefore, in fact $\Delta\lambda$ is large compared with $\Delta\chi^2$, and so
\begin{equation}
(\beta-\alpha)^2=-4w_1w_2(1+o(1))=-\frac{(u+\ii v)(3u+\ii v)(9u^2+v^2)}{v^2(u^2+v^2)^2}\Delta\lambda (1+o(1)).
\end{equation}
So, multiplying by $L_0(\critpt)(\critpt-\critpt^*)/\critpt^2$ gives a product that is purely imaginary to leading order and therefore the integral condition on $\lambda$ actually reads
\begin{equation}
0=f(\lambda;\chi,\tau)=2v\Delta\chi -\frac{(9u^2+v^2)^2}{2v(u^2+v^2)^4}\Delta\lambda\ln\left(\frac{1}{|\Delta\lambda|}\right) + o(\Delta\chi),
\label{eq:Delta-lambda-asymptotic-c}
\end{equation}
where we used the fact that $\ln(1/\rho)=\frac{1}{2}\ln(1/|\Delta\lambda|)+O(1)$.

Finally, we return to \eqref{eq:DS-alpha-chi-1-A} and \eqref{eq:DS-b-vec-A} and note that since $\alpha\to\critpt=u+\ii v$, $\lambda\to\lambda_\mathrm{c}$ and $\lambda_\chi\to\lambda_{\chi,\mathrm{c}}$,
\begin{equation}
\lim_{\chi\downarrow\chi_\mathrm{c}} \alpha^3b_1-\alpha^2b_2+\alpha b_3-b_4 = 0.
\end{equation}
Also, since $(\alpha-\alpha^*)(\alpha-\beta^*)\to -4\mathrm{Im}(\critpt)^2\neq 0$ we clearly have
\begin{equation}
\left|\frac{\partial\alpha}{\partial\chi}\right|=o\left(\frac{1}{|\beta-\alpha|}\right)=o\left(\frac{1}{|\Delta\lambda|^{\frac{1}{2}}}\right)=o\left(\left(|\Delta\lambda|\ln\left(\frac{1}{|\Delta\lambda|}\right)\right)^{-\frac{1}{2}-\epsilon}\right),\quad\chi\downarrow\chi_\mathrm{c}(\tau)
\end{equation}
holds for every $\epsilon>0$.  Therefore, using \eqref{eq:Delta-lambda-asymptotic-c} gives $|\alpha_\chi|=o(\Delta\chi^{-\frac{1}{2}-\epsilon})$ which is integrable at $\Delta\chi=\chi-\chi_\mathrm{c}(\tau)=0$ by taking $\epsilon<\frac{1}{2}$.

This completes the proof.
\end{proof}

\subsection{Use of $g(z)$}
Assuming $\chi>\chi_\mathrm{c}(\tau)$,
we take the jump contour $\Gamma$ for Riemann-Hilbert Problem~\ref{rhp:S} to be that described at the end of Section~\ref{s:DS-Constructing-g}. 
We introduce a new
unknown $\mathbf{T}(z;\chi,\tau,M)$ by setting
\begin{equation}
\mathbf{T}(z;\chi,\tau,M):=\mathbf{S}(z;\chi,\tau,M)\ee^{\ii M g(z)\sigma_3},\quad z\in\mathbb{C}\setminus\Gamma.
\label{eq:DS-T-from-S}
\end{equation}
Then, from the conditions of Riemann-Hilbert Problem~\ref{rhp:S} we see that $\mathbf{T}(\cdot;\chi,\tau,M)$ is analytic in its domain of definition, takes continuous boundary values on $\Gamma$ from each side, and tends to $\mathbb{I}$ as $z\to\infty$.  Moreover, since $g(z)=g(z^*)^*$, the matrix $\mathbf{T}(z;\chi,\tau,M)$ inherits from $\mathbf{S}(z;\chi,\tau,M)$ the Schwarz symmetry \eqref{eq:S-Schwarz}.
The jump conditions satisfied by $\mathbf{T}(z;\chi,\tau,M)$ across the arcs of $\Gamma$ read as follows:
\begin{equation}
\mathbf{T}_+(z;\chi,\tau,M)=\mathbf{T}_-(z;\chi,\tau,M)\begin{bmatrix}\sqrt{1-\ee^{-4M}} & -\ee^{-2\ii M(h(z)-\ii)}\\\ee^{2\ii M(h(z)+\ii)} &\sqrt{1-\ee^{-4M}}\end{bmatrix},\quad z\in\Gamma_{\alpha^*\to\alpha},
\end{equation}
\begin{multline}
\mathbf{T}_+(z;\chi,\tau,M)\begin{bmatrix}1 & 0\\\sqrt{1-\ee^{-4M}}\ee^{2\ii M(h_+(z)-\ii)} & 1\end{bmatrix}\\
=\mathbf{T}_-(z;\chi,\tau,M)\begin{bmatrix}1 & 0\\-\sqrt{1-\ee^{-4M}}\ee^{2\ii M(h_-(z)-\ii)} & 1\end{bmatrix}
\begin{bmatrix}0 & -\ee^{-\ii M\phi}\\\ee^{\ii M\phi} & 0\end{bmatrix},\quad z\in\Gamma_{\alpha\to\beta},
\end{multline}
\begin{equation}
\mathbf{T}_+(z;\chi,\tau,M)=\mathbf{T}_-(z;\chi,\tau,M)\begin{bmatrix}\sqrt{1-\ee^{-4M}}\ee^{\ii M\Delta} & -\ee^{-\ii M(h_+(z)+h_-(z)-2\ii)}\\
\ee^{\ii M(h_+(z)+h_-(z)+2\ii)} & \sqrt{1-\ee^{-4M}}\ee^{-\ii M\Delta}\end{bmatrix},\quad z\in\Gamma_{\beta\to\beta^*}, %\quad\text{and}
\end{equation}
and there is a jump condition on $\Gamma_{\beta^*\to\alpha^*}$ that follows from that on $\Gamma_{\alpha\to\beta}$ by Schwarz symmetry.
We have written the jump condition across $\Gamma_{\alpha\to\beta}$ in factorized form to facilitate the opening of small lenses surrounding this arc and its reflection $\Gamma_{\beta^*\to\alpha^*}$.  Letting $\Omega^+_{\alpha\to\beta}$ and $\Omega^-_{\alpha\to\beta}$ denote lens-shaped regions to the left and right respectively of $\Gamma_{\alpha\to\beta}$,
we define
\begin{equation}
\mathbf{O}(z;\chi,\tau,M):=\mathbf{T}(z;\chi,\tau,M)\begin{bmatrix}1&0\\\pm \sqrt{1-\ee^{-4M}}\ee^{2\ii M(h(z)-\ii)} & 1\end{bmatrix},\quad z\in\Omega^\pm_{\alpha\to\beta}.
\end{equation}
To preserve Schwarz symmetry in the form \eqref{eq:S-Schwarz} for $\mathbf{O}(z;\chi,\tau,M)$ we make corresponding substitutions in the reflected domains $\Omega^{\pm*}_{\alpha\to\beta}$
and elsewhere we set $\mathbf{O}(z;\chi,\tau,M):=\mathbf{T}(z;\chi,\tau,M)$.  The jump conditions for $\mathbf{O}(z;\chi,\tau,M)$ read as follows.  Let $\Lambda^\pm_{\alpha\to\beta}$ denote the outer boundary of the lens domain $\Omega^\pm_{\alpha\to\beta}$ with the orientation from $\alpha$ to $\beta$.  
Then,
\begin{equation}
\mathbf{O}_+(z;\chi,\tau,M)=\mathbf{O}_-(z;\chi,\tau,M)\begin{bmatrix}\sqrt{1-\ee^{-4M}} & -\ee^{-2\ii M(h(z)-\ii)}\\\ee^{2\ii M(h(z)+\ii)} & \sqrt{1-\ee^{-4M}}\end{bmatrix},\quad z\in\Gamma_{\alpha^*\to\alpha},
\end{equation}
\begin{equation}
\mathbf{O}_+(z;\chi,\tau,M)=\mathbf{O}_-(z;\chi,\tau,M)\begin{bmatrix}0 & -\ee^{-\ii M\phi}\\\ee^{\ii M\phi} & 0\end{bmatrix},\quad z\in\Gamma_{\alpha\to\beta},
\end{equation}
\begin{equation}
\mathbf{O}_+(z;\chi,\tau,M)=\mathbf{O}_-(z;\chi,\tau,M)\begin{bmatrix}1 & 0\\-\sqrt{1-\ee^{-4M}}\ee^{2\ii M(h(z)-\ii)} & 1\end{bmatrix},\quad z\in\Lambda^\pm_{\alpha\to\beta},
\end{equation} 
\begin{equation}
\mathbf{O}_+(z;\chi,\tau,M)=\mathbf{O}_-(z;\chi,\tau,M)\begin{bmatrix}\sqrt{1-\ee^{-4M}}\ee^{\ii M\Delta} & -\ee^{-\ii M(h_+(z)+h_-(z)-2\ii)}\\
\ee^{\ii M(h_+(z)+h_-(z)+2\ii)} & \sqrt{1-\ee^{-4M}}\ee^{-\ii M\Delta} \end{bmatrix},\quad z\in\Gamma_{\beta\to\beta^*},
\end{equation}
and on the reflected contours $\Gamma_{\beta^*\to\alpha^*}$ and $\Lambda^{\pm*}_{\alpha\to\beta}$ there are corresponding jump conditions induced by Schwarz symmetry.
Since $\mathbf{O}(z;\chi,\tau,M)=\mathbf{T}(z;\chi,\tau,M)$ holds for $|z|$ sufficiently large, while $\mathbf{T}(z;\chi,\tau,M)$ is related to $\mathbf{S}(z;\chi,\tau,M)$ by \eqref{eq:DS-T-from-S} where $g(\infty)=0$, it follows from \eqref{eq:DS-Psi-from-S} that also
\begin{equation}
M\Psi(M^2\chi,M^3\tau;\mathbf{G}(\ee^{-2M},\sqrt{1-\ee^{-4M}}))=2\ii\lim_{z\to\infty}zO_{12}(z;\chi,\tau,M).
\label{eq:DS-Psi-from-O}
\end{equation}

A useful formula for the constant $\Delta\in\mathbb{R}$ can be found as follows.  Let $z$ denote the real point on the arc $\Gamma_{\beta\to\beta^*}$.  Then, since $\phase(z)$ is single-valued,
\begin{equation}
\Delta=h_+(z)-h_-(z) = (g_+(z)+\phase_+(z))-(g_-(z)+\phase_-(z)) = g_+(z)-g_-(z)
\end{equation}
assuming also that $z\neq 0$ (we can deform $\Gamma_{\beta\to\beta^*}$ if necessary to ensure this).
Using \eqref{eq:DS-g-formula} and integrating on the real line from $\infty=\pm\infty$ for calculating $g_\pm(z)$,
\begin{equation}
\Delta = \int_{+\infty}^z\left[h'(Z)-\phase'(Z)\right]\,\dd Z - \int_{-\infty}^z\left[h'(Z)-\phase'(Z)\right]\,\dd Z = -\int_\mathbb{R}\left[h'(Z)-\phase'(Z)\right]\,\dd Z.
\end{equation}
Clearly the result is independent of $z$.  Using \eqref{eq:hprime-Rsquared} and the definition of $\phase(z)$ in \eqref{eq:DS-tildevartheta} gives
\begin{equation}
\Delta = -\int_\mathbb{R}\left[\frac{2\tau z+\chi-\tau^2\lambda}{z^2}R(z)-\chi-2\tau z-\frac{2}{z^2}\right]\,\dd z.
\end{equation}
The apparent singularities of the integrand at $z=0$ and $z=\infty$ cancel by choice of the branch points $\alpha=\alpha(\chi,\tau)$ and $\beta=\beta(\chi,\tau)$, so the integral is absolutely convergent and since the integrand is analytic except for the branch cuts $\Gamma_{\alpha\to\beta}$ and $\Gamma_{\beta^*\to\alpha^*}$, Cauchy's theorem can be applied to replace $\mathbb{R}$ with a counterclockwise-oriented loop surrounding the cut $\Gamma_{\alpha\to\beta}$.  Then, with this replacement, the contribution to the integrand from $\phase'(z)$ vanishes as it is analytic inside the loop, so ultimately the result is that
\begin{equation}
\Delta = 2\int_{\Gamma_{\alpha\to\beta}}\frac{2\tau z+\chi-\tau^2\lambda}{z^2}R_+(z)\,\dd z.
\label{eq:DS-Delta-equation}
\end{equation}
Combining the first identities in \eqref{eq:Ss-gamma-chi-tau} and \eqref{eq:SymmetricPolynomials-lambda} gives $-\tau^2\lambda=2\tau\mathrm{Re}(\alpha+\beta)$, so to implement this formula requires only determining $\alpha$ and $\beta$ as functions of $(\chi,\tau)\in\mathbb{R}^2$ with $\chi>\chi_\mathrm{c}(\tau)$.

\subsection{Outer parametrix}
\label{sec:DS-outer}
The properties of $h(z)$ now imply that all of the exponential factors involving $h(z)$ decay rapidly to zero as $M\to+\infty$, although the decay is not uniform in neighborhoods of the four points $\alpha,\beta,\alpha^*,\beta^*$.  The outer parametrix captures the pointwise asymptotic behavior of the jump matrices.
\begin{rhp}[Outer parametrix]
Let $\phi$ and $\Delta$ be given real numbers.  Seek a $2\times 2$ matrix-valued function $\breve{\mathbf{O}}^\mathrm{out}(z)$ with the following properties:
\begin{itemize}
\item[]\textbf{Analyticity:} $\breve{\mathbf{O}}^\mathrm{out}(z)$ is analytic in $z$ for $z\in\mathbb{C}\setminus\Gamma_{\alpha\to\beta}\cup\Gamma_{\beta\to\beta^*}\cup\Gamma_{\beta^*\to\alpha^*}$, and it takes continuous boundary values except near the four points $\alpha,\beta,\alpha^*,\beta^*$ at which negative one-fourth power singularities are admissible in all four matrix elements.
\item[]\textbf{Jump conditions:}  The boundary values on the jump contour are related as follows:
\begin{equation}
\breve{\mathbf{O}}^\mathrm{out}_+(z)=\breve{\mathbf{O}}^\mathrm{out}_-(z)\begin{bmatrix}0 & -\ee^{-\ii M\phi}\\\ee^{\ii M\phi} & 0\end{bmatrix},\quad z\in\Gamma_{\alpha\to\beta}\cup\Gamma_{\beta^*\to\alpha^*},
\end{equation}
\begin{equation}
\breve{\mathbf{O}}^\mathrm{out}_+(z)=\breve{\mathbf{O}}^\mathrm{out}_-(z)\begin{bmatrix}\ee^{\ii M\Delta}&0\\0 & \ee^{-\ii M\Delta}\end{bmatrix},\quad z\in\Gamma_{\beta\to\beta^*}.
\end{equation}
\item[]\textbf{Normalization:}  $\breve{\mathbf{O}}^\mathrm{out}(z)\to\mathbb{I}$ as $z\to\infty$.
\end{itemize}
\label{rhp:O-out}
\end{rhp}

To solve this problem, first note that the matrix $\mathbf{F}(z):=\ee^{-\ii\pi\sigma_3/4}\ee^{\ii M\phi\sigma_3/2}\breve{\mathbf{O}}^\mathrm{out}(z)\ee^{-\ii M\phi\sigma_3/2}\ee^{\ii\pi\sigma_3/4}$ satisfies exactly the same conditions as does $\breve{\mathbf{O}}^\mathrm{out}(z)$ except that 
the jump conditions on $\Gamma_{\alpha\to\beta}\cup\Gamma_{\beta^*\to\alpha^*}$ are reduced to the simple form $\mathbf{F}_+(z)=\mathbf{F}_-(z)\ii\sigma_1$.

Let $j(z)$ denote the function analytic for $z\in\mathbb{C}\setminus(\Gamma_{\alpha\to\beta}\cup\Gamma_{\beta^*\to\alpha^*})$ with asymptotic behavior $j(z)\to 1$ as $z\to\infty$ and that satisfies 
\begin{equation}
j(z)^4=\frac{(z-\alpha)(z-\beta^*)}{(z-\alpha^*)(z-\beta)}.
\end{equation}
This function satisfies the scalar jump condition $j_+(z)=-\ii j_-(z)$ on both arcs of 
its jump contour:  $\Gamma_{\alpha\to\beta}\cup\Gamma_{\beta^*\to\alpha^*}$.
From $j(z)$, we define two related functions:
\begin{equation}
F^\mathrm{D}(z):=\frac{1}{2}\left(j(z)+j(z)^{-1}\right),\quad F^\mathrm{OD}(z):=\frac{1}{2\ii}\left(j(z)-j(z)^{-1}\right).
\end{equation}
Note that 
\begin{equation}
\begin{split}
F^\mathrm{D}(z)F^\mathrm{OD}(z)&=\frac{1}{4\ii j(z)^2}\left(j(z)^4-1\right)\\
&=\frac{(z-\alpha)(z-\beta^*)-(z-\alpha^*)(z-\beta)}{4\ii j(z)^2(z-\alpha^*)(z-\beta)}\\
&=\frac{(\alpha^*+\beta-\alpha-\beta^*)z+\alpha\beta^*-\alpha^*\beta}{4\ii R(z)}.
\end{split}
\label{eq:FD-FOD}
\end{equation}
Unless $\mathrm{Im}(\beta)=\mathrm{Im}(\alpha)$, this product has a single real-valued root:
\begin{equation}
F^\mathrm{D}(z)F^\mathrm{OD}(z)=0\iff z=z_0:=\frac{\mathrm{Im}(\alpha^*\beta)}{\mathrm{Im}(\beta)-\mathrm{Im}(\alpha)}.
\label{eq:z0}
\end{equation}
We notice that $j(z)$ is a well-defined quantity of unit modulus for all $z\in\mathbb{R}$, so we can write it as $j(z)=\ee^{\ii\theta}$, in which case $F^\mathrm{D}(z)=\cos(\theta)$ and $F^\mathrm{OD}(z)=\sin(\theta)$.  By the argument principle, using the fact that $j(z)^4$ has one pole and one zero on either side of the real line, $j(z)^4$ has zero winding number as $z$ traverses the real line.  This implies that $\theta$ is a real analytic function of $z\in\mathbb{R}$ that tends to zero in both limits $z\to\pm\infty$.  Since the roots of $F^\mathrm{D}(z)$ correspond to values $\theta\in\pi(\mathbb{Z}+\tfrac{1}{2})$ (and hence $\theta\neq 0$), they must therefore be even in number (counted with multiplicity).  Because $z_0$ is the unique simple root of $F^\mathrm{D}(z)F^\mathrm{OD}(z)$, it follows that $F^\mathrm{OD}(z_0)=0$.  Note that $j_+(z)=-\ii j_-(z)$ implies that 
\begin{equation}
F^\mathrm{D}_+(z)=F^\mathrm{OD}_-(z)\quad\text{and}\quad
F^\mathrm{OD}_+(z)=-F^\mathrm{D}_-(z),\quad z\in\Gamma_{\alpha\to\beta}\cup\Gamma_{\beta^*\to\alpha^*}.
\label{eq:f-jumps}
\end{equation}
Also,
\begin{equation}
F^\mathrm{D}(z)\to 1\quad\text{and}\quad F^\mathrm{OD}(z)=\frac{\mathrm{Im}(\beta)-\mathrm{Im}(\alpha)}{2z}+O(z^{-2}),\quad z\to\infty.
\label{eq:f-asymp}
\end{equation}
Finally, observe that according to the jump conditions \eqref{eq:f-jumps}, the function $k$ defined on a two-sheeted cover $\mathcal{R}$ of the $z$-plane joined at the cuts $\Gamma_{\alpha\to\beta}\cup\Gamma_{\beta^*\to\alpha^*}$ by setting $k(z):=F^\mathrm{OD}(z)^2$ on one sheet and $k(z):=F^\mathrm{D}(z)^2$ on the other sheet is a meromorphic function on $\mathcal{R}$ with simple poles at the branch points $\alpha,\beta,\alpha^*,\beta^*$ and double zeros at $z=z_0$ and $z=\infty$ on the sheet where $k(z)=F^\mathrm{OD}(z)^2$ (and no other poles or zeros).

Let $L$ denote a clockwise-oriented loop surrounding the arc $\Gamma_{\alpha\to\beta}$, and consider the integrals
\begin{equation}
I_\mathcal{A}:=2\int_{\alpha^*}^\alpha\frac{\dd z}{R(z)},\quad I_\mathcal{B}:=\oint_L\frac{\dd z}{R(z)},
\label{eq:DS-I-AB}
\end{equation}
where the path of integration in $I_\mathcal{A}$ is a straight vertical line in the domain of analyticity of $R(z)$.  Since $R(z^*)=R(z)^*$, it follows that $I_\mathcal{A}$ is purely imaginary;  in fact, since $R(z)>0$ at the midpoint of the integration contour, it can be shown that $I_\mathcal{A}$ is strictly positive imaginary.  
Also by Cauchy's theorem, the contour of integration in $I_\mathcal{B}$ can be replaced by $\mathbb{R}$ with right-to-left orientation and therefore $I_\mathcal{B}<0$ because $R(z)>0$ for $z\in\mathbb{R}$.  Passing to the Riemann surface of $y^2=R(z)^2$, $I_\mathcal{A}$ and $I_\mathcal{B}$ are integrals of the same holomorphic differential over a canonical basis $(\mathcal{A},\mathcal{B})$ of homology.  We can simplify these integrals as follows.

Provided that $\mathrm{Re}(\alpha)<\mathrm{Re}(\beta)$, the points $\alpha,\beta,\alpha^*,\beta^*$ all lie on a circle with center 
$x\in\mathbb{R}$ and radius $\rho>0$ given by
\begin{equation}
x := \frac{|\beta|^2-|\alpha|^2}{2\mathrm{Re}(\beta)-2\mathrm{Re}(\alpha)}\quad\text{and}\quad
\rho :=\frac{|\alpha-\beta| |\alpha-\beta^*|}{2|\mathrm{Re}(\beta)-\mathrm{Re}(\alpha)|}.
\end{equation}
By the affine transformation $w=(z-x)/\rho$, we get
\begin{equation}
I_\mathcal{A}=\frac{2}{\rho}\int_{\ee^{-\ii\theta_\alpha}}^{\ee^{\ii\theta_\alpha}}\frac{\dd w}{S(w)},\quad 
I_\mathcal{B}=\frac{1}{\rho}\oint_{\tilde{L}}\frac{\dd w}{S(w)}
\end{equation}
where $S(w)$ is a corresponding square root of the monic quartic in $w$ with roots $w=\ee^{\pm\ii\theta_\alpha}$ and $w=\ee^{\pm\ii\theta_\beta}$, in which $\theta_\alpha=\arg(\alpha-x)\in (0,\pi)$ and $\theta_\beta=\arg(\beta-x)\in (0,\pi)$.  Note that $S(w)=w^2+O(w)$ as $w\to\infty$, and we may assume that $S(w)$ is cut on the upper unit semicircle between $w=\ee^{\ii\theta_\alpha}$ and $w=\ee^{\ii\theta_\beta}$ as well as its Schwarz reflection.  We assume that the points $\alpha,\beta$ are labeled such that $0<\theta_\beta<\theta_\alpha<\pi$.  A fractional linear transformation taking $w=\ee^{\pm\ii\theta_\alpha}$ to $W=\pm 1$ respectively is
\begin{equation}
W=\ii\tan(\tfrac{1}{2}\theta_\alpha)\frac{w+1}{w-1}.
\label{eq:DS-FLM}
\end{equation}
The same transformation maps $w=\ee^{\pm\ii\theta_\beta}$ to $W=\pm m^{-1/2}=\pm\tan(\tfrac{1}{2}\theta_\alpha)\cot(\tfrac{1}{2}\theta_\beta)$.  Since $\theta\mapsto\cot(\tfrac{1}{2}\theta)$ is positive and monotone decreasing on $0<\theta<\pi$, it follows that $0<m<1$.  Therefore,
\begin{equation}
I_\mathcal{A}=\frac{\ii}{\rho\sin(\tfrac{1}{2}\theta_\alpha)\cos(\tfrac{1}{2}\theta_\beta)}\int_{-1}^1\frac{\dd W}{\sqrt{1-W^2}\sqrt{1-mW^2}},\quad m=\cot^2(\tfrac{1}{2}\theta_\alpha)\tan^2(\tfrac{1}{2}\theta_\beta).
\end{equation}
Similarly, by collapsing the image of $\tilde{L}$ in the $W$-plane to the opposite sides of the real interval $[1,m^{-\frac{1}{2}}]$ we obtain
\begin{equation}
I_\mathcal{B}=-\frac{1}{\rho\sin(\tfrac{1}{2}\theta_\alpha)\cos(\tfrac{1}{2}\theta_\beta)}\int_1^{m^{-\frac{1}{2}}}\frac{\dd W}{\sqrt{W^2-1}\sqrt{1-mW^2}},\quad m=\cot^2(\tfrac{1}{2}\theta_\alpha)\tan^2(\tfrac{1}{2}\theta_\beta).
\end{equation}
By definition of the complete elliptic integral $\mathbb{K}(m)$ of the first kind
\begin{equation}
\mathbb{K}(m):=\int_0^1\frac{\dd x}{\sqrt{1-x^2}\sqrt{1-mx^2}},\quad 0<m<1,
\end{equation}
one easily sees that
\begin{equation}
I_\mathcal{A}=\frac{2\ii\mathbb{K}(m)}{\rho\sin(\tfrac{1}{2}\theta_\alpha)\cos(\tfrac{1}{2}\theta_\beta)},\quad m=\cot^2(\tfrac{1}{2}\theta_\alpha)\tan^2(\tfrac{1}{2}\theta_\beta).
\end{equation}
One can write $I_\mathcal{B}$ in a similar form by means of 
the substitution $mW^2=1-(1-m)Z^2$ mapping $W\in (1,m^{-\frac{1}{2}})$ onto $Z\in (0,1)$:
\begin{equation}
I_\mathcal{B}=-\frac{1}{\rho\sin(\frac{1}{2}\theta_\alpha)\cos(\frac{1}{2}\theta_\beta)}\int_0^1\frac{\dd Z}{\sqrt{1-Z^2}\sqrt{1-(1-m)Z^2}} = -\frac{\mathbb{K}(1-m)}{\rho\sin(\frac{1}{2}\theta_\alpha)\cos(\frac{1}{2}\theta_\beta)}.
\end{equation}
It follows that 
\begin{equation}
H:=2\pi\ii\frac{I_\mathcal{B}}{I_\mathcal{A}}=-\pi\frac{\mathbb{K}(1-m)}{\mathbb{K}(m)}<0,\quad m=\cot^2(\tfrac{1}{2}\theta_\alpha)\tan^2(\tfrac{1}{2}\theta_\beta).
\label{eq:DS-H-def}
\end{equation}
In the situation that $\mathrm{Re}(\beta)-\mathrm{Re}(\alpha)$ tends to zero while $\mathrm{Im}(\alpha)>\mathrm{Im}(\beta)>0$, the elliptic parameter $m$ has limiting value $[\mathrm{Im}(\beta)/\mathrm{Im}(\alpha)]^2\in (0,1)$, so $H$ makes sense as well.

The Abel map is defined as follows:
\begin{equation}
A(z):=\frac{2\pi\ii}{I_\mathcal{A}}\int_\alpha^z\frac{\dd z'}{R(z')},\quad z\in\mathbb{C}\setminus(\Gamma_{\alpha\to\beta}\cup\Gamma_{\beta\to\beta^*}\cup\Gamma_{\beta^*\to\alpha^*}),
\label{eq:Abel-define}
\end{equation}
and it is single-valued and analytic in its domain of definition.  Its boundary values are related by:
\begin{equation}
A_+(z)+A_-(z)=0,\quad z\in\Gamma_{\alpha\to\beta},
\label{eq:Abel-alpha-beta}
\end{equation}
\begin{equation}
A_+(z)-A_-(z)=H,\quad z\in\Gamma_{\beta\to\beta^*},
\label{eq:Abel-beta-betastar}
\end{equation}
\begin{equation}
A_+(z)+A_-(z)=-2\pi\ii,\quad z\in\Gamma_{\beta^*\to\alpha^*}.
\label{eq:Abel-betastar-alphastar}
\end{equation}
Note that the Abel map can be extended from its domain of definition to the whole Riemann surface $\mathcal{R}$ by using the definition \eqref{eq:Abel-define} on the sheet of $\mathcal{R}$ where $k(z)=F^\mathrm{OD}(z)^2$ and changing the sign on the other sheet; then allowing the path of integration to be arbitrary on $\mathcal{R}$, the lift of $A(z)$ to $\mathcal{R}$ becomes well-defined modulo integer multiples of $2\pi\ii$ and $H$.  According to the Abel-Jacobi theorem, this extended Abel map acting linearly on divisors of meromorphic functions on $\mathcal{R}$ yields zero (modulo the period lattice).  Applying this result to the meromorphic function $k$ whose divisor is $(k)=2z_0^\mathrm{OD}+2\infty^\mathrm{OD}-\alpha-\beta-\alpha^*-\beta^*$ (the superscript denotes the sheet where $k(z)=F^\mathrm{OD}(z)^2$) and noticing that the sum of $A$ applied to the four branch points yields a period, we learn that the function defined precisely by \eqref{eq:Abel-define} satisfies
\begin{equation}
2A(z_0)+2A(\infty)=2\pi \ii n_1 + Hn_2
\label{eq:Az0-identity}
\end{equation}
for some particular $n_1,n_2\in\mathbb{Z} \pmod 2$ (each of $A(z_0)$ and $A(\infty)$ is well-defined modulo integer multiples of $2\pi\ii$ and $H$).  In fact\footnote{Since $n_1$ and $n_2$ are integers and the left-hand side depends continuously on $\alpha$ and $\beta$, we can calculate them from a limiting configuration in which $\alpha=\ee^{3\pi\ii/4}$ and $\beta=\ee^{\ii\pi/4}$.  In this limit, $z_0\to\infty$ so we should evaluate $4A(\infty)$ in the limiting configuration.  To do this, we take a path of integration from $z=\alpha=\ee^{3\pi\ii/4}$ along the unit circle in the counterclockwise direction to $z=-1$ and then a real path from $z=-1$ to $z=-\infty$.  The leg of the path on the unit circle gives a purely imaginary contribution to $4A(\infty)$ equal to $-2\pi\ii$ while the symmetries $R(z)=R(-z)=z^2R(z^{-1})$ valid for $z\in\mathbb{R}$ show that the real leg of the path gives a real contribution to $4A(\infty)$ equal to $H$.}, we may take $n_1=-1$ and $n_2=1$.  

The Riemann theta function is defined for $w\in\mathbb{C}$ and $\mathrm{Re}(H)<0$ by
\begin{equation}
\Theta(w;H):=\sum_{n\in\mathbb{Z}}\ee^{\frac{1}{2}n^2H}\ee^{n w},
\label{eq:DS-Theta-define}
\end{equation}
and it is an entire function of $w$ satisfying the identities
\begin{equation}
\Theta(-w;H)=\Theta(w;H),\quad\Theta(w+2\pi\ii;H)=\Theta(w;H),\quad\text{and}\quad\Theta(w\pm H;H)=\ee^{-\frac{1}{2}H}\ee^{\mp w}\Theta(w;H).
\label{eq:automorphic}
\end{equation}
It has only simple zeros, and they are located at the lattice points $w=(j+\tfrac{1}{2})2\pi\ii + (k+\tfrac{1}{2})H$ for $(j,k)\in\mathbb{Z}^2$.  We denote the zero for $j=k=0$ by $\mathcal{K}$:
\begin{equation}
\mathcal{K}:=\ii\pi +\frac{1}{2}H.
\label{eq:RiemannConstant}
\end{equation}
Taking an arbitrary complex shift $s\in\mathbb{C}$ and a point $z_0\in\mathbb{C}\setminus(\Gamma_{\alpha\to\beta}\cup\Gamma_{\beta\to\beta^*}\cup\Gamma_{\beta^*\to\alpha^*})$, we define two functions of $z$ by
\begin{equation}
q^\pm(z;z_0,s):=\frac{\Theta(A(z)\pm A(z_0)\pm\mathcal{K}-s;H)}{\Theta(A(z)\pm A(z_0)\pm\mathcal{K};H)},\quad
z\in\mathbb{C}\setminus(\Gamma_{\alpha\to\beta}\cup\Gamma_{\beta\to\beta^*}\cup\Gamma_{\beta^*\to\alpha^*}).
\label{eq:qpm-def}
\end{equation}
One can check that $q^+$ is analytic in its domain of definition.  On the other hand $q^-$ has a simple pole at $z=z_0$, and this is its only singularity (unless $s$ is an integer linear combination of $2\pi\ii$ and $H$ in which case the singularity is cancelled and $q^-$ becomes analytic).  Taking boundary values, it follows from $\Theta(-w;H)=\Theta(w;H)$, $\Theta(w+2\pi\ii;H)=\Theta(w;H)$,  \eqref{eq:Abel-alpha-beta}, and \eqref{eq:Abel-betastar-alphastar} that 
\begin{equation}
q^\pm_+(z;z_0,s)=q^\mp_-(z;z_0,-s),\quad z\in\Gamma_{\alpha\to\beta}\cup\Gamma_{\beta^*\to\alpha^*}.
\end{equation}
Similarly, it follows from $\Theta(w+H;H)=\ee^{-\frac{1}{2}H}\ee^{-w}\Theta(w;H)$ and \eqref{eq:Abel-beta-betastar} that
\begin{equation}
q^\pm_+(z;z_0,s)=\ee^{s}q^\pm_-(z;z_0,s),\quad z\in\Gamma_{\beta\to\beta^*}.
\end{equation}
Therefore, the matrix $\mathbf{Q}(z)$ defined by
\begin{equation}
\mathbf{Q}(z):=\begin{bmatrix}q^+(z;z_0,s) & -\ii q^-(z;z_0,-s)\\\ii q^-(z;z_0,s) & q^+(z;z_0,-s)\end{bmatrix}
\end{equation}
satisfies the following jump conditions
\begin{equation}
\mathbf{Q}_+(z)=\mathbf{Q}_-(z)\begin{bmatrix}0&-\ii \\\ii & 0\end{bmatrix},\quad z\in\Gamma_{\alpha\to\beta}\cup\Gamma_{\beta^*\to\alpha^*},
\end{equation}
and
\begin{equation}
\mathbf{Q}_+(z)=\mathbf{Q}_-(z)\ee^{s\sigma_3},\quad z\in\Gamma_{\beta\to\beta^*}.
\end{equation}
Identifying $z_0$ with the value given in \eqref{eq:z0}, which is the simple root of $F^{\mathrm{OD}}(z)$, we can modify $\mathbf{Q}(z)$ as follows:
\begin{equation}
\widetilde{\mathbf{Q}}(z):=\begin{bmatrix}F^\mathrm{D}(z)q^+(z;z_0,s) & -\ii F^\mathrm{OD}(z)q^-(z;z_0,-s)\\
-\ii F^\mathrm{OD}(z)q^-(z;z_0,s) & F^\mathrm{D}(z)q^+(z;z_0,-s)\end{bmatrix}.
\end{equation}
With this modification, the pole at $z=z_0$ in $q^-(z;z_0,\pm s)$ is removed, and so $\widetilde{\mathbf{Q}}(z)$ is analytic in the complement of the jump contour.  
Using the jump conditions \eqref{eq:f-jumps} one has
\begin{equation}
\widetilde{\mathbf{Q}}_+(z)=\widetilde{\mathbf{Q}}_-(z)\ii\sigma_1,\quad z\in\Gamma_{\alpha\to\beta}\cup\Gamma_{\beta^*\to\alpha^*}.
\end{equation}
Using the fact that $F^\mathrm{D}(z)$ and $F^\mathrm{OD}(z)$ are analytic on $\Gamma_{\beta\to\beta^*}$, one has
\begin{equation}
\widetilde{\mathbf{Q}}_+(z)=\widetilde{\mathbf{Q}}_-(z)\ee^{s\sigma_3},\quad z\in\Gamma_{\beta\to\beta^*}.
\end{equation}
To match the desired jump conditions of $\mathbf{F}(z)$ it therefore only remains to choose $s=\ii M\Delta$.  Finally, using \eqref{eq:f-asymp}, we can normalize at infinity by multiplication on the left by a suitable constant diagonal matrix to obtain
\begin{equation}
\mathbf{F}(z)=\begin{bmatrix}\displaystyle F^\mathrm{D}(z)\frac{q^+(z;z_0,\ii M\Delta)}{q^+(\infty;z_0,\ii M\Delta)} &\displaystyle  -\ii F^\mathrm{OD}(z)\frac{q^-(z;z_0,-\ii M\Delta)}{q^+(\infty;z_0,\ii M\Delta)}\\
\displaystyle -\ii F^\mathrm{OD}(z)\frac{q^-(z;z_0,\ii M\Delta)}{q^+(\infty;z_0,-\ii M\Delta)} & \displaystyle F^\mathrm{D}(z)\frac{q^+(z;z_0,-\ii M\Delta)}{q^+(\infty;z_0,-\ii M\Delta)}\end{bmatrix}.
\end{equation}
Going back to $\breve{\mathbf{O}}^\mathrm{out}(z)$ by a constant diagonal conjugation yields
\begin{equation}
\breve{\mathbf{O}}^\mathrm{out}(z):=\begin{bmatrix}
\displaystyle F^\mathrm{D}(z)\frac{q^+(z;z_0,\ii M\Delta)}{q^+(\infty;z_0,\ii M\Delta)}  & \displaystyle \ee^{-\ii M\phi}F^\mathrm{OD}(z)\frac{q^-(z;z_0,-\ii M\Delta)}{q^+(\infty;z_0,\ii M\Delta)}\\
\displaystyle -\ee^{\ii M\phi}F^\mathrm{OD}(z)\frac{q^-(z;z_0,\ii M\Delta)}{q^+(\infty;z_0,-\ii M\Delta)} & 
\displaystyle F^\mathrm{D}(z)\frac{q^+(z;z_0,-\ii M\Delta)}{q^+(\infty;z_0,-\ii M\Delta)}\end{bmatrix}.
\label{eq:DS-outer-parametrix-formula}
\end{equation}
One can verify that this solution of \rhref{rhp:O-out} is unique and therefore, like $\mathbf{O}(z;\chi,\tau,M)$, $\mathbf{T}(z;\chi,\tau,M)$, and $\mathbf{S}(z;\chi,\tau,M)$, it has Schwarz symmetry of the form \eqref{eq:S-Schwarz}.

\subsection{Inner parametrices}
We define two conformal mappings, one on a neighborhood of each of the points $z=\alpha,\beta$ as follows.
First, note that $h(z)$ is well defined at $z=\alpha$ and that $h(\alpha)=\ii+\frac{1}{2}\phi$.  At $z=\beta$, the sum and difference of boundary values of $h(z)$ are well defined, and $h_+(\beta)+h_-(\beta)=2\ii +\phi$ while $h_+(\beta)-h_-(\beta)=\Delta$, implying that $h_+(\beta)=\ii+\frac{1}{2}\phi+\frac{1}{2}\Delta$.  Let $D_p$, $p=\alpha,\beta$ denote disks of radius $\delta>0$ fixed but sufficiently small centered at $p$.  
\begin{itemize}
\item On $D_\alpha$ we define a conformal coordinate $\varphi_\alpha(z):=(2\ii (h(z)- h(\alpha)))^{\frac{2}{3}}$ by analytic continuation of the positive $\frac{2}{3}$ power from the arc emanating from $z=\alpha$ along which $h(z)-h(\alpha)$ is negative imaginary, and within $D_\alpha$ we choose $\Gamma_{\alpha^*\to\alpha}$ to agree with that arc.  We also choose the image under $\varphi=\varphi_\alpha$ of $\Gamma_{\alpha\to\beta}\cap D_\alpha$ to lie on the negative real axis, and that of $\Lambda_{\alpha\to\beta}^\pm\cap D_\alpha$ to lie on the ray $\arg(\varphi)=\mp\frac{2}{3}\pi$.
\item On $D_\beta$ we define a conformal coordinate $\varphi_\beta(z):=(2\ii (h_+(z)-h_+(\beta)))^{\frac{2}{3}}$ by analytic continuation of the positive $\frac{2}{3}$ power from the arc emanating from $z=\beta$ along which $\tilde{h}(z)-h_+(\beta)$ is negative imaginary, where $\tilde{h}(z)$ denotes the analytic continuation of $h_+(z)$ from $\Gamma_{\beta\to\beta^*}$ to $D_\beta\setminus\Gamma_{\alpha\to\beta}$, and within $D_\beta$ we choose $\Gamma_{\beta\to\beta^*}$ to agree with that arc.  We also choose the image under $\varphi=\varphi_\beta$ of $\Gamma_{\alpha\to\beta}\cap D_\beta$ to lie on the negative real axis, and that of $\Lambda_{\alpha\to\beta}^\pm\cap D_\beta$ to lie on the ray $\arg(\varphi)=\pm\frac{2}{3}\pi$.
\end{itemize}
The two maps $z\mapsto \varphi_p(z)$ for $p=\alpha,\beta$ are conformal near $z=p$ because these points are simple roots of $h'(z)^2$. 
If we define 
\begin{equation}
\mathbf{P}(z):=\mathbf{O}(z;\chi,\tau,M)\ee^{-\frac{1}{2}\ii M\phi\sigma_3},\quad z\in D_\alpha
\label{eq:P-in-Dalpha}
\end{equation}
and
\begin{equation}
\mathbf{P}(z):=\mathbf{O}(z;\chi,\tau,M)\begin{cases}\ee^{-\frac{1}{2}\ii M(\phi+\Delta)\sigma_3}\ii^{\sigma_3},& z\in D_\beta,\quad \mathrm{Im}(\varphi_\beta(z))>0,\\
\ee^{-\frac{1}{2}\ii M(\phi-\Delta)\sigma_3}\ii^{\sigma_3},&z\in D_\beta,\quad\mathrm{Im}(\varphi_\beta(z))<0,
\end{cases}
\label{eq:P-in-Dbeta}
\end{equation}
then the jump conditions satisfied by $\mathbf{P}(z)$ in both disks can be approximated universally by the same formul\ae; namely we have
\begin{equation}
\mathbf{P}_+(z)=\mathbf{P}_-(z)\left(\mathbb{I}+O(\ee^{-4M})\right)\begin{bmatrix}1 & \ee^{-\zeta^{3/2}}\\0 & 1\end{bmatrix},\quad \arg(\zeta)=0,
\label{eq:P-jump-pos}
\end{equation}
\begin{equation}
\mathbf{P}_+(z)=\mathbf{P}_-(z)\left(\mathbb{I}+O(\ee^{-4M})\right)\begin{bmatrix}1 & 0\\\ee^{\zeta^{3/2}} & 1\end{bmatrix},\quad\arg(\zeta)=\pm\frac{2\pi}{3},
\label{eq:P-jump-lenses}
\end{equation}
and
\begin{equation}
\mathbf{P}_+(z)=\mathbf{P}_-(z)\begin{bmatrix} 0 & 1\\-1 & 0\end{bmatrix},\quad \arg(-\zeta)=0.
\label{eq:P-jump-twist}
\end{equation}
Here $\zeta=M^{\frac{2}{3}}\varphi_p(z)$ for $z\in D_p$, $p=\alpha,\beta$, and to define the boundary values all rays are taken with orientation in the direction of increasing real part of $\zeta$ (this matches the original orientation within $D_\beta$ but reverses the orientation within $D_\alpha$).  The error terms are uniform for $z\in D_p$ and they vanish along the indicated ray in the limit $\zeta\to 0$.  Defining a matrix function $\breve{\mathbf{P}}^\mathrm{out}(z)$ for $z\in D_\alpha$ (resp., for $z\in D_\beta$) by an analogue of the formula \eqref{eq:P-in-Dalpha} (resp., of the formula \eqref{eq:P-in-Dbeta}) in which $\mathbf{O}(z;\chi,\tau,M)$ is replaced with $\breve{\mathbf{O}}^\mathrm{out}(z)$, we see that $\breve{\mathbf{P}}^\mathrm{out}(z)$ is analytic within $D_p$ except along the arc where $\varphi_p(z)\le 0$, and where $\breve{\mathbf{P}}^\mathrm{out}(z)$ satisfies exactly the jump condition \eqref{eq:P-jump-twist}, and that $\breve{\mathbf{P}}^\mathrm{out}(z)$ blows up like a negative one-fourth power at $z=p$.  Therefore, the matrix function
\begin{equation}
\mathbf{\holomat}_p(z):=\breve{\mathbf{P}}^\mathrm{out}(z)\mathbf{V}^{-1}\varphi_p(z)^{-\frac{1}{4}\sigma_3},\quad z\in D_p,\quad\mathbf{V}:=\frac{1}{\sqrt{2}}\begin{bmatrix}1 & -\ii\\-\ii & 1\end{bmatrix}
\label{eq:H-define}
\end{equation}
has a removable singularity along the arc $\varphi_p(z)\le 0$ and hence is analytic in $D_p$.  Letting $\mathbf{A}(\zeta)$ denote the standard Airy parametrix analytic for $\mathrm{Im}(\zeta)\neq 0$ except across the rays $\arg(\zeta)=\pm\frac{2}{3}\pi$, satisfying jump conditions given in \eqref{eq:P-jump-pos}--\eqref{eq:P-jump-twist} with the error terms neglected, and satisfying the normalization condition 
\begin{equation}
\mathbf{A}(\zeta)\mathbf{V}^{-1}\zeta^{-\frac{1}{4}\sigma_3}=\mathbb{I} + \begin{bmatrix}O(\zeta^{-3}) & O(\zeta^{-1})\\O(\zeta^{-2}) & O(\zeta^{-3})\end{bmatrix},\quad \zeta\to\infty,
\label{eq:A-norm}
\end{equation}
(i.e., $\mathbf{A}(\zeta)$ is the unique solution of Riemann-Hilbert Problem 4 of \cite{BothnerM19}, for instance --- see \cite[Appendix B]{BothnerM19} for full details), we then define a parametrix for $\mathbf{O}(z;\chi,\tau,M)$ within $D_p$, $p=\alpha,\beta$ by setting 
\begin{equation}
\breve{\mathbf{O}}^\alpha(z):=\mathbf{\holomat}_\alpha(z)M^{-\frac{1}{6}\sigma_3}\mathbf{A}(M^{\frac{2}{3}}\varphi_\alpha(z))\ee^{\frac{1}{2}\ii M\phi\sigma_3},\quad z\in D_\alpha,
\label{eq:O-parametrix-Dalpha}
\end{equation}
and
\begin{equation}
\breve{\mathbf{O}}^\beta(z):=\mathbf{\holomat}_\beta(z)M^{-\frac{1}{6}\sigma_3}\mathbf{A}(M^\frac{2}{3}\varphi_\beta(z))\begin{cases}\ee^{\frac{1}{2}\ii M(\phi+\Delta)\sigma_3}\ii^{-\sigma_3},& z\in D_\beta,\quad \mathrm{Im}(\varphi_\beta(z))>0,\\
\ee^{\frac{1}{2}\ii M(\phi-\Delta)\sigma_3}\ii^{-\sigma_3},&z\in D_\beta,\quad \mathrm{Im}(\varphi_\beta(z))<0.
\end{cases}
\label{eq:O-parametrix-Dbeta}
\end{equation}

\subsection{Global parametrix and error estimation}
Let $D_p^*$ denote the Schwarz reflection in the real axis of the disk $D_p$, $p=\alpha,\beta$.  We define a global parametrix for $\mathbf{O}(z;\chi,\tau,M)$ by setting
\begin{equation}
\breve{\mathbf{O}}(z):=\begin{cases}\breve{\mathbf{O}}^\alpha(z),&z\in D_\alpha,\\
\breve{\mathbf{O}}^\beta(z),&z\in D_\beta,\\
\sigma_2\breve{\mathbf{O}}^\alpha(z^*)^*\sigma_2,&z\in D_\alpha^*,\\
\sigma_2\breve{\mathbf{O}}^\beta(z^*)^*\sigma_2,&z\in D_\beta^*,\\
\breve{\mathbf{O}}^\mathrm{out}(z),&z\in\mathbb{C}\setminus (\overline{D_\alpha}\cup\overline{D_\beta}\cup\overline{D_\alpha^*}\cup\overline{D_\beta^*}),
\end{cases}
\end{equation}
which globally satisfies Schwarz symmetry in the form \eqref{eq:S-Schwarz}.
The corresponding error is defined as $\mathbf{E}(z):=\mathbf{O}(z;\chi,\tau,M)\breve{\mathbf{O}}(z)^{-1}$ wherever both factors make sense, and it also is Schwarz symmetric.  Since $\mathbf{O}(z;\chi,\tau,M)$ and $\breve{\mathbf{O}}(z)$ satisfy the same jump conditions across the arcs $\Gamma_{\alpha\to\beta}$ and $\Gamma_{\beta^*\to\alpha^*}$ both within and exterior to the disks $D_p$ and $D_p^*$ for $p=\alpha,\beta$, $\mathbf{E}(z)$ can be defined on these arcs so as to be analytic there.  On the parts of the arcs $\Gamma_{\alpha^*\to\alpha}$, $\Gamma_{\beta\to\beta^*}$, $\Lambda^\pm_{\alpha\to\beta}$, and $\Lambda^{\pm*}_{\alpha\to\beta}$  lying outside of all four disks, $\breve{\mathbf{O}}(z)=\breve{\mathbf{O}}^\mathrm{out}(z)$ is analytic and bounded, and $\mathrm{Im}(h(z))\in (-1+\epsilon,1-\epsilon)$ holds on these arcs for some $\epsilon>0$ independent of $M$.  It follows that on these arcs the boundary values of $\mathbf{E}(z)$ are related by $\mathbf{E}_+(z)=\mathbf{E}_-(z)(\mathbb{I}+O(\ee^{-M\epsilon}))$ as $M\to+\infty$.  On the arcs of $\Gamma_{\alpha^*\to\alpha}$, $\Gamma_{\beta\to\beta^*}$, $\Lambda^\pm_{\alpha\to\beta}$, and $\Lambda^{\pm*}_{\alpha\to\beta}$ within the four disks, both $\mathbf{O}(z;\chi,\tau,M)$ and $\breve{\mathbf{O}}(z)$ have jump discontinuities.  Using \eqref{eq:P-in-Dalpha}--\eqref{eq:P-in-Dbeta}, \eqref{eq:P-jump-pos}--\eqref{eq:P-jump-lenses},  and \eqref{eq:O-parametrix-Dalpha}--\eqref{eq:O-parametrix-Dbeta} one can check that on these arcs within $D_p$, $p=\alpha,\beta$, we have
\begin{equation}
\mathbf{E}_+(z)=\mathbf{E}_-(z)\mathbf{\holomat}_p(z)M^{-\frac{1}{6}\sigma_3}\mathbf{A}(M^\frac{2}{3}\varphi_p(z))\left(\mathbb{I}+O(\ee^{-4M})\right)\mathbf{A}(M^\frac{2}{3}\varphi_p(z))^{-1}M^{\frac{1}{6}\sigma_3}\mathbf{\holomat}_p(z)^{-1}.
\end{equation}
But since the holomorphic factors $\mathbf{\holomat}_p(z)$ have unit determinant and are bounded independent of $M$ in $D_p$, and since $\mathbf{A}(\zeta)$ has unit determinant and satisfies $\mathbf{A}(\zeta)=O(|\zeta|^{\frac{1}{4}})$ as $\zeta\to\infty$ according to \eqref{eq:A-norm}, it follows that $\mathbf{E}_+(z)=\mathbf{E}_-(z)(\mathbb{I}+O(M^\frac{2}{3}\ee^{-4M}))$ holds uniformly on these arcs in $D_\alpha$ and $D_\beta$.  By Schwarz reflection symmetry of $\mathbf{E}(z)$, the same holds in $D_\alpha^*$ and $D_\beta^*$.  Finally, on the boundaries of all four disks, taken with clockwise orientation, it follows from the definitions \eqref{eq:H-define}, \eqref{eq:O-parametrix-Dalpha}, and \eqref{eq:O-parametrix-Dbeta}, and from the large-$\zeta$ asymptotic property of $\mathbf{A}(\zeta)$ given in \eqref{eq:A-norm} that $\mathbf{E}_+(z)=\mathbf{E}_-(z)(\mathbb{I}+O(M^{-1}))$ holds on these circles.

From these arguments, it follows that if the jump matrix for $\mathbf{E}(z)$ on its jump contour $\Sigma_\mathbf{E}$ is denoted by $\mathbf{V}^\mathbf{E}(z)$, so that $\mathbf{E}_+(z)=\mathbf{E}_-(z)\mathbf{V}^\mathbf{E}(z)$ for $z$ any non-self-intersection point of $\Sigma_\mathbf{E}$ (so that the boundary values are well-defined), then
\begin{equation}
\sup_{z\in\Sigma_\mathbf{E}}\|\mathbf{V}^\mathbf{E}(z)-\mathbb{I}\|=O(M^{-1}),\quad M\to+\infty.
\label{eq:VE-sup}
\end{equation}
Since $\mathbf{E}(z)$ is analytic for $z\in\mathbb{C}\setminus\Sigma_\mathbf{E}$ and $\mathbf{E}(z)\to\mathbb{I}$ as $z\to\infty$, appealing to standard small-norm theory for Riemann-Hilbert problems then shows that $\mathbf{E}_-(\diamond)-\mathbb{I}=O(M^{-1})$ holds in the $L^2(\Sigma_\mathbf{E})$ sense, and therefore also that
\begin{equation}
\begin{split}
\lim_{z\to\infty}zE_{12}(z)&=-\frac{1}{2\pi\ii}\int_{\Sigma_\mathbf{E}}E_{-,11}(\zeta)V^\mathbf{E}_{12}(\zeta)\,\dd\zeta -\frac{1}{2\pi\ii}\int_{\Sigma_\mathbf{E}}E_{-,12}(\zeta)(V^\mathbf{E}_{22}(\zeta)-1)\,\dd\zeta \\
&= -\frac{1}{2\pi\ii}\int_{\Sigma_\mathbf{E}}V^\mathbf{E}_{12}(\zeta)\,\dd\zeta-\frac{1}{2\pi\ii}\int_{\Sigma_\mathbf{E}}(E_{-,11}(\zeta)-1)V^\mathbf{E}_{12}(\zeta)\,\dd\zeta\\
&\qquad\qquad\qquad{} -\frac{1}{2\pi\ii}\int_{\Sigma_\mathbf{E}}E_{-,12}(\zeta)(V^\mathbf{E}_{22}(\zeta)-1)\,\dd\zeta\\
&= O(M^{-1}),\quad M\to+\infty,
\end{split}
\label{eq:E-moment-estimate}
\end{equation}
where we used the Cauchy-Schwarz inequality and the fact that \eqref{eq:VE-sup} implies that also $\mathbf{V}_\mathbf{E}(\diamond)-\mathbb{I}=O(M^{-1})$ in $L^2(\Sigma_\mathbf{E})$ because $\Sigma_\mathbf{E}$ is compact.  

Recalling \eqref{eq:DS-Psi-from-O} and using the fact that $O_{12}(z;\chi,\tau,M)=E_{11}(z)\breve{O}_{12}(z)+E_{12}(z)\breve{O}_{22}(z)$ and that $\breve{\mathbf{O}}(z)=\breve{\mathbf{O}}^\mathrm{out}(z)$ holds for $|z|$ sufficiently large, we appeal to \eqref{eq:E-moment-estimate} and the limits $E_{11}(z)\to 1$ and $\breve{O}^\mathrm{out}_{22}(z)\to 1$ as $z\to\infty$ to obtain
\begin{equation}
\begin{split}
M\Psi(M^2\chi,M^3\tau;\mathbf{G}(\ee^{-2M},\sqrt{1-\ee^{-4M}}))&=2\ii\lim_{z\to\infty} z\left[E_{11}(z)\breve{O}^\mathrm{out}_{12}(z)+E_{12}(z)\breve{O}^\mathrm{out}_{22}(z)\right] \\
&= \breve{\Psi}(\chi,\tau;M) + O(M^{-1}),\quad M\to +\infty,
\end{split}
\end{equation}
where $\breve{\Psi}(\chi,\tau;M)$ is defined in terms of the outer parametrix by
\begin{equation}
\breve{\Psi}(\chi,\tau;M):=2\ii\lim_{z\to\infty}z\breve{O}^\mathrm{out}_{12}(z).
\label{eq:Psi-breve-def}
\end{equation}

\subsection{Properties of $\breve{\Psi}(\chi,\tau;M)$}
\label{s:DS-Interpretation}

In this section we obtain differential equations 
for $\breve{\Psi}(\chi,\tau;M)$ (Section~\ref{s:Lax}), estimate some of the coefficients in these equations (Section~\ref{s:estimate-coeffs}), use these results to compute the $L^2$-norm of $\breve{\Psi}(\chi,\tau;M)$ (Section~\ref{sec:L2-norm}), and derive the explicit formula \eqref{eq:Intro-square-modulus} for $|\breve{\Psi}(\chi,\tau;M)|^2$ (Section~\ref{s:explicit-formulae}).

\subsubsection{Lax equations}
\label{s:Lax}
Consider the matrix 
\begin{equation}
\mathbf{L}(z;\chi,\tau,M):=\breve{\mathbf{O}}^\mathrm{out}(z)\ee^{-\ii M(h(z)+2z^{-1})\sigma_3}.
\label{eq:DS-U-Oout}
\end{equation}
Note that $h(z)+2z^{-1}$ is analytic for $z\in\mathbb{C}\setminus(\Gamma_{\alpha\to\beta}\cup\Gamma_{\beta\to\beta^*}\cup\Gamma_{\beta^*\to\alpha^*})$ and satisfies $h(z)+2z^{-1}=\chi z + \tau z^2+O(z^{-1})$ as $z\to\infty$.  Since $h_+(z)+h_-(z)=2\ii +\phi$ for $z\in\Gamma_{\alpha\to\beta}$, $h_+(z)+h_-(z)=-2\ii +\phi$ for $z\in \Gamma_{\beta^*\to\alpha^*}$, and $h_+(z)-h_-(z)=\Delta$ for $z\in\Gamma_{\beta\to\beta^*}$, it follows that $\mathbf{L}(z;\chi,\tau,M)$ is analytic for $z\in\mathbb{C}\setminus(\Gamma_{\alpha\to\beta}\cup\Gamma_{\beta^*\to\alpha^*})$ (one checks that $\mathbf{L}_+(z;\chi,\tau,M)=\mathbf{L}_-(z;\chi,\tau,M)$ for $z\in\Gamma_{\beta\to\beta^*}$ and applies Morera's theorem to deduce analyticity on the interior of $\Gamma_{\beta\to\beta^*}$), and that the jump conditions on $\Gamma_{\alpha\to\beta}$ and $\Gamma_{\beta^*\to\alpha^*}$ are independent of $(\chi,\tau)\in\mathbb{R}^2$:
\begin{equation}
\mathbf{L}_+(z;\chi,\tau,M)=\mathbf{L}_-(z;\chi,\tau,M)\begin{bmatrix}0 & -\ee^{-2M}\ee^{4\ii Mz^{-1}}\\\ee^{2M}\ee^{-4\ii Mz^{-1}} & 0\end{bmatrix},\quad z\in\Gamma_{\alpha\to\beta},
\end{equation}
and
\begin{equation}
\mathbf{L}_+(z;\chi,\tau,M)=\mathbf{L}_-(z;\chi,\tau,M)\begin{bmatrix}0 & -\ee^{2M}\ee^{4\ii Mz^{-1}}\\\ee^{-2M}\ee^{-4\ii Mz^{-1}} & 0\end{bmatrix},\quad z\in\Gamma_{\beta^*\to\alpha^*}.
\end{equation}
Also, since $h(z)$ is bounded at the four points $\alpha,\beta,\alpha^*,\beta^*$, $\mathbf{L}(z;\chi,\tau,M)$ inherits from $\breve{\mathbf{O}}^\mathrm{out}(z)$ the property that it has at worst negative one-fourth root singularities near each of these points.  Finally, since $g(z)=O(z^{-1})$ as $z\to\infty$ and $h(z)=\phase(z;\chi,\tau)+g(z)$, from \eqref{eq:DS-tildevartheta} it follows that 
\begin{equation}
\mathbf{L}(z;\chi,\tau,M)=\left(\mathbb{I}+\sum_{n=1}^\infty \mathbf{L}^{[n]}(\chi,\tau;M)z^{-n}\right)\ee^{-\ii M(\chi z +\tau z^2)\sigma_3}
\label{eq:DS-Lax-expansion}
\end{equation}
holds for $|z|$ sufficiently large, where the series is convergent as well as asymptotic as $z\to\infty$.  One can also check that $\mathbf{L}(z^*;\chi,\tau,M)=\sigma_2\mathbf{L}(z;\chi,\tau,M)^*\sigma_2$.
Now, $\breve{\Psi}(\chi,\tau;M)$ defined by \eqref{eq:Psi-breve-def} can also be represented in the form
\begin{equation}
\breve{\Psi}(\chi,\tau;M)=2\ii L^{[1]}_{12}(\chi,\tau;M)=-2\ii L^{[1]}_{21}(\chi,\tau;M)^*,
\label{eq:DS-Psi-breve-U}
\end{equation}
where the second equality comes via the Schwarz symmetry of $\mathbf{L}(z;\chi,\tau,M)$.
These properties of $\mathbf{L}(z;\chi,\tau,M)$ show that 
\begin{equation}
\mathbf{X}:=\frac{\partial\mathbf{L}}{\partial \chi}\mathbf{L}^{-1}\quad\text{and}\quad
\mathbf{T}:=\frac{\partial\mathbf{L}}{\partial \tau}\mathbf{L}^{-1}
\label{eq:DS-XT-define}
\end{equation}
are analytic functions of $z$ for $z\in\mathbb{C}\setminus\{\alpha,\beta,\alpha^*,\beta^*\}$ with asymptotic expansions
\begin{equation}
\begin{split}
\mathbf{X}&=-\ii Mz\sigma_3 -\ii M[\mathbf{L}^{[1]},\sigma_3] + O(z^{-1})\\
\mathbf{T}&=-\ii M z^2\sigma_3 -\ii Mz[\mathbf{L}^{[1]},\sigma_3] -\ii M ([\mathbf{L}^{[2]},\sigma_3] - [\mathbf{L}^{[1]},\sigma_3\mathbf{L}^{[1]}]) + O(z^{-1})
\end{split}
\label{eq:DS-XT-asymp}
\end{equation}
as $z\to\infty$.  
If $\alpha,\beta,\alpha^*,\beta^*$ were independent of $(\chi,\tau)$, then $\mathbf{X}$ and $\mathbf{T}$ would be entire and hence by Liouville's Theorem they would be polynomials in $z$.
However the dependence of these quantities on $(\chi,\tau)$ via the Whitham equations \eqref{eq:DS-Whitham} implies that $\mathbf{X}$ and $\mathbf{T}$ have simple poles at all four branch points, so for certain residue matrices $\mathbf{X}^{(p)}$ and $\mathbf{T}^{(p)}$ depending on $(\chi,\tau;M)$ we can write
\begin{equation}
\begin{split}
\mathbf{X}&=-\ii Mz\sigma_3 -\ii M[\mathbf{L}^{[1]},\sigma_3] + \sum_{p=\alpha,\beta,\alpha^*,\beta^*}\frac{\mathbf{X}^{(p)}}{z-p}\\
\mathbf{T}&=-\ii M z^2\sigma_3 -\ii Mz[\mathbf{L}^{[1]},\sigma_3] -\ii M ([\mathbf{L}^{[2]},\sigma_3] - [\mathbf{L}^{[1]},\sigma_3\mathbf{L}^{[1]}]) + \sum_{p=\alpha,\beta,\alpha^*,\beta^*}\frac{\mathbf{T}^{(p)}}{z-p}.
\end{split}
\label{eq:DS-X-T-poles}
\end{equation}
To determine the residues, note that for $z$ in a neighborhood of $\Gamma_{\alpha\to\beta}$ we may express $\mathbf{L}(z;\chi,\tau,M)$ in the form
\begin{equation}
\mathbf{L}(z;\chi,\tau,M)=\mathbf{\holomat}(z;\chi,\tau,M)\left(\frac{z-\alpha}{z-\beta}\right)^{-\frac{1}{4}\sigma_3}\frac{1}{\sqrt{2}}\begin{bmatrix}1 & \ii \ee^{-2M}\ee^{4\ii Mz^{-1}}\\ \ii \ee^{2M}\ee^{-4\ii Mz^{-1}} & 1\end{bmatrix},
\label{eq:DS-L-H-diag-ev}
\end{equation}
where $\mathbf{\holomat}(z;\chi,\tau,M)$ is analytic for $z$ in a neighborhood of $\Gamma_{\alpha\to\beta}$ and where the power function is cut on $\Gamma_{\alpha\to\beta}$ and tends to $1$ as $z\to\infty$.  Now, $\alpha(\chi,\tau)$, $\beta(\chi,\tau)$, and $\mathbf{\holomat}(z;\chi,\tau,M)$ are differentiable with respect to $(\chi,\tau)$ on the region $\chi>\chi_\mathrm{c}(\tau)$, and in particular derivatives of $\mathbf{\holomat}(z;\chi,\tau,M)$ with respect to $(\chi,\tau)$ are analytic in $z$ near $\Gamma_{\alpha\to\beta}$ as well.  It then follows from \eqref{eq:DS-XT-define} that
\begin{equation}
\begin{split}
\mathbf{X}^{(\alpha)}&=\frac{1}{4}\frac{\partial\alpha}{\partial\chi}(\chi,\tau)\mathbf{\holomat}(\alpha;\chi,\tau,M)\sigma_3\mathbf{\holomat}(\alpha;\chi,\tau,M)^{-1}\\
\mathbf{T}^{(\alpha)}&=\frac{1}{4}\frac{\partial\alpha}{\partial\tau}(\chi,\tau)\mathbf{\holomat}(\alpha;\chi,\tau,M)\sigma_3\mathbf{\holomat}(\alpha;\chi,\tau,M)^{-1}\\
\mathbf{X}^{(\beta)}&=-\frac{1}{4}\frac{\partial\beta}{\partial\chi}(\chi,\tau)\mathbf{\holomat}(\beta;\chi,\tau,M)\sigma_3\mathbf{\holomat}(\beta;\chi,\tau,M)^{-1}\\
\mathbf{T}^{(\beta)}&=-\frac{1}{4}\frac{\partial\beta}{\partial\tau}(\chi,\tau)\mathbf{\holomat}(\beta;\chi,\tau,M)\sigma_3\mathbf{\holomat}(\beta;\chi,\tau,M)^{-1},
\end{split}
\end{equation}
and by Schwarz symmetry (i.e., from $\breve{\mathbf{O}}^\mathrm{out}(z^*)^*=\sigma_2\breve{\mathbf{O}}^\mathrm{out}(z)\sigma_2$) we have $\mathbf{X}^{(p^*)}=\sigma_2\mathbf{X}^{(p)*}\sigma_2$ and $\mathbf{T}^{(p^*)}=\sigma_2\mathbf{T}^{(p)*}\sigma_2$ for $p=\alpha,\beta$.
With $\mathbf{X}$ and $\mathbf{T}$ determined from $\mathbf{L}$ in this way, rearranging the definitions \eqref{eq:DS-XT-define} yields a Lax pair of differential equations for which $\mathbf{L}$ is a fundamental (as $\det(\mathbf{L})=1$) solution matrix:
\begin{equation}
\frac{\partial\mathbf{L}}{\partial\chi}=\mathbf{XL}\quad\text{and}\quad\frac{\partial\mathbf{L}}{\partial\tau}=\mathbf{TL},
\label{eq:DS-Lax-equations}
\end{equation}
and hence
the zero-curvature condition
\begin{equation}
\frac{\partial\mathbf{X}}{\partial\tau} -\frac{\partial\mathbf{T}}{\partial\chi} + [\mathbf{X},\mathbf{T}]=\mathbf{0}
\label{eq:DS-ZCC}
\end{equation}
holds.
If $\alpha,\beta,\alpha^*,\beta^*$ are fixed, then the pole contributions vanish from $\mathbf{X}$ and $\mathbf{T}$, and \eqref{eq:DS-ZCC} becomes equivalent to the scaled focusing nonlinear Schr\"odinger equation \eqref{eq:semiclassicalNLS} on $q=\breve{\Psi}(\chi,\tau;M)$.  Since these points are not fixed, $\breve{\Psi}(\chi,\tau;M)$ satisfies instead more complicated equations, one of which we use in Section~\ref{sec:L2-norm} below to calculate the $L^2$-norm.

Combining \eqref{eq:DS-U-Oout} and \eqref{eq:DS-L-H-diag-ev} allows the residues to be expressed directly in terms of $\breve{\mathbf{O}}^\mathrm{out}(z)$ and $h(z)$, although since $\breve{\mathbf{O}}^\mathrm{out}(z)$ is undefined at the branch points, the evaluations at $z=\alpha,\beta$ must be replaced by limits:
\begin{equation}
\mathbf{X}^{(\alpha)}=\frac{1}{4}\frac{\partial\alpha}{\partial\chi}(\chi,\tau)\lim_{z\to\alpha}\mathbf{N}(z;\chi,\tau,M),\quad
\mathbf{X}^{(\beta)}=-\frac{1}{4}\frac{\partial\beta}{\partial\chi}(\chi,\tau)\lim_{z\to\beta}\mathbf{N}(z;\chi,\tau,M),
\label{eq:DS-X-alpha-beta-N}
\end{equation}
and similarly for $\mathbf{T}^{(\alpha,\beta)}$, wherein
\begin{equation}
\begin{split}
\mathbf{N}(z;\chi,\tau,M):&=\mathbf{\holomat}(z;\chi,\tau,M)\sigma_3\mathbf{\holomat}(z;\chi,\tau,M)^{-1}\\
& =\breve{\mathbf{O}}^\mathrm{out}(z)\begin{bmatrix}0 & \ii\ee^{-2M-2\ii Mh(z)}\\-\ii\ee^{2M+2\ii Mh(z)} & 0\end{bmatrix}\breve{\mathbf{O}}^\mathrm{out}(z)^{-1}.
\end{split}
\end{equation}
Although it must be true from the first formula for $\mathbf{N}(z;\chi,\tau,M)$ in terms of $\mathbf{\holomat}(z;\chi,\tau,M)$, which has unit determinant and is analytic near $\Gamma_{\alpha\to\beta}$, one can directly confirm from the second formula that $\mathbf{N}(z;\chi,\tau,M)$ has no jump across either $\Gamma_{\alpha\to\beta}$ or $\Gamma_{\beta\to\beta^*}$.  Now notice that, since $h(\alpha)=\ii +\frac{1}{2}\phi$ is well-defined and $h'(z)=O((z-\alpha)^\frac{1}{2})$ as $z\to\alpha$, reality of $\phi=\phi(\chi,\tau)$ yields $\ee^{\pm(2M+2\ii Mh(z))}=\ee^{\pm\ii M\phi}+O((z-\alpha)^\frac{3}{2})$.  Then, since $\breve{\mathbf{O}}^\mathrm{out}(z)$ has unit determinant and satisfies $\breve{\mathbf{O}}^\mathrm{out}(z)=O((z-\alpha)^{-\frac{1}{4}})$ as $z\to\alpha$, we get $\mathbf{N}(z;\chi,\tau,M)=\widetilde{\mathbf{N}}^{(\alpha)}(z;\chi,\tau,M)+O(z-\alpha)$, where
\begin{equation}
\widetilde{\mathbf{N}}^{(\alpha)}(z;\chi,\tau,M):=\breve{\mathbf{O}}^\mathrm{out}(z)\begin{bmatrix}0 & \ii\ee^{-\ii M\phi}\\-\ii\ee^{\ii M\phi} & 0\end{bmatrix}\breve{\mathbf{O}}^\mathrm{out}(z)^{-1}.
\end{equation}
One can check directly that $\widetilde{\mathbf{N}}^{(\alpha)}(z;\chi,\tau,M)$ is analytic for $z$ in a neighborhood of $\alpha$.  Hence  in the expression for $\mathbf{X}^{(\alpha)}$ in \eqref{eq:DS-X-alpha-beta-N}, $\mathbf{N}(z;\chi,\tau,M)$ can be replaced with $\widetilde{\mathbf{N}}^{(\alpha)}(z;\chi,\tau,M)$.  Similarly, using the fact that the boundary values taken by $h(z)$ at $z=\beta$ on $\Gamma_{\alpha\to\beta}\cup\Gamma_{\beta\to\beta^*}$ are $h_\pm(\beta)=\ii+\frac{1}{2}\phi\pm\frac{1}{2}\Delta$, we get $\mathbf{N}(z;\chi,\tau,M)=\widetilde{\mathbf{N}}^{(\beta)}(z;\chi,\tau,M)+O(z-\beta)$ as $z\to\beta$, where
\begin{equation}
\widetilde{\mathbf{N}}^{(\beta)}(z;\chi,\tau,M):=\breve{\mathbf{O}}^\mathrm{out}(z)\begin{bmatrix}0 & \ii\ee^{-\ii M\phi\mp\ii M\Delta}\\
-\ii\ee^{\ii M\phi\pm\ii M\Delta} & 0\end{bmatrix}\breve{\mathbf{O}}^\mathrm{out}(z)^{-1},
\end{equation}
where the top/bottom sign indicates that $z$ lies on the left/right side of $\Gamma_{\alpha\to\beta}\cup\Gamma_{\beta\to\beta^*}$ near $z=\beta$.   One can also check that $\widetilde{\mathbf{N}}^{(\beta)}(z;\chi,\tau,M)$ extends to $\Gamma_{\alpha\to\beta}\cup\Gamma_{\beta\to\beta^*}$ as an analytic function of $z$ near $z=\beta$.   In the expression for $\mathbf{X}^{(\beta)}$ in \eqref{eq:DS-X-alpha-beta-N}, $\mathbf{N}(z;\chi,\tau,M)$ can then be replaced with $\widetilde{\mathbf{N}}^{(\beta)}(z;\chi,\tau,M)$ provided the limit $z\to\beta$ is taken from the correct side of the jump contour corresponding to the top/bottom sign. 

\subsubsection{Estimates of the coefficients}
\label{s:estimate-coeffs}
In Section~\ref{sec:L2-norm} below we will need information about the (opposite) diagonal elements of the matrices $\mathbf{X}^{(\alpha)}$ and $\mathbf{X}^{(\beta)}$, and in particular we need to estimate $\widetilde{N}_{22}^{(p)}$ for $p=\alpha,\beta$.  In the notation of Section~\ref{sec:DS-outer}, we have
\begin{multline}
\widetilde{N}^{(\alpha)}_{22}(z;\chi,\tau,M)=\frac{\ii F^\mathrm{D}(z)F^{\mathrm{OD}}(z)}{q^+(\infty;z_0,-\ii M\Delta)q^+(\infty;z_0,\ii M\Delta)}\\
{}\cdot\left(
q^+(z;z_0,-\ii M\Delta)q^-(z;z_0,-\ii M\Delta)-q^+(z;z_0,\ii M\Delta)q^-(z;z_0,\ii M\Delta)\right)
\label{eq:tilde-N-alpha}
\end{multline}
and
\begin{multline}
\widetilde{N}^{(\beta)}_{22}(z;\chi,\tau,M)=\frac{\ii F^\mathrm{D}(z)F^\mathrm{OD}(z)}{q^+(\infty;z_0,-\ii M\Delta)q^+(\infty;z_0,\ii M\Delta)}\\{}\cdot\left(
\ee^{\pm\ii M\Delta}q^+(z;z_0,-\ii M\Delta)q^-(z;z_0,-\ii M\Delta)-\ee^{\mp \ii M\Delta}q^+(z;z_0,\ii M\Delta)q^-(z;z_0,\ii M\Delta)\right),
\label{eq:tilde-N-beta}
\end{multline}
where again the top/bottom sign indicates that $z$ lies on the left/right of the jump contour $\Gamma_{\alpha\to\beta}\cup\Gamma_{\beta\to\beta^*}$ near $z=\beta$.  The common $z$-independent denominator can be simplified using \eqref{eq:Az0-identity} with $n_1=-1$ and $n_2=1$ and \eqref{eq:RiemannConstant} in \eqref{eq:qpm-def}:
\begin{equation}
q^+(\infty;z_0,-\ii M\Delta)q^+(\infty;z_0,\ii M\Delta)=\frac{\Theta(H+\ii M\Delta;H)\Theta(H-\ii M\Delta;H)}{\Theta(H;H)^2}=\frac{\Theta(\ii M\Delta;H)^2}{\Theta(0;H)^2},
\end{equation}
where in the second equality we used the identities \eqref{eq:automorphic}.
To evaluate \eqref{eq:tilde-N-alpha} in the limit $z\to\alpha$, we may first use \eqref{eq:FD-FOD} to obtain
\begin{equation}
\lim_{z\to\alpha}R(z)\cdot \ii F^\mathrm{D}(z)F^\mathrm{OD}(z)=\frac{1}{4}(\alpha-\alpha^*)(\beta-\alpha).
\label{eq:FD-FOD-limit}
\end{equation}
Next, using $A(\alpha)=0$ in \eqref{eq:qpm-def} and Taylor expansion about $z=\alpha$ gives
\begin{multline}
q^+(z;z_0,-\ii M\Delta)q^-(z;z_0,-\ii M\Delta)-q^+(z;z_0,\ii M\Delta)q^-(z;z_0,\ii M\Delta)\\
{}=2\frac{\Theta(x_\alpha-\ii M\Delta;H)\Theta'(x_\alpha+\ii M\Delta;H)-
\Theta'(x_\alpha-\ii M\Delta;H)\Theta(x_\alpha+\ii M\Delta;H)}{\Theta(x_\alpha;H)^2}A(z)+O(A(z)^3)
\label{eq:qpm-diff-alpha}
\end{multline}
as $z\to\alpha$, where $x_\alpha:=A(z_0)+\mathcal{K}$ and the first identity in \eqref{eq:automorphic} was also used.  From \eqref{eq:Abel-define} we then obtain
\begin{equation}
\lim_{z\to\alpha}\frac{A(z)}{R(z)}=\frac{\pi\ii}{I_\mathcal{A}}\lim_{z\to\alpha}\frac{\dd}{\dd z}\left(\int_\alpha^z\frac{\dd z'}{R(z')}\right)^2 =\frac{4\pi\ii}{I_\mathcal{A}(\alpha-\alpha^*)(\alpha-\beta)(\alpha-\beta^*)}.
\label{eq:A-over-R-alpha}
\end{equation}
Therefore, combining \eqref{eq:FD-FOD-limit}, \eqref{eq:qpm-diff-alpha}, and \eqref{eq:A-over-R-alpha} shows that if $x_\alpha:=A(z_0)+\mathcal{K}$, then
\begin{multline}
\lim_{z\to\alpha}\widetilde{N}_{22}^{(\alpha)}(z;\chi,\tau,M)=-\frac{2\pi\ii\Theta(0;H)^2}{I_\mathcal{A}(\alpha-\beta^*)\Theta(\ii M\Delta;H)^2}\\
{}\cdot\frac{\Theta(x_\alpha-\ii M\Delta;H)\Theta'(x_\alpha+\ii M\Delta;H)-
\Theta'(x_\alpha-\ii M\Delta;H)\Theta(x_\alpha+\ii M\Delta;H)}{\Theta(x_\alpha;H)^2}.
\label{eq:N22-alpha-limit}
\end{multline}
Likewise, to evaluate \eqref{eq:tilde-N-beta} in the limit $z\to\beta$, we start from the analogue of \eqref{eq:FD-FOD-limit}:
\begin{equation}
\lim_{z\to\beta} R(z)\cdot\ii F^\mathrm{D}(z)F^\mathrm{OD}(z)=\frac{1}{4}(\beta-\beta^*)(\beta-\alpha).
\end{equation}
Next, from \eqref{eq:DS-H-def} and \eqref{eq:Abel-define} we find that $A_\pm(\beta)=\pm\frac{1}{2}H$, where the subscript denotes taking a limit from the left ($+$) or right ($-$) side of $\Gamma_{\alpha\to\beta}\cup\Gamma_{\beta\to\beta^*}$.  Since we know that $\widetilde{N}_{22}^{(\beta)}(z;\chi,\tau,M)$ is analytic in $z$ at $z=\beta$, without loss of generality we may agree to take the limit $z\to\beta$ in \eqref{eq:tilde-N-beta} from the left side, taking the top sign therein.  Hence, using \eqref{eq:qpm-def} and Taylor expansion about $z=\beta$ recalling the automorphic identities \eqref{eq:automorphic}, 
\begin{multline}
\ee^{\ii M\Delta}q^+(z;z_0,-\ii M\Delta)q^-(z;z_0,-\ii M\Delta)-\ee^{-\ii M\Delta}q^+(z;z_0,\ii M\Delta)q^-(z;z_0,\ii M\Delta) \\
{}=2\frac{\Theta'(\widetilde{x}_\beta+\ii M\Delta;H)\Theta(\widetilde{x}_\beta-\ii M\Delta;H)-\Theta'(\widetilde{x}_\beta-\ii M\Delta;H)\Theta(\widetilde{x}_\beta+\ii M\Delta;H)}{\Theta(\widetilde{x}_\beta;H)^2}(A(z)-A(\beta))\\
{}+O((A(z)-A(\beta))^3),
\end{multline}
as $z\to\beta$, where $\widetilde{x}_\beta:=A(z_0)+\mathcal{K}+\frac{1}{2}H$.  By the third identity in \eqref{eq:automorphic}, the same formula holds if $\widetilde{x}_\beta$ is replaced with $x_\beta:=\widetilde{x}_\beta-H=A(z_0)+\mathcal{K}-\frac{1}{2}H$.  
Then again taking the limit from the left,
\begin{equation}
\lim_{z\to\beta}\frac{A(z)-A(\beta)}{R(z)} =\frac{\pi\ii}{I_\mathcal{A}}\frac{\dd}{\dd z}\left(\int_\beta^z\frac{\dd z'}{R(z')}\right)^2 = \frac{4\pi\ii}{I_\mathcal{A}(\beta-\alpha)(\beta-\alpha^*)(\beta-\beta^*)}.
\end{equation}
Combining these results then shows that if $x_\beta:=A(z_0)+\mathcal{K}-\tfrac{1}{2}H$, then
\begin{multline}
\lim_{z\to\beta}\widetilde{N}_{22}^{(\beta)}(z;\chi,\tau,M)=\frac{2\pi\ii\Theta(0;H)^2}{I_\mathcal{A}(\beta-\alpha^*)\Theta(\ii M\Delta;H)^2}\\
{}\cdot\frac{\Theta'(x_\beta+\ii M\Delta;H)\Theta(x_\beta-\ii M\Delta;H)-\Theta'(x_\beta-\ii M\Delta;H)\Theta(x_\beta+\ii M\Delta;H)}{\Theta(x_\beta;H)^2}.
\label{eq:N22-beta-limit}
\end{multline}
We notice that the denominators in \eqref{eq:N22-alpha-limit} and \eqref{eq:N22-beta-limit} involve related products $(\alpha-\beta^*)I_\mathcal{A}$ and $(\beta-\alpha^*)I_\mathcal{A}$, where $I_\mathcal{A}$ was defined in \eqref{eq:DS-I-AB}.
%.
Since $I_\mathcal{A}$ is positive imaginary, these are related by complex conjugation.  Moreover, recalling \eqref{eq:DS-H-def}, we have the following result:
\begin{lemma}
In the limit $\chi\downarrow\chi_\mathrm{c}(\tau)$, we have $H\uparrow 0$ and
\begin{equation}
\frac{1}{(\alpha-\beta^*)I_\mathcal{A}}=\left[\frac{1}{(\beta-\alpha^*)I_\mathcal{A}}\right]^*=O(H),
\label{eq:frac-bound-H-small}
\end{equation}
while in the limit $\chi\uparrow +\infty$ we have $H\downarrow -\infty$ and
\begin{equation}
\frac{1}{(\alpha-\beta^*)I_\mathcal{A}}=\left[\frac{1}{(\beta-\alpha^*)I_\mathcal{A}}\right]^*=O(1).
\label{eq:frac-bound-H-large}
\end{equation}
\label{lem:DS-IA-factors-bounds}
\end{lemma}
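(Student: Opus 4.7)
The plan is to combine the explicit geometric formula
$$
I_\mathcal{A}=\frac{2\ii\,\mathbb{K}(m)}{\rho\sin(\tfrac{1}{2}\theta_\alpha)\cos(\tfrac{1}{2}\theta_\beta)},\qquad m=\cot^2(\tfrac{1}{2}\theta_\alpha)\tan^2(\tfrac{1}{2}\theta_\beta),
$$
derived in Section~\ref{sec:DS-outer}, with the identity \eqref{eq:DS-H-def} expressing $H=-\pi\mathbb{K}(1-m)/\mathbb{K}(m)$, and then to analyze the two limiting regimes separately. The complex-conjugation relation between $1/((\alpha-\beta^*)I_\mathcal{A})$ and $1/((\beta-\alpha^*)I_\mathcal{A})$ is immediate from Schwarz symmetry of the roots together with the fact that $I_\mathcal{A}$ is purely imaginary (so $I_\mathcal{A}^*=-I_\mathcal{A}$): one has $[(\beta-\alpha^*)I_\mathcal{A}]^*=-(\alpha-\beta^*)\cdot(-I_\mathcal{A})=(\alpha-\beta^*)I_\mathcal{A}$. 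Therefore it suffices to estimate the modulus $|(\alpha-\beta^*)I_\mathcal{A}|$ in each limit.

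For the edge limit $\chi\downarrow\chi_\mathrm{c}(\tau)$, the two roots $\alpha,\beta$ coalesce at a common value $\critpt$ with $\Im(\critpt)>0$ by the construction of $\chi_\mathrm{c}(\tau)$ in Section~\ref{s:boundary-curve}. Hence $\alpha-\beta^*\to 2\ii\,\Im(\critpt)\ne 0$, while $\theta_\alpha$ and $\theta_\beta$ both tend to a common value $\theta_\critpt\in(0,\pi)$ and the radius $\rho$ tends to a positive limit (extending continuously to the case $\Re(\alpha)=\Re(\beta)$ exactly as in the definition of $m_1$ in \eqref{eq:Intro-m1}). Consequently $m\uparrow 1$, so that $\mathbb{K}(1-m)\to\mathbb{K}(0)=\tfrac{\pi}{2}$ and $\mathbb{K}(m)\to+\infty$ with the precise balance $|H|\,\mathbb{K}(m)=\pi\mathbb{K}(1-m)\to\tfrac{\pi^2}{2}$; in particular $H\uparrow 0$. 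The geometric formula then gives $|I_\mathcal{A}|\sim\pi^2/(|H|\,\rho\sin(\tfrac{1}{2}\theta_\critpt)\cos(\tfrac{1}{2}\theta_\critpt))$ as $\chi\downarrow\chi_\mathrm{c}(\tau)$, and multiplying by $|\alpha-\beta^*|\to 2\Im(\critpt)$ yields $|(\alpha-\beta^*)I_\mathcal{A}|\asymp |H|^{-1}$, which is exactly \eqref{eq:frac-bound-H-small}.

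For the limit $\chi\uparrow+\infty$ I would reuse the rescaled spectral analysis carried out in the proof of Lemma~\ref{lem:absolute-integrability}. There the two roots of $R(z)^2$ in the upper half-plane lie near the rescaled points $w=\pm\sqrt 2$ (with $z=\chi^{-1/2}w$) and satisfy $w=\mp\sqrt 2+\tfrac{1}{2}\ii\delta+O(\delta^2)$ where $\delta^2=\sqrt{128}\,\chi^{-1/2}/\pi+O(\chi^{-1})$; hence
$$
\alpha=-\sqrt{2}\,\chi^{-1/2}+\tfrac{1}{2}\ii\delta\,\chi^{-1/2}+\cdots,\qquad \beta=+\sqrt{2}\,\chi^{-1/2}+\tfrac{1}{2}\ii\delta\,\chi^{-1/2}+\cdots,
$$
so $|\alpha-\beta^*|\sim 2\sqrt{2}\,\chi^{-1/2}$. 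The circle through the four roots has center $x\to 0$ and radius $\rho\sim\sqrt{2}\,\chi^{-1/2}$, so $\theta_\alpha\to\pi^{-}$, $\theta_\beta\to 0^{+}$, and therefore $m\to 0$ and $\sin(\tfrac{1}{2}\theta_\alpha)\cos(\tfrac{1}{2}\theta_\beta)\to 1$. Consequently $\mathbb{K}(m)\to\pi/2$ while $\mathbb{K}(1-m)\to+\infty$ logarithmically, so $H\downarrow-\infty$, and the geometric formula gives $|I_\mathcal{A}|\sim\pi\,\chi^{1/2}/\sqrt{2}$. Multiplying, $|(\alpha-\beta^*)I_\mathcal{A}|\to 2\pi$, establishing \eqref{eq:frac-bound-H-large}.

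The main obstacle is bookkeeping at the boundary $\chi\downarrow\chi_\mathrm{c}(\tau)$: one must check that the geometric data $\rho$, $\theta_\alpha$, $\theta_\beta$, and the center $x$ all extend continuously to the coalescence configuration (including the borderline case $\Re(\alpha)=\Re(\beta)$ handled by the limit in \eqref{eq:Intro-m1}) and that the resulting limits are nonzero, so that the divergence of $I_\mathcal{A}$ is controlled exclusively by the singular factor $\mathbb{K}(m)\sim\tfrac{1}{2}\pi^2/|H|$. Everything else reduces to the classical asymptotics of $\mathbb{K}$ at $m=0$ and $m=1$ combined with the explicit rescaled root configuration already used in Section~\ref{s:escape}.
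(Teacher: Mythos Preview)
Your argument is correct and rests on the same core idea as the paper: for $\chi\downarrow\chi_\mathrm{c}(\tau)$ one has $\alpha-\beta^*\to 2\ii\Im(\critpt)\neq 0$ while $|I_\mathcal{A}|$ blows up exactly like $1/|H|$, and for $\chi\uparrow+\infty$ both factors balance as $\chi^{\pm 1/2}$ using the rescaled root configuration from the proof of Lemma~\ref{lem:absolute-integrability}. The implementation, however, differs. You go through the explicit formula $I_\mathcal{A}=2\ii\mathbb{K}(m)/[\rho\sin(\tfrac{1}{2}\theta_\alpha)\cos(\tfrac{1}{2}\theta_\beta)]$ together with $|H|\mathbb{K}(m)=\pi\mathbb{K}(1-m)\to\tfrac{\pi^2}{2}$, which forces you to verify that the circle data $\rho,\theta_\alpha,\theta_\beta$ individually extend to the coalescence configuration with $\rho\sin(\tfrac{1}{2}\theta_\alpha)\cos(\tfrac{1}{2}\theta_\beta)$ finite and nonzero. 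The paper sidesteps precisely this bookkeeping by multiplying and dividing by $I_\mathcal{B}$ and using the definition \eqref{eq:DS-H-def} in the form $H=2\pi\ii I_\mathcal{B}/I_\mathcal{A}$: since $I_\mathcal{B}$ is (up to sign) the real integral $\int_\mathbb{R} R(z)^{-1}\,\dd z$, its finite nonzero limit as $\alpha,\beta\to\critpt$ is immediate without any circle geometry. Your ``main obstacle'' is thus exactly what the paper's $I_\mathcal{B}$ trick eliminates; either route works, but the paper's is shorter.
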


\begin{proof}
For the limit as $\chi\downarrow\chi_\mathrm{c}(\tau)$, we note that $I_\mathcal{B}$ has a finite strictly negative real limit, while $\alpha-\beta^*\to \critpt-\critpt^*=2\ii\mathrm{Im}(\critpt)\neq 0$.  Therefore multiplying by $1=I_\mathcal{B}/I_\mathcal{B}$ and using \eqref{eq:DS-H-def} (which also shows that $H\uparrow 0$ in the limit because $I_\mathcal{A}$ blows up logarithmically in $|\beta-\alpha|$ as in the proof of Lemma~\ref{lem:absolute-integrability}) proves \eqref{eq:frac-bound-H-small}.

For the limit as $\chi\uparrow+\infty$, we recall that $\alpha$ and $\beta$ both tend to zero in this limit, and in fact $\chi^\frac{1}{2}\alpha\to -\sqrt{2}$ while $\chi^\frac{1}{2}\beta\to \sqrt{2}$.  In particular, $\alpha-\beta^* =2\sqrt{2}\chi^{-\frac{1}{2}}(1+o(1))$.  By the scaling $z=\chi^{-\frac{1}{2}}w$ in the definition \eqref{eq:DS-I-AB}, one then also sees that $\chi^{-\frac{1}{2}}I_\mathcal{A}$ has a finite nonzero limit while $\chi^{-\frac{1}{2}}I_\mathcal{B}$ blows up as $\chi\uparrow+\infty$ (and hence $H\downarrow -\infty$).  Again, see the proof of Lemma~\ref{lem:absolute-integrability} for further details.  This proves \eqref{eq:frac-bound-H-large}.
\end{proof}

Since $z_0\in\mathbb{R}$, it is not difficult to show that $\mathrm{Im}(A(z_0))=-\frac{1}{2}\pi$, which in view of \eqref{eq:RiemannConstant} and $H<0$ implies that $x_\alpha$ and $x_\beta$ are both real.  Also, since to compute $\mathrm{Re}(A(z_0))$ from \eqref{eq:Abel-define} we may integrate $R(z)^{-1}$ between two points on the real line, the corresponding integral is bounded in absolute value by $|I_\mathcal{B}|$, so using \eqref{eq:DS-H-def} we see that $|\mathrm{Re}(A(z_0))|\le -H>0$.  Hence using \eqref{eq:RiemannConstant} shows that $\frac{3}{2}H\le x_\alpha\le -\frac{1}{2}H$ and $H\le x_\beta\le -H$.  Using this information, we now wish to estimate, for $x=x_\alpha,x_\beta\in\mathbb{R}$, $y=M\Delta\in\mathbb{R}$, and $H<0$:
\begin{equation}
T(H):=\mathop{\sup_{\frac{3}{2}H\le x\le-\frac{3}{2}H}}_{y\in\mathbb{R}}\left|\frac{\Theta(0;H)^2}{\Theta(\ii y;H)^2}\frac{\Theta'(x+\ii y;H)\Theta(x-\ii y;H)-\Theta'(x-\ii y;H)\Theta(x+\ii y;H)}{\Theta(x;H)^2}\right|
\end{equation}
\begin{lemma}
$T(H)$ is a continuous function of $H<0$ that satisfies
\begin{equation}
T(H)=O(1),\quad H\downarrow -\infty
\label{eq:DS-T-final-bound-H-large}
\end{equation}
and
\begin{equation}
T(H)=O(H^{-1}),\quad H\uparrow 0.
\label{eq:DS-T-final-bound-H-small}
\end{equation}
\label{lem:DS-T-bounds}
\end{lemma}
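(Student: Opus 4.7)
The plan rests on a hidden double periodicity of the supremand that reduces $T(H)$ to the supremum of a continuous function on a compact fundamental rectangle, followed by direct series analysis at $H\to-\infty$ and, for $H\uparrow 0$, an application of the Jacobi imaginary transformation that reduces the problem to the already-established large modulus estimate at the dual modulus $\tilde H:=4\pi^2/H$.

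First I would verify that the quantity inside the supremum is doubly periodic in $(x,y)$ with periods $H$ in $x$ and $2\pi$ in $y$. Periodicity in $y$ is immediate from $\Theta(w+2\pi\ii;H)=\Theta(w;H)$. Periodicity in $x$ is the essential structural observation: the quasiperiodicity $\Theta(w+H;H)=\ee^{-H/2-w}\Theta(w;H)$ and its differentiated form $\Theta'(w+H;H)=\ee^{-H/2-w}[\Theta'(w;H)-\Theta(w;H)]$ together imply that under $x\mapsto x+H$, the common exponential prefactor $\ee^{-H-2x}$ cancels between numerator and denominator of the supremand, while the additive constant $-1$ acquired by $\Theta'/\Theta$ cancels between the two terms of the Wronskian-type numerator. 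Since the prescribed range $\tfrac{3}{2}H\le x\le-\tfrac{3}{2}H$ covers three full $x$-periods, $T(H)$ equals the supremum over the compact rectangle $[-|H|/2,|H|/2]\times[0,2\pi]$, is finite for each $H<0$, and depends continuously on $H$.

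For $H\to-\infty$, with $q:=\ee^{H/2}\to 0$ and $s:=q\ee^x\in[q,1]$ on the strip $x\in[0,|H|/2]$, direct expansion of each theta series yields
\begin{equation*}
\Theta'(x+\ii y)\Theta(x-\ii y)-\Theta'(x-\ii y)\Theta(x+\ii y)=2\ii s\sin y+O(q)
\end{equation*}
and $\Theta(x;H)^2\Theta(\ii y;H)^2/\Theta(0;H)^2=(1+s)^2(1+O(q))$, uniformly in $(x,y)$. The elementary bound $2s/(1+s)^2\le\tfrac{1}{2}$ combined with the parity symmetry $(x,y)\mapsto(-x,-y)$ covering the case $x<0$ then gives $T(H)\le\tfrac{1}{2}+O(q)$, proving \eqref{eq:DS-T-final-bound-H-large}.

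For $H\uparrow 0$, I would apply the Jacobi imaginary transformation $\Theta(w;H)=\sqrt{-2\pi/H}\,\ee^{w^2/(2H)}\Theta(2\pi\ii w/H;\tilde H)$ to each of the five theta factors, with dual real variables $\tilde x:=-2\pi y/H$ and $\tilde y:=2\pi x/H$. The $\sqrt{-2\pi/H}$ prefactors and all Gaussian factors cancel in the theta-ratio part, converting it into the dual analogue; the logarithmic derivative difference however acquires a factor of $1/H$ from the chain rule and, using $\Theta'(-w)=-\Theta'(w)$, reads
\begin{equation*}
\frac{\Theta'(x+\ii y;H)}{\Theta(x+\ii y;H)}-\frac{\Theta'(x-\ii y;H)}{\Theta(x-\ii y;H)}=\frac{2\ii y}{H}+\frac{4\pi\ii}{H}\Re\left[\frac{\Theta'(\tilde x+\ii\tilde y;\tilde H)}{\Theta(\tilde x+\ii\tilde y;\tilde H)}\right].
\end{equation*}
Since $|y|\le 2\pi$, the desired $O(|H|^{-1})$ bound on $T(H)$ reduces to showing that
\begin{equation*}
E_1(\tilde x,\tilde y;\tilde H):=\frac{\Theta(0;\tilde H)^2\Theta'(\tilde x+\ii\tilde y;\tilde H)\Theta(\tilde x-\ii\tilde y;\tilde H)}{\Theta(\tilde x;\tilde H)^2\Theta(\ii\tilde y;\tilde H)^2}
\end{equation*}
is $O(1)$ uniformly on the dual fundamental rectangle as $\tilde H\to-\infty$. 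The same series expansion now applied at the dual modulus, with $\tilde q:=\ee^{\tilde H/2}$ and $\tilde s:=\tilde q\ee^{\tilde x}\in[\tilde q,1]$, gives $E_1=\tilde s(\ee^{\ii\tilde y}+\tilde s)/(1+\tilde s)^2+O(\tilde q)$, whence $|E_1|\le\tilde s/(1+\tilde s)+O(\tilde q)\le\tfrac{1}{2}+O(\tilde q)$, establishing \eqref{eq:DS-T-final-bound-H-small}. The principal technical subtlety is the uniformity of the $O(\tilde q)$ remainder as $(\tilde x,\tilde y)\to(\pm|\tilde H|/2,\pi)$, where the potential pole of $\Theta'(\tilde x+\ii\tilde y)/\Theta(\tilde x+\ii\tilde y)$ is cancelled by the simple zero of the factor $\Theta(\tilde x-\ii\tilde y;\tilde H)$ in the numerator of $E_1$; this cancellation, which keeps $E_1$ bounded near the boundary of the fundamental rectangle, has to be tracked explicitly at the scale $|\tilde s-1|+|\tilde y-\pi|\sim\tilde q$.
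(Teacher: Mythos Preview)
Your proposal is correct and takes a genuinely different route from the paper. The paper never exploits the periodicity of the supremand in $x$ with period $H$; instead it works on the full strip $\tfrac{3}{2}H\le x\le-\tfrac{3}{2}H$ throughout. For $H\downarrow-\infty$ the paper bounds $|\Theta(x-\ii y;H)|\le\Theta(x;H)$ and $|\Theta'(x+\ii y;H)|\le\ee^{-1}(\Theta(x+1;H)+\Theta(x-1;H))$ to reduce to real-argument ratios, then completes the square in the series exponent and tracks fractional parts of $x/H$ carefully. For $H\uparrow 0$ it applies Poisson summation piecewise, estimating each factor separately. Your approach, by contrast, reduces to a single fundamental rectangle via the $x$-periodicity (a clean structural observation), extracts the leading two-term asymptotics of the series directly with $s=\ee^{H/2+x}$ to get the sharp constant $\tfrac{1}{2}$, and then invokes modular duality so that the $H\uparrow 0$ case becomes literally the $\tilde H\downarrow-\infty$ case for the same quantity $E_1$, plus a harmless additive $y/H$ term. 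Your method is more conceptual and gives tighter constants; the paper's method is more elementary in that it avoids the modular inversion of the full expression and never needs to confront the corner subtlety you flag (where $\tilde\Theta(\tilde x\pm\ii\tilde y;\tilde H)$ both vanish), because it separates the $\Theta'$ and $\Theta$ factors before estimating. Your handling of that corner is fine, since $E_1$ itself has analytic numerator and nonvanishing denominator on the closed rectangle, so the leading-order formula $E_1=\tilde s(\ee^{\ii\tilde y}+\tilde s)/(1+\tilde s)^2+O(\tilde q)$ is genuinely uniform.
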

\begin{proof}
We give a proof in Appendix~\ref{a:Lemma}.
\end{proof}

Combining Lemma~\ref{lem:DS-IA-factors-bounds} and Lemma \ref{lem:DS-T-bounds} we obtain the following result:
\begin{proposition}
For each $\tau\in\mathbb{R}$, 
$\lim_{z\to\alpha}\widetilde{N}^{(\alpha)}_{22}(z;\chi,\tau,M)$ and $\lim_{z\to\beta}\widetilde{N}^{(\beta)}_{22}(z;\chi,\tau,M)$
are bounded uniformly with respect to $M>0$ and $\chi>\chi_\mathrm{c}(\tau)$.
\label{prop:N-alpha-beta-bound}
\end{proposition}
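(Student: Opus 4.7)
The plan is to combine the explicit formulas \eqref{eq:N22-alpha-limit}--\eqref{eq:N22-beta-limit} with Lemmas~\ref{lem:DS-IA-factors-bounds}--\ref{lem:DS-T-bounds}, factoring each of the two limits as the product of an algebraic piece and a theta-function piece and then estimating these two pieces separately. First I would write
\[
\lim_{z\to\alpha}\widetilde{N}^{(\alpha)}_{22}=\frac{-2\pi\ii}{(\alpha-\beta^*)I_\mathcal{A}}\cdot\mathcal{T}_\alpha,\qquad \lim_{z\to\beta}\widetilde{N}^{(\beta)}_{22}=\frac{2\pi\ii}{(\beta-\alpha^*)I_\mathcal{A}}\cdot\mathcal{T}_\beta,
\]
where $\mathcal{T}_p$ denotes the theta-function quotient appearing in \eqref{eq:N22-alpha-limit} or \eqref{eq:N22-beta-limit} evaluated at $x=x_p$. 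Since $x_\alpha$ and $x_\beta$ are real and satisfy the bounds $\tfrac{3}{2}H\le x_\alpha\le -\tfrac{1}{2}H$ and $H\le x_\beta\le -H$ derived just above Lemma~\ref{lem:DS-T-bounds} (both of which place $x_\alpha,x_\beta\in[\tfrac{3}{2}H,-\tfrac{3}{2}H]$), and since the only $M$-dependence of $\mathcal{T}_p$ enters through $y=M\Delta\in\mathbb{R}$ over which the supremum in the definition of $T(H)$ is already taken, I conclude $|\mathcal{T}_p|\le T(H)$ uniformly in $M>0$.

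Next I would note that $\alpha,\beta,I_\mathcal{A},H,\Delta,x_\alpha,x_\beta$ all depend continuously on $\chi\in(\chi_\mathrm{c}(\tau),+\infty)$ and that the theta function is entire in its first argument and continuous in $H<0$, so each of the two limits is a continuous function of $(\chi,M)$ on the open half-strip $(\chi_\mathrm{c}(\tau),+\infty)\times(0,+\infty)$. Uniform boundedness therefore reduces to control at the two $\chi$-boundaries, namely $\chi\downarrow\chi_\mathrm{c}(\tau)$ (where $H\uparrow 0$) and $\chi\uparrow+\infty$ (where $H\downarrow-\infty$). In the latter regime Lemma~\ref{lem:DS-IA-factors-bounds} gives the algebraic factor as $O(1)$ and Lemma~\ref{lem:DS-T-bounds} gives $T(H)=O(1)$, so their product is bounded. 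In the former regime Lemma~\ref{lem:DS-IA-factors-bounds} contributes $O(H)$ while Lemma~\ref{lem:DS-T-bounds} contributes $O(H^{-1})$; these singular behaviors cancel exactly to yield an $O(1)$ bound. This cancellation---the logarithmic blow-up of $|I_\mathcal{A}|$ as $\alpha,\beta$ coalesce at the degenerate critical point $\critpt$ balanced against the simultaneous collapse of the theta quotient as the $\mathcal{B}$-cycle shrinks through $H\uparrow 0$---is the essential geometric content of the proposition.

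The main obstacle therefore sits upstream in Lemma~\ref{lem:DS-T-bounds}, specifically in the sharp estimate $T(H)=O(H^{-1})$ as $H\uparrow 0$, which demands delicate uniform control of the theta-function combination for $x$ in a window that shrinks to the origin and for $y$ ranging over all of $\mathbb{R}$; the standard theta identities \eqref{eq:automorphic} should suffice to reduce this to a sup over a fundamental domain, but verifying the matching cancellation explicitly is the real work. Granted Lemmas~\ref{lem:DS-IA-factors-bounds}--\ref{lem:DS-T-bounds}, the proof of the proposition will be the essentially mechanical assembly sketched above, whose only substantive point is recognizing the precise matching of the singular behaviors of the algebraic and theta-function pieces at the critical boundary.
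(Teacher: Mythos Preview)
Your proposal is correct and follows essentially the same approach as the paper: factor each limit as the product of the algebraic piece $1/((\alpha-\beta^*)I_\mathcal{A})$ (or its conjugate) and the theta-quotient piece, then use Lemma~\ref{lem:DS-IA-factors-bounds} and Lemma~\ref{lem:DS-T-bounds} to see that the $O(H)$ and $O(H^{-1})$ behaviors cancel at the critical boundary while both factors are $O(1)$ at $\chi\to+\infty$. The paper's proof is in fact a single sentence, since all the work has already been pushed into the two lemmas; the only point it makes explicit is precisely the one you verify, namely that the ranges $\tfrac{3}{2}H\le x_\alpha\le-\tfrac{1}{2}H$ and $H\le x_\beta\le -H$ both lie inside the sup-window $[\tfrac{3}{2}H,-\tfrac{3}{2}H]$ defining $T(H)$.
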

\begin{proof}
It only remains to explain that the bounds on $x$ in the definition of $T(H)$ are sufficient to guarantee that $\frac{3}{2}H\le x_\alpha\le-\frac{1}{2}H$ and $H\le x_\beta\le -H$ as needed for consistency with the value of $A(z_0)$.
\end{proof}

 In Section~\ref{sec:L2-norm} below we will also need to estimate the quantity $\breve{O}^{[1]}_{22}(\chi,\tau;M)$ defined by
\begin{equation}
\breve{O}^{[1]}_{22}(\chi,\tau;M):=\lim_{z\to\infty} z\left(\breve{O}^{\mathrm{out}}_{22}(z)-1\right).
\end{equation}
Since $F^\mathrm{D}(z)=1+O(z^{-2})$ as $z\to\infty$, and since using $R(z)=z^2+O(z)$ in the same limit gives
\begin{equation}
A(z)-A(\infty)=\frac{2\pi\ii}{I_\mathcal{A}}\int_\infty^z\frac{\dd z'}{R(z')} = -\frac{2\pi\ii}{I_\mathcal{A}}\frac{1}{z}+O(z^{-2}),\quad z\to\infty,
\end{equation}
combining \eqref{eq:qpm-def} with \eqref{eq:DS-outer-parametrix-formula} gives
\begin{equation}
\breve{O}^{[1]}_{22}(\chi,\tau;M)=\frac{2\pi\ii}{I_\mathcal{A}}\left[\frac{\Theta'(A(\infty)+A(z_0)+\mathcal{K};H)}{\Theta(A(\infty)+A(z_0)+\mathcal{K};H)}-\frac{\Theta'(A(\infty)+A(z_0)+\mathcal{K}+\ii M\Delta;H)}{\Theta(A(\infty)+A(z_0)+\mathcal{K}+\ii M\Delta;H)}\right].
\end{equation}
Using $A(\infty)+A(z_0)=\frac{1}{2}H-\ii\pi$ together with \eqref{eq:RiemannConstant}, and applying the first and third identities in \eqref{eq:automorphic} we arrive at
\begin{equation}
\breve{O}^{[1]}_{22}(\chi,\tau;M)=-\frac{2\pi\ii}{I_\mathcal{A}}\frac{\Theta'(\ii M\Delta;H)}{\Theta(\ii M\Delta;H)}.
\label{eq:breve-O-1-22-simple}
\end{equation}
We then have the following:
\begin{proposition}
For each $\tau\in\mathbb{R}$, $\breve{O}^{[1]}_{22}(\chi,\tau;M)$ is bounded uniformly with respect to $M>0$ and $\chi>\chi_\mathrm{c}(\tau)$.
\label{prop:breve-O-1-bound}
\end{proposition}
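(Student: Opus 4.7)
The plan is to reduce the estimate of $\breve{O}^{[1]}_{22}(\chi,\tau;M)$ to a direct application of Lemma~\ref{lem:DS-T-bounds} together with a companion bound on $|I_\mathcal{A}|^{-1}$ extracted from the proof of Lemma~\ref{lem:DS-IA-factors-bounds}. Starting from the explicit formula \eqref{eq:breve-O-1-22-simple}, the $M$-dependence enters only through the argument $\ii M\Delta$ of $\Theta'/\Theta$, and by periodicity of the logarithmic derivative $\Theta'(\diamond;H)/\Theta(\diamond;H)$ in the direction $2\pi\ii$, the sup over $M>0$ amounts to a sup over $M\Delta\in\mathbb{R}$.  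Hence it suffices to bound
\begin{equation*}
\frac{1}{|I_\mathcal{A}|}\sup_{y\in\mathbb{R}}\left|\frac{\Theta'(\ii y;H)}{\Theta(\ii y;H)}\right|
\end{equation*}
uniformly for $\chi>\chi_\mathrm{c}(\tau)$, and given continuity in $\chi$ (with $H=H(\chi,\tau)$ continuous) only the limits $\chi\downarrow\chi_\mathrm{c}(\tau)$ and $\chi\uparrow+\infty$ require attention.

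The next step is to identify this sup with the quantity controlled by Lemma~\ref{lem:DS-T-bounds}. Using the identities $\Theta(-w;H)=\Theta(w;H)$ and hence $\Theta'(-w;H)=-\Theta'(w;H)$, one computes
\begin{equation*}
\Theta'(\ii y;H)\Theta(-\ii y;H)-\Theta'(-\ii y;H)\Theta(\ii y;H)=2\Theta'(\ii y;H)\Theta(\ii y;H),
\end{equation*}
so that the defining expression for $T(H)$ evaluated at $x=0$ collapses to $2\Theta'(\ii y;H)/\Theta(\ii y;H)$. Since $H<0$, the point $x=0$ is always inside the interval $[\tfrac{3}{2}H,-\tfrac{3}{2}H]$ over which the sup in $T(H)$ is taken, so
\begin{equation*}
\sup_{y\in\mathbb{R}}\left|\frac{\Theta'(\ii y;H)}{\Theta(\ii y;H)}\right|\le \tfrac{1}{2}T(H).
\end{equation*}

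It remains to pair the two sides of Lemma~\ref{lem:DS-T-bounds} with matching information about $|I_\mathcal{A}|$. From \eqref{eq:DS-H-def} and the proof of Lemma~\ref{lem:DS-IA-factors-bounds}, as $\chi\downarrow\chi_\mathrm{c}(\tau)$ one has $I_\mathcal{B}\to I_\mathcal{B}^{\mathrm{c}}\in (-\infty,0)$ while $|I_\mathcal{A}|\to+\infty$, so $|H|=2\pi|I_\mathcal{B}|/|I_\mathcal{A}|$ forces $|I_\mathcal{A}|^{-1}=O(|H|)$ as $H\uparrow 0$. Combined with the bound $T(H)=O(|H|^{-1})$ from \eqref{eq:DS-T-final-bound-H-small}, this yields $\pi T(H)/|I_\mathcal{A}|=O(1)$ in that limit. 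As $\chi\uparrow+\infty$, the scaling $z=\chi^{-1/2}w$ in \eqref{eq:DS-I-AB} shows $|I_\mathcal{A}|\sim\mathrm{const}\cdot\chi^{1/2}\to+\infty$, so $|I_\mathcal{A}|^{-1}=o(1)$, which combined with $T(H)=O(1)$ from \eqref{eq:DS-T-final-bound-H-large} gives $\pi T(H)/|I_\mathcal{A}|=o(1)$.

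The two endpoint bounds, together with continuity of $(\chi,M)\mapsto\breve{O}^{[1]}_{22}(\chi,\tau;M)$ on $(\chi_\mathrm{c}(\tau),+\infty)\times(0,+\infty)$ and the $M$-independence of the bound obtained through Lemma~\ref{lem:DS-T-bounds}, give the desired uniform boundedness. The main obstacle is verifying cleanly that $x=0$ really lies in the interval for which Lemma~\ref{lem:DS-T-bounds} is stated and confirming the balance $|I_\mathcal{A}|^{-1}\cdot T(H)=O(1)$ in the delicate limit $\chi\downarrow\chi_\mathrm{c}(\tau)$; both amount to tracing through already-established identities rather than new analysis.
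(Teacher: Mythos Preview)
Your proof is correct and follows essentially the same route as the paper: factor \eqref{eq:breve-O-1-22-simple} into $|I_\mathcal{A}|^{-1}$ and the logarithmic derivative $\Theta'(\ii y;H)/\Theta(\ii y;H)$, use periodicity to reduce to $y\in[-\pi,\pi]$, and control each factor in the two limits $\chi\downarrow\chi_\mathrm{c}(\tau)$ and $\chi\uparrow+\infty$ via Lemma~\ref{lem:DS-IA-factors-bounds}. The one difference is in how you bound the theta ratio: the paper reaches back into the \emph{proof} of Lemma~\ref{lem:DS-T-bounds} (dominated convergence for $H\downarrow-\infty$, the estimate \eqref{eq:DS-Theta-frac-estimate} for $H\uparrow 0$), whereas you observe that specializing the expression defining $T(H)$ to $x=0$ collapses it to $2|\Theta'(\ii y;H)/\Theta(\ii y;H)|$, so the lemma's \emph{statement} applies directly. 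This is a clean shortcut and arguably tidier than the paper's version.
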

\begin{proof}
Since the denominator is nonzero for each $\chi>\chi_\mathrm{c}(\tau)$ and the expression \eqref{eq:breve-O-1-22-simple} is $2\pi$-periodic in $y:=M\Delta\in\mathbb{R}$, it suffices to examine it as a function of $y\in [-\pi,\pi]$ and determine its behavior in the limits $\chi\downarrow\chi_\mathrm{c}(\tau)$ and $\chi\uparrow +\infty$.

As in the proof of Lemma~\ref{lem:DS-IA-factors-bounds}, one sees that $H\uparrow 0$ and $I_\mathcal{A}^{-1}=O(H)$ as $\chi\downarrow\chi_\mathrm{c}(\tau)$, while $H\downarrow -\infty$ and $I_\mathcal{A}^{-1}\to 0$ proportional to $\chi^{-\frac{1}{2}}$ as $\chi\uparrow +\infty$.  Then, as in the proof of Lemma~\ref{lem:DS-T-bounds}, one checks using dominated convergence applied to \eqref{eq:DS-Theta-define} that $\Theta'(\ii y;H)/\Theta(\ii y;H)\to 0$ as $H\downarrow -\infty$ uniformly for $y\in\mathbb{R}$, and that \eqref{eq:DS-Theta-frac-estimate} implies in particular that $\Theta'(\ii y;H)/\Theta(\ii y;H)=O(H^{-1})$ as $H\uparrow 0$ uniformly for $y\in\mathbb{R}$.  
\end{proof}

\subsubsection{$L^2$-norm}
\label{sec:L2-norm}
A consequence of the expansion \eqref{eq:DS-Lax-expansion} and the Lax equations \eqref{eq:DS-Lax-equations} is the pair of identities
\begin{equation}
\begin{split}
\frac{\partial L^{[1]}_{11}}{\partial\chi}(\chi,\tau;M)-2\ii M L^{[1]}_{12}(\chi,\tau;M)L^{[1]}_{21}(\chi,\tau;M) &=X_{11}^{[-1]}(\chi,\tau;M)\\
\frac{\partial L^{[1]}_{22}}{\partial\chi}(\chi,\tau;M)+2\ii M L^{[1]}_{21}(\chi,\tau;M)L^{[1]}_{12}(\chi,\tau;M)&=X_{22}^{[-1]}(\chi,\tau;M),
\end{split}
\end{equation}
where $\mathbf{X}^{[-1]}(\chi,\tau;M)$ denotes the coefficient of $z^{-1}$ in the Laurent expansion of $\mathbf{X}$ as $z\to\infty$.  From \eqref{eq:DS-X-T-poles} we see that 
\begin{equation}
\mathbf{X}^{[-1]}(\chi,\tau;M)=\mathbf{X}^{(\alpha)}+\mathbf{X}^{(\beta)}+\mathbf{X}^{(\alpha^*)}+\mathbf{X}^{(\beta^*)} = \mathbf{X}^{(\alpha)}+\mathbf{X}^{(\beta)}+\sigma_2\mathbf{X}^{(\alpha)*}\sigma_2 +\sigma_2\mathbf{X}^{(\beta)*}\sigma_2.
\end{equation}
Using \eqref{eq:DS-Psi-breve-U} and the fact that the traces of $\mathbf{L}^{[1]}$ and of the four residue matrices vanish, we deduce the two equivalent formul\ae\
\begin{equation}
\begin{split}
|\breve{\Psi}(\chi,\tau;M)|^2 &= \frac{2\ii}{M}\left[\frac{\partial L^{[1]}_{11}}{\partial\chi}(\chi,\tau;M)-2\ii\mathrm{Im}(X^{(\alpha)}_{11}+X^{(\beta)}_{11})\right]\\
&=-\frac{2\ii}{M}\left[\frac{\partial L^{[1]}_{22}}{\partial\chi}(\chi,\tau;M)-2\ii\mathrm{Im}(X^{(\alpha)}_{22}+X^{(\beta)}_{22})\right].
\end{split}
\label{eq:DS-mod-psibreve-squared-Lax}
\end{equation}
Now combining \eqref{eq:DS-tildevartheta}, \eqref{eq:DS-h-g-tildevartheta}, \eqref{eq:DS-U-Oout}, and \eqref{eq:DS-Lax-expansion} shows that
\begin{equation}
\begin{split}
\mathbf{L}^{[1]}(\chi,\tau;M)&=\lim_{z\to\infty} z\left(\mathbf{L}(z;\chi,\tau,M)\ee^{\ii M(\chi z + \tau z^2)\sigma_3}-\mathbb{I}\right)\\
&=\lim_{z\to\infty}z\left(\breve{\mathbf{O}}^\mathrm{out}(z)\ee^{-\ii M g(z)\sigma_3}-\mathbb{I}\right)\\
&=\lim_{z\to\infty}z\left(\left(\mathbb{I}+z^{-1}\breve{\mathbf{O}}^{[1]}(\chi,\tau;M) + O(z^{-2})\right)\left(\mathbb{I} -\ii M g_1(\chi,\tau)z^{-1}\sigma_3 + O(z^{-2})\right)-\mathbb{I}\right)\\
&=\breve{\mathbf{O}}^{[1]}(\chi,\tau;M) -\ii Mg_1(\chi,\tau)\sigma_3,
\end{split}
\end{equation}
where $\breve{\mathbf{O}}^{[1]}(\chi,\tau;M)$ is the coefficient of $z^{-1}$ in the Laurent expansion of $\breve{\mathbf{O}}^{\mathrm{out}}(z)$ as $z\to\infty$, and
\begin{equation}
g_1(\chi,\tau):=\lim_{z\to\infty}zg(z)
\end{equation}
which is well defined because $g$ is analytic for large $z$ and $g(z)\to 0$ as $z\to\infty$.  Integrating \eqref{eq:DS-mod-psibreve-squared-Lax} in $\chi$ from $\chi_\mathrm{c}(\tau)$ to $+\infty$ for fixed $\tau$ therefore gives
\begin{multline}
\int_{\chi_\mathrm{c}(\tau)}^{+\infty}|\breve{\Psi}(\chi,\tau;M)|^2\,\dd\chi = 2g_1(+\infty,\tau)-2g_1(\chi_\mathrm{c}(\tau),\tau) -\frac{2\ii}{M}\left(\breve{O}^{[1]}_{22}(+\infty,\tau;M)-\breve{O}^{[1]}_{22}(\chi_\mathrm{c}(\tau),\tau;M)\right)\\
{}-\frac{4}{M}\int_{\chi_\mathrm{c}(\tau)}^{+\infty}\mathrm{Im}(X_{22}^{(\alpha)}+X_{22}^{(\beta)})\,\dd\chi.
\end{multline}
Here, the terms on the first line of the right-hand side are understood to refer to the limits $\chi\downarrow \chi_\mathrm{c}(\tau)$ and $\chi\uparrow+\infty$.

First, we apply Proposition~\ref{prop:breve-O-1-bound} and obtain in the limit $M\to\infty$,
\begin{equation}
\int_{\chi_\mathrm{c}(\tau)}^{+\infty}|\breve{\Psi}(\chi,\tau;M)|^2\,\dd\chi = 2g_1(+\infty,\tau)-2g_1(\chi_\mathrm{c}(\tau),\tau) -\frac{4}{M}\int_{\chi_\mathrm{c}(\tau)}^{+\infty}
\mathrm{Im}(X_{22}^{(\alpha)}+X_{22}^{(\beta)})\,\dd\chi + O(M^{-1}).
\label{eq:DS-L2-norm-1}
\end{equation}
Next, we use \eqref{eq:DS-X-alpha-beta-N} and $\mathbf{N}(z;\chi,\tau,M)=\widetilde{\mathbf{N}}^{(\alpha)}(z;\chi,\tau,M) + O(z-\alpha)=\widetilde{\mathbf{N}}^{(\beta)}(z;\chi,\tau,M)+O(z-\beta)$ and apply Proposition~\ref{prop:N-alpha-beta-bound} to get
\begin{equation}
\left|\int_{\chi_\mathrm{c}(\tau)}^{+\infty}\mathrm{Im}(X_{22}^{(\alpha)}+X_{22}^{(\beta)})\,\dd\chi\right| = O\left(\int_{\chi_\mathrm{c}(\tau)}^{+\infty}\left|\frac{\partial\alpha}{\partial\chi}(\chi,\tau)\right|\,\dd\chi +\int_{\chi_\mathrm{c}(\tau)}^{+\infty}\left|\frac{\partial\beta}{\partial\chi}(\chi,\tau)\right|\,\dd\chi \right),\quad M\to\infty.
\label{eq:DS-integral-bound}
\end{equation}

From Lemma~\ref{lem:absolute-integrability} it follows that the bound on the right-hand side of \eqref{eq:DS-integral-bound} is finite, and it is independent of $M$.  Consequently we obtain from \eqref{eq:DS-L2-norm-1} that 
\begin{equation}
\int_{\chi_\mathrm{c}(\tau)}^{+\infty}|\breve{\Psi}(\chi,\tau;M)|^2\,\dd\chi = 2g_1(+\infty,\tau)-2g_1(\chi_\mathrm{c}(\tau),\tau)  + O(M^{-1}),\quad M\to\infty.
\end{equation}

To finish the calculation, we will compute $g_1(\chi,\tau)$, by first noting that 
\begin{equation}
g(z)=g_1(\chi,\tau)z^{-1} + O(z^{-2}) \implies g'(z)=-g_1(\chi,\tau)z^{-2} + O(z^{-3}),\quad z\to\infty.
\end{equation}
Now combining \eqref{eq:DS-tildevartheta}, \eqref{eq:DS-h-g-tildevartheta}, and \eqref{eq:hprime-Rsquared}, we get
\begin{equation}
g_1(\chi,\tau)=-\lim_{z\to\infty}z^2g'(z) = -\lim_{z\to\infty}z^2\left[\frac{2\tau z+\chi-\tau^2\lambda}{z^2}R(z)-(\chi+2\tau z + 2z^{-2})\right],
\label{eq:DS-g1}
\end{equation}
where we recall that $R(z)=z^2+O(z)$ as $z\to\infty$ and $R(z)^2$ is given by \eqref{eq:Intro-Rsquared}.  It follows that also
\begin{equation}
R(z)=z^2 + \frac{1}{2}\tau\lambda z+\frac{1}{4}(\tau^2\lambda-\chi)\lambda +
\frac{(8+(\tau^2\lambda-\chi)^2\lambda)(8-(\tau^2\lambda-\chi)^2\lambda)}{8(\tau^2\lambda-\chi)^3}\tau
z^{-1}+O(z^{-2}),\quad z\to\infty.
\end{equation}
Using this in \eqref{eq:DS-g1} gives the explicit (in terms of the solution $\lambda=\lambda(\chi,\tau)$ of the integral condition $f(\lambda;\chi,\tau)=0$) formula
\begin{equation}
g_1(\chi,\tau) = \frac{(2(\tau^2\lambda-\chi)^3+\chi(\tau^2\lambda-\chi)^2-8\tau^2)(8+(\tau^2\lambda-\chi)^2\lambda)}{4(\tau^2\lambda-\chi)^3}.
\label{eq:DS-g1-formula}
\end{equation}
For the limit $\chi\downarrow\chi_\mathrm{c}(\tau)$, we the parametrization of $(\chi,\tau)$ by \eqref{eq:chi-tau-sigma} and of $\lambda$ by \eqref{eq:gamma-negative} to get
\begin{equation}
g_1(\chi_\mathrm{c}(\tau),\tau)=\lim_{\chi\downarrow\chi_\mathrm{c}(\tau)}g_1(\chi,\tau)=0.
\end{equation}
To calculate $g_1(\chi,\tau)$ in the limit $\chi\to+\infty$ we use instead the representation $\lambda=2(4-\delta^2)\chi^{-2}$ with $\delta^2=O(\chi^{-\frac{1}{2}})$ according to \eqref{eq:delta-squared-expansion} in the proof of Lemma~\ref{lem:absolute-integrability}.  Substituting into \eqref{eq:DS-g1-formula} then gives 
\begin{equation}
g_1(+\infty,\tau)=\lim_{\chi\to+\infty}g_1(\chi,\tau)=4.
\end{equation}
Therefore, we have shown that 
\begin{equation}
\int_{\chi_\mathrm{c}(\tau)}^{+\infty}|\breve{\Psi}(\chi,\tau;M)|^2\,\dd\chi = 2g_1(+\infty,\tau) -2g_1(\chi_\mathrm{c}(\tau),\tau)+ O(M^{-1}) =8 + O(M^{-1}),\quad M\to+\infty.
\end{equation}
This completes the proof of \eqref{eq:Intro-approx-L2} in Theorem~\ref{t:DS}.

\begin{remark}
This approach would appear to be an alternative to the more direct one based on integration of the formula for $|\breve{\Psi}(\chi,\tau;M)|^2$ given in \eqref{eq:Intro-square-modulus}.  Noting that the dependence on $M$ in \eqref{eq:Intro-square-modulus} enters through the argument of the Jacobi elliptic function $\mathrm{sn}^2(\diamond;m)$ which is rapidly varying for large $M$, one could perhaps 
pass to the limit $M\to+\infty$ by replacing it with its period average, which is $\langle \mathrm{sn}^2(\diamond;m)\rangle = (\mathbb{K}(m)-\mathbb{E}(m))/(m\mathbb{K}(m))$.  Then, to establish the same result it would remain to prove that 
\begin{multline}
\int_{\chi_\mathrm{c}(\tau)}^{+\infty} \left[(\mathrm{Im}(\alpha(\chi,\tau))+\mathrm{Im}(\beta(\chi,\tau)))^2\vphantom{\frac{\mathbb{K}(m_1(\chi,\tau))-\mathbb{E}(m_1(\chi,\tau))}{m_1(\chi,\tau)\mathbb{K}(m_1(\chi,\tau))}}\right.\\
{}\left.-4\mathrm{Im}(\alpha(\chi,\tau))\mathrm{Im}(\beta(\chi,\tau))\frac{\mathbb{K}(m_1(\chi,\tau))-\mathbb{E}(m_1(\chi,\tau))}{m_1(\chi,\tau)\mathbb{K}(m_1(\chi,\tau))}\right]\dd\chi = 8
\end{multline}
where $m_1(\chi,\tau)$ is defined in terms of $\alpha$ and $\beta$ by \eqref{eq:Intro-m1}, given the rather implicit characterization of $\alpha$ and $\beta$ as functions of $(\chi,\tau)$.  In fact, we may regard the above arguments as an indirect proof of this identity.
\end{remark}

\subsubsection{Explicit formul\ae}
\label{s:explicit-formulae}
Using the definition of the outer parametrix obtained in Section~\ref{sec:DS-outer}, we have
\begin{equation}
\begin{split}
\breve{\Psi}(\chi,\tau;M)&=2\ii\ee^{-\ii M\phi}\frac{q^-(\infty;z_0,-\ii M\Delta)}{q^+(\infty;z_0,\ii M\Delta)}\lim_{z\to\infty}zF^\mathrm{OD}(z)\\
&=\ii(\mathrm{Im}(\beta)-\mathrm{Im}(\alpha))\ee^{-\ii M\phi}\frac{q^-(\infty;z_0,-\ii M\Delta)}{q^+(\infty;z_0,\ii M\Delta)}
\end{split}
\label{eq:DS-Psi-breve-define}
\end{equation}
in which the final ratio is given explicitly by
\begin{equation}
\frac{q^-(\infty;z_0,-\ii M\Delta)}{q^+(\infty;z_0,\ii M\Delta)}
=\frac{\Theta(A(\infty)-A(z_0)-\mathcal{K}+\ii M\Delta;H)\Theta(A(\infty)+A(z_0)+\mathcal{K};H)}{\Theta(A(\infty)-A(z_0)-\mathcal{K};H)\Theta(A(\infty)+A(z_0)+\mathcal{K}-\ii M\Delta;H)},
\label{eq:DS-q-ratio}
\end{equation}
where $z_0$ is defined by \eqref{eq:z0}.

Our aim in this section is to express $|\breve{\Psi}(\chi,\tau;M)|^2$ explicitly in terms of Jacobi elliptic functions with elliptic modulus $m$ defined in Section~\ref{sec:DS-outer}.  Note that the modulus $m$ is the one most naturally associated with the spectral curve underlying the outer parametrix $\breve{\mathbf{O}}^\mathrm{out}(z)$ via the integrals $I_\mathfrak{a}$ and $I_\mathfrak{b}$ and the associated Abel mapping $A(z)$.  In order to achieve this goal, it is first necessary to 
express the formula in \eqref{eq:DS-q-ratio} in terms of theta functions for the doubled parameter $H_0:=2H$ using the identity (see \cite[Eqn.\@ 20.7.14]{DLMF} and \cite[Eqns.\@ 20.2.10 and 20.2.12]{DLMF})
\begin{multline}
\Theta(w_1;H)\Theta(w_2;H)=\Theta(w_1+w_2;2H)\Theta(w_1-w_2;2H)\\
{}+\ee^{\frac{1}{2}H}\ee^{w_1}\Theta(w_1+w_2+H;2H)\Theta(w_1-w_2+H;2H).
\end{multline}
Using this result as well as \eqref{eq:RiemannConstant} and $\Theta(z+2\pi\ii;\cdot)=\Theta(z;\cdot)$, the numerator of \eqref{eq:DS-q-ratio} becomes
\begin{multline}
\Theta(A(\infty)-A(z_0)-\mathcal{K}+\ii M\Delta;H)\Theta(A(\infty)+A(z_0)+\mathcal{K};H)\\
{}=\Theta(2A(\infty)+\ii M\Delta;2H)\Theta(-2A(z_0)-H+\ii M\Delta;2H)\\
{}-\ee^{A(\infty)-A(z_0)+\ii M\Delta}\Theta(2A(\infty)+\ii M\Delta + H;2H)\Theta(-2A(z_0)+\ii M\Delta;2H).
\label{eq:DS-numerator}
\end{multline}
Likewise, the denominator of \eqref{eq:DS-q-ratio} becomes
\begin{multline}
\Theta(A(\infty)-A(z_0)-\mathcal{K};H)\Theta(A(\infty)+A(z_0)+\mathcal{K}-\ii M\Delta;H)\\
{}=\Theta(2A(\infty)-\ii M\Delta;2H)\Theta(-2A(z_0)-H+\ii M\Delta;2H) \\
{}-\ee^{A(\infty)-A(z_0)}\Theta(2A(\infty)-\ii M\Delta+H;2H)\Theta(-2A(z_0)+\ii M\Delta;2H).
\label{eq:DS-denominator}
\end{multline}
According to \eqref{eq:Az0-identity} with $n_1=-1\pmod{2}$ and $n_2=1\pmod{2}$,  we may replace 
$A(z_0)$ with $-A(\infty)-\ii\pi+\tfrac{1}{2}H$ in the right-hand sides of \eqref{eq:DS-numerator}--\eqref{eq:DS-denominator} up to an ambiguity that cancels between the numerator and denominator due to the identities \eqref{eq:automorphic}.  Therefore, with the shorthand $w:=2A(\infty)$ and $\Theta_0(\cdot):=\Theta(\cdot;2H)$,
\begin{multline}
\frac{q^-(\infty;z_0,-\ii M\Delta)}{q^+(\infty;z_0,\ii M\Delta)}\\
{}=\frac{\Theta_0(w+\ii M\Delta)\Theta_0(w+\ii M\Delta-2H)+\ee^{w+\ii M\Delta-\frac{1}{2}H}\Theta_0(w+\ii M\Delta + H)\Theta_0(w+\ii M\Delta-H)}{\Theta_0(w-\ii M\Delta)\Theta_0(w+\ii M\Delta -2H)+\ee^{w-\frac{1}{2}H}\Theta_0(w-\ii M\Delta+H)\Theta_0(w+\ii M\Delta -H)}.
\end{multline}
Then using \eqref{eq:automorphic} to shift arguments of four factors by $-2H$,
\begin{multline}
\frac{q^-(\infty;z_0,-\ii M\Delta)}{q^+(\infty;z_0,\ii M\Delta)}\\
{}=\frac{\Theta_0(w+\ii M\Delta)^2+\ee^{w+\ii M\Delta +\frac{1}{2}H}\Theta_0(w+\ii M\Delta+H)^2}{\Theta_0(w-\ii M\Delta)\Theta_0(w+\ii M\Delta)+\ee^{w+\frac{1}{2}H}\Theta_0(w-\ii M\Delta + H)\Theta_0(w+\ii M\Delta + H)}.
\label{eq:q-fraction}
\end{multline}
Now observe that for $w=2A(\infty)$ we have $\mathrm{Im}(w)=-\ii\pi$.  Therefore because $H$ is real, $\Theta_0(w-\ii M\Delta)=\Theta_0(w^*+\ii M\Delta)^* = \Theta_0(w+2\pi\ii +\ii M\Delta)^*=\Theta_0(w+\ii M\Delta)^*$.  Similarly, $\Theta_0(w-\ii M\Delta+H)=\Theta_0(w+\ii M\Delta+H)^*$.  For the same reason, $\ee^{w+\frac{1}{2}H}<0$.  It follows that the denominator is real.

Therefore, the square modulus of the quantity in \eqref{eq:q-fraction} is (squaring the denominator and multiplying the numerator by its complex conjugate $\Theta_0(w-\ii M\Delta)^2-\ee^{w-\ii M\Delta+\frac{1}{2}H}\Theta_0(w-\ii M\Delta+H)^2$)
\begin{equation}
\left|\frac{q^-(\infty;z_0,-\ii M\Delta)}{q^+(\infty;z_0,\ii M\Delta)}\right|^2 =\frac{1+\ee^{w+\ii M\Delta +\frac{1}{2}H}\phi_+^2+\ee^{w-\ii M\Delta+\frac{1}{2}H}\phi_-^2+\ee^{2w+H}\phi_+^2\phi_-^2}{1+2\ee^{w+\frac{1}{2}H}\phi_+\phi_-+\ee^{2w+H}\phi_+^2\phi_-^2}
\end{equation}
where
\begin{equation}
\phi_\pm:=\frac{\Theta_0(w\pm\ii M\Delta+H)}{\Theta_0(w\pm\ii M\Delta)}.
\end{equation}
Now, since the modular parameter in $\Theta_0$ is $2H=-2\pi \mathbb{K}(1-m)/\mathbb{K}(m)$, the theta ratios $\phi_\pm$ can be expressed in terms of the Jacobi elliptic function $\mathrm{sn}(\cdot;m)$ by the following identity:
\begin{equation}
\frac{\Theta(W+H;2H)}{\Theta(W;2H)}=-\ee^{-\frac{1}{2}W}\frac{\Theta(-H;2H)}{\Theta(0;2H)}\mathrm{sn}\left(\frac{\mathbb{K}(m)}{\ii\pi}(W-\ii\pi);m\right).
\label{eq:DS-sn}
\end{equation}
Therefore, defining a constant $\JacobiSigma>0$ by
\begin{equation}
\JacobiSigma:=\ee^{\frac{1}{2}H}\frac{\Theta(-H;2H)^2}{\Theta(0;2H)^2},
\label{eq:sigma-define}
\end{equation}
we have 
\begin{multline}
\left|\frac{q^-(\infty;z_0,-\ii M\Delta)}{q^+(\infty;z_0,\ii M\Delta)}\right|^2\\
{}=\frac{\displaystyle \left(1+\JacobiSigma\mathrm{sn}^2\left(\frac{\mathbb{K}(m)}{\ii\pi}(w+\ii M\Delta-\ii\pi);m\right)\right)\left(1+\JacobiSigma\mathrm{sn}^2\left(\frac{\mathbb{K}(m)}{\ii\pi}(w-\ii M\Delta-\ii\pi);m\right)\right)}{\displaystyle \left(1+\JacobiSigma\mathrm{sn}\left(\frac{\mathbb{K}(m)}{\ii\pi}(w+\ii M\Delta-\ii\pi);m\right)\mathrm{sn}\left(\frac{\mathbb{K}(m)}{\ii\pi}(w-\ii M\Delta-\ii\pi);m\right)\right)^2}.
\end{multline}
Now we may write $w=2A(\infty)=\eta-\ii\pi$ with $\eta\in\mathbb{R}$.  Using the identity $\mathrm{sn}(u-2\mathbb{K}(m);m)=-\mathrm{sn}(u;m)$ gives 
\begin{multline}
\left|\frac{q^-(\infty;z_0,-\ii M\Delta)}{q^+(\infty;z_0,\ii M\Delta)}\right|^2\\
{}=\frac{\displaystyle \left(1+\JacobiSigma\mathrm{sn}^2\left(\frac{\mathbb{K}(m)}{\ii\pi}(\eta+\ii M\Delta);m\right)\right)\left(1+\JacobiSigma\mathrm{sn}^2\left(\frac{\mathbb{K}(m)}{\ii\pi}(\eta-\ii M\Delta);m\right)\right)}{\displaystyle \left(1+\JacobiSigma\mathrm{sn}\left(\frac{\mathbb{K}(m)}{\ii\pi}(\eta+\ii M\Delta);m\right)\mathrm{sn}\left(\frac{\mathbb{K}(m)}{\ii\pi}(\eta-\ii M\Delta);m\right)\right)^2}.
\end{multline}
Combining the addition formula \cite[Eqn.\@ 22.8.1]{DLMF} with Jacobi's imaginary transformation (see \cite[\S 22.6(iv)]{DLMF}), we have
\begin{equation}
\mathrm{sn}(-\ii \Jacobiu\pm \Jacobiv;m)=\frac{-\ii\mathrm{sc}(\Jacobiu;1-m)\mathrm{cn}(\Jacobiv;m)\mathrm{dn}(\Jacobiv;m)\pm\mathrm{sn}(\Jacobiv;m)\mathrm{nc}(\Jacobiu;1-m)\mathrm{dc}(\Jacobiu;1-m)}{1+m\mathrm{sc}^2(\Jacobiu;1-m)\mathrm{sn}^2(\Jacobiv;m)},
\end{equation}
and for real $\Jacobiu,m$ and $0<m<1$ the denominator is strictly positive.  Using also the trigonometric identities
\begin{equation}
\begin{split}
\mathrm{cn}^2(\Jacobiv;m)\mathrm{dn}^2(\Jacobiv;m)&=1-(1+m)\mathrm{sn}^2(\Jacobiv;m)+m\mathrm{sn}^4(\Jacobiv;m)\\
\mathrm{nc}^2(\Jacobiu;1-m)\mathrm{dc}^2(\Jacobiu;1-m)&=1+(1+m)\mathrm{sc}^2(\Jacobiu;1-m)+m\mathrm{sc}^4(\Jacobiu;1-m),
\end{split}
\end{equation}
we obtain
\begin{equation}
\left|\frac{q^-(\infty;z_0,-\ii M\Delta)}{q^+(\infty;z_0,\ii M\Delta)}\right|^2=\frac{A\mathrm{sn}^4(\Jacobiv;m) +B\mathrm{sn}^2(\Jacobiv;m)+C}
{(D\mathrm{sn}^2(\Jacobiv;m) + E)^2},
\end{equation}
where
\begin{equation}
\begin{gathered}
A:= (-\JacobiSigma+m\mathrm{sc}^2(\Jacobiu;1-m))^2,\quad
C:=(1-\JacobiSigma\mathrm{sc}^2(\Jacobiu;1-m))^2,\\
B:=2m\JacobiSigma\mathrm{sc}^4(\Jacobiu;1-m)+2(m+2\JacobiSigma+2m\JacobiSigma+\JacobiSigma^2)\mathrm{sc}^2(\Jacobiu;1-m)+2\JacobiSigma,\\
D:=-\JacobiSigma+m\mathrm{sc}^2(\Jacobiu;1-m),\quad
E:=1-\JacobiSigma\mathrm{sc}^2(\Jacobiu;1-m)
\end{gathered}
\end{equation}
and
\begin{equation}
\Jacobiu:=\frac{\mathbb{K}(m)}{\pi}\eta,\quad\eta:=\mathrm{Re}(2A(\infty)),\quad \Jacobiv:=\frac{\mathbb{K}(m)}{\pi}M\Delta.
\label{eq:DS-u-eta-v}
\end{equation}
Note that $A=D^2$ and $C=E^2$.  Therefore, adding and subtracting $2DE\mathrm{sn}^2(\Jacobiv;m)$ in the numerator gives
\begin{equation}
\left|\frac{q^-(\infty;z_0,-\ii M\Delta)}{q^+(\infty;z_0,\ii M\Delta)}\right|^2=1+\frac{4\JacobiSigma(1+\mathrm{sc}^2(\Jacobiu;1-m))(1+m\mathrm{sc}^2(\Jacobiu;1-m))\mathrm{sn}^2(\Jacobiv;m)}{(D\mathrm{sn}^2(\Jacobiv;m)+E)^2}.
\end{equation}
To further simplify, we can use \cite[Eqn.\@ 20.9.1]{DLMF} and \eqref{eq:sigma-define} to obtain simply $\JacobiSigma=\sqrt{m}$.  Thus, $D=-\sqrt{m}E$, and hence 
\begin{equation}
\left|\frac{q^-(\infty;z_0,-\ii M\Delta)}{q^+(\infty;z_0,\ii M\Delta)}\right|^2=1+\frac{4\sqrt{m}(1+\mathrm{sc}^2(\Jacobiu;1-m))(1+m\mathrm{sc}^2(\Jacobiu;1-m))}{(1-\sqrt{m}\mathrm{sc}^2(\Jacobiu;1-m))^2}\cdot
\frac{\mathrm{sn}^2(\Jacobiv;m)}{(1-\sqrt{m}\mathrm{sn}^2(\Jacobiv;m))^2}.
\end{equation}
Using $\mathrm{sc}(\Jacobiu;1-m)=\mathrm{sn}(\Jacobiu;1-m)/\mathrm{cn}(\Jacobiu;1-m)$ and $\mathrm{sn}^2(\Jacobiu;1-m)+\mathrm{cn}^2(\Jacobiu;1-m)=1$, this can be rewritten as
\begin{equation}
\left|\frac{q^-(\infty;z_0,-\ii M\Delta)}{q^+(\infty;z_0,\ii M\Delta)}\right|^2=1+\frac{4\sqrt{m}(1-(1-m)\mathrm{sn}^2(\Jacobiu;1-m))}{(1-(1+\sqrt{m})\mathrm{sn}^2(\Jacobiu;1-m))^2}\cdot\frac{\mathrm{sn}^2(\Jacobiv;m)}{(1-\sqrt{m}\mathrm{sn}^2(\Jacobiv;m))^2}.
\label{eq:DS-squared-modulus-u-v}
\end{equation}

To further simplify, observe that upon combining the definition of $\Jacobiu$ and $\eta$ given in \eqref{eq:DS-u-eta-v} with the definition of $A(\infty)$ given in \eqref{eq:Abel-define} and using the same affine transformation $w=(z-x)/\rho$ and fractional linear mapping \eqref{eq:DS-FLM} used to simplify $I_\mathcal{A}$, one can write $\Jacobiu$ in the form $\Jacobiu=U(\ii\tan(\frac{1}{2}\theta_\alpha))$, where
\begin{equation}
U(\zeta):=\ii\int_0^{\zeta}\frac{\dd W}{\sqrt{1-W^2}\sqrt{1-mW^2}}.
\label{eq:DS-U-of-zeta}
\end{equation}
For evaluation at $\zeta=\ii\tan(\frac{1}{2}\theta_\alpha)$ we can integrate along the positive imaginary axis taking the positive square roots of $1-W^2>1$ and $1-mW^2>1$.   More generally, $U(\zeta)$ admits analytic continuation from the positive imaginary axis to the domain $\zeta\in\mathbb{C}\setminus[-1/\sqrt{m},1/\sqrt{m}]$.  Even more generally, we may take $\zeta$ on a sheet of the Riemann surface $\mathcal{R}$ of $y^2=(1-\zeta^2)(1-m\zeta^2)$ and admit arbitrary paths of integration on $\mathcal{R}$ in which case $U(\zeta)$ is a multi-valued function of $\zeta\in\mathcal{R}$ that is well-defined modulo integer linear combinations of the periods $4\ii \mathbb{K}(m)$ and $2 \mathbb{K}(1-m)$.  Since $\mathrm{sn}^2(\Jacobiu;1-m)$ is doubly periodic in $\Jacobiu$ with periods $2\mathbb{K}(1-m)$ (because $\mathrm{sn}(\Jacobiu;1-m)$ changes sign upon $\Jacobiu\mapsto \Jacobiu+2\mathbb{K}(1-m)$) and $2\ii \mathbb{K}(m)$, it follows that upon composing $\Jacobiu\mapsto \mathrm{sn}^2(\Jacobiu;1-m)$ with $\zeta\mapsto \Jacobiu=U(\zeta)$ one obtains a single-valued function on $\mathcal{R}$, analytic except from isolated singularities coming from the poles of $\mathrm{sn}^2(\Jacobiu;1-m)$.  In other words, $\zeta\mapsto\mathrm{sn}^2(U(\zeta);1-m)$ is a meromorphic function on $\mathcal{R}$ for each $m\in (0,1)$.  

In fact, the restriction of this composition to one sheet of $\mathcal{R}$ defined by the analytic continuation of the integral formula \eqref{eq:DS-U-of-zeta} from positive imaginary $\zeta$ to the domain $\zeta\in\mathbb{C}\setminus[-1/\sqrt{m},1/\sqrt{m}]$ is meromorphic on the $\zeta$-sphere and hence a rational function of $\zeta\in\mathbb{C}$.  To see this, we let $U_\pm(\zeta)$ denote the boundary values on $\zeta\in\mathbb{R}$ from the half planes $\mathbb{C}_\pm$.  Then, since the integrand is integrable at $\zeta=\infty$, we have $U_+(\zeta)=U_-(\zeta)$ for $\zeta<-1/\sqrt{m}$ and $\zeta>1/\sqrt{m}$.  If instead $-1<\zeta<1$, then $U_+(\zeta)$ is calculated directly using the formula \eqref{eq:DS-U-of-zeta} while to compute $U_-(\zeta)$ we have to integrate from $W=\ii 0$ to $W=\zeta-\ii 0$ along a path in the domain $\mathbb{C}\setminus [-1/\sqrt{m},1/\sqrt{m}]$ yielding the identity
\begin{equation}
U_+(\zeta)-U_-(\zeta)=-2\int_1^{1/\sqrt{m}}\frac{\dd W}{\sqrt{W^2-1}\sqrt{1-mW^2}}=-2\mathbb{K}(1-m),\quad -1<\zeta<1.
\end{equation}
Therefore, if $r(\zeta):=\mathrm{sn}^2(U(\zeta);1-m)$, then
\begin{equation}
\begin{split}
r_+(\zeta)=\mathrm{sn}^2(U_+(\zeta);1-m)&=\mathrm{sn}^2(U_-(\zeta)-2\mathbb{K}(1-m);1-m)\\ &=\mathrm{sn}^2(U_-(\zeta);1-m)=r_-(\zeta),\quad
\zeta\in (-1,1).
\end{split}
\end{equation}
By similar calculations, one finds that
\begin{equation}
\begin{split}
U_+(\zeta)+U_-(\zeta)&=2\ii\int_0^1\frac{\dd W}{\sqrt{1-W^2}\sqrt{1-mW^2}} + 2\int_1^{1/\sqrt{m}}\frac{\dd W}{\sqrt{W^2-1}\sqrt{1-mW^2}} \\ &= 2\ii \mathbb{K}(m)+2\mathbb{K}(1-m),\quad 1<\zeta<1/\sqrt{m},
\end{split}
\end{equation}
and therefore, using evenness of $\mathrm{sn}^2(\cdot;1-m)$,
\begin{equation}
\begin{split}
r_+(\zeta)=\mathrm{sn}^2(U_+(\zeta);1-m)&=\mathrm{sn}^2(-U_-(\zeta)+2\ii\mathbb{K}(m)+2\mathbb{K}(1-m);1-m) \\
&= \mathrm{sn}^2(-U_-(\zeta);1-m)\\
&=\mathrm{sn}^2(U_-(\zeta);1-m)=r_-(\zeta),\quad \zeta\in (1,1/\sqrt{m}).
\end{split}
\end{equation}
Likewise,
\begin{equation}
U_+(\zeta)+U_-(\zeta)=-2\ii\mathbb{K}(m)+2\mathbb{K}(1-m),\quad -1/\sqrt{m}<\zeta<-1
\end{equation}
implying that
\begin{equation}
r_+(\zeta)=r_-(\zeta),\quad \zeta\in (-1/\sqrt{m},-1).
\end{equation}

To fully characterize the rational function $r(\zeta)$ it remains to determine its partial fraction expansion.  The poles of $
\Jacobiu\mapsto \mathrm{sn}^2(\Jacobiu;1-m)$ are at the translations by lattice periods $2\mathbb{K}(1-m)$ and $2\ii\mathbb{K}(m)$ of the point $u=\ii\mathbb{K}(m)$, where we have the Laurent expansion
\begin{equation}
\mathrm{sn}^2(\Jacobiu;1-m)=\frac{1}{1-m}\cdot\frac{1}{(\Jacobiu-\ii\mathbb{K}(m))^2} + O(1),\quad \Jacobiu\to\ii\mathbb{K}(m).
\end{equation}
The only points mapped by $\zeta\mapsto \Jacobiu=U(\zeta)$ to poles of $\mathrm{sn}^2(\Jacobiu;1-m)$ are the points $\zeta=\pm 1$.  Since $r(-\zeta)=r(\zeta)$ it is enough to work out the Laurent expansion of $r(\zeta)$ near $\zeta=1$.  Expanding $U_+(\zeta)$ for $\zeta<1$ with $1-\zeta$ small yields
\begin{equation}
U_+(\zeta)=\ii\mathbb{K}(m)-\ii\sqrt{\frac{2}{1-m}}\cdot\sqrt{1-\zeta}+O(1-\zeta).
\end{equation}
Therefore, since $r(\zeta):=\mathrm{sn}^2(U(\zeta);1-m)$ is single-valued near $\zeta=1$,
\begin{equation}
r(\zeta)=\frac{1}{2(\zeta-1)} +O(1),\quad\zeta\to 1.
\end{equation}
Since $r(-\zeta)=r(\zeta)$, we then obtain the partial fraction expansion
\begin{equation}
r(\zeta)=\frac{1}{2(\zeta-1)} + \frac{1}{2(-\zeta-1)} + C
\end{equation}
for some constant $C$.  Using the obvious identity $r(0)=0$ then shows that $C=1$.  Consequently
the rational function at hand is exactly
\begin{equation}
r(\zeta):=\mathrm{sn}^2(U(\zeta);1-m) = \frac{\zeta^2}{\zeta^2-1}.
\end{equation}
(The result does not depend on the elliptic parameter $1-m$.)  Since the value of $\Jacobiu$ defined in \eqref{eq:DS-u-eta-v} requires taking $\zeta=\ii\tan(\frac{1}{2}\theta_\alpha)$, we have proved the identity
\begin{equation}
\mathrm{sn}^2(\Jacobiu;1-m)=\frac{\tan^2(\frac{1}{2}\theta_\alpha)}{\tan^2(\frac{1}{2}\theta_\alpha)+1}=\frac{t_\alpha^2}{1+t_\alpha^2},
\end{equation}
where we have introduced the shorthand notation
\begin{equation}
t_\alpha:=\tan(\tfrac{1}{2}\theta_\alpha)\quad\text{and}\quad t_\beta:=\tan(\tfrac{1}{2}\theta_\beta).
\end{equation}
Using this result in \eqref{eq:DS-squared-modulus-u-v} along with $m=\cot^2(\frac{1}{2}\theta_\alpha)\tan^2(\frac{1}{2}\theta_\beta)=t_\alpha^{-2}t_\beta^2$ gives
\begin{equation}
\frac{4\sqrt{m}(1-(1-m)\mathrm{sn}^2(\Jacobiu;1-m))}{(1-(1+\sqrt{m})\mathrm{sn}^2(\Jacobiu;1-m))^2}=
\frac{4t_\alpha^{-1}t_\beta(1+t_\alpha^2)(1+t_\beta^2)}{(1-t_\alpha t_\beta)^2}.
\end{equation}
Comparing with the trigonometric identity
\begin{equation}
\frac{4(1-\sqrt{m})^2\mathrm{Im}(\alpha)\mathrm{Im}(\beta)}{(\mathrm{Im}(\alpha)-\mathrm{Im}(\beta))^2}=\frac{4t_\alpha^{-1}t_\beta(1+t_\alpha^2)(1+t_\beta^2)}{(1-t_\alpha t_\beta)^2},
\end{equation}
we can write \eqref{eq:DS-squared-modulus-u-v} in the form
\begin{equation}
\left|\frac{q^-(\infty;z_0,-\ii M\Delta)}{q^+(\infty;z_0,\ii M\Delta)}\right|^2=
1+\frac{4\mathrm{Im}(\alpha)\mathrm{Im}(\beta)}{(\mathrm{Im}(\alpha)-\mathrm{Im}(\beta))^2}\cdot\frac{(1-\sqrt{m})^2\mathrm{sn}^2(\Jacobiv;m)}{(1-\sqrt{m}\mathrm{sn}^2(\Jacobiv;m))^2}.
\end{equation}
Going back to \eqref{eq:DS-Psi-breve-define}, we have shown that
\begin{equation}
|\breve{\Psi}(\chi,\tau;M)|^2=(\mathrm{Im}(\alpha)-\mathrm{Im}(\beta))^2+4\mathrm{Im}(\alpha)\mathrm{Im}(\beta)\cdot\frac{(1-\sqrt{m})^2\mathrm{sn}^2(\Jacobiv;m)}{(1-\sqrt{m}\mathrm{sn}^2(\Jacobiv;m))^2}.
\end{equation}

Some calculus shows that as $\mathrm{sn}^2(\Jacobiv;m)$ varies in $[0,1]$, so also
\begin{equation}
\frac{(1-\sqrt{m})^2\mathrm{sn}^2(\Jacobiv;m)}{(1-\sqrt{m}\mathrm{sn}^2(\Jacobiv;m))^2}\in [0,1],\quad 0<m<1.
\end{equation}
Therefore, as $\Jacobiv$ varies for fixed $(\alpha,\beta)$,
\begin{equation}
(\mathrm{Im}(\alpha)-\mathrm{Im}(\beta))^2\le |\breve{\Psi}(\chi,\tau;M)|^2\le (\mathrm{Im}(\alpha)+\mathrm{Im}(\beta))^2.
\label{eq:DS-range-of-brevePsi}
\end{equation}
Actually, a simpler formula for $|\breve{\Psi}(\chi,\tau;M)|^2$ can be obtained via a Landen transformation in terms of a less-natural elliptic parameter 
\begin{equation}
m_1:=\frac{4\sqrt{m}}{(1+\sqrt{m})^2}=\frac{4\cot(\frac{1}{2}\theta_\alpha)\tan(\frac{1}{2}\theta_\beta)}{(1+\cot(\frac{1}{2}\theta_\alpha)\tan(\frac{1}{2}\theta_\beta))^2}=\frac{\sin(\theta_\alpha)\sin(\theta_\beta)}{\sin^2(\frac{1}{2}(\theta_\alpha+\theta_\beta))},
\label{eq:DS-m1-define}
\end{equation}
and variable
\begin{equation}
\Jacobiv_1:=\frac{\mathbb{K}(m_1)}{\pi}(M\Delta+\pi).
\label{eq:DS-v1-define}
\end{equation}
Note that, like $m$, $m_1$ varies in $[0,1]$ from $m_1=1$ at $\chi=\chi_\mathrm{c}(\tau)$ (because there $\alpha=\beta=\critpt$ with $\mathrm{Im}(\critpt)>0$) to $m_1=0$ in the limit $\chi\to+\infty$ (because $\theta_\alpha\to 0$ while $\theta_\beta\to\pi$).
The simpler formula for $|\breve{\Psi}(\chi,\tau;M)|^2$ reads
\begin{equation}
|\breve{\Psi}(\chi,\tau;M)|^2=(\mathrm{Im}(\alpha)+\mathrm{Im}(\beta))^2-4\mathrm{Im}(\alpha)\mathrm{Im}(\beta)\mathrm{sn}^2(\Jacobiv_1;m_1),
\label{eq:DS-Breve-Psi-alt}
\end{equation}
which again obviously varies in the range indicated in \eqref{eq:DS-range-of-brevePsi}.  We provide an alternative derivation of this formula directly from \eqref{eq:DS-q-ratio} in Appendix~\ref{a:Landen}.

\subsection{Proof of Corollary~\ref{cor:L2-convergence}}
\label{sec:L2-convergence}
\begin{proof}
We compute
\begin{multline}
\|M\Psi(M^2\diamond,M^3\tau;\mathbf{G}(a,b))-\ee^{-\ii\arg(ab)}\breve{\Psi}(\diamond,\tau;M)\|_{L^2(\mathbb{R})}^2 = \|M\Psi(M^2\diamond,M^3\tau;\mathbf{G}(a,b)\|_{L^2(\mathbb{R})}^2 \\{}+ \|\breve{\Psi}(\diamond,\tau;M)\|_{L^2(\mathbb{R})}^2 - 2\mathrm{Re}\left(\ee^{-\ii\arg(ab)}\int_\mathbb{R}M\Psi(M^2\chi,M^3\tau;\mathbf{G}(a,b))^*\breve{\Psi}(\chi,\tau;M)\,\dd\chi\right).
\end{multline}
But, by Theorem~\ref{t:L2-norm} and \eqref{eq:Intro-approx-L2}, we can write this in the form
\begin{multline}
\|M\Psi(M^2\diamond,M^3\tau;\mathbf{G}(a,b))-\ee^{-\ii\arg(ab)}\breve{\Psi}(\diamond,\tau;M)\|_{L^2(\mathbb{R})}^2 =\\2\mathrm{Re}\left(\ee^{-\ii\arg(ab)}\int_\mathbb{R}\left(\ee^{-\ii\arg(ab)}\breve{\Psi}(\chi,\tau;M)-M\Psi(M^2\chi,M^3\tau;\mathbf{G}(a,b))\right)^*\breve{\Psi}(\chi,\tau;M)\,\dd\chi\right) \\
{}+O(M^{-1}).
\end{multline}
Given $\delta>0$, there exists $M_1(\delta)$ such that $M>M_1(\delta)$ implies that the error term is less than $\frac{1}{3}\delta^2$.  
Also, by \eqref{eq:Intro-square-modulus}, we have $|\breve{\Psi}(\chi,\tau;M)|\le \mathrm{Im}(\alpha(\chi,\tau))+\mathrm{Im}(\beta(\chi,\tau))$ for $\chi>\chi_\mathrm{c}(\tau)$, an upper bound that is independent of $M$ and that lies in $L^2(\chi_\mathrm{c}(\tau),+\infty)$ because $\alpha$ and $\beta$ are continuous down to $\chi=\chi_\mathrm{c}(\tau)$ with common value $\critpt$, and as shown in the proof of Lemma~\ref{lem:absolute-integrability}, $\mathrm{Im}(\alpha(\chi,\tau))=O(\chi^{-\frac{3}{4}})$ and $\mathrm{Im}(\beta(\chi,\tau))=O(\chi^{-\frac{3}{4}})$ as $\chi\to+\infty$.  Therefore, when $M>M_1(\delta)$,
\begin{multline}
\|M\Psi(M^2\diamond,M^3\tau;\mathbf{G}(a,b))-\ee^{-\ii\arg(ab)}\breve{\Psi}(\diamond,\tau;M)\|_{L^2(\mathbb{R})}^2\le\\
2\int_{\chi_\mathrm{c}(\tau)}^{+\infty}\left|M\Psi(M^2\chi,M^3\tau;\mathbf{G}(a,b))-\ee^{-\ii\arg(ab)}\breve{\Psi}(\chi,\tau;M)\right|\left(\mathrm{Im}(\alpha(\chi,\tau))+\mathrm{Im}(\beta(\chi,\tau))\right)\,\dd\chi + \frac{1}{3}\delta^2.
\end{multline}
Now let $f(\diamond,\tau)\in C_0^\infty(\chi_\mathrm{c}(\tau),+\infty)$ be a test function with compact support in $\chi>\chi_\mathrm{c}(\tau)$.  By the Cauchy-Schwarz and Minkowski inequalities,
\begin{multline}
2\int_{\chi_\mathrm{c}(\tau)}^{+\infty}\left|M\Psi(M^2\chi,M^3\tau;\mathbf{G}(a,b))-\ee^{-\ii\arg(ab)}\breve{\Psi}(\chi,\tau;M)\right|\left(\mathrm{Im}(\alpha(\chi,\tau))+\mathrm{Im}(\beta(\chi,\tau))\right)\,\dd\chi\le\\
2\int_{\mathrm{spt}(f)}\left|M\Psi(M^2\chi,M^3\tau;\mathbf{G}(a,b))-\ee^{-\ii\arg(ab)}\breve{\Psi}(\chi,\tau;M)\right| |f(\chi,\tau)|\,\dd\chi \\
{}+2\left(\|M\Psi(M^2\diamond,M^3\tau;\mathbf{G}(a,b))\|_{L^2(\mathbb{R})}+\|\breve{\Psi}(\diamond,\tau;M)\|_{L^2(\chi_\mathrm{c}(\tau),+\infty)}\right)\\
\cdot\|\mathrm{Im}(\alpha(\diamond,\tau))+\mathrm{Im}(\beta(\diamond,\tau))-f(\diamond,\tau)\|_{L^2(\chi_\mathrm{c}(\tau),+\infty)}.
\end{multline}
The quantity in 
parentheses
is equal to $2\sqrt{8} + O(M^{-1})$ according to Theorem~\ref{t:L2-norm} and \eqref{eq:Intro-approx-L2}.  
Furthermore,
by density, for each $\tau\in\mathbb{R}$, the test function $f(\chi,\tau)$ can be chosen to approximate the $M$-independent quantity $\mathrm{Im}(\alpha(\diamond,\tau))+\mathrm{Im}(\beta(\diamond,\tau))$ to arbitrary accuracy in $L^2(\chi_\mathrm{c}(\tau),+\infty)$.  Therefore, there exists some $M_2(\delta)$ such that $M>M_2(\delta)$ and suitable $f_\delta(\diamond,\tau)\in C_0^\infty(\chi_\mathrm{c}(\tau),+\infty)$ implies that    
\begin{multline}
2\int_{\chi_\mathrm{c}(\tau)}^{+\infty}\left|M\Psi(M^2\chi,M^3\tau;\mathbf{G}(a,b))-\ee^{-\ii\arg(ab)}\breve{\Psi}(\chi,\tau;M)\right|\left(\mathrm{Im}(\alpha(\chi,\tau))+\mathrm{Im}(\beta(\chi,\tau))\right)\,\dd\chi\le\\
2\int_{\mathrm{spt}(f_\delta)}\left|M\Psi(M^2\chi,M^3\tau;\mathbf{G}(a,b))-\ee^{-\ii\arg(ab)}\breve{\Psi}(\chi,\tau;M)\right| |f_\delta(\chi,\tau)|\,\dd\chi +\frac{1}{3}\delta^2.
\end{multline}
Finally, using the locally-uniform convergence on compact subsets of $\chi>\chi_\mathrm{c}(\tau)$ such as $\mathrm{spt}(f_\delta)$ afforded by Theorem~\ref{t:DS} and the fact that as an element of $C_0^\infty(\chi_\mathrm{c}(\tau),+\infty)$, $f_\delta(\diamond,\tau)\in L^1(\chi_\mathrm{c}(\tau),+\infty)$ with norm independent of $M$, there exists $M_3(\delta)$ such that $M>M_3(\delta)$ implies that
\begin{equation}
2\int_{\mathrm{spt}(f_\delta)}\left|M\Psi(M^2\chi,M^3\tau;\mathbf{G}(a,b))-\ee^{-\ii\arg(ab)}\breve{\Psi}(\chi,\tau;M)\right| |f_\delta(\chi,\tau)|\,\dd\chi \le\frac{1}{3}\delta^2.
\end{equation}
Combining the results shows that $M>\max\{M_1(\delta),M_2(\delta),M_3(\delta)\}$ implies that 
\begin{equation}
\|M\Psi(M^2\diamond,M^3\tau;\mathbf{G}(a,b))-\ee^{-\ii\arg(ab)}\breve{\Psi}(\diamond,\tau;M)\|_{L^2(\mathbb{R})}\le\delta, 
\end{equation}
so the proof is finished.
\end{proof}

\section{Asymptotic behavior of $\Psi(X,T;\mathbf{G}(a,b))$ in the limit $a\to 0$:  the case $\chi\approx\chi_\mathrm{c}(\tau)$}
\label{s:edge}

In this section we reconsider \rhref{rhp:S} in the limit $M\to+\infty$, treating the case $\chi\approx \chi_\mathrm{c}(\tau)$ for given $\tau$, i.e., we study the transition between the $\chi <\chi_\mathrm{c}(\tau)$ and $\chi > \chi_\mathrm{c}(\tau)$ regimes.
Recalling the jump matrix $\mathbf{V}^\mathbf{S}(z;\chi,\tau,M)$ from \rhref{rhp:S} defined in \eqref{eq:S-jump}, we are now interested in the case that the region of the $z$-plane on which the inequalities $-1<\mathrm{Re}(\ii\phase(z;\chi,\tau))<1$ hold resembles the third pane in Figure~\ref{f:vartheta-composite}.  In this situation, $\mathrm{Re}(\ii\phase(z;\chi,\tau))\approx -1$   for $z\approx \critpt(\chi,\tau)$, the complex critical point of $z\mapsto\phase(z;\chi,\tau)$ in the upper half-plane near which the contour $\Gamma$ must pass.  Therefore, when $\chi\approx \chi_\mathrm{c}(\tau)$ and $z\approx \critpt(\chi,\tau)$, the exponential factor $\ee^{-2\ii M(\phase(z;\chi,\tau)-\ii)}$ in the $(1,2)$-element of $\mathbf{V}^\mathbf{S}(z;\chi,\tau,M)$ fails to be negligible; however the factor $\ee^{2\ii M(\phase(z;\chi,\tau)+\ii)}$ present in the other off-diagonal element is then automatically exponentially small near the same point $z$ in the same situation, because $\mathrm{Re}(\ii(\phase(z;\chi,\tau)+\ii)=\mathrm{Re}(\ii(\phase(z;\chi,\tau)-\ii)-2\approx -2<0$.

For the remainder of this section we assume that the Jordan curve $\Gamma$ passes over the complex-conjugate critical points $z=\critpt,\critpt^*$ and study how to account for the fact that the $(1,2)$-entry of the jump matrix $\mathbf{V}^\mathbf{S}(z;\chi,\tau,M)$ may become large for $z\approx\critpt(\chi,\tau)$ in $\Gamma$ as $\chi$ increases beyond $\chi_\mathrm{c}(\tau)$.  To measure the size,   
recall the quantity $d(\chi,\tau)$ defined in \eqref{d-def-intro} 
so that $2Md(\chi,\tau)$ is the value of the exponent in the $(1,2)$-entry of \eqref{eq:S-jump} at the critical point $z=\critpt(\chi,\tau)$.
Then, the boundary curve $\chi=\chi_\mathrm{c}(\tau)$ is defined by $\Re(2d(\chi,\tau))=0$, and the region $\chi>\chi_\mathrm{c}(\tau)$ corresponds to $\Re(2d(\chi,\tau))>0$.  We will show that it is possible to analyze \rhref{rhp:S} in the limit $M\to+\infty$ without the use of any $g$-function provided that for some $K_\pm>0$, $(\chi,\tau)$ lies in the region $\mathcal{S}$ defined in \eqref{d-size}.
The constant $K_->0$ in \eqref{d-size} should be taken sufficiently small to guarantee existence of the complex critical points $\critpt,\critpt^*$, and we also assume (see Section~\ref{s:transition-error}) that $K_-<2$.
To carry out this analysis, we will build a suitable inner parametrix in a small disk $D_\critpt$ of fixed sufficiently small radius (see Section~\ref{s:transition-error}) centered at $z=\critpt$, and deal with the neighborhood of $z=\critpt^*$ using Schwarz reflection symmetry \eqref{eq:S-Schwarz}.  Since \eqref{d-size} implies that $\Re(2d(\chi,\tau))\to 0$ as $M\to +\infty$ if $(\chi,\tau)\in\mathcal{S}$ with $\Re(2d(\chi,\tau))>0$, then such points $(\chi,\tau)$ must tend to the boundary curve.  Therefore, there exists a constant $c>0$ such that $(\chi,\tau)\in \mathcal{S}$ defined in \eqref{d-size} implies the uniform estimate ($\|\diamond\|$ denotes an arbitrary norm on $2\times 2$ matrices)
\begin{equation}
\sup_{z\in\Gamma\setminus \overline{D_\critpt\cup D_\critpt^*}}\left\|\mathbf{V}^\mathbf{S}(z;\chi,\tau,M)-\mathbb{I}\right\|=O(\ee^{-cM}),\quad M\to+\infty,
\label{eq:transitional-uniform-outside}
\end{equation}
but due to contributions from $\Gamma\cap (D_\critpt\cup D_\critpt^*)$ we expect to obtain a new asymptotic description of $\Psi$ in this transitional regime.

\subsection{Inner parametrices}
\label{s:local-param}
Under the condition $(\chi,\tau)\in\mathcal{S}$ (see \eqref{d-size}) we cannot fully extend the estimate \eqref{eq:transitional-uniform-outside} to $z\in\Gamma\cap D_\critpt$, however we may write
\begin{equation}
\mathbf{V}^{\mathbf{S}}(z ;\chi,\tau,M)
=\mathbb{I} + \begin{bmatrix} O(\ee^{-c M}) & -\ee^{-2\ii M(\phase(z;\chi,\tau)-\ii)}\\
O(\ee^{-c M})  & O(\ee^{-c M}) \end{bmatrix},\quad z\in\Gamma\cap D_\critpt,
\label{eq:VS-in-1}
\end{equation}
where the error terms are uniform with respect to $z\in\Gamma\cap D_\critpt$ and $(\chi,\tau)\in\mathcal{S}$.  We do not further estimate $V_{12}^\mathbf{S}(z;\chi,\tau,M)$ because it may be large for $z\in\Gamma\cap D_\critpt$ under the condition $(\chi,\tau)\in\mathcal{S}$, so it should be retained and accounted for.

For $z\in D_\critpt$ we write
\begin{equation}
- 2 \ii \left( \phase(z;\chi,\tau) - \ii \right) = 2 d(\chi,\tau) - \varphi(z;\chi,\tau)^2.
\label{conformal-phi-def}
\end{equation}
This defines two opposite $M$-independent conformal maps $\varphi(z)=\varphi(z;\chi,\tau)$ from $D_\critpt$ to a neighborhood of the origin $\varphi=0$ since $z=\critpt(\chi,\tau)$ is a simple critical point of $\phase(z;\chi,\tau)$ and since $d(\chi,\tau)$ is defined by \eqref{d-def-intro}.   Note that $\varphi(\critpt(\chi,\tau);\chi,\tau)=0$, and denoting $\conformalprime(\chi,\tau):=\varphi'(\critpt(\chi,\tau);\chi,\tau)$, differentiating \eqref{conformal-phi-def} twice at $z=\critpt(\chi,\tau)$ gives 
\begin{equation}
\conformalprime(\chi,\tau):=\varphi'(\critpt(\chi,\tau);\chi,\tau)\implies \conformalprime(\chi,\tau)^2=\ii\phase''(\critpt(\chi,\tau);\chi,\tau)\neq 0.
\label{eq:conf-map-deriv}
\end{equation}
We assume that the smooth oriented contour $\Gamma$ passes over $z=\critpt$ in such a way that $\varphi$ maps $\Gamma\cap D_\critpt$ to a real interval containing $\varphi=0$, and then we resolve the sign ambiguity by insisting that $z\mapsto \varphi(z)$ is increasing along $\Gamma\cap D_\critpt$. 
This makes $\conformalprime$ a well-defined continuous complex-valued function of $(\chi,\tau)$.  In the rescaled coordinate
\begin{equation}
\zeta:= M^{\frac{1}{2}} \varphi(z;\chi,\tau),
\label{zeta-def}
\end{equation}
the jump matrix is expressed in the limit $M\to+\infty$ as
\begin{equation}
\mathbf{V}^{\mathbf{S}}(z ;\chi,\tau,M)=
\ee^{Md(\chi,\tau)\sigma_3} \ii^{\sigma_3} \begin{bmatrix}1 + O(\ee^{-c M}) & \ee^{-\zeta^2}\\
O(\ee^{-c M}M^{K_+})  & 1 + O(\ee^{-c M}) \end{bmatrix} \ii^{-\sigma_3}\ee^{-Md(\chi,\tau)\sigma_3},
\quad z\in\Gamma\cap D_\critpt.
\label{jump-zeta}
\end{equation}
The condition $z\in\Gamma\cap D_\critpt$ implies that $\zeta$ lies in a large interval containing $\zeta=0$ with endpoints proportional to $M^\frac{1}{2}$.  Neglecting the errors, we see that the central factor is the jump matrix for the following 
Riemann-Hilbert problem due to Fokas, Its, and Kitaev \cite{FokasIK1991} 
(see also \cite[Section 3]{DeiftKMVZ1999} for a context more relevant to our work).
\begin{rhp}[Hermite polynomials]
\label{rhp:Hermite}
Given $n\in\mathbb{Z}_{\ge 0}$, find a $2\times 2$ matrix valued function $\mathbf{H}^{[n]}(\zeta)$ with the following properties:
\begin{itemize}
\item\textbf{Analyticity:} $\mathbf{H}^{[n]}(\zeta)$ is analytic for $\zeta\in\mathbb{C}\setminus\mathbb{R}$, taking continuous boundary values on $\mathbb{R}$.
\item\textbf{Jump conditions:}  The boundary values on the jump contour $\mathbb{R}$ oriented from $\zeta=-\infty$ to $\zeta=+\infty$ are related as follows:
\begin{equation}
\mathbf{H}^{[n]}_+(\zeta)=\mathbf{H}^{[n]}_-(\zeta)\begin{bmatrix}1 &\ee^{-\zeta^2}\\ 0 & 1\end{bmatrix},\quad \zeta\in\mathbb{R}.
\end{equation}
\item\textbf{Normalization:}  $\mathbf{H}^{[n]}(\zeta) \zeta^{-n \sigma_3} = \mathbb{I} + O(\zeta^{-1})$ as $\zeta\to\infty$.
\end{itemize}
\end{rhp}
This Riemann-Hilbert problem has a unique solution given in terms of the Hermite polynomials $\{P_{n}(\zeta)\}_{n=0}^{\infty}$, which are
determined by the following properties: $P_n(\zeta)$ is a polynomial of degree $n$ with a positive leading coefficient $\gamma_n>0$ as defined in \eqref{gamma-n-intro} and $\{P_{n}(\zeta)\}_{n=0}^{\infty}$ are orthonormal with respect to the measure $\ee^{-\zeta^2}\dd \zeta$, $\zeta\in\mathbb{R}$, that is,
\begin{equation}
\int_{-\infty}^{+\infty} P_j(\zeta) P_k(\zeta) \ee^{-\zeta^2}\dd \zeta = \delta_{jk},
\end{equation}
where $\delta_{jk}$ is the Kronecker delta.
Let $\pi_n(\zeta):= \gamma_{n}^{-1} P_{n}(\zeta)=\zeta^n+\cdots$ denote the monic Hermite polynomial of degree $n$.
Then with the convention that $\gamma_{-1}=0$ the formula
\begin{equation}
\mathbf{H}^{[n]}(\zeta) = \begin{bmatrix} \pi_n(\zeta) &\displaystyle \frac{1}{2\pi \ii} \int_{\mathbb{R}} \frac{\pi_n(s) \ee^{-s^2} \dd s}{s-\zeta} \\ -2\pi \ii \gamma_{n-1}^{2}\pi_{n-1}(\zeta) & -\gamma_{n-1}^{2}  \displaystyle \int_{\mathbb{R}} \frac{\pi_{n-1}(s) \ee^{-s^2} \dd s}{s-\zeta}  \end{bmatrix},\quad n=0,1,2,3,\ldots,
\end{equation}
provides the solution of \rhref{rhp:Hermite}.
Moreover, $\det(\mathbf{H}^{[n]}(\zeta))\equiv 1$.
See \cite[Chapter 3]{Deift2008} or \cite[Theorem 3.1]{DeiftKMVZ1999} for more details.
The matrix $\mathbf{H}^{[n]}(\zeta)$ also has the Laurent expansion
\begin{equation}
\mathbf{H}^{[n]}(\zeta) \zeta^{-n \sigma_3} = \mathbb{I} + \mathbf{H}_{-1}^{[n]}\zeta^{-1} + \mathbf{H}_{-2}^{[n]}\zeta^{-2} + O(\zeta^{-3}),\qquad \zeta\to\infty,
\label{H-expansion}
\end{equation}
where 
\begin{align}
\mathbf{H}_{-1}^{[n]} := 
\begin{bmatrix}
0 & \dfrac{-1}{2\pi \ii \gamma_n^2} \\ {-2\pi \ii \gamma_{n-1}^2} & 0
 \end{bmatrix},\quad n\in\mathbb{Z}_{\ge 0}.
\label{H-error}
\end{align}
The coefficient $\mathbf{H}_{-2}^{[n]}$ in \eqref{H-expansion} is a diagonal matrix due to the property
$\pi_n(-\zeta) = (-1)^n \pi_n(\zeta)$ for $n\in\mathbb{Z}_{\ge 0}$,
and in fact $\mathbf{H}_{-2}^{[0]}=\mathbf{0}$, see \cite[Table 18.6.1]{DLMF}.
From this point on, we indicate the dependence of various quantities on the chosen integer $n$ via subscripts.

For a given value of $n\in\mathbb{Z}_{\ge 0}$ and holomorphic matrix-valued function $\mathbf{Y}_n(z)$ both to be determined, we define an inner parametrix for $z\in D_\critpt$ by
\begin{equation}
\breve{\mathbf{S}}_n^{\mathrm{in}}(z;\chi,\tau,M) := \mathbf{Y}_n(z) M^{-\frac{1}{2}n \sigma_3}\ee^{M d(\chi,\tau)\sigma_3} \ii^{\sigma_3} \mathbf{H}^{[n]}(M^{\frac{1}{2}}\varphi(z;\chi,\tau))  \ii^{-\sigma_3}  \ee^{ - M d(\chi,\tau)\sigma_3}.
\label{S-xi}
\end{equation}
This is analytic for $z\in D_\critpt\setminus\Gamma$ with jump matrix
\begin{equation}
\mathbf{V}^{\breve{\mathbf{S}}_n^{\mathrm{in}}}(z;\chi,\tau,M):= \begin{bmatrix} 1 & - \ee^{2M d(\chi,\tau) -M\varphi(z;\chi,\tau)^2} \\ 0 & 1 \end{bmatrix},\quad z\in\Gamma\cap D_\critpt,
\label{S-xi-jump}
\end{equation}
which coincides with the matrix in \eqref{jump-zeta} after neglecting the error terms.

A second inner parametrix for $\mathbf{S}(z;\chi,\tau,M)$ is then defined in the reflected disk $D_\critpt^*$ centered at $z=\critpt^*$ by Schwarz reflection, see \eqref{S-global} below.  
See \cite{BertolaT2010}, \cite{BuckinghamM2015}, and \cite{ClaeysG2010} for other other applications of \rhref{rhp:Hermite} to the solution of nonlinear equations in transitional regimes.

\subsection{Outer parametrix}
Given $n\in\mathbb{Z}_{\ge 0}$, we need an outer parametrix defined for $z\in\mathbb{C}\setminus\overline{D_\critpt \cup D_{\critpt}^*}$ that matches with the $\zeta^{n\sigma_3}$ behavior of the local parametrices.
Thanks to the estimate \eqref{eq:transitional-uniform-outside},
the outer parametrix need not satisfy any sort of nontrivial jump condition on $\Gamma$ outside the disks $D_\critpt\cup D_\critpt^*$. Therefore, the only flexibility we have for an outer parametrix is to allow it to have poles in the disks. We introduce an $M$-independent outer parametrix by
\begin{equation}
\breve{\mathbf{S}}_n^{\mathrm{out}}(z;\chi,\tau):=\left( \frac{z-\critpt(\chi,\tau)}{z-\critpt(\chi,\tau)^*}\right)^{n\sigma_3},
\label{S-dot-out}
\end{equation}
which satisfies the Schwarz symmetry $\breve{\mathbf{S}}_n^{\mathrm{out}}(z;\chi,\tau)=\sigma_2\breve{\mathbf{S}}_n^{\mathrm{out}}(z^*;\chi,\tau)^*\sigma_2$.  Here, $n\in\mathbb{Z}_{\ge 0}$ has the same value as in \eqref{S-xi} --- the degree of the Hermite polynomial.
For $z\in D_\critpt$, the formula \eqref{S-dot-out} can be expressed as $\breve{\mathbf{S}}_n^{\mathrm{out}}(z;\chi,\tau)=\mathbf{Y}_n(z)\varphi(z;\chi,\tau)^{n\sigma_3}$ with
\begin{equation}
\mathbf{Y}_n(z):=y(z;\chi,\tau)^{n\sigma_3},\quad y(z;\chi,\tau):=\frac{z-\critpt(\chi,\tau)}{\varphi(z;\chi,\tau)(z-\critpt(\chi,\tau)^*)}.
\label{f-def}
\end{equation}
Note that $z\mapsto y(z;\chi,\tau)$ is analytic and nonvanishing for $z\in \overline{D_{\critpt}}$.  In particular $z=\critpt(\chi,\tau)$ is a removable singularity:
\begin{equation}
y(\critpt(\chi,\tau);\chi,\tau)=\frac{1}{2\ii\Im(\critpt(\chi,\tau))\conformalprime(\chi,\tau)}.
\label{eq:y-critpt}
\end{equation}
This defines the holomorphic prefactor $\mathbf{Y}_n(z)$ in \eqref{S-xi}.  With this definition, the mismatch of the inner and outer parametrices 
for $z\in \partial D_\critpt$ can be written in the form
\begin{equation}
\begin{aligned}
\breve{\mathbf{S}}_n^{\mathrm{in}}(z;\chi,\tau,M) \breve{\mathbf{S}}_n^{\mathrm{out}}(z;\chi,\tau)^{-1} 
&=y(z;\chi,\tau)^{n\sigma_3} \ii^{\sigma_3}  M^{-\frac{1}{2}n \sigma_3}\ee^{M d(\chi,\tau)\sigma_3} \\
&\quad\cdot \mathbf{H}^{[n]}(M^{\frac{1}{2}}\varphi(z;\chi,\tau)) \left(M^{\frac{1}{2}} \varphi(z;\chi,\tau)\right)^{-n\sigma_3} \\
&\quad\cdot \ee^{ - M d(\chi,\tau)\sigma_3} M^{\frac{1}{2}n\sigma_3}   \ii^{-\sigma_3}  y(z;\chi,\tau)^{-n\sigma_3},\quad z\in\partial D_\critpt.\\
\end{aligned}
\end{equation}
Using \eqref{zeta-def} and \eqref{H-expansion}, 
we obtain for $z\in \partial D_\critpt$ (on which $\varphi(z;\chi,\tau)$ is bounded away from zero) that
\begin{equation}
\begin{aligned}
\breve{\mathbf{S}}_n^{\mathrm{in}}(z;\chi,\tau,M) \breve{\mathbf{S}}_n^{\mathrm{out}}(z;\chi,\tau)^{-1} &=y(z;\chi,\tau)^{n\sigma_3} \ii^{\sigma_3}  M^{-\frac{1}{2}n \sigma_3}\ee^{M d(\chi,\tau)\sigma_3}\\
&\quad\cdot \left( \mathbb{I} + O(M^{-\frac{1}{2}}) \right)\\
&\quad\cdot \ee^{ - M d(\chi,\tau)\sigma_3} M^{\frac{1}{2}n\sigma_3}   \ii^{-\sigma_3}  y(z;\chi,\tau)^{-n\sigma_3},\quad M\to +\infty.
\end{aligned}
\label{Sin-Sout-comparison}
\end{equation}

To prevent the conjugating factors from contaminating the error term we now tie $n$ to $(\chi,\tau)$ to ensure that $| M^{-\frac{1}{2}n} \ee^{M d(\chi,\tau)} |$ is as close as possible to $1$.
Since
\begin{equation}
\begin{aligned}
A_n(\chi,\tau;M):=M^{-\frac{1}{2}n} \ee^{M d(\chi,\tau)} 
&=\ee^{\ii M \Im(d(\chi,\tau))} \exp\left(\ln(M)\frac{1}{2}\left(\frac{\Re(2 d(\chi,\tau))}{\ln(M) M^{-1}} -n \right)\right),
\end{aligned}
\label{A-quantity}
\end{equation}
we assume that $n=N(\chi,\tau;M)$, where
\begin{equation}
N(\chi,\tau;M):= \left\lfloor \frac{\Re(2 d(\chi,\tau))}{\ln(M) M^{-1}} \right\rceil \geq 0,\qquad \text{whenever}\quad \frac{\Re(2 d(\chi,\tau))}{\ln(M) M^{-1}}\geq -\frac{1}{2},
\label{n-def}
\end{equation}
where $\lfloor x \rceil$ denotes the integer closest to a real number $x$ with the convention that $\lfloor n-0.5 \rceil =n$. 
We truncate the choice of $n$ to $0$ otherwise: 
\begin{equation}
N(\chi,\tau;M):=0, \qquad \text{whenever}\quad \frac{\Re(2 d(\chi,\tau))}{\ln(M) M^{-1}}< -\frac{1}{2}.
\label{n-def-0}
\end{equation}
Thus, the condition $(\chi,\tau)\in\mathcal{S}$ implies that $n=N(\chi,\tau;M)$ lies in the finite range $\{0,1,2,\dots,\lfloor K_+\rceil\}$.

With $n$ related to $(\chi,\tau)\in\mathcal{S}$, using \eqref{n-def} in \eqref{A-quantity} shows that
$A_n(\chi,\tau;M)$
satisfies the estimate
\begin{equation}
M^{-\frac{1}{4}} \leq | A_n(\chi,\tau;M)| < M^{\frac{1}{4}},\quad n=N(\chi,\tau;M)\in\mathbb{Z}_{\geq 1}.
\label{A-bound}
\end{equation}
In the case $n=0$ (resulting from either \eqref{n-def} or from the truncation \eqref{n-def-0}), the lower bound in \eqref{A-bound} 
is replaced with an exponentially small quantity:
\begin{equation}
\ee^{-K_-M}\le | A_0(\chi,\tau;M)| < M^{\frac{1}{4}},\quad N(\chi,\tau;M)=0.
\label{A-bound-0}
\end{equation}

We finally define the global parametrix $\breve{\mathbf{S}}(z;\chi,\tau,M)$ by
\begin{equation}
\breve{\mathbf{S}}(z;\chi,\tau,M):=
\begin{cases}
\breve{\mathbf{S}}_n^{\mathrm{in}}(z;\chi,\tau,M),&\quad z\in D_{\critpt},\\
\sigma_2\breve{\mathbf{S}}_n^{\mathrm{in}}(z^*;\chi,\tau,M)^*\sigma_2,&\quad z\in D_{\critpt}^*,\\
\breve{\mathbf{S}}_n^{\mathrm{out}}(z;\chi,\tau),&\quad z\in \mathbb{C}\setminus \overline{D_{\critpt}\cup D_{\critpt}^*},
\end{cases}
\label{S-global}
\end{equation}
wherein $n=N(\chi,\tau;M)\in\{0,1,2,\dots,\lfloor K_+\rceil\}$.

\subsection{Initial error analysis}
\label{s:transition-error}
The accuracy of approximating $\mathbf{S}(z;\chi,\tau,M)$ with $\breve{\mathbf{S}}(z;\chi,\tau,M)$ is measured by $\mathbf{F}(z;\chi,\tau,M)-\mathbb{I}$, where
\begin{equation}
\mathbf{F}(z;\chi,\tau,M):= \mathbf{S}(z;\chi,\tau,M)\breve{\mathbf{S}}(z;\chi,\tau,M)^{-1},\quad z\in\mathbb{C}\setminus \Sigma^{\mathbf{F}}.
\label{S-to-F}
\end{equation}
Here $\Sigma^\mathbf{F}:=\Gamma\cup\partial D_\critpt\cup\partial D_\critpt^*$, where the circles $\partial D_\critpt$ and $\partial D_\critpt^*$ both have clockwise orientation.
Note that $\mathbf{F}(z;\chi,\tau,M)$ has a Schwarz-symmetric property analogous to \eqref{eq:S-Schwarz} and is analytic in its domain of definition $\mathbb{C}\setminus \Sigma^{\mathbf{F}}$, and its boundary values on the jump contour $\Sigma^\mathbf{F}$ are related by
$\mathbf{F}_{+}(z;\chi,\tau,M) = \mathbf{F}_{-}(z;\chi,\tau,M)\mathbf{V}^{\mathbf{F}}(z;\chi,\tau,M)$, where $\mathbf{V}^\mathbf{F}(z)=\mathbf{V}^\mathbf{F}(z;\chi,\tau,M)$ is defined by
\begin{equation}
\mathbf{V}^{\mathbf{F}}(z) :=
\begin{cases}
\breve{\mathbf{S}}^{\mathrm{out}}_n(z;\chi,\tau)\mathbf{V}^{\mathbf{S}}(z;\chi,\tau,M)\breve{\mathbf{S}}^{\mathrm{out}}_n(z;\chi,\tau)^{-1},&\quad z\in \Gamma\setminus\overline{D_\critpt\cup D_\critpt^*},\\
\breve{\mathbf{S}}^{\mathrm{in}}_{n -}(z;\chi,\tau,M) \mathbf{V}^{\mathbf{S}}(z;\chi,\tau,M)\mathbf{V}^{\breve{\mathbf{S}}_n^{\mathrm{in}}}(z;\chi,\tau,M)^{-1} \breve{\mathbf{S}}^{\mathrm{in}}_{n -}(z;\chi,\tau,M)^{-1},&\quad z\in \Gamma\cap D_\critpt,\\
\breve{\mathbf{S}}_n^{\mathrm{in}}(z;\chi,\tau,M)\breve{\mathbf{S}}_n^{\mathrm{out}}(z;\chi,\tau)^{-1},&\quad z\in \partial D_{\critpt},
\end{cases}
\label{F-jump}
\end{equation}
where $n=N(\chi,\tau;M)$ and the value of the jump matrix on $\Gamma\cap D_\critpt^*$ and on $\partial D_\critpt^*$ follows by Schwarz symmetry.
Note also that $\mathbf{F}(z;\chi,\tau,M) \to \mathbb{I}$ as $z\to\infty$.

Since for $n=N(\chi,\tau;M)\in \{0,1,2,\dots,\lfloor K_+\rceil\}$ the outer parametrix $\breve{\mathbf{S}}_n^\mathrm{out}(z;\chi,\tau)$ is unimodular, independent of $M$, and bounded on $\Gamma\setminus\overline{D_\critpt\cup D_\critpt^*}$, using the estimate \eqref{eq:transitional-uniform-outside} in \eqref{F-jump} implies that
\begin{equation}
\sup_{z\in\Gamma\setminus\overline{D_\critpt\cup D_\critpt^*}}\| \mathbf{V}^{\mathbf{F}}(z;\chi,\tau,M)-\mathbb{I} \| = O (\ee^{-c M}),\quad M\to +\infty,
\label{VF-Sigma-infty}
\end{equation}
for some constant $c>0$.

To analyze $\mathbf{V}^{\mathbf{F}}(z;\chi,\tau,M)-\mathbb{I}$
on the arc $\Gamma\cap D_\critpt$, we begin by noting that the inner parametrix $\breve{\mathbf{S}}_n^{\mathrm{in}}(z;\chi,\tau,M)$ does not satisfy exactly the same jump conditions as $\mathbf{S}(z;\chi,\tau,M)$ on that arc. Indeed, recalling \eqref{eq:S-jump} and using \eqref{conformal-phi-def} and \eqref{S-xi-jump}, we see that
a more precise version of \eqref{eq:VS-in-1} is 
\begin{equation}
\mathbf{V}^{\mathbf{S}}(z;\chi,\tau,M) 
=
\mathbf{V}^{\breve{\mathbf{S}}_n^\mathrm{in}}(z;\chi,\tau,M)
+
\begin{bmatrix}
O(\ee^{-4M}) & 0\\
O(\ee^{M[\varphi(z;\chi,\tau)^2-2d(\chi,\tau)-4]})
& O(\ee^{-4M})
\end{bmatrix},
\quad z\in \Gamma\cap D_\critpt,
\label{VS-inside}
\end{equation}
in the limit $M\to+\infty$.
Hence using \eqref{S-xi-jump} again,
\begin{multline}
\mathbf{V}^\mathbf{S}(z;\chi,\tau,M)\mathbf{V}^{\breve{\mathbf{S}}_n^\mathrm{in}}(z;\chi,\tau,M)^{-1}\\=\mathbb{I}+\begin{bmatrix} 
O(\ee^{-4M}) & O(\ee^{M[2d(\chi,\tau) -\varphi(z;\chi,\tau)^2-4]}) \\ 
O(\ee^{M[\varphi(z;\chi,\tau)^2 -2 d(\chi,\tau)-4]}) & O(\ee^{-4M})
 \end{bmatrix},\quad z\in\Gamma\cap D_\critpt,\quad M\to+\infty.
 \end{multline}
Thus, with \eqref{S-xi} taking $\mathbf{Y}_n(z)$ as in \eqref{f-def} and using \eqref{A-quantity}, we find that with $n=N(\chi,\tau;M)$,
\begin{equation}
\begin{aligned}
\mathbf{V}^{\mathbf{F}}(z;\chi,\tau,M)  - \mathbb{I}
&= y(z;\chi,\tau)^{n\sigma_3} A_n(\chi,\tau;M)^{\sigma_3}\ii^{\sigma_3}  \mathbf{H}^{[n]}_-(M^{\frac{1}{2}}\varphi(z;\chi,\tau))\ii^{-\sigma_3}A_n(\chi,\tau;M)^{-\sigma_3}\\
 &\quad \cdot
\begin{bmatrix}
O(\ee^{-4M}) & O(A_n(\chi,\tau;M)^2 \ee^{ -M[\varphi(z;\chi,\tau)^2+4]})\\
O(A_n(\chi,\tau;M)^{-2}\ee^{M[\varphi(z;\chi,\tau)^2-4]}) & O(\ee^{-4M})
 \end{bmatrix}
 \\
&\quad \cdot
A_n(\chi,\tau;M)^{\sigma_3}\ii^{\sigma_3}\mathbf{H}^{[n]}_-(M^{\frac{1}{2}}\varphi(z;\chi,\tau))^{-1}
 \ii^{- \sigma_3} 
A_n(\chi,\tau;M)^{-\sigma_3} 
 y(z;\chi,\tau)^{n\sigma_3}
\end{aligned}
\label{VF-inside}
\end{equation}
 for $z\in\Gamma\cap D_\critpt$.  Now, since $\mathbf{H}^{[n]}(\zeta)=O(\langle\zeta\rangle^n)$, $\mathbf{H}^{[n]}(M^\frac{1}{2}\varphi(z;\chi,\tau))=O(M^{\frac{1}{2}n})$ for $z\in D_\critpt$ because $\varphi(z;\chi,\tau)$ is bounded there.  The estimates \eqref{A-bound}--\eqref{A-bound-0} on $|A_n(\chi,\tau;M)|$ then show that the matrix product on the first line of the right-hand side of \eqref{VF-inside} is $O(M^{\frac{1}{2}(n+1)})$ (in the case $n=0$, the exponential lower bound in \eqref{A-bound-0} is harmless because $\mathbf{H}^{[0]}(\zeta)$ is upper triangular).  By unimodularity, the same estimate holds for the inverse matrix on the third line of the right-hand side of \eqref{VF-inside}.  
Because $0<K_-<2$, the central factor is uniformly $O(\ee^{-M\delta})$ on $\Gamma\cap D_\critpt$ for some $\delta>0$, provided that the $M$-independent radius of $D_\critpt$ is sufficiently small.  Indeed, this is obvious for all but the $2,1$-entry because $\varphi(z;\chi,\tau)^2\ge 0$ holds on the jump contour.  Now, since $K_-<2$, there exists $\delta>0$ such that $4-2K_--2\delta>0$, so using the exponential lower bound for $|A_0(\chi,\tau;M)|$ from \eqref{A-bound-0} shows that when $\max_{z\in\Gamma\cap D_\critpt}\varphi(z;\chi,\tau)^2\le 4-2K_--2\delta$ we have
for $n=0$ that $O(A_0(\chi,\tau;M)^{-2}\ee^{M[\varphi(z;\chi,\tau)^2-4]})=O(\ee^{-2M\delta})$.  Under the same condition for $n>0$ we have $O(A_n(\chi,\tau;M)^{-2}\ee^{M[\varphi(z;\chi,\tau)^2-4]})=O(M^\frac{1}{2}\ee^{-M[2K_-+2\delta]})=O(\ee^{-2M\delta})$.  Since the conjugating factors contribute $O(M^{n+1})=O(M^{\lfloor K_+\rceil+1})=O(\ee^{M\delta})$, combining the results shows that
\begin{equation}
\sup_{z\in\Gamma\cap D_\critpt}\|\mathbf{V}^\mathbf{F}(z;\chi,\tau,M)-\mathbb{I}\|=O(\ee^{-M\delta}),\quad M\to+\infty.
\label{eq:VF-inside-disk}
\end{equation}

It now remains to estimate the jump matrix $\mathbf{V}^{\mathbf{F}}(z;\chi,\tau,M)$ on the disk boundaries $\partial D_\critpt$ and $\partial D_{\critpt}^*$. 
For $z\in\partial D_\critpt$, we recall that $\varphi(z;\chi,\tau)\neq 0$ and first use \eqref{H-expansion} and \eqref{A-quantity} in \eqref{Sin-Sout-comparison} to write
\begin{equation}
\begin{aligned}
\mathbf{V}^{\mathbf{F}}(z;\chi,\tau,M) - \mathbb{I} & =y(z;\chi,\tau)^{n\sigma_3} A_n(\chi,\tau;M)^{\sigma_3} \ii^{\sigma_3}  \\
&\quad\cdot \left(  \mathbf{H}^{[n]}_{-1}\frac{1}{\varphi(z;\chi,\tau)M^{\frac{1}{2}}} + \mathbf{H}^{[n]}_{-2}\frac{1}{\varphi(z;\chi,\tau)^2 M } + O(M^{-\frac{3}{2}}) \right)\\
&\quad\cdot   \ii^{-\sigma_3}  A_n(\chi,\tau;M)^{-\sigma_3}  y(z;\chi,\tau)^{-n\sigma_3}, \qquad z\in\partial D_\critpt,\quad M\to +\infty,
\end{aligned}
\label{VF-disk-expansion}
\end{equation}
where $n=N(\chi,\tau;M)$.  Next, incorporating the bounds \eqref{A-bound}--\eqref{A-bound-0} in the above expression and recalling the definition \eqref{H-error} together with the facts that $\mathbf{H}_{-2}^{[n]}$ is a diagonal matrix, that the error term is an upper-triangular matrix if $n=N(\chi,\tau;M)=0$, and that $y(z;\chi,\tau)^{n\sigma_3}$ is bounded for $z\in\partial D_{\critpt}$ independently of $M$, we find that
\begin{equation}
\begin{aligned}
\mathbf{V}^{\mathbf{F}}(z;\chi,\tau,M)  - \mathbb{I} &= 
\begin{bmatrix}
0 & \dfrac{y(z;\chi,\tau)^{2n}  A_n(\chi,\tau;M)^2}{2\pi \ii \gamma_n^2} \\ \dfrac{2\pi \ii \gamma_{n-1}^2}{y(z;\chi,\tau)^{2n}  A_n(\chi,\tau;M)^{2}} & 0
\end{bmatrix}\frac{1}{\varphi(z;\chi,\tau) M^{\frac{1}{2}} }\\
&\quad + O(M^{-1}),\quad z\in\partial D_\critpt,\quad n=N(\chi,\tau;M),\quad M\to +\infty.
\end{aligned}
\label{VF-disk-estimate}
\end{equation}
\begin{remark}
This estimate shows that $\mathbf{V}^{\mathbf{F}}(z;\chi,\tau,M) - \mathbb{I}$ fails to be small for $z\in \partial D_{\critpt}$ whenever $|A_n(\chi,\tau;M)|$ approaches the endpoints of the range \eqref{A-bound} (or the upper endpoint of the range \eqref{A-bound-0} for $n=0$ --- the exponentially small lower bound at the lower endpoint is irrelevant because $\gamma_{n-1}=0$ by convention) and in this case $\mathbf{F}(z;\chi,\tau,M)$ does not satisfy a small-norm Riemann-Hilbert problem in the limit as $M\to+\infty$.
The same problem arises also on $\partial D_{\critpt}^*$ by symmetry.
\label{r:no-small-norm}
\end{remark}

In light of this observation, to be able to cover the whole range of $(\chi,\tau)\in\mathcal{S}$ defined in \eqref{d-size}, it is necessary
to mitigate the 
difficulty
by considering an explicitly solvable model Riemann-Hilbert problem whose jump condition involves the components of $\mathbf{V}^{\mathbf{F}}(z;\chi,\tau,M) - \mathbb{I}$ on $\partial D_{\critpt}\cup \partial D_{\critpt}^*$ that are not small.
Put differently, we will now build a parametrix for $\mathbf{F}(z;\chi,\tau,M)$.

\subsection{A soliton parametrix for $\mathbf{F}(z;\chi,\tau,M)$}
A closer inspection of \eqref{VF-disk-estimate} shows that the largest terms are different depending on whether $|A_n(\chi,\tau;M)|< 1$ or $|A_n(\chi,\tau;M)|\ge 1$.  Indeed, if $n=N(\chi,\tau;M)\in \{1,2,\dots,\lfloor K_+\rceil$\} and $|A_n(\chi,\tau;M)|\ge 1$, or if $n=N(\chi,\tau;M)=0$ without any further condition on $|A_0(\chi,\tau;M)|$, then 
\begin{equation}
\mathbf{V}^{\mathbf{F}}(z;\chi,\tau,M) = \mathbb{I} + 
\begin{bmatrix}
0 & \dfrac{y(z;\chi,\tau)^{2n}  A_n(\chi,\tau;M)^2}{2\pi \ii \gamma_n^2 \varphi(z;\chi,\tau)M^{\frac{1}{2}}} \\ 0 & 0
\end{bmatrix}+ O(M^{-\frac{1}{2}}),\quad z\in\partial D_\critpt, \quad M\to +\infty.
\label{VF-plus}
\end{equation}
In the $n=0$ case one gets an improvement of the error term to $O(M^{-1})$ for the same reason no condition on $|A_0(\chi,\tau;M)|$ is needed, namely that $\gamma_{-1}=0$.
On the other hand, if $n=N(\chi,\tau;M)\in \{1,2,\dots,\lfloor K_+\rceil\}$ and $|A_n(\chi,\tau;M)|< 1$, then
\begin{equation}
\mathbf{V}^{\mathbf{F}}(z;\chi,\tau,M) = \mathbb{I} + 
\begin{bmatrix}
0 & 0 \\ \dfrac{2\pi \ii \gamma_{n-1}^2y(z;\chi,\tau)^{-2n}}{ A_n(\chi,\tau;M)^{2}\varphi(z;\chi,\tau)M^{\frac{1}{2}}} & 0
\end{bmatrix}+ O(M^{-\frac{1}{2}}),\quad z\in \partial D_\critpt, \quad M\to +\infty.
\label{VF-minus}
\end{equation}
In both \eqref{VF-plus} and \eqref{VF-minus}, the error terms are in the $L^\infty(\partial D_\critpt)$ sense.
We introduce the following quantities in order to treat both cases simultaneously. Let
\begin{equation}
L_n^{+}(\chi,\tau;M):=\frac{A_n(\chi,\tau;M)^2}{2\pi \ii \gamma_n^2 M^{\frac{1}{2}}},\qquad
L_n^{-}(\chi,\tau;M):=\frac{2\pi \ii \gamma_{n-1}^2 }{A_n(\chi,\tau;M)^2 M^{\frac{1}{2}}},
\label{Ln-def}
\end{equation}
and define the $2$-nilpotent matrices
\begin{equation}
\mathbf{N}^+ := \begin{bmatrix} 0 & 1 \\ 0 & 0 \end{bmatrix},\qquad \mathbf{N}^- := \begin{bmatrix} 0 & 0 \\ 1 & 0 \end{bmatrix}.
\label{2-nilpotents}
\end{equation}
Also define $\mathfrak{s}=\mathfrak{S}(\chi,\tau;M)$ to be the sign determined from $\chi,\tau,M$ in the following way:
\begin{equation}
\mathfrak{S}(\chi,\tau;M):=
\begin{cases}
+,&\quad \text{$n\in\{1,\dots,\lfloor K_+\rceil\}$ and $|A_n(\chi,\tau;M)| \geq 1$, or $n=0$}\\
-,&\quad \text{$n\in\{1,\dots,\lfloor K_+\rceil\}$ and $|A_n(\chi,\tau;M)| < 1$,}
\end{cases}
\label{s-def-1}
\end{equation}
where $n=N(\chi,\tau;M)$.
Then \eqref{VF-plus} and \eqref{VF-minus} can be written in a common form as 
\begin{equation}
\mathbf{V}^\mathbf{F}(z;\chi,\tau,M)=\mathbb{I} + L_n^\mathfrak{s}(\chi,\tau;M)\frac{y(z;\chi,\tau)^{\mathfrak{s}2n}}{\varphi(z;\chi,\tau)}\mathbf{N}^\mathfrak{s} + O(M^{-\frac{1}{2}}),\quad z\in\partial D_\critpt,\quad M\to+\infty.
\label{eq:VF-plus-minus-together}
\end{equation}

Neglecting the error terms in \eqref{VF-plus}--\eqref{VF-minus} leads to the following 
Riemann-Hilbert problem 
characterizing
a parametrix for $\mathbf{F}(z;\chi,\tau,M)$.  
\begin{rhp}[Soliton parametrix]
Given an arbitrary sign $\mathfrak{s}=\pm$ and a function $p(z)$ analytic on a punctured closed disk $\overline{D}_\critpt\setminus\{\critpt\}$ with a simple pole at the center $z=\critpt$, find a $2\times 2$ matrix-valued function $\mathbf{\solmat}^\mathfrak{s}(z;p)$ with the following properties:
\begin{itemize}
\item[]\textbf{Analyticity:} $\mathbf{\solmat}^\mathfrak{s}(z;p)$ is analytic for $z\in\mathbb{C}\setminus \left(\partial D_\critpt\cup\partial D_\critpt^*\right)$, and it takes continuous boundary values on the clockwise-oriented disk boundaries.
\item[]\textbf{Jump conditions:}  The boundary values on the jump contour $\partial D_\critpt\cup\partial D^*_\critpt$ are related by 
\begin{equation}
\mathbf{\solmat}^\mathfrak{s}_+(z;p)=\mathbf{\solmat}^\mathfrak{s}_-(z;p)\mathbf{V}^{\mathbf{\solmat}^\mathfrak{s}}(z;p),\quad z\in \partial D_\critpt\cup\partial D_\critpt^*,
\end{equation}
where 
\begin{equation}
\mathbf{V}^{\mathbf{\solmat}^\mathfrak{s}}(z;p):=\begin{cases} \mathbb{I}+p(z)\mathbf{N}^\mathfrak{s},&\quad z\in\partial D_\critpt,\\
\sigma_2(\mathbb{I}+p(z^*)\mathbf{N}^\mathfrak{s})^*\sigma_2,&\quad z\in\partial D_\critpt^*.
\end{cases}
\label{eq:VJ-jump}
\end{equation}
\item[]\textbf{Normalization:} $\mathbf{\solmat}^\mathfrak{s}(z;p)=\mathbb{I}+O(z^{-1})$ as $z\to\infty$.
\end{itemize}
\label{rhp:generic-soliton-parametrix}
\end{rhp}

\begin{proposition}\label{p:generic-soliton-parametrix}
 \rhref{rhp:generic-soliton-parametrix} has a unique solution given by
\begin{equation}
\mathbf{\solmat}^\mathfrak{s}(z;p) = 
\begin{cases}
\mathbf{R}^{\mathfrak{s}}(z;p) \mathbf{V}^{\mathbf{\solmat}^\mathfrak{s}}(z;p)^{-1},&\quad z\in D_\critpt\cup D_{\critpt^*},\\
\mathbf{R}^{\mathfrak{s}}(z;p),&\quad z\in \mathbb{C}\setminus {\overline{D_\critpt\cup D_{\critpt^*}}},
\end{cases}
\label{G-s-def}
\end{equation}
where $\mathbf{R}^\mathfrak{s}(z;p)$ is the rational function
\begin{equation}
\mathbf{R}^{\mathfrak{s}}(z;p) :=\mathbb{I} + \frac{\mathbf{X}^{\mathfrak{s}}(p)}{z-\critpt}+\frac{\mathbf{Z}^{\mathfrak{s}}(p)}{z-\critpt^*}.
\label{G-sharp-form}
\end{equation}
Here
\begin{equation}
\mathbf{X}^-(p):=\frac{1}{4\Im(\critpt)^2+|C|^2}\begin{bmatrix}2\ii\Im(\critpt)|C|^2 & 0\\4\Im(\critpt)^2C & 0\end{bmatrix},\quad 
\mathbf{Z}^{-}(p)%&:=
:=
\frac{1}{4\Im(\critpt)^2 + |C|^2}
\begin{bmatrix}  0 & - 4\Im(\critpt)^2 C^*  \\  0& - 2\ii \Im(\critpt) |C|^2 
\end{bmatrix},
\label{Z-s-minus}
%\end{align}
\end{equation}
and
\begin{equation}
%\begin{align}
\mathbf{X}^+(p)%&:=
:=
\frac{1}{4\Im(\critpt)^2 + |C|^2}
\begin{bmatrix} 0 & 4\Im(\critpt)^2 C  \\ 0& 2\ii \Im(\critpt) |C|^2 
\end{bmatrix},\quad
\mathbf{Z}^+(p):=\frac{1}{4\Im(\critpt)^2+|C|^2}\begin{bmatrix}-2\ii\Im(\critpt)|C|^2 & 0\\-4\Im(\critpt)^2C^* & 0\end{bmatrix},
\label{X-s-plus}
\end{equation}
where
\begin{equation}
C := \mathop{\mathrm{Res}}_{z=\critpt}p(z).
\label{C-residue-def}
\end{equation}
The first line of \eqref{G-s-def} has only removable singularities at $z=\critpt, \critpt^*$ and it extends as a function analytic in $D_{\critpt} \cup D_{\critpt}^*$.
\end{proposition}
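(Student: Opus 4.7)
The plan is to solve \rhref{rhp:generic-soliton-parametrix} by direct construction and to establish uniqueness by a separate Liouville argument. For uniqueness, since both $\mathbf{N}^\pm$ are nilpotent one has $\det\mathbf{V}^{\mathbf{\solmat}^\mathfrak{s}}\equiv 1$, so any two solutions are invertible, and the ratio $\widetilde{\mathbf{\solmat}}^{\mathfrak{s}}(\mathbf{\solmat}^\mathfrak{s})^{-1}$ has identical boundary values on $\partial D_\critpt\cup\partial D_\critpt^*$; continuity up to these compact circles combined with the normalization $\mathbb{I}$ at infinity then forces the ratio to equal $\mathbb{I}$ by Liouville's theorem.

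For existence, I would take as ansatz the form \eqref{G-sharp-form} in the exterior --- rational with simple poles only at $z=\critpt,\critpt^*$ and $\mathbb{I}$ at infinity --- and define the interior values by the jump relation, which, owing to the nilpotency $(\mathbf{N}^\mathfrak{s})^2=\mathbf{0}$ giving $\mathbf{V}^{\mathbf{\solmat}^\mathfrak{s}}(z;p)^{-1}=\mathbb{I}-p(z)\mathbf{N}^\mathfrak{s}$ on $\partial D_\critpt$, takes the compact form $\mathbf{R}^\mathfrak{s}(z;p)(\mathbb{I}-p(z)\mathbf{N}^\mathfrak{s})$ there. All items of \rhref{rhp:generic-soliton-parametrix} are then built in except for analyticity at $z=\critpt$ and $z=\critpt^*$ (where both factors have simple poles). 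Writing $p(z)=C/(z-\critpt)+\widetilde{p}(z)$ with $\widetilde{p}$ analytic at $\critpt$ and expanding the product in a Laurent series at $\critpt$, cancellation of the double pole requires $\mathbf{X}^\mathfrak{s}\mathbf{N}^\mathfrak{s}=\mathbf{0}$ and cancellation of the simple pole then requires
\[
\mathbf{X}^\mathfrak{s}=C\Bigl(\mathbb{I}+\frac{\mathbf{Z}^\mathfrak{s}}{2\ii\,\Im(\critpt)}\Bigr)\mathbf{N}^\mathfrak{s}.
\]
Because the jump matrix on $\partial D_\critpt^*$ is the $\sigma_2$-conjugate of that on $\partial D_\critpt$, imposing the Schwarz symmetry $\mathbf{\solmat}^\mathfrak{s}(z;p)=\sigma_2\mathbf{\solmat}^\mathfrak{s}(z^*;p)^*\sigma_2$ on the ansatz forces $\mathbf{Z}^\mathfrak{s}=\sigma_2(\mathbf{X}^\mathfrak{s})^*\sigma_2$, after which analyticity at $z=\critpt^*$ follows automatically from that at $z=\critpt$.

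The remaining --- and most computationally involved --- step is to solve the coupled system comprising the residue equation above and the Schwarz relation. I expect this to be the main obstacle, though it amounts only to linear algebra. For $\mathfrak{s}=+$, the constraint $\mathbf{X}^+\mathbf{N}^+=\mathbf{0}$ forces the first column of $\mathbf{X}^+$ to vanish, so only two scalar unknowns remain (the entries of the second column); substituting $\mathbf{Z}^+=\sigma_2(\mathbf{X}^+)^*\sigma_2$ into the residue equation reduces it to a $2\times 2$ linear system whose determinant is proportional to $4\Im(\critpt)^2+|C|^2$. Since $\Im(\critpt)>0$ this quantity is strictly positive regardless of $C$, so the system has a unique solution, which I would solve directly to recover the formula \eqref{X-s-plus}; the case $\mathfrak{s}=-$ is entirely parallel, with lower-triangular nilpotent structure and first-column support for $\mathbf{X}^-$, producing \eqref{Z-s-minus}. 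One then verifies by inspection that the resulting $\mathbf{R}^\mathfrak{s}(z;p)$ has the required normalization at infinity, that the interior formula has removable singularities at $\critpt$ and $\critpt^*$, and that the Schwarz symmetry holds globally --- completing the existence half and thus the proof.
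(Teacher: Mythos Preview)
Your proposal is correct and follows essentially the same approach as the paper: uniqueness via Liouville using $\det\mathbf{V}^{\mathbf{\solmat}^\mathfrak{s}}\equiv 1$, the rational ansatz \eqref{G-sharp-form} for the exterior, the Schwarz relation $\mathbf{Z}^\mathfrak{s}=\sigma_2(\mathbf{X}^\mathfrak{s})^*\sigma_2$, and the Laurent expansion at $z=\critpt$ yielding exactly the equations $\mathbf{X}^\mathfrak{s}\mathbf{N}^\mathfrak{s}=\mathbf{0}$ and $\mathbf{X}^\mathfrak{s}=C\bigl(\mathbb{I}+\mathbf{Z}^\mathfrak{s}/(\critpt-\critpt^*)\bigr)\mathbf{N}^\mathfrak{s}$ that the paper derives and solves. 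The only cosmetic difference is that the paper packages the meromorphic extension as $\mathbf{Q}^\mathfrak{s}$ first and notes that the double-pole equation is actually redundant once the simple-pole equation is in hand (since $\mathbf{N}^\mathfrak{s}$ is nilpotent), whereas you use both; either way the same $2\times 2$ linear system with determinant $4\Im(\critpt)^2+|C|^2>0$ results.
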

\begin{remark}
When the residue $C$ has the form $C_0\ee^{-\mathfrak{s}2\ii M(\critpt \chi+\critpt^2\tau)}$ for $C_0\neq 0$, \rhref{rhp:generic-soliton-parametrix} characterizes a fundamental solution matrix of the Zakharov-Shabat eigenvalue problem with spectral parameter $z$ and simple eigenvalue $z=\critpt\in\mathbb{C}_+$ corresponding to the soliton solution of the focusing nonlinear Schr\"odinger equation in the form \eqref{eq:semiclassicalNLS}.  This solution is a reflectionless potential for which the only nontrivial scattering coefficient is a simple Blaschke factor $a(z)=(z-\critpt)/(z-\critpt^*)$.  Actually, defining the scattering coefficient requires selecting a direction $\mathfrak{s}$ of normalization so that $\mathbf{\solmat}\to\mathbb{I}$ as $\chi\to-\mathfrak{s}\infty$, and it is well known that one may change the direction of normalization by conjugating by the scattering matrix $a(z)^{\sigma_3}$.
Indeed, the solutions of \rhref{rhp:generic-soliton-parametrix} are explicitly related for opposite signs $\mathfrak{s}$ and corresponding residues $C,\widetilde{C}$.  To see this, it suffices to work outside of the two disks and consider the rational matrix $\mathbf{R}^+(z;p)$ where $p$ has residue $C$ at $z=\critpt$.  A simple calculation starting from \eqref{G-sharp-form} and \eqref{X-s-plus} shows that $\mathbf{R}^+(z;p)a(z)^{-\sigma_3}$ is a rational function of the form $\mathbf{R}^-(z;\widetilde{p})$, where the residue $\widetilde{C}$ of $\widetilde{p}$ at $z=\critpt$ is given by $\widetilde{C}=-4\Im(\critpt)^2/C$.
\label{r:Jost-Swap}
\end{remark}
We prove Proposition~\ref{p:generic-soliton-parametrix} in Appendix~\ref{a:soliton-RHP}. Taking the  function $p(z)$ to be given in terms of $z\in\overline{D}_{\critpt(\chi,\tau)}\setminus\{\critpt(\chi,\tau)\}$ and parameters $\chi,\tau,M$ by
\begin{equation}
p(z)=p_n^\mathfrak{s}(z;\chi,\tau,M):=L_n^\mathfrak{s}(\chi,\tau;M)\frac{y(z;\chi,\tau)^{\mathfrak{s}2n}}{\varphi(z;\chi,\tau)},\quad n=N(\chi,\tau;M),\quad\mathfrak{s}=\mathfrak{S}(\chi,\tau;M),
\end{equation}
we define a parametrix for $\mathbf{F}(z;\chi,\tau,M)$ by $\breve{\mathbf{F}}(z;\chi,\tau,M):=\mathbf{\solmat}^\mathfrak{s}(z;p_n^\mathfrak{s}(\diamond;\chi,\tau,M))$.

Next, we proceed with analyzing the accuracy of approximating $\mathbf{F}(z;\chi,\tau,M)$ with $\breve{\mathbf{F}}(z;\chi,\tau,M)$.

\subsection{Improved error analysis}
Given $M>0$ and $(\chi,\tau)\in\mathcal{S}$ (see \eqref{d-size}), we define the (improved) error matrix 
\begin{equation}
\mathbf{E}(z;\chi,\tau,M):=\mathbf{F}(z;\chi,\tau,M)\breve{\mathbf{F}}(z;\chi,\tau,M)^{-1},\quad z\in\mathbb{C}\setminus\Sigma^\mathbf{E},
\end{equation}
where $\Sigma^\mathbf{E}=\Sigma^\mathbf{F}=\Gamma\cup\partial D_\critpt\cup\partial D_\critpt^*$.  It is easy to see that $\mathbf{E}(z;\chi,\tau,M)$ satisfies a Riemann-Hilbert problem of small-norm type as $M\to\infty$, uniformly for $(\chi,\tau)\in\mathcal{S}$.  Indeed, $z\mapsto\mathbf{E}(z;\chi,\tau,M)$ is analytic in its domain of definition and tends to the identity as $z\to\infty$.  On the arcs of $\Sigma^\mathbf{E}$ it satisfies jump conditions of the form $\mathbf{E}_+(z;\chi,\tau,M)=\mathbf{E}_-(z;\chi,\tau,M)\mathbf{V}^\mathbf{E}(z;\chi,\tau,M)$ where the jump matrix is given by
\begin{equation}
\mathbf{V}^\mathbf{E}(z;\chi,\tau,M):=\breve{\mathbf{F}}(z;\chi,\tau,M)\mathbf{V}^\mathbf{F}(z;\chi,\tau,M)\breve{\mathbf{F}}(z;\chi,\tau,M)^{-1},\quad z\in \Sigma^\mathbf{E}\setminus(\partial D_\critpt\cup \partial D_\critpt^*),
\label{VE-not-circles-2}
\end{equation}
and
\begin{equation}
\mathbf{V}^\mathbf{E}(z;\chi,\tau,M):=\breve{\mathbf{F}}_-(z;\chi,\tau,M)\mathbf{V}^\mathbf{F}(z;\chi,\tau,M)\mathbf{V}^{\breve{\mathbf{F}}}(z;\chi,\tau,M)^{-1}\breve{\mathbf{F}}_-(z;\chi,\tau,M)^{-1},\quad z\in \partial D_\critpt\cup \partial D_\critpt^*,
\label{VE-on-circles-2}
\end{equation}
where $\mathbf{V}^{\breve{\mathbf{F}}}(z;\chi,\tau,M)$ denotes the jump matrix for the parametrix $\breve{\mathbf{F}}(z;\chi,\tau,M)$.  Since the residue of $p(z)$ at $z=\critpt(\chi,\tau)$ is bounded uniformly for $(\chi,\tau)\in\mathcal{S}$ due to the definitions \eqref{Ln-def} and \eqref{s-def-1} and the bounds \eqref{A-bound}--\eqref{A-bound-0}, it follows from Proposition~\ref{p:generic-soliton-parametrix} that $z\mapsto \breve{\mathbf{F}}(z;\chi,\tau,M)$ is also uniformly bounded.  Since $\breve{\mathbf{F}}(z;\chi,\tau,M)$ has unit determinant, it is then clear that the conjugating factors in \eqref{VE-not-circles-2}--\eqref{VE-on-circles-2} are uniformly bounded.  Then using the estimate \eqref{VF-Sigma-infty} and the estimate \eqref{eq:VF-inside-disk} with its Schwarz-reflection analogue valid on $\Gamma\cap D_\critpt^*$, one sees from \eqref{VE-not-circles-2} that 
\begin{equation}
\sup_{z\in\Sigma^\mathbf{E}\setminus (\partial D_\critpt\cup\partial D_\critpt^*)}\|\mathbf{V}^\mathbf{E}(z;\chi,\tau,M)-\mathbb{I}\|=O(\ee^{-c'M}),\quad M\to+\infty
\end{equation}
holds for $(\chi,\tau)\in\mathcal{S}$, where $c'=\min\{c,\delta\}>0$.  Since  $\mathbf{V}^{\breve{\mathbf{F}}}(z;\chi,\tau,M)$ is exactly given by the explicit terms in $\mathbf{V}^\mathbf{F}(z;\chi,\tau,M)$ in \eqref{eq:VF-plus-minus-together} for $z\in\partial D_\critpt$, and since these terms are bounded on $\partial D_\critpt$, one sees that $\mathbf{V}^\mathbf{F}(z;\chi,\tau,M)\mathbf{V}^{\breve{\mathbf{F}}}(z;\chi,\tau,M)^{-1}=\mathbb{I}+O(M^{-\frac{1}{2}})$ holds uniformly on $\partial D_\critpt$ and for $(\chi,\tau)\in\mathcal{S}$.  Combining this with its Schwarz-reflection analogue and using the results in \eqref{VE-on-circles-2} then shows that
\begin{equation}
\sup_{z\in\partial D_\critpt\cup\partial D_\critpt^*}\|\mathbf{V}^\mathbf{E}(z;\chi,\tau,M)-\mathbb{I}\|=O(M^{-\frac{1}{2}}),\quad M\to\infty,
\end{equation}
an estimate that is also uniform for $(\chi,\tau)\in\mathcal{S}$.  

Since $\mathbf{V}^\mathbf{E}(z;\chi,\tau,M)-\mathbb{I}$ is uniformly $O(M^{-\frac{1}{2}})$ on the compact jump contour $\Sigma^\mathbf{E}$, it follows from standard small-norm theory\footnote{See, for example, \cite[Section 4.1.3]{BilmanLM2020} for an application of this theory in more detail in a similar setting.} that 
\begin{equation}
\mathbf{E}_{-1}(\chi,\tau;M):=\lim_{z\to\infty} z(\mathbf{E}(z;\chi,\tau,M)-\mathbb{I}) = O(M^{-\frac{1}{2}})
\label{eq:E-residue-estimate}
\end{equation}
holds uniformly for $(\chi,\tau)\in\mathcal{S}$.  

Recall now the recovery formula \eqref{eq:DS-Psi-from-S} and note that in a neighborhood of $z=\infty$ we have the identities $\mathbf{S}(z;\chi,\tau,M)=\mathbf{F}(z;\chi,\tau,M)\breve{\mathbf{S}}_n^\mathrm{out}(z;\chi,\tau)=\mathbf{E}(z;\chi,\tau,M)\breve{\mathbf{F}}(z;\chi,\tau,M)\breve{\mathbf{S}}^\mathrm{out}_n(z;\chi,\tau)$.  Therefore, as all three factors tend to the identity as $z\to\infty$ and $\breve{\mathbf{S}}^\mathrm{out}_n(z;\chi,\tau,M)$ is diagonal,
\begin{equation}
\begin{split}
M\Psi(M^2\chi,M^3\tau;\mathbf{G}(\ee^{-2M},\sqrt{1-\ee^{-4M}}))&=2\ii\lim_{z\to\infty}z\breve{F}_{12}(z;\chi,\tau,M) + 2\ii E_{-1,12}(\chi,\tau;M)\\
&=2\ii\lim_{z\to\infty}z\breve{F}_{12}(z;\chi,\tau,M) +O(M^{-\frac{1}{2}}),\quad M\to+\infty,
\end{split}
\end{equation}
where we used \eqref{eq:E-residue-estimate}.  Now we calculate the explicit term using Proposition~\ref{p:generic-soliton-parametrix}.  If $n=N(\chi,\tau;M)$ and $\mathfrak{s}=\mathfrak{S}(\chi,\tau;M)=+$, then $\breve{F}_{12}(z;\chi,\tau,M)=\solmat^+_{12}(z;p_n^+(\diamond;\chi,\tau,M))=R^+_{12}(z;p_n^+(\diamond;\chi,\tau,M))$ holds by definition for large $z$.  Therefore, using \eqref{G-sharp-form} and \eqref{X-s-plus} gives
\begin{equation}
2\ii\lim_{z\to\infty}z\breve{F}_{12}(z;\chi,\tau,M)=2\ii X^+_{12}(p_n^+(\diamond;\chi,\tau,M)) = \frac{8\ii\Im(\critpt)^2C_n^+}{4\Im(\critpt)^2+|C_n^+|^2}=:\psi^+_n(\chi,\tau;M),
\end{equation}
where
\begin{multline}
C_n^+=C_n^+(\chi,\tau;M):=\mathop{\mathrm{Res}}_{z=\critpt(\chi,\tau)}p_n^+(z;\chi,\tau,M)=L_n^+(\chi,\tau;M)\frac{y(\critpt(\chi,\tau);\chi,\tau)^{2n}}{\conformalprime(\chi,\tau)}\\
=L_n^+(\chi,\tau;M)[2\ii\Im(\critpt(\chi,\tau))]^{-2n}\conformalprime(\chi,\tau)^{-2n-1},
\label{eq:C-n-plus}
\end{multline}
and \eqref{eq:y-critpt} was used on the second line.
On the other hand, if $n=N(\chi,\tau;M)$ and $\mathfrak{s}=\mathfrak{S}(\chi,\tau;M)=-$, then for large $z$ we have $\breve{F}_{12}(z;\chi,\tau,M)=\solmat_{12}^-(z;p_n^-(\diamond;\chi,\tau,M))=R_{12}^-(z;p_n^-(\diamond;\chi,\tau,M))$, so using \eqref{G-sharp-form} and \eqref{Z-s-minus} gives
\begin{equation}
2\ii\lim_{z\to\infty}z\breve{F}_{12}(z;\chi,\tau,M)=2\ii Z^-_{12}(p_n^-(\diamond;\chi,\tau,M))=\frac{-8\ii\Im(\critpt)^2C_n^{-*}}{4\Im(\critpt)^2+|C_n^-|^2}=:\psi^-_n(\chi,\tau;M),
\end{equation}
where
\begin{multline}
C_n^-=C_n^-(\chi,\tau;M):=\mathop{\mathrm{Res}}_{z=\critpt(\chi,\tau)}p_n^-(z;\chi,\tau,M)=L_n^-(\chi,\tau;M)\frac{y(\critpt(\chi,\tau);\chi,\tau)^{-2n}}{\conformalprime(\chi,\tau)}\\
=L_n^-(\chi,\tau;M)[2\ii\Im(\critpt(\chi,\tau))]^{2n}\conformalprime(\chi,\tau)^{2n-1}.
\label{eq:C-n-minus}
\end{multline}

Although $\psi_n^\mathfrak{s}$ and $C_n^\mathfrak{s}$ are only used when $n=N(\chi,\tau;M)$ and $\mathfrak{s}=\mathfrak{S}(\chi,\tau;M)$, these expressions have meaning when $n,\mathfrak{s},\chi,\tau,M$ are all independent.  In this setting, we have the following identity, which is also related to Remark~\ref{r:Jost-Swap}:
\begin{lemma}
\label{L:s-relations-general}
$C^+_{n-1}(\chi,\tau;M)C_n^-(\chi,\tau;M)=-4\Im(\critpt(\chi,\tau))^2$ holds for $n\in\{1,2,\dots,\lfloor K_+\rceil\}$.
\end{lemma}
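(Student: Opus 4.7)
The plan is to reduce the identity to a direct algebraic computation using the explicit formulas \eqref{eq:C-n-plus} and \eqref{eq:C-n-minus} for $C_n^+$ and $C_n^-$. First I would substitute these formulas with $n$ replaced by $n-1$ in the first factor, and observe that the exponents of the geometric factors telescope: the powers of $2\ii\Im(\critpt)$ multiply as $[2\ii\Im(\critpt)]^{-2(n-1)}\cdot[2\ii\Im(\critpt)]^{2n}=[2\ii\Im(\critpt)]^{2}=-4\Im(\critpt)^2$, while the powers of $\conformalprime$ multiply as $\conformalprime^{-2(n-1)-1}\cdot \conformalprime^{2n-1}=\conformalprime^{0}=1$. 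Thus it suffices to show that $L_{n-1}^+(\chi,\tau;M)\,L_n^-(\chi,\tau;M)=1$.

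The key step is then to use the definition \eqref{Ln-def} of $L_n^{\pm}$ together with the $n$-dependent scaling of $A_n(\chi,\tau;M)$ from \eqref{A-quantity}. Direct multiplication gives
\begin{equation*}
L_{n-1}^+(\chi,\tau;M)\,L_n^-(\chi,\tau;M) = \frac{A_{n-1}(\chi,\tau;M)^2}{2\pi\ii\gamma_{n-1}^2 M^{1/2}}\cdot\frac{2\pi\ii\gamma_{n-1}^2}{A_n(\chi,\tau;M)^2 M^{1/2}} = \frac{A_{n-1}(\chi,\tau;M)^2}{A_n(\chi,\tau;M)^2 M}.
\end{equation*}
From \eqref{A-quantity}, $A_n(\chi,\tau;M)=M^{-n/2}\ee^{M d(\chi,\tau)}$, so the ratio $A_{n-1}/A_n$ equals $M^{1/2}$, which precisely cancels the factor $M$ in the denominator. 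Combining this with the previous paragraph completes the proof.

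I expect no substantive obstacle here; the entire statement is an algebraic identity that follows by arranging the definitions carefully, and in particular the cancellation $\gamma_{n-1}^2/\gamma_{n-1}^2=1$ of the Hermite normalization constants in $L_{n-1}^+ L_n^-$ is what ties together the two cases $\mathfrak{s}=+$ and $\mathfrak{s}=-$ of the soliton parametrix in a consistent way. The more conceptual content of the identity (which the algebra is merely reflecting) is the Jost-swap relation described in Remark~\ref{r:Jost-Swap}: changing $\mathfrak{s}$ corresponds to conjugation by a Blaschke factor $a(z)^{\sigma_3}$ with simple pole at $\critpt^*$ and simple zero at $\critpt$, and the resulting transformation of residues is precisely $\widetilde{C}=-4\Im(\critpt)^2/C$, matching the claimed identity once $n$ and $n-1$ are correctly paired.
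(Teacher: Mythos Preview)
Your proof is correct and follows essentially the same approach as the paper: reduce to showing $L_{n-1}^{+}L_{n}^{-}=1$ via \eqref{A-quantity}--\eqref{Ln-def}, then combine with the telescoping of the geometric factors coming from \eqref{eq:C-n-plus}--\eqref{eq:C-n-minus}. The only cosmetic difference is that the paper uses the first-line forms of \eqref{eq:C-n-plus}--\eqref{eq:C-n-minus} (in terms of $y(\critpt;\chi,\tau)$) and then invokes \eqref{eq:y-critpt}, whereas you use the second-line forms directly.
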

The proof will be given in Appendix~\ref{a:soliton-RHP}.  Lemma~\ref{L:s-relations-general} immediately implies the identity
\begin{equation}
\psi_n^-(\chi,\tau;M)=\psi_{n-1}^+(\chi,\tau;M),\quad n\in \{1,2,\dots,\lfloor K_+\rceil\}.
\end{equation}
Therefore, it is sufficient to work with the functions $\psi^+_n(\chi,\tau;M)$ for $n\in\{0,1,\dots,\lfloor K_+\rceil\}$.  These can be written equivalently in the form
\begin{equation}
\psi_n^+(\chi,\tau;M)=2\Im(\critpt(\chi,\tau))\mathrm{sech}(\Phi_n(\chi,\tau;M))\ee^{\ii\solitonphase_n(\chi,\tau;M)},
\label{eq:psi-n-plus}
\end{equation}
where
\begin{equation}
\Phi_n(\chi,\tau;M):=\ln\left(\frac{|C_n^+(\chi,\tau;M)|}{2\Im(\critpt(\chi,\tau))}\right),\quad \solitonphase_n(\chi,\tau;M):=\frac{\pi}{2}+\arg(C_n^+(\chi,\tau;M)).
\label{eq:Phi-n-theta-n}
\end{equation}
Since $\ii C_n^+(\chi,\tau;M)=2\Im(\critpt(\chi,\tau))D_n(\chi,\tau;M)$, where $D_n(\chi,\tau;M)$ is defined in \eqref{eq:Dn-def}, these expressions agree with those in \eqref{eq:intro-Phi-n-theta-n}, and $\psi_n^+(\chi,\tau;M)$ agrees with $\psi_n(\chi,\tau;M)$ defined in \eqref{eq:modulated-soliton}.

We therefore have shown that with $n=N(\chi,\tau;M)$,
\begin{equation}
M\Psi(M^2\chi,M^3\tau;\mathbf{G}(\ee^{-2M},\sqrt{1-\ee^{-4M}}))=\begin{cases}\psi_{n}^+(\chi,\tau;M)+O(M^{-\frac{1}{2}}),&\quad \mathfrak{s}(\chi,\tau;M)=+,\\
\psi_{n-1}^+(\chi,\tau;M)+O(M^{-\frac{1}{2}}),&\quad\mathfrak{s}(\chi,\tau;M)=-
\end{cases}
\label{eq:Psi-two-approximations}
\end{equation}
in the limit $M\to+\infty$ with the error being uniform on the region $\mathcal{S}$ specified by \eqref{d-size}.  
Using \eqref{eq:y-critpt}, \eqref{A-quantity}, \eqref{Ln-def} and \eqref{eq:C-n-plus} in the definition \eqref{eq:Phi-n-theta-n}, we have
\begin{multline}
\Phi_n(\chi,\tau;M)=M\left(\Re(2d(\chi,\tau))-\left(n+\frac{1}{2}\right)\frac{\ln(M)}{M}\right)\\
{}-\ln(2\pi\gamma_n^2)-(2n+1)\ln(2\Im(\critpt(\chi,\tau))|\conformalprime(\chi,\tau)|).
\label{eq:Phi-n-rewrite}
\end{multline}
Now the conditions $n=N(\chi,\tau;M)$ and $\mathfrak{S}(\chi,\tau;M)=+$ imply that $-\frac{1}{2}\ln(M)\le M\Re(2d(\chi,\tau))-(n+\frac{1}{2})\ln(M)\le 0$, and it follows that $-\frac{1}{2}\ln(M) + O(1)\le\Phi_n(\chi,\tau;M)\le O(1)$ as $M\to+\infty$.  Furthermore, for any integer $k\in\mathbb{Z}$, we have $-(k+\frac{1}{2})\ln(M)+O(1)\le\Phi_{n+k}(\chi,\tau;M)\le -k\ln(M)+O(1)$ in the same limit.  Provided that $k\neq 0$, we then have $|\Phi_{n+k}(\chi,\tau;M)|\gtrsim \frac{1}{2}\ln(M)$ and so $|\psi_{n+k}^+(\chi,\tau;M)|\lesssim M^{-\frac{1}{2}}$.  Therefore,
\begin{equation}
\psi_n^+(\chi,\tau;M)=\sum_{m=0}^{\lfloor K_+\rceil}\psi_m^+(\chi,\tau;M)-\mathop{\sum_{m=0}^{\lfloor K_+\rceil}}_{m\neq n}\psi_m^+(\chi,\tau;M) =     \sum_{m=0}^{\lfloor K_+\rceil}\psi_m^+(\chi,\tau;M) + O(M^{-\frac{1}{2}}).
\end{equation}

Likewise the conditions $n=N(\chi,\tau;M)$ and $\mathfrak{S}(\chi,\tau;M)=-$ imply that $0\le M\Re(2d(\chi,\tau))-((n-1)+\frac{1}{2})\ln(M)\le\frac{1}{2}\ln(M)$, and it follows that $O(1)\le \Phi_{n-1}(\chi,\tau;M)\le \frac{1}{2}\ln(M)+O(1)$, so also for fixed $k\in\mathbb{Z}$, $-k\ln(M)+O(1)\le\Phi_{n-1+k}(\chi,\tau;M)\le -(k-\frac{1}{2})\ln(M)+O(1)$ in the same limit.  Hence for $k\neq 0$ we have $|\psi_{n-1+k}^+(\chi,\tau;M)|\lesssim M^{-\frac{1}{2}}$ and so
\begin{equation}
\psi_{n-1}^+(\chi,\tau;M)=\sum_{m=0}^{\lfloor K_+\rceil}\psi_m^+(\chi,\tau;M)-\mathop{\sum_{m=0}^{\lfloor K_+\rceil}}_{m\neq n-1}\psi_m^+(\chi,\tau;M)=\sum_{m=0}^{\lfloor K_+\rceil}\psi_m^+(\chi,\tau;M)+O(M^{-\frac{1}{2}}).
\end{equation}
Using these results in \eqref{eq:Psi-two-approximations} shows that regardless of the value
%taken by 
of
$N(\chi,\tau;M)\in\{0,1,\dots,\lfloor K_+\rceil\}$ and of the sign $\mathfrak{S}(\chi,\tau;M)\in\{-,+\}$,
\begin{equation}
M\Psi(M^2\chi,M^3\tau;\mathbf{G}(\ee^{-2M},\sqrt{1-\ee^{-4M}}))=\sum_{m=0}^{\lfloor K_+\rceil}\psi_m^+(\chi,\tau;M)+O(M^{-\frac{1}{2}}),\quad M\to+\infty
\label{eq:edge-final-estimate}
\end{equation}
holds uniformly for $(\chi,\tau)\in\mathcal{S}$.  Appealing to the estimate of the left-hand side on arbitrary compacts $\compact$ consisting of points with $\chi<\chi_\mathrm{c}(\tau)$ that is part of Theorem~\ref{t:DS}, and noting that the finite sum on the right-hand side of \eqref{eq:edge-final-estimate} is exponentially small on $\compact$ extends the domain of validity of \eqref{eq:edge-final-estimate} to $\compact$ as well.  Restoring the phase factor $\ee^{-\ii\arg(ab)}$ completes the proof of Theorem~\ref{t:edge}.

\begin{remark}
The condition that the sum in \eqref{eq:edge-final-estimate} is finite is essential for the validity of the argument, to control the $O(1)$ terms in the bounds for $\Phi_{n+k}(\chi,\tau;M)$ and $\Phi_{n-1+k}(\chi,\tau;M)$ and hence guarantee that these phases are logarithmically large as $M\to+\infty$ for $k\neq 0$.  These $O(1)$ terms depend on $k$, and due to the term $-\ln(2\pi\gamma_n^2)$ in \eqref{eq:Phi-n-rewrite}, which grows rapidly with $n$, there is always some value of $n$ for which this $M$-independent becomes competitive with the otherwise dominant terms in \eqref{eq:Phi-n-rewrite} proportional to $M$ and $\ln(M)$.  This subtle point has been missed in some prior works (e.g., \cite{BuckinghamM2015,ClaeysG2010}).
\label{r:finite-sum}
\end{remark}

\subsection{Proof of Corollary~\ref{cor:soliton}}
\label{sec:local-behavior}
\begin{proof}
Let $n\in\{0,1,\dots,\lfloor K_+\rceil\}$ be fixed.  The maximum value of $|\psi_n(\chi,\tau;M)|$ is achieved on a curve depending on $M$ defined by the condition $\Phi_n(\chi,\tau;M)=0$.  Let $(\chi_0,\tau_0)$ denote a point on this curve.  We Taylor-expand $\Phi_n(\chi,\tau;M)$ and $\solitonphase_n(\chi,\tau;M)$ about this point using the fact that for each index pair $(k,l)$ with $k\ge 0$ and $l\ge 0$ but $k+l\ge 1$,
\begin{equation}
\partial_\chi^k\partial_\tau^l\Phi_n(\chi,\tau;M)=2M\Re(\partial_\chi^k\partial_\tau^l d(\chi,\tau)) + O(1)
\end{equation}
and
\begin{equation}
\partial_\chi^k\partial_\tau^l\solitonphase_n(\chi,\tau;M)=2M\Im(\partial_\chi^k\partial_\tau^l d(\chi,\tau)) + O(1).
\end{equation}
Thus, if $\chi-\chi_0=O(M^{-1})$ and $\tau-\tau_0=O(M^{-1})$, then since $\Phi_n(\chi_0,\tau_0;M)=0$,
\begin{equation}
\Phi_n(\chi,\tau;M)=\partial_\chi\Phi_n(\chi_0,\tau_0;M)(\chi-\chi_0) + \partial_\tau\Phi_n(\chi_0,\tau_0;M)(\tau-\tau_0) + O(M^{-1}).
\end{equation}
Likewise, setting $\solitonphase_n^0:=\solitonphase_n(\chi_0,\tau_0;M)$,
\begin{equation}
\solitonphase_n(\chi,\tau;M)=\solitonphase_n^0 + \partial_\chi\solitonphase_n(\chi_0,\tau_0;M)(\chi-\chi_0) + \partial_\tau\solitonphase_n(\chi_0,\tau_0;M)(\tau-\tau_0) + O(M^{-1}).
\end{equation}
Since $z=\critpt(\chi,\tau)$ is a critical point of $z\mapsto \phase(z;\chi,\tau)$, we have
\begin{align}
{\partial_\chi} d(\chi,\tau) &= -\ii  \phase_{\chi}(\critpt(\chi,\tau);\chi,\tau) = -\ii \critpt(\chi,\tau),\\
{\partial_\tau} d(\chi,\tau) &= -\ii  \phase_{\tau}(\critpt(\chi,\tau);\chi,\tau) = -\ii \critpt(\chi,\tau)^2.
\end{align}
Therefore, denoting $\critpt_0:=\critpt(\chi_0,\tau_0)$,
\begin{equation}
\Phi_n(\chi,\tau;M)=2M\Im(\critpt_0)(\chi-\chi_0) +4M\Im(\critpt_0)\Re(\critpt_0)(\tau-\tau_0)+O(M^{-1})
\end{equation}
and
\begin{equation}
\solitonphase_n(\chi,\tau;M)=\solitonphase_n^0-2M\Re(\critpt_0)(\chi-\chi_0)-2M[\Re(\critpt_0)^2-\Im(\critpt_0)^2](\tau-\tau_0) + O(M^{-1}).
\end{equation}
Therefore, replacing $(\chi,\tau)$ with $(\chi_0+\chi,\tau_0+\tau)$ and substituting into \eqref{eq:psi-n-plus} gives $\psi_n(\chi_0+\chi,\tau_0+\tau;M)=q_n(\chi,\tau;M) + O(M^{-1})$ for $\chi=O(M^{-1})$ and $\tau=O(M^{-1})$, where $q_n(\chi,\tau;M)$ is defined in \eqref{eq:q-define}.  This completes the proof.
\end{proof}

\appendix

\section{Proof of Lemma~\ref{lem:DS-T-bounds}}
\label{a:Lemma}
\begin{proof}[Proof of Lemma~\ref{lem:DS-T-bounds}]
By the second identity in \eqref{eq:automorphic}, we can take the supremum over $y\in [-\pi,\pi]$ instead of $y\in\mathbb{R}$.
Since $\Theta(x;H)^2>0$ and $\Theta(\ii y;H)^2>0$ for all variables under consideration, $T(H)$ is a continuous function of $H$. To estimate it in the limits $H\downarrow -\infty$ and $H\uparrow 0$.  As $\Theta(w^*;H)=\Theta(w;H)^*$ for real $H<0$ and complex $w$, we have the simplified estimate
\begin{equation}
T(H)\le 2\mathop{\sup_{\frac{3}{2}H\le x\le-\frac{3}{2}H}}_{-\pi\le y\le\pi}\left|\frac{\Theta(0;H)^2\Theta'(x+\ii y;H)\Theta(x-\ii y;H)}{\Theta(x;H)^2\Theta(\ii y;H)^2}\right|.
\label{eq:DS-T-bound-1}
\end{equation}
Now, for all $x,y\in\mathbb{R}$ and $H<0$, from \eqref{eq:DS-Theta-define} we have
\begin{equation}
|\Theta(x-\ii y;H)| = \left|\sum_{n\in\mathbb{Z}}\ee^{\frac{1}{2}n^2H}\ee^{nx}\ee^{-\ii n y}\right|\le \sum_{n\in\mathbb{Z}}\ee^{\frac{1}{2}n^2H}\ee^{nx} = \Theta(x;H).
\label{eq:DS-Theta-complex-estimate}
\end{equation}
Similarly,
\begin{equation}
|\Theta'(x+\ii y;H)|\le \sum_{n\in\mathbb{Z}}|n|\ee^{\frac{1}{2}n^2H}\ee^{nx},
\end{equation}
and using the inequality $|n|\le \ee^{|n|-1}$ valid for all $n\in\mathbb{Z}$, we get
\begin{equation}
\begin{split}
|\Theta'(x+\ii y;H)|\le \frac{1}{\ee}\sum_{n\in\mathbb{Z}}\ee^{\frac{1}{2}n^2H}\ee^{nx}\ee^{|n|}&\le \frac{1}{\ee}\sum_{n\in\mathbb{Z}}\ee^{\frac{1}{2}n^2H}\ee^{n(x+1)} +\frac{1}{\ee}\sum_{n\in\mathbb{Z}}\ee^{\frac{1}{2}n^2H}\ee^{n(x-1)} \\ &= \frac{1}{\ee}(\Theta(x+1;H)+\Theta(x-1;H)).
\end{split}
\label{eq:DS-Theta-prime-complex-estimate}
\end{equation}
Using \eqref{eq:DS-Theta-complex-estimate} and \eqref{eq:DS-Theta-prime-complex-estimate} in \eqref{eq:DS-T-bound-1} along with the first identity in \eqref{eq:automorphic} gives
\begin{equation}
T(H)\le \frac{4}{\ee}\mathop{\sup_{\frac{3}{2}H\le x\le -\frac{3}{2}H}}_{-\pi\le y\le\pi}\left|\frac{\Theta(0;H)^2\Theta(x+1;H)}{\Theta(x;H)\Theta(\ii y;H)^2}\right|.
\label{eq:DS-T-bound-2}
\end{equation}
Applying the dominated convergence theorem to \eqref{eq:DS-Theta-define} shows that 
\begin{equation}
\lim_{H\to -\infty}\Theta(\ii y;H) = 1,\quad\text{uniformly for $y\in\mathbb{R}$}.
\label{eq:DS-Theta-imaginary-asymptotic}
\end{equation}
And for real arguments $x\in\mathbb{R}$, to study the same limit we complete the square in the exponent of the summand in \eqref{eq:DS-Theta-define} to obtain
\begin{equation}
\ee^{x^2/(2H)}\Theta(x;H)=\sum_{n\in\mathbb{Z}}\ee^{\frac{1}{2}H(n+X)^2},\quad X:=\frac{x}{H}\in\mathbb{R}.
\label{eq:DS-Theta-completed-square}
\end{equation}
This is a periodic function of $X\in\mathbb{R}$ with period $1$.  Assuming without loss of generality that $-\frac{1}{2}<X\le\frac{1}{2}$, in the limit $H\downarrow -\infty$ the largest terms correspond to $n=-1,0,1$; subtracting them off gives:
\begin{equation}
\begin{split}
\sum_{n\in\mathbb{Z}}\ee^{\frac{1}{2}H(n+X)^2} - \ee^{\frac{1}{2}HX^2} -\ee^{\frac{1}{2}H(X-1)^2}-\ee^{\frac{1}{2}H(X+1)^2}&=
\ee^{\frac{1}{2}HX^2}\mathop{\sum_{n\in\mathbb{Z}}}_{|n|\ge 2}\ee^{\frac{1}{2}H(n^2+2nX)}\\
&\le 2\ee^{\frac{1}{2}HX^2}\sum_{n=2}^\infty\ee^{\frac{1}{2}H(n^2-n)},\quad -\tfrac{1}{2}<X\le\tfrac{1}{2}.
\end{split}
\end{equation}
Reindexing by $n=k+1$, using $k^2\ge k$ for $k=1,2,3,\dots$, and summing the resulting geometric series shows that 
\begin{equation}
\sum_{n\in\mathbb{Z}}\ee^{\frac{1}{2}H(n+X)^2}= \ee^{\frac{1}{2}HX^2}\left(1+\ee^{\frac{1}{2}H(1-2X)}+\ee^{\frac{1}{2}H(1+2X)}+O(\ee^{H})\right),\quad H\downarrow -\infty,
\end{equation}
uniformly for $-\frac{1}{2}<X\le\frac{1}{2}$.  We can use this result on the right-hand side of \eqref{eq:DS-Theta-completed-square} if we replace $X$ with the fractional part $\{x/H\}$ defined by $\{\diamond\}:=\diamond-[\diamond]\in (-\frac{1}{2},\frac{1}{2}]$ where $[\diamond]$ denotes the nearest integer function, rounding down at the half-integers:
\begin{equation}
\Theta(x;H)=\ee^{-x^2/(2H)}\ee^{\frac{1}{2}H\{x/H\}^2}\left(1+\ee^{\frac{1}{2}H(1-2\{x/H\})}+\ee^{\frac{1}{2}H(1+2\{x/H\})}+O(\ee^{H})\right),\quad H\downarrow -\infty,
\label{eq:DS-Theta-real-large-negative-H}
\end{equation}
with the result being uniformly valid for $x\in\mathbb{R}$.  Now notice that as $\{x/H\}$ varies between $-\frac{1}{2}$ and $\frac{1}{2}$,
\begin{equation}
1\le 1+\ee^{\frac{1}{2}H(1-2\{x/H\})} +\ee^{\frac{1}{2}H(1+2\{x/H\})}\le 2+\ee^{H}.
\end{equation}
Hence also
\begin{equation}
0<\frac{\Theta(x+1;H)}{\Theta(x;H)}\le(2+O(\ee^H))\ee^{-x/H}\ee^{-1/(2H)}\ee^{\frac{1}{2}H\{x/H+1/H\}^2}\ee^{-\frac{1}{2}H\{x/H\}^2},\quad H\downarrow -\infty.
\label{eq:DS-Shifted-Theta-Ratio-large-H}
\end{equation}
Now, unless $-\frac{1}{2}<\{x/H\}\le-\frac{1}{2}-\frac{1}{H}$, we will have $\{x/H+1/H\}=\{x/H\}+1/H$, and it follows that $\ee^{-1/(2H)}\ee^{\frac{1}{2}H\{x/H+1/H\}^2}\ee^{-\frac{1}{2}H\{x/H\}^2}=\ee^{\{x/H\}}\le \ee^{\frac{1}{2}}$.  Otherwise, $\{x/H\}=-\frac{1}{2}+O(H^{-1})$ and $\{x/H+1/H\}=\frac{1}{2}+O(H^{-1})$, and therefore $\ee^{-1/(2H)}\ee^{\frac{1}{2}H\{x/H+1/H\}^2}\ee^{-\frac{1}{2}H\{x/H\}^2}=O(1)$ as $H\downarrow -\infty$.  Therefore,
\begin{equation}
\frac{\Theta(x+1;H)}{\Theta(x;H)} = O(\ee^{-x/H}),\quad H\downarrow -\infty
\label{eq:DS-Theta-ratio-bound}
\end{equation}
holds uniformly for $x\in\mathbb{R}$.  Using \eqref{eq:DS-Theta-imaginary-asymptotic} and \eqref{eq:DS-Theta-ratio-bound} in \eqref{eq:DS-T-bound-2} proves the estimate \eqref{eq:DS-T-final-bound-H-large}.

To obtain asymptotics of $T(H)$ as $H\uparrow 0$ instead, we use the Poisson summation formula to convert \eqref{eq:DS-Theta-define} into a different series representation of $\Theta(w;H)$:
 \begin{equation}
 \Theta(w;H)=\sqrt{-\frac{2\pi}{H}}\sum_{n\in\mathbb{Z}}\ee^{2\pi^2(n-w/(2\pi\ii))^2/H} = \sqrt{-\frac{2\pi}{H}}\ee^{-w^2/(2H)}\Theta\left(\frac{2\pi\ii w}{H};\frac{4\pi^2}{H}\right),\quad H<0.
 \label{eq:DS-Theta-PS}
 \end{equation}
Combining this formula with the inequality \eqref{eq:DS-Theta-complex-estimate} gives
\begin{equation}
\begin{split}
|\Theta(x-\ii y;H)| &= \sqrt{-\frac{2\pi}{H}}\ee^{-x^2/(2H)}\ee^{y^2/(2H)}\left|\Theta\left(-\frac{2\pi y}{H}+\frac{2\pi\ii x}{H};\frac{4\pi^2}{H}\right)\right|\\
&\le\sqrt{-\frac{2\pi}{H}}\ee^{-x^2/(2H)}\ee^{y^2/(2H)}\Theta\left(-\frac{2\pi y}{H};\frac{4\pi^2}{H}\right)\\
&=\ee^{-x^2/(2H)}\Theta(\ii y;H).
\end{split}
\label{eq:DS-Theta-complex-estimate-2}
\end{equation}
Using this inequality in \eqref{eq:DS-T-bound-1} gives
\begin{equation}
\begin{split}
T(H)&\le 2\mathop{\sup_{\frac{3}{2}H\le x\le -\frac{3}{2}H}}_{-\pi\le y\le\pi}\ee^{-x^2/(2H)}\frac{\Theta(0;H)^2}{\Theta(x;H)^2}\left|\frac{\Theta'(x+\ii y;H)}{\Theta(\ii y;H)}\right|\\
&\le 2\ee^{-\frac{9}{8}H}\mathop{\sup_{\frac{3}{2}H\le x\le -\frac{3}{2}H}}_{-\pi\le y\le\pi}\frac{\Theta(0;H)^2}{\Theta(x;H)^2}\left|\frac{\Theta'(x+\ii y;H)}{\Theta(\ii y;H)}\right|.
\end{split}
\label{eq:DS-T-bound-3}
\end{equation}
Next, differentiation of \eqref{eq:DS-Theta-PS} yields the identity
\begin{equation}
\Theta'(w;H)=-\frac{w}{H}\sqrt{-\frac{2\pi}{H}}\ee^{-w^2/(2H)}\Theta\left(\frac{2\pi\ii w}{H};\frac{4\pi^2}{H}\right) + \frac{2\pi\ii}{H}\sqrt{-\frac{2\pi}{H}}\ee^{-w^2/(2H)}\Theta'\left(\frac{2\pi\ii w}{H};\frac{4\pi^2}{H}\right).
\end{equation}
Therefore, for all $x\in\mathbb{R}$ and $y\in [-\pi,\pi]$,
\begin{multline}
\left|\frac{\Theta'(x+\ii y;H)}{\Theta(\ii y;H)}\right|\le\frac{\sqrt{x^2+y^2}}{(-H)}\sqrt{-\frac{2\pi}{H}}\ee^{(y^2-x^2)/(2H)}\left|\frac{\displaystyle\Theta\left(-\frac{2\pi y}{H}+\frac{2\pi \ii x}{H};\frac{4\pi^2}{H}\right)}{\Theta(\ii y;H)}\right| \\
{}+
\left(-\frac{2\pi}{H}\right)^{\frac{3}{2}}\ee^{(y^2-x^2)/(2H)}\left|\frac{\displaystyle\Theta'\left(-\frac{2\pi y}{H} +\frac{2\pi \ii x}{H};\frac{4\pi^2}{H}\right)}{\Theta(\ii y;H)}\right|,
\end{multline}
and applying \eqref{eq:DS-Theta-complex-estimate-2} to the first term in the upper bound gives
\begin{equation}
\left|\frac{\Theta'(x+\ii y;H)}{\Theta(\ii y;H)}\right|\le\frac{\sqrt{x^2+y^2}}{(-H)}\ee^{-x^2/(2H)}
+
\left(-\frac{2\pi}{H}\right)^{\frac{3}{2}}\ee^{(y^2-x^2)/(2H)}\left|\frac{\displaystyle\Theta'\left(-\frac{2\pi y}{H} +\frac{2\pi \ii x}{H};\frac{4\pi^2}{H}\right)}{\Theta(\ii y;H)}\right|.
\label{eq:DS-dlogTheta-bound-small-H}
\end{equation}
To deal with the second term, we begin by rewriting \eqref{eq:DS-Theta-prime-complex-estimate} with the necessary substitutions and then use \eqref{eq:DS-Theta-PS} once again on the resulting right-hand side to obtain the inequality
\begin{multline}
\left|\Theta'\left(-\frac{2\pi y}{H}+\frac{2\pi\ii x}{H};\frac{4\pi^2}{H}\right)\right|\le\frac{1}{\ee}\left(\Theta\left(-\frac{2\pi y}{H}+1;\frac{4\pi^2}{H}\right) +\Theta\left(-\frac{2\pi y}{H}-1;\frac{4\pi^2}{H}\right)\right)\\
{}=\frac{1}{\ee}\sqrt{-\frac{H}{2\pi}}\ee^{-y^2/(2H)}\ee^{-H/(8\pi^2)}\left(\ee^{y/(2\pi)}\Theta\left(\ii y-\frac{\ii H}{2\pi};H\right) +
\ee^{-y/(2\pi)}\Theta\left(\ii y+\frac{\ii H}{2\pi};H\right)\right).
\label{eq:DS-ThetaPrime-small-H}
\end{multline}
Now, taking $w=\ii y$ for $y\in\mathbb{R}$ and combining \eqref{eq:DS-Theta-real-large-negative-H} and \eqref{eq:DS-Theta-PS} gives
\begin{multline}
\Theta(\ii y;H)=\sqrt{-\frac{2\pi}{H}}\ee^{2\pi^2\{-y/(2\pi)\}^2/H}\\
\cdot\left(1+\ee^{2\pi^2(1-2\{-y/(2\pi)\})/H} +\ee^{2\pi^2(1+2\{-y/(2\pi)\})/H} + O(\ee^{4\pi^2/H})\right),
\label{eq:DS-Theta-imaginary-small-H}
\end{multline}
as $H\uparrow 0$, uniformly for $y\in\mathbb{R}$, and we have the upper and lower bounds
\begin{equation}
1\le 1+\ee^{2\pi^2(1-2\{-y/(2\pi)\})/H}+\ee^{2\pi^2(1+2\{-y/(2\pi)\})/H}\le 2+\ee^{4\pi^2/H}.  
\label{eq:DS-upper-lower-bounds-2}
\end{equation}
Using these shows that, uniformly for $y\in\mathbb{R}$,
\begin{equation}
\frac{\displaystyle\Theta\left(\ii y\mp\frac{\ii H}{2\pi};H\right)}{\Theta(\ii y;H)}\le
\ee^{2\pi^2\{-y/(2\pi)\pm H/(4\pi^2)\}/H}\ee^{-2\pi^2\{-y/(2\pi)\}/H}\left(2+O(\ee^{4\pi^2/H})\right) = O(1)
\label{eq:DS-Theta-ratio-order-one-small-H}
\end{equation}
holds in the limit $H\uparrow 0$ (the second estimate follows from exactly the same reasoning as applied after \eqref{eq:DS-Shifted-Theta-Ratio-large-H} above).
Therefore, combining \eqref{eq:DS-dlogTheta-bound-small-H}, \eqref{eq:DS-ThetaPrime-small-H}, and 
\eqref{eq:DS-Theta-ratio-order-one-small-H} gives
\begin{equation}
\mathop{\sup_{\frac{3}{2}H\le x\le -\frac{3}{2}H}}_{-\pi\le y\le \pi}\left|\frac{\Theta'(x+\ii y;H)}{\Theta(\ii y;H)}\right|=O(H^{-1}),\quad H\uparrow 0.
\label{eq:DS-Theta-frac-estimate}
\end{equation}
Finally, taking $w=x\in\mathbb{R}$ and combining \eqref{eq:DS-Theta-imaginary-asymptotic} with \eqref{eq:DS-Theta-PS} gives
\begin{equation}
\Theta(x;H)=\sqrt{-\frac{2\pi}{H}}\ee^{-x^2/(2H)}\left(1+o(1)\right),\quad H\uparrow 0
\end{equation}
uniformly for $x\in\mathbb{R}$.  
This immediately yields that uniformly for $x\in\mathbb{R}$,
\begin{equation}
\frac{\Theta(0;H)^2}{\Theta(x;H)^2} = \ee^{x^2/H}(1+o(1)),\quad H\uparrow 0.
\end{equation}
Using this along with \eqref{eq:DS-Theta-frac-estimate} in \eqref{eq:DS-T-bound-3} proves the estimate \eqref{eq:DS-T-final-bound-H-small} and finishes the proof.
\end{proof}

\section{Alternative {formula} for $|\breve{\Psi}(\chi,\tau;M)|^2$}
\label{a:Landen}
Although the alternate elliptic parameter $m_1$ defined in \eqref{eq:DS-m1-define} is apparently not naturally associated to the underlying elliptic curve $\mathcal{R}$ whose theta function theory is used to solve Riemann-Hilbert Problem~\ref{rhp:O-out}, it is the parameter most naturally linking the theta functions of $\mathcal{R}$ to Jacobi elliptic functions.  This is best seen using the \emph{descending Landen transformation} \cite[Eqn.\@ 19.8.12]{DLMF}:
\begin{equation}
\mathbb{K}(m)=\frac{2}{1+\sqrt{1-m}}\mathbb{K}\left(\left(\frac{1-\sqrt{1-m}}{1+\sqrt{1-m}}\right)^2\right),\quad 0<m<1.
\end{equation}
Replacing $m$ in this identity with $1-((1-\sqrt{1-m})/(1+\sqrt{1-m}))^2$ yields
\begin{equation}
\mathbb{K}\left(1-\left(\frac{1-\sqrt{1-m}}{1+\sqrt{1-m}}\right)^2\right)=(1+\sqrt{1-m})\mathbb{K}(1-m),\quad 0<m<1.
\end{equation}
Therefore,
\begin{equation}
\frac{\mathbb{K}(1-m)}{\mathbb{K}(m)}=\frac{1}{2}\frac{\displaystyle\mathbb{K}\left(1-\left(\frac{1-\sqrt{1-m}}{1+\sqrt{1-m}}\right)^2\right)}{\displaystyle\mathbb{K}\left(\left(\frac{1-\sqrt{1-m}}{1+\sqrt{1-m}}\right)^2\right)},\quad 0<m<1.
\end{equation}
With another substitution, this can be written as
\begin{equation}
\frac{\mathbb{K}(1-m)}{\mathbb{K}(m)}=2\frac{\displaystyle\mathbb{K}\left(1-\frac{4\sqrt{m}}{(1+\sqrt{m})^2}\right)}{\displaystyle\mathbb{K}\left(\frac{4\sqrt{m}}{(1+\sqrt{m})^2}\right)},\quad 0<m<1.
\end{equation} 
Consequently, according to \eqref{eq:DS-H-def}, the period ratio appearing in the solution of Riemann-Hilbert Problem~\ref{rhp:O-out} is
\begin{equation}
H=-\pi\frac{\mathbb{K}(1-m)}{\mathbb{K}(m)} = -2\pi\frac{\displaystyle\mathbb{K}\left(1-\frac{4\sqrt{m}}{(1+\sqrt{m})^2}\right)}{\displaystyle\mathbb{K}\left(\frac{4\sqrt{m}}{(1+\sqrt{m})^2}\right)}.
\end{equation}
Taking $m=\cot^2(\frac{1}{2}\theta_\alpha)\tan^2(\frac{1}{2}\theta_\beta)$ as indicated in \eqref{eq:DS-H-def} then implies that 
\begin{equation}
H=-2\pi\frac{\mathbb{K}(1-m_1)}{\mathbb{K}(m_1)},\quad m_1 := \frac{4\sqrt{m}}{(1+\sqrt{m})^2}=\frac{4\cot(\frac{1}{2}\theta_\alpha)\tan(\frac{1}{2}\theta_\beta)}{(1+\cot(\frac{1}{2}\theta_\alpha)\tan(\frac{1}{2}\theta_\beta))^2}=\frac{\sin(\theta_\alpha)\sin(\theta_\beta)}{\sin^2(\frac{1}{2}(\theta_\alpha+\theta_\beta))}.
\label{eq:DS-m1}
\end{equation}
With this in hand, we can return to \eqref{eq:DS-q-ratio} and directly try to write the ratios of theta functions in terms of Jacobi elliptic functions with parameter $m_1$.  In this Appendix, we will use this approach to prove the alternate formula \eqref{eq:DS-m1-define}--\eqref{eq:DS-Breve-Psi-alt} for $|\breve{\Psi}(\chi,\tau;M)|^2$.

To this end, first write \eqref{eq:DS-Psi-breve-define}--\eqref{eq:DS-q-ratio} as follows:
$\breve{\Psi}(\chi,\tau;M)= \ee^{-\ii M\phi}FG$,
where
\begin{equation}
F:=\ii (\mathrm{Im}(\beta)-\mathrm{Im}(\alpha))\frac{\Theta(A(\infty)+A(z_0)+\mathcal{K};H)}{\Theta(A(\infty)-A(z_0)-\mathcal{K};H)},\quad
G:=\frac{\Theta(A(\infty)-A(z_0)-\mathcal{K}+\ii M\Delta;H)}{\Theta(A(\infty)+A(z_0)+\mathcal{K}-\ii M\Delta;H)}.
\end{equation}
Then, using the fact that $\mathrm{Im}(A(\infty))=\mathrm{Im}(A(z_0))=-\frac{1}{2}\pi$ we can write $A(\infty)=\frac{1}{2}\eta-\frac{1}{2}\ii\pi$ and $A(z_0)=\frac{1}{2}\gamma-\frac{1}{2}\ii\pi$ where $\eta$ and $\gamma$ are real.  The latter are related by \eqref{eq:Az0-identity} in which $n_1=-1+2\mu$ and $n_2=1+2\nu$ for $(\mu,\nu)\in\mathbb{Z}^2$.  Therefore,
\begin{equation}
A(\infty)+A(z_0)=(2\mu-1)\ii\pi + (\tfrac{1}{2}+\nu)H\quad\text{and}\quad
A(\infty)-A(z_0)=\eta -2\pi\ii\mu -(\tfrac{1}{2}+\nu)H.
\end{equation}
Using also \eqref{eq:RiemannConstant} and $\Theta(w+2\pi\ii;H)=\Theta(w;H)$ gives
\begin{equation}
F=\ii (\mathrm{Im}(\beta)-\mathrm{Im}(\alpha))\frac{\Theta((\nu+1)H;H)}{\Theta(\eta-\ii\pi -(\nu+1)H;H)},\quad
G=\frac{\Theta(\eta-\ii\pi+\ii M\Delta-(\nu+1)H;H)}{\Theta(-\ii M\Delta +(\nu+1)H;H)}.
\end{equation}
Next, using the ``iterated'' identity $\Theta(w+nH;H)=\ee^{-\frac{1}{2}n^2H-nw}\Theta(w;H)$ for $n\in\mathbb{Z}$ gives
\begin{equation}
\begin{split}
F&=\ii(\mathrm{Im}(\beta)-\mathrm{Im}(\alpha))\ee^{-(\nu+1)(\eta-\ii\pi)}\frac{\Theta(0;H)}{\Theta(\eta-\ii\pi;H)}\\
G&=\ee^{(\nu+1)(\eta-\ii\pi)}\frac{\Theta(\eta-\ii\pi +\ii M\Delta;H)}{\Theta(-\ii M\Delta;H)}.
\end{split}
\end{equation}
Hence also 
$\breve{\Psi}(\chi,\tau;M)=\ee^{-\ii M\phi}\widetilde{F}\widetilde{G}$
where
\begin{equation}
\widetilde{F}:= \ii(\mathrm{Im}(\beta)-\mathrm{Im}(\alpha))\frac{\Theta(0;H)}{\Theta(\eta-\ii\pi;H)}\quad\text{and}\quad
\widetilde{G}:=\frac{\Theta(\eta-\ii\pi +\ii M\Delta;H)}{\Theta(-\ii M\Delta;H)}.
\end{equation}
Note that $\widetilde{F}$ is purely imaginary.  Therefore
\begin{equation}
|\widetilde{F}|^2 = (\mathrm{Im}(\beta)-\mathrm{Im}(\alpha))^2\frac{\Theta(0;H)^2}{\Theta(\eta-\ii\pi;H)^2}.
\end{equation}
On the other hand, $\widetilde{G}$ has a variable phase depending on $M\Delta$, so from \eqref{eq:DS-Theta-define} and $\Theta(-w;H)=\Theta(w;H)$,
\begin{equation}
|\widetilde{G}|^2 = \frac{\Theta(\eta-\ii\pi+\ii M\Delta;H)\Theta(\eta+\ii\pi-\ii M\Delta;H)}{\Theta(\ii M\Delta;H)^2}.
\end{equation}
Next, using the addition formula (deduced from \cite[Eqn.\@ 20.7.8]{DLMF})
\begin{multline}
\Theta(w+z;H)\Theta(w-z;H)\\{}=\frac{\Theta(w+\ii\pi;H)^2\Theta(z;H)^2 + \ee^{\frac{1}{2}H+w+z}\Theta(w+\ii\pi +\frac{1}{2}H;H)^2\Theta(z+\frac{1}{2}H;H)^2}{\Theta(\ii\pi;H)^2}
\end{multline}
with $w=\eta$ and $z=\ii M\Delta-\ii\pi$ gives
\begin{multline}
|\widetilde{G}|^2=\frac{\Theta(\eta+\ii\pi;H)^2\Theta(\ii M\Delta-\ii\pi;H)^2}{\Theta(\ii\pi;H)^2\Theta(\ii M\Delta;H)^2} \\{}+ \ee^{\frac{1}{2}H +\eta-\ii\pi+\ii M\Delta}\frac{\Theta(\eta+\ii\pi +\frac{1}{2}H;H)^2\Theta(\ii M\Delta-\ii\pi+\frac{1}{2}H;H)^2}{\Theta(\ii\pi;H)^2\Theta(\ii M\Delta;H)^2}.
\end{multline}
Now we use the identities
\begin{equation}
\frac{\Theta(W+\ii\pi;H)}{\Theta(W;H)}=\frac{\Theta(0;H)}{\Theta(\ii\pi;H)}\mathrm{dn}\left(\frac{\mathbb{K}(m_1)}{\ii\pi}(W+\ii\pi);m_1\right)
\label{eq:DS-dn}
\end{equation}
and
\begin{equation}
\frac{\Theta(W+\frac{1}{2}H+\ii\pi;H)}{\Theta(W;H)} =- \ii\ee^{-\frac{1}{2}W}\frac{\Theta(-\frac{1}{2}H;H)}{\Theta(\ii\pi;H)}\mathrm{cn}\left(\frac{\mathbb{K}(m_1)}{\ii\pi}(W+\ii\pi);m_1\right),
\label{eq:DS-cn}
\end{equation}
along with $\Theta(W+2\pi\ii;H)=\Theta(W;H)$ to give
\begin{multline}
|\widetilde{G}|^2=\frac{\Theta(\eta+\ii\pi;H)^2\Theta(0;H)^2}{\Theta(\ii\pi;H)^4}\mathrm{dn}^2\left(\frac{\mathbb{K}(m_1)}{\pi}(M\Delta+\pi);m_1\right)\\
{}+\ee^{\frac{1}{2}H+\eta}\frac{\Theta(\eta+\ii\pi+\frac{1}{2}H;H)^2\Theta(\frac{1}{2}H;H)^2}{\Theta(\ii\pi;H)^4}\mathrm{cn}^2\left(\frac{\mathbb{K}(m_1)}{\pi}(M\Delta+\pi);m_1\right).
\end{multline}
Next, using \eqref{eq:DS-sn} with $H$ replaced by $H/2$ gives 
\begin{multline}
|\widetilde{F}\widetilde{G}|^2 = (\mathrm{Im}(\beta)-\mathrm{Im}(\alpha))^2\frac{\Theta(0;H)^4}{\Theta(\ii\pi;H)^4}\mathrm{dn}^2\left(\frac{\mathbb{K}(m_1)}{\pi}(M\Delta+\pi);m_1\right)\\
-\ee^{\frac{1}{2}H}(\mathrm{Im}(\beta)-\mathrm{Im}(\alpha))^2\frac{\Theta(\frac{1}{2}H;H)^4}{\Theta(\ii\pi;H)^4}\mathrm{sn}^2\left(\frac{\mathbb{K}(m_1)}{\ii\pi}\eta;m_1\right)\mathrm{cn}^2\left(\frac{\mathbb{K}(m_1)}{\pi}(M\Delta+\pi);m_1\right).
\end{multline}
Using \cite[Eqn.\@ 20.9.1]{DLMF} we have
\begin{equation}
m_1=\ee^{\frac{1}{2}H}\frac{\Theta(\frac{1}{2}H;H)^4}{\Theta(0;H)^4},
\label{eq:DS-m1-identity}
\end{equation}
so 
\begin{multline}
|\widetilde{F}\widetilde{G}|^2=(\mathrm{Im}(\beta)-\mathrm{Im}(\alpha))^2\frac{\Theta(0;H)^4}{\Theta(\ii\pi;H)^4}\left[\mathrm{dn}^2\left(\frac{\mathbb{K}(m_1)}{\pi}(M\Delta+\pi);m_1\right) \right.\\\left.{}-m_1\mathrm{sn}^2\left(\frac{\mathbb{K}(m_1)}{\ii\pi}\eta;m_1\right)\mathrm{cn}^2\left(\frac{\mathbb{K}(m_1)}{\pi}(M\Delta+\pi);m_1\right)\right].
\end{multline}
Combining \eqref{eq:DS-m1-identity} with \cite[Eqn.\@ 20.7.5]{DLMF} shows that also
\begin{equation}
\frac{\Theta(\ii\pi;H)^4}{\Theta(0;H)^4}=1-m_1.
\end{equation}
Using this, as well as $\mathrm{dn}^2(\cdot;m_1)=1-m_1\mathrm{sn}^2(\cdot;m_1)$ and $\mathrm{cn}^2(\cdot;m_1)=1-\mathrm{sn}^2(\cdot;m_1)$,
together with $\mathrm{sn}^2(-\ii \Jacobiu;m_1)=-\mathrm{sc}^2(\Jacobiu;1-m_1)$ (see \cite[\S22.6(iv)]{DLMF}),
\begin{multline}
|\widetilde{F}\widetilde{G}|^2=\frac{(\mathrm{Im}(\beta)-\mathrm{Im}(\alpha))^2}{1-m_1}\left[
\left(1+m_1\mathrm{sc}^2(\Jacobiu_1;1-m_1)\right)\right.\\
\left.{}-m_1\left(1+\mathrm{sc}^2(\Jacobiu_1;1-m_1)\right)\mathrm{sn}^2(\Jacobiv_1;m_1)\right],
\label{eq:DS-Abs-Squared-REDO}
\end{multline}
wherein 
\begin{equation}
\Jacobiu_1:=\frac{\mathbb{K}(m_1)}{\pi}\eta,\quad\eta=\mathrm{Re}(2A(\infty)),\quad \Jacobiv_1:=\frac{\mathbb{K}(m_1)}{\pi}(M\Delta+\pi).
\end{equation}

Comparing with \eqref{eq:DS-u-eta-v}, we have
\begin{equation}
\Jacobiu_1=\frac{\mathbb{K}(m_1)}{\mathbb{K}(m)}\Jacobiu = (1+\sqrt{m})\Jacobiu.
\end{equation}
The substitution behind the descending Landen transformation in the form
\begin{equation}
\mathbb{K}(m)=\int_0^1\frac{\dd W}{\sqrt{1-W^2}\sqrt{1-mW^2}}=\frac{1}{1+\sqrt{m}}\int_0^1\frac{\dd t}{\sqrt{1-t^2}\sqrt{1-m_1t^2}}=\frac{\mathbb{K}(m_1)}{1+\sqrt{m}}
\end{equation}
is 
\begin{equation}
W=\frac{1+\sqrt{m}-\sqrt{(1+\sqrt{m})^2-4\sqrt{m}t^2}}{2\sqrt{m}t}\implies t=\frac{(1+\sqrt{m})W}{1+\sqrt{m}W^2}.
\end{equation}
Using this same transformation in the formula $\Jacobiu=U(\ii\tan(\frac{1}{2}\theta_\alpha))$ where $U(\zeta)$ is defined by \eqref{eq:DS-U-of-zeta} gives $\Jacobiu_1=U_1(\ii\tan(\frac{1}{2}(\theta_\alpha+\theta_\beta)))$ wherein
\begin{equation}
U_1(\zeta):=\ii\int_0^\zeta\frac{\dd t}{\sqrt{1-t^2}\sqrt{1-m_1t^2}}.
\end{equation}
Note that in computing $\Jacobiu_1$, the upper limit of integration can take any purely imaginary value; hence to make the integral well-defined we can assume that the path of integration lies on the imaginary axis, proceeding upwards from the origin and possibly continuing through the integrable singularity at $t=\infty$ should the upper limit be negative imaginary.  Then, by exactly the same arguments as were used to analyze $r(\zeta):=\mathrm{sn}^2(U(\zeta);1-m)$ in Section~\ref{s:DS-Interpretation} we have the analogous result:
\begin{equation}
r_1(\zeta):=\mathrm{sn}^2(U_1(\zeta);1-m_1)=\frac{\zeta^2}{\zeta^2-1}.
\end{equation}
Taking $\zeta=\ii\tan(\frac{1}{2}(\theta_\alpha+\theta_\beta))$ yields
\begin{equation}
\mathrm{sc}^2(\Jacobiu_1;1-m_1)=\frac{\mathrm{sn}^2(\Jacobiu_1;1-m_1)}{1-\mathrm{sn}^2(\Jacobiu_1;1-m_1)}=\frac{r_1(\ii\tan(\frac{1}{2}(\theta_\alpha+\theta_\beta)))}{1-r_1(\ii\tan(\frac{1}{2}(\theta_\alpha+\theta_\beta)))} =\mathrm{tan}^2(\tfrac{1}{2}(\theta_\alpha+\theta_\beta)).
\end{equation}
Using the notation $t_\alpha:=\tan(\frac{1}{2}\theta_\alpha)$ and $t_\beta:=\tan(\frac{1}{2}\theta_\beta)$ we then have by an addition formula
\begin{equation}
\mathrm{sc}^2(\Jacobiu_1;1-m_1)=\tan^2(\tfrac{1}{2}(\theta_\alpha+\theta_\beta))=\frac{(t_\alpha+t_\beta)^2}{(1-t_\alpha t_\beta)^2}.
\end{equation}
Then using the definition of $m_1$ in \eqref{eq:DS-m1} gives
\begin{equation}
m_1=\frac{4t_\alpha^{-1}t_\beta}{(1+t_\alpha^{-1} t_\beta)^2} = \frac{4t_\alpha t_\beta}{(t_\alpha+t_\beta)^2}\implies \frac{1}{1-m_1}=\frac{(t_\alpha+t_\beta)^2}{(t_\alpha-t_\beta)^2}.
\end{equation}
Hence,
\begin{equation}
\frac{1+m_1\mathrm{sc}^2(\Jacobiu_1;1-m_1)}{1-m_1} = \frac{(t_\alpha+t_\beta)^2}{(t_\alpha-t_\beta)^2}\left(1+\frac{4t_\alpha t_\beta}{(1-t_\alpha t_\beta)^2} \right)= 
\frac{(t_\alpha+t_\beta)^2(1+t_\alpha t_\beta)^2}{(t_\alpha-t_\beta)^2(1-t_\alpha t_\beta)^2}
\end{equation}
and
\begin{equation}
\frac{m_1\left(1+\mathrm{sc}^2(\Jacobiu_1;1-m_1)\right)}{1-m_1}=\frac{4t_\alpha t_\beta}{(t_\alpha-t_\beta)^2}\left(1+\frac{(t_\alpha+t_\beta)^2}{(1-t_\alpha t_\beta)^2}\right)=\frac{4t_\alpha t_\beta (1+t_\alpha^2)(1+t_\beta^2)}{(t_\alpha-t_\beta)^2(1-t_\alpha t_\beta)^2}.
\end{equation}
Then also,
\begin{equation}
(\mathrm{Im}(\beta)-\mathrm{Im}(\alpha))^2 = \rho^2(\sin(\theta_\beta)-\sin(\theta_\alpha))^2
=4\rho^2\frac{(t_\beta-t_\alpha)^2(1-t_\alpha t_\beta)^2}{(1+t_\alpha^2)^2(1+t_\beta^2)^2}.
\end{equation}
Using these results in \eqref{eq:DS-Abs-Squared-REDO} gives
\begin{equation}
|\widetilde{F}\widetilde{G}|^2=\frac{4\rho^2(t_\alpha+t_\beta)^2(1+t_\alpha t_\beta)^2}{(1+t_\alpha^2)^2(1+t_\beta^2)^2} - \frac{16\rho^2 t_\alpha t_\beta}{(1+t_\alpha^2)(1+t_\beta^2)} \mathrm{sn}^2(\Jacobiv_1;m_1).
\end{equation}
Then observe that
\begin{equation}
\frac{4\rho^2(t_\alpha+t_\beta)^2(1+t_\alpha t_\beta)^2}{(1+t_\alpha^2)^2(1+t_\beta^2)^2}
=\left(\rho\sin(\theta_\alpha)+\rho\sin(\theta_\beta)\right)^2% \\
%&
= \left(\mathrm{Im}(\alpha)+\mathrm{Im}(\beta)\right)^2,
\end{equation}
and 
\begin{equation}
\frac{16\rho^2t_\alpha t_\beta}{(1+t_\alpha^2)(1+t_\beta^2)} 
=4\rho^2\sin(\theta_\alpha)\sin(\theta_\beta)%\\
%&
=4\mathrm{Im}(\alpha)\mathrm{Im}(\beta).
\end{equation}
Since $|\breve{\Psi}(\chi,\tau;M)|^2 = |\widetilde{F}\widetilde{G}|^2$, this proves \eqref{eq:DS-Breve-Psi-alt}.

\section{Solution of \rhref{rhp:generic-soliton-parametrix} and proofs of related properties}
\label{a:soliton-RHP}
\begin{proof}[Proof of Proposition~\ref{p:generic-soliton-parametrix}]
We first produce a solution of \rhref{rhp:generic-soliton-parametrix}. The fact that this solution is unique follows by a straightforward application of Liouville's theorem using the fact that the jump matrix has unit determinant.

Observe that
\begin{equation}
\mathbf{Q}^{\mathfrak{s}} (z):=
\begin{cases}
\mathbf{\solmat}^{\mathfrak{s}}(z;p)\mathbf{V}^{\mathbf{\solmat}^\mathfrak{s}}(z;p),&\quad z\in (D_\critpt\cup D_{\critpt}^*)\setminus\{\critpt,\critpt^*\},\\
\mathbf{\solmat}^{\mathfrak{s}}(z;p), &\quad z\in \mathbb{C}\setminus {\overline{D_\critpt\cup D_\critpt^*}},
\end{cases}
\label{Fdot-to-Q}
\end{equation}
extends to $\mathbb{C}\setminus\{\critpt,\critpt^*\}$ as a meromorphic matrix function with simple poles at $z=\critpt$ and $z=\critpt^*$ only, thanks to the analyticity properties of ${\mathbf{\solmat}}^\mathfrak{s}(z;p)$ and the fact that the jump matrix ${\mathbf{V}}^{\mathbf{\solmat}^\mathfrak{s}}(z;p)$ defined in \eqref{eq:VJ-jump} has simple poles at $z=\critpt,\critpt^*$ and is otherwise analytic in the disks.
Therefore, $\mathbf{Q}^{\mathfrak{s}}(z)$ necessarily has the form
\begin{equation}
\mathbf{Q}^{\mathfrak{s}}(z) = \mathbb{I} + \frac{\mathbf{X}^{\mathfrak{s}}}{z-\critpt}+\frac{\mathbf{Z}^{\mathfrak{s}}}{z-\critpt^*},
\label{Q-form}
\end{equation}
where 
$\mathbf{X}^\mathfrak{s}$ and $\mathbf{Z}^\mathfrak{s}$ 
are constant matrices to be determined.  Because the conditions of \rhref{rhp:generic-soliton-parametrix}  are symmetric with respect to Schwarz reflection, $\mathbf{X}^\mathfrak{s}$ and $\mathbf{Z}^\mathfrak{s}$ are related by 
\begin{equation}
\mathbf{Z}^{\mathfrak{s}}=\sigma_2\mathbf{X}^{\mathfrak{s}*}\sigma_2.
\label{Z-X-symmetry}
\end{equation}
As $\mathbf{\solmat}^{\mathfrak{s}}(z;p)$ is analytic at $z=\critpt,\critpt^*$, the condition
\begin{align}
\mathbf{Q}^{\mathfrak{s}}(z) \mathbf{V}^{\mathbf{\solmat}^\mathfrak{s}} (z;p)^{-1} = O(1),\quad z\to \critpt,\critpt^*,
\label{error-jump-expand-xi}
\end{align}
must hold. In what follows we focus on the limit $z\to\critpt$ as what happens as $z\to\critpt^*$ follows from the symmetry in the problem.
Since $\mathbf{N}^\mathfrak{s}$ is $2$-nilpotent, from \eqref{eq:VJ-jump} we have
\begin{equation}
\mathbf{V}^{\mathbf{\solmat}^\mathfrak{s}}(z;p)^{-1}=\mathbb{I}-p(z)\mathbf{N}^\mathfrak{s} = -C\mathbf{N}^\mathfrak{s}(z-\critpt)^{-1} + \mathbb{I}-p_0\mathbf{N}^\mathfrak{s} + O(z-\critpt),\quad z\to\critpt,
\label{V-F-dot-xi-expand}
\end{equation}
where $p(z)$ has the Laurent expansion $p(z)=C(z-\critpt)^{-1}+p_0 + O(z-\critpt)$ (see \eqref{C-residue-def}).
Then, expanding \eqref{Q-form} as $z\to\critpt$, we see that
\begin{equation}
\begin{aligned}
\mathbf{Q}^{\mathfrak{s}}(z)\mathbf{V}^{\mathbf{\solmat}^\mathfrak{s}}(z;p)^{-1}
&=-C\mathbf{X}^{\mathfrak{s}} \mathbf{N}^{\mathfrak{s}}(z-\critpt)^{-2} +\left[ \mathbf{X}^{\mathfrak{s}} (\mathbb{I}-p_0\mathbf{N}^\mathfrak{s}) -C\left( \mathbb{I} + \frac{\mathbf{Z}^{\mathfrak{s}} }{\critpt-\critpt^*}\right)\mathbf{N}^{\mathfrak{s}} \right](z-\critpt)^{-1} + O(1).
\end{aligned}
\label{analytic-expand-xi}
\end{equation}
Comparing with \eqref{error-jump-expand-xi}, we arrive at the following linear system of equations for the unknown matrices 
$\mathbf{X}^{\mathfrak{s}}$ and $\mathbf{Z}^{\mathfrak{s}}$:
\begin{align}
C\mathbf{X}^{\mathfrak{s}} \mathbf{N}^{\mathfrak{s}} &= \mathbf{0},\label{neg2-term}\\
 \mathbf{X}^{\mathfrak{s}}(\mathbb{I}-p_0\mathbf{N}^\mathfrak{s})-C\left( \mathbb{I} + \frac{\mathbf{Z}^{\mathfrak{s}}}{\critpt-\critpt^*}\right)\mathbf{N}^{\mathfrak{s}} &= \mathbf{0}.
\label{neg1-term}
\end{align}
Because $\mathbf{N}^\mathfrak{s}$ is $2$-nilpotent, multiplying \eqref{neg1-term} on the right by $(\mathbb{I}-p_0\mathbf{N}^\mathfrak{s})^{-1}=\mathbb{I}+p_0\mathbf{N}^\mathfrak{s}$ allows $\mathbf{X}^\mathfrak{s}$ to be explicitly expressed in terms of $\mathbf{Z}^\mathfrak{s}$ as:
\begin{equation}
\mathbf{X}^\mathfrak{s}=C\left(\mathbb{I}+\frac{\mathbf{Z}^\mathfrak{s}}{\critpt-\critpt^*}\right)\mathbf{N}^\mathfrak{s}.
\end{equation}
The equation \eqref{neg2-term} gives no additional information, but using \eqref{Z-X-symmetry} to eliminate $\mathbf{Z}^\mathfrak{s}$ gives 
\begin{equation}
\mathbf{X}^\mathfrak{s}=C\left(\mathbb{I}+\frac{\sigma_2\mathbf{X}^{\mathfrak{s}*}\sigma_2}{\critpt-\critpt^*}\right)\mathbf{N}^\mathfrak{s}.
\end{equation}
In the case $\mathfrak{s}=-$, using \eqref{2-nilpotents} shows that the second column of $\mathbf{X}^-$ vanishes, and the first column then reads
\begin{equation}
\begin{bmatrix}X^-_{11}\\X^-_{21}\end{bmatrix}=C\begin{bmatrix}\displaystyle -\frac{X_{21}^{-*}}{2\ii\Im(\critpt)}\\\displaystyle 1+\frac{X_{11}^{-*}}{2\ii\Im(\critpt)}\end{bmatrix}\implies
X_{11}^-=\frac{2\ii\Im(\critpt)|C|^2}{4\Im(\critpt)^2+|C|^2},\quad X_{21}^-=\frac{4\Im(\critpt)^2C}{4\Im(\critpt)^2+|C|^2},
\end{equation}
which together with \eqref{Z-X-symmetry} then proves \eqref{Z-s-minus}.  If instead $\mathfrak{s}=+$, then the first column of $\mathbf{X}^+$ vanishes and the second column then reads
\begin{equation}
\begin{bmatrix}X^+_{12}\\X^+_{22}\end{bmatrix}=C\begin{bmatrix}\displaystyle 1+\frac{X_{22}^{+*}}{2\ii\Im(\critpt)}\\\displaystyle -\frac{X_{12}^{+*}}{2\ii\Im(\critpt)}\end{bmatrix}\implies X_{12}^-=\frac{4\Im(\critpt)^2C}{4\Im(\critpt)^2+|C|^2},\quad X_{22}^-=\frac{2\ii\Im(\critpt)|C|^2}{4\Im(\critpt)^2+|C|^2},
\end{equation}
which together with \eqref{Z-X-symmetry} proves \eqref{X-s-plus}.
\end{proof}

\begin{proof}[Proof of Lemma~\ref{L:s-relations-general}]
From the definitions \eqref{eq:C-n-plus} and \eqref{eq:C-n-minus} we get
\begin{equation}
C_{n-1}^+(\chi,\tau;M)C_n^-(\chi,\tau;M)=\frac{L_{n-1}^+(\chi,\tau;M)L_{n}^-(\chi,\tau;M)}{y(\critpt(\chi,\tau);\chi,\tau)^2\conformalprime(\chi,\tau)^2}.
\end{equation}
Using \eqref{eq:y-critpt}, this can be written in the form
\begin{equation}
C_{n-1}^+(\chi,\tau;M)C_n^-(\chi,\tau;M)=-4\Im(\critpt(\chi,\tau))^2L_{n-1}^+(\chi,\tau;M)L_n^-(\chi,\tau;M).
\end{equation}
But, recalling \eqref{A-quantity} and \eqref{Ln-def}, we have
\begin{equation}
L_{n-1}^+(\chi,\tau;M)L_n^-(\chi,\tau;M)=\frac{A_{n-1}(\chi,\tau;M)^2}{A_n(\chi,\tau;M)^2M} = 1,
\end{equation} 
so the proof is complete.
\end{proof}

\end{document}